\newcounter{makeconstant}
\newenvironment{makeconstant}%
{\refstepcounter{makeconstant}}%
{}
\newcounter{nonumber}
\theoremstyle{plain}
\newtheorem{theorem}[equation]{Theorem}
\newtheorem{lemma}[equation]{Lemma}
\newtheorem{corollary}[equation]{Corollary}
\newtheorem{proposition}[equation]{Proposition}
\theoremstyle{definition}
\newtheorem{definition}[equation]{Definition}
\newtheorem{remark}[equation]{Remark}
\newtheorem{remarks}[equation]{Remarks}
\newtheorem{notation}[equation]{Notation}
\newtheorem*{remark*}{Remark}
\numberwithin{equation}{section} 
\def\CC{\mathbf{C}}
\def\NN{\mathbb{N}}
\def\ZZ{\mathbb{Z}} 
\def\A{{\rm A}}
\def\B{{\rm B}}
\def\C{{\rm C}}
\def\E{{\rm E}}
\def\F{{\rm F}}
\def\G{{\rm G}}
\def\H{{\mathrm{H}}}
\def\I{{\rm I}}
\def\J{{\rm J}}
\def\K{{\rm K}}
\def\L{{\rm L}}
\def\M{{\rm M}}
\def\N{{\rm N}}
\def\O{{\rm O}}
\def\P{{\rm P}}
\def\R{{\rm R}}
\def\SS{{\rm S}}
\def\T{{\rm T}}
\def\U{{\rm U}}
\def\V{{\rm V}}
\def\W{{\rm W}}
\def\X{{\rm X}}
\def\Z{{\rm Z}}
\def\Cc{\mathscr{C}}
\def\Ee{\mathscr{E}}
\def\Ff{\mathcal{F}}
\def\Hh{\mathcal{H}}
\def\Mm{\mathcal{M}}
\def\Nn{\mathcal{N}}
\def\Qq{\mathscr{Q}}
\def\Ww{\mathcal{W}}
\def\BB{\mathfrak{B}}
\def\fCc{\mathfrak{C}}
\def\HH{\mathfrak{H}}
\def\JJ{\mathfrak{J}}
\def\KK{\mathfrak{K}}
\def\aa{\mathfrak{a}}
\def\bb{\mathfrak{b}}
\def\kk{\mathfrak{k}}
\def\mm{\mathfrak{m}}
\def\nn{\mathfrak{n}}
\def\so{\mathsf{o}}
\def\sw{\mathsf{w}}
\def\an{\mathsf{an}}
\def\even{\mathsf{even}}
\def\odd{\mathsf{odd}}
\def\a{\alpha} 
\def\b{\beta}
\def\d{\delta}
\def\e{\varepsilon}
\def\g{\gamma}
\def\vphi{\varphi}
\def\k{\kappa}
\def\l{\lambda}
\def\o{\mathfrak{o}}
\def\p{\mathfrak{p}}
\def\s{\sigma}
\def\t{\theta}
\def\w{\varpi}
\def\z{\zeta}
\def\La{\Lambda}
\def\Th{\Theta}
\def\({\left(}
\def\){\right)}
\def\>{\geqslant}
\def\<{\leqslant}
\def\le{\leqslant}
\def\ge{\geqslant}
\def\Hom{\operatorname{Hom}}
\def\End{\operatorname{End}}
\def\Aut{\operatorname{Aut}}
\def\GL{\operatorname{GL}}
\def\Gal{\operatorname{Gal}}
\def\ker{\operatorname{ker}}
\def\im{\operatorname{im}}
\def\Tr{\operatorname{Tr}}
\def\id{\operatorname{id}}
\def\Res{\operatorname{Res}}
\def\ind{\operatorname{ind}}
\def\Irr{\operatorname{Irr}}
\def\Art{\operatorname{Art}}
\def\dim{\operatorname{dim}}
\def\diman{\mathrm{dim}_{\mathrm{an}}}
\def\supp{\operatorname{supp}}
\def\diag{\operatorname{diag}}
\def\val{\operatorname{val}}
\def\lcm{\operatorname{lcm}}
\def\gcd{\operatorname{gcd}}
\def\SO{\operatorname{SO}}
\def\SL{\operatorname{SL}}
\def\tG{{\widetilde{\G}}}
\def\tM{{\widetilde{\M}}}
\def\tP{{\widetilde{\P}}}
\def\tU{{\widetilde{\U}}}
\def\ov#1{{\overline{#1}}}
\def\la{\langle}
\def\ra{\rangle}
\definecolor{dorange}{RGB}{255,140,0}
\definecolor{dgreen}{RGB}{0,205,10}
\def\red#1{\textcolor{black}{#1}}
\def\blue#1{\textcolor{black}{#1}}
\def\orange#1{\textcolor{black}{#1}}
\def\gre#1{\textcolor{black}{#1}}
\def\shaun#1{\textcolor{black}{#1}}
\def\shauns#1{\textcolor{black}{#1}}
\def\rob#1{\textcolor{magenta}{#1}}
\def\daniel#1{\textcolor{blue}{#1}}
\def\shauny#1{\textcolor{dorange}{#1}}
\def\bob#1{\textcolor{dgreen}{#1}}
\def\ignore#1{\relax}
\def\bs{\boldsymbol}
\def\bv{{\mathrm v}}
\def\be{{\mathrm e}}
\def\bw{{\mathrm w}}
\def\bdots{\mathinner{\mkern1mu\raise1pt\hbox{.}\mkern2mu\raise4pt\hbox{.}
           \mkern2mu\raise7pt\vbox{\kern7pt\hbox{.}}\mkern1mu}}
\def\into{\hookrightarrow}
\def\presuper#1#2%
\def\presub#1#2%
\def\varphican{{\vphi_\b}}
\def\varphicanprime{{\vphi_{\b'}}}
\def\varphigcan{{\vphi_\g}}
\def\varphigcanprime{{\vphi_{\g'}}}
\def\tame{{\mathsf{tame}}}
\def\wt{\w_\tame}
\def\bt{\beta_\tame}
\def\gt{\gamma_\tame}
\def\Ft{\F_\tame}
\def\Fprt{\F'_\tame}
\def\full{{full}}
\def\WT{\mathrm{WT}}
\def\EP{\mathrm{EP}}
\def\fin{\mathrm{fin}}
\def\f{{\mathfrak{f}}} %%%% Or other notation?
\def\ft{{\mathfrak{t}}} %%%% Or other notation?
\def\fz{{\mathfrak{z}}} %%%% Or other notation?
\def\fo{{\scriptstyle{\mathcal{O}}}} %%%% Or other notation?
\def\sfo{{\scriptscriptstyle{\mathcal{O}}}} %%%% Or other notation?
\def\fy{{\mathfrak{h}}} %%%% Or other notation?
\def\fe{{\mathfrak{e}}} %%%% Or other notation?
\def\mfe{{\mathfrak{e}}} 
\def\Falg{\F^{\mathsf{alg}}}
\def\PsF{\operatorname{Ps}(\Falg)}
\def\PsFcl{\operatorname{Ps}_-(\Falg)}
\def\Thip{\Th_{{1}}}
\def\Thim{\Th_{-1}}
\def\Thipm{\Th_{\pm1}}
\def\herm{\operatorname{herm}}
\def\mcH{{\mathcal{H}}}
\def\BBred{\BB_{\operatorname{red}}}
\def\LG{{\tensor*[^L]\G{^\so}}}
\def\hGo{\widehat{\G^\so}}
\def\Ad{\mathrm{Ad}}
\def\Lang{\mathrm{Lang}}
\def\LL{\mathrm{LL}}
\def\WLL{\mathrm{LL}_p}
\def\WP{\mathrm{Wild}}
\def\rob#1{\textcolor{black}{#1}}
\def\daniel#1{\textcolor{black}{#1}}
\def\shauny#1{\textcolor{black}{#1}}
\def\bob#1{\textcolor{black}{#1}}
\title{Endo-parameters for~\lowercase{$p$}-adic classical groups}
\author{Robert Kurinczuk}
\address{Robert Kurinczuk, Department of Mathematics, Imperial College, London, SW7 2AZ, United Kingdom.}
\email{robkurinczuk@gmail.com}
\author{Daniel Skodlerack}
\address{Daniel Skodlerack, Institute of Mathematical Sciences, ShanghaiTech University, Pudong, China.}
\email{dskodlerack@shanghaitech.edu.cn}
\author{Shaun Stevens}
\address{Shaun Stevens, School of Mathematics, University of East Anglia, Norwich, NR4 7TJ, United~Kingdom.}
\email{shaun.stevens@uea.ac.uk}
\subjclass[2010]{22E50; 11F70}
\begin{document}

\begin{abstract}
For a classical group %~$\G^\so$
over a non-archim\-e\-dean local field of odd residual characteristic~$p$, we prove that two cuspidal types, defined over an algebraically closed field~$\CC$ of characteristic different from~$p$, intertwine if and only if they are conjugate. This completes work of the first and third authors %~\cite{St08,RKSS}
who showed that every irreducible cuspidal~$\CC$-representation %over~$\CC$ of~$\G^\so$ 
of a classical group is compactly induced from a cuspidal type. % now giving a classification of irreducible cuspidal representations of classical groups in terms of cuspidal types. 
%A fundamental step in this approach is to construct a list of characters of pro-$p$ subgroups from arithmetic data, called self-dual semisimple characters, which exhaust all smooth representations (every smooth representation of~$\G^\so$ contains a self-dual semisimple character.
%, which is accomplished for all smooth representations of~$\G^\so$, is the 
We generalize Bushnell and Henniart's notion of endo-equivalence %~\cite{BH96}
to semisimple characters of general linear groups and to self-dual semisimple characters of classical groups, and introduce (self-dual) endo-parameters. \shaun{We prove that these parametrize intertwining classes of (self-dual) semisimple characters and conjecture that they are in bijection with wild Langlands parameters, compatibly with the local Langlands correspondence.}
\end{abstract}
\maketitle

\setcounter{tocdepth}{1}
\tableofcontents

\section{Introduction}\label{sec:intro}
\subsection{}

One approach to study smooth representations of a reductive~$p$-adic group on modules over a commutative ring intrinsically is by restriction to compact open subgroups.  For~$p$-adic general linear groups this has yielded detailed classification results, for example Bushnell and Kutzko's monograph~\cite{BK93} for complex representations and its subsequent partial generalizations to other coefficient rings, see for example~\cite{Vig96,MStype,HelmBC}.% This includes an explicit construction of all cuspidal complex representations building on earlier work of Howe~\cite{Howe}, and a construction of \emph{types} for all Bernstein components.  %Vign\'eras~\cite{Vig96} later showed that the construction of cuspidal representations works well over any algebraically closed field of characteristic different to~$p$.

At the base of the work of Bushnell and Kutzko is an explicit construction of characters of compact open pro-$p$ subgroups of~$p$-adic general linear groups called \emph{simple characters}, constructed from certain arithmetic data.    However, there is a lot of flexibility in the choice of arithmetic data leading to, for example, simple characters contained in isomorphic cuspidal representations.  Moreover, there are functorial relations between simple characters defined on compact open subgroups of different rank~$p$-adic general linear groups.  To control this flexibility and \shaun{encapsulate} these relations, Bushnell and Henniart~\cite{BH96} collected~simple characters into families called \emph{ps-characters} and introduced an equivalence relation on ps-characters \rob{called} \emph{endo-equivalence}.

Endo-classes (endo-equivalence classes of ps-characters) have subsequently been extended to inner forms of general linear groups~\cite{BSS}, and have proved fundamental in understanding fine properties of the local Langlands correspondence~\cite{BHEffective,BH17} and the Jacquet--Langlands correspondence~\cite{SeStJL,Dotto}, as well as in the study of \rob{Galois}-distinguished cuspidal representations~\cite{AKMSS,Secherre} and in Bernstein decompositions of the category of smooth representations over fields of positive characteristic~\cite{SeStblock}.
%
%proved useful in ..... detailed descriptions of the local Langlands correspondence, have been extended to inner forms of general linear groups \c
 
%Their original motivation was to understand \emph{tame lifting} of simple characters, ..  

%%%%%%%%%%%%%%%%%%%%%%%%%%%%%%%%%%%%%%%%%
\subsection{}
Let~$\G^\so$ be a~\emph{$p$-adic classical group}: that is, the group of~$\F_\so$-points of a unitary, symplectic, or special orthogonal group defined over a non-archimedean local field~$\F_\so$ of odd residual characteristic~$p$.  Let~$\CC$ be an algebraically closed field of characteristic different \rob{from}~$p$.   

Building on the work of Bushnell and Kutzko, %and considering~$\G^\so$ as a subgroup of the fixed points of an involution in a general linear group~$\tG$, 
all cuspidal representations of~$\G^\so$ on~$\CC$-vector spaces have been constructed in~\cite{St08,RKSS}, and for complex representations types have been constructed for all Bernstein components~\cite{MiSt}.  Fundamental to this approach are the \emph{self-dual semisimple characters} of compact open pro-$p$ subgroups of~$\G^\so$ constructed in~\cite{St05}.\footnote{\rob{In previous works, including~\cite{St05}, only semisimple characters valued in~$\mathbb{C}$ are considered.  However, the constructions and results also apply to characters valued in~$\CC$ because semisimple characters are (smooth) characters of pro-$p$ groups and~$\CC$ contains a full set of~$p$-power roots of unity.
%
%
%R%The results in~\cite{stevens:02},\cite{St05} and~\cite{SkSt} are for complex semisimple characters. Those still hold for semisimple
%characters with values in~$\CC$, because they are defined on pro-$p$-groups and~$\CC$ contains a full set of~$p$-power roots of unity.%.~$\mu_{p^\infty}(\CC)$.
%R%In fact for most statements it is enough to move between complex and~$\CC$-valued characters via a fixed group isomorphism from~$\mu_{p^\infty}(\mathbb{C})$ to~$\mu_{p^\infty}(\CC)$. 
}}
\shaun{There are functorial relations between the self-dual semisimple characters of different~$p$-adic classical groups, and their definition is cursed by the same flexibility as for general linear groups.}
%{\color{red}Mention there are also functorial relations between different~$p$-adic classical groups, and their definition is cursed with the same flexibility?}
%
%{\color{red}Rewrite paragraph: } To achieve this, first the simple characters for~$p$-adic general linear groups of Bushnell and Kutzko are generalized to semisimple characters for~$p$-adic general linear groups~\cite{BK?,St05}, and then \emph{self-dual semisimple characters} of compact open pro-$p$ subgroups of~$\G^\so$ are constructed~\cite{St05}.  The self-dual semisimple characters then ..... for this approach to the representation theory of~$\G^\so$

In this article, we generalize Bushnell and Henniart's notions of ps-character and endo-equivalence to \emph{self-dual pss-characters} and \emph{endo-equivalence} for~$p$-adic classical groups, and along the way to the semisimple setting of \emph{pss-characters} and \emph{endo-equivalence} for~$p$-adic general linear groups.  We then prove two applications.
\begin{enumerate}\setlength\itemsep{5pt}
\item\label{intro:parti} we complete the classification of isomorphism classes of cuspidal (smooth) representations of~$\G^\so$ on~$\CC$-vector spaces  by conjugacy classes of \emph{cuspidal types}, %~irreducible representations of compact open subgroups 
following the exhaustive constructions of~\cite{St08,RKSS}.  
\item\label{intro:partii} we parametrize the intertwining classes of self-dual semisimple characters by \emph{self-dual endo-parameters}.
%introduce (self-dual) \emph{endo-parameters} which parametrise the intertwining classes of (self-dual) semisimple characters.  
%the theory of which, in its core, contains a generalization of Bushnell and Henniart's theory of endo-equivalence~\cite{BH96}, from simple to semisimple characters of general linear groups, and develops the theory of endo-equivalence for self-dual semisimple characters of classical groups.  
%we classify the intertwining classes of \emph{self-dual semisimple characters} which form the fundamental step of this approach to the smooth dual.
\end{enumerate}

%In previous works, notably~\cite{St05,St08,MiSt,RKSS,SkSt}, the third author and his collaborators have developed an approach to the smooth dual of~$\G^\so$ over an algebraically closed field~$\CC$ of characteristic prime to~$p$ via Bushnell--Kutzko type theory.  

%{\color{red}At the base of the construction.... characters....  Application models for cuspidals....}
%In this article, we accomplish two important parts of this programme: % and complete recent work~\cite{RKSS,SkSt}
%
%%
%%
%%
%%
%%Let~$\G^\so$ be a unitary, symplectic or special orthogonal group over a non-archimedean local field~$\F$ of odd residual characteristic~$p$, let~$\G$ be either~$\G^\so$ in the unitary and symplectic case or the ambient orthogonal group.  
%%
%%
%%For~\ref{intro:partii}, we introduce \emph{the theory of self-dual semisimple endo-parameters}. In its core this generalizes Bushnell and Henniart's theory of endo-class~\cite{BH96}, from simple to semisimple characters of general linear groups, and develops the theory for self-dual semisimple characters of classical groups.  
%
%These results form part of the motivation for our development of endo-parameters for classical groups, together with the use of endo-classes in our proof of~\ref{intro:parti}. {\color{red}tie in}

%%%%%%%%%%%%%%%%%%%%%%%%%%%%%%%%%%%%%%%%%
\subsection{}
We expect self-dual endo-parameters to have a natural interpretation via the local Langlands correspondence in terms of the \emph{restriction} to wild inertia of (extended) Langlands parameters, generalizing Bushnell and Henniart's ramification theorem~\cite[6.1 Theorem]{BHEffective} to classical groups and refining work of the third author with Blondel and Henniart~\cite[Theorem 7.1]{BlHeSt}. See the end of the introduction for a precise conjecture.  The added complexity in endo-parameters for~$p$-adic classical groups in comparison to~$p$-adic general linear groups is explained in the parametrization of~$L$-\shaun{indistinguishable} representations.

Another application of endo-parameters is found in current work of the first and third authors which gives a decomposition of the category of all smooth~$\CC$-representations of~$\G^\so$ by self-dual endo-parameters, refining the decomposition by depth~\cite[II 5.8]{Vig96}.  This decomposition is particularly interesting when~$\CC$ has positive characteristic where a block decomposition of the category is not yet known, but where there has been recent progress in depth zero~\cite{Lanard}.  We expect using endo-parameters that there is a reduction of the block decomposition for~$\G^\so$ to the depth zero block decompositions of twisted Levi subgroups of~$\G^\so$.  This fits %well 
with work of Chinello for general linear groups~\cite{Chinello} %, and assuming our conjecture on endo-parameters and local Langlands, 
and with general predictions of Dat~\cite{Datfunctoriality} .
%
%
%Potential applications of~\ref{intro:partii} include a \emph{local Langlands correspondence for endo-parameters} or \emph{Ramification Theorem} for classical groups (see~\cite[6.1 Theorem]{BHEffective} and, recently,~\cite{BHRam} for the state of the art for~$\tG$), and 

%%%%%%%%%%%%%%%%%%%%%%%%%%%%%%%%%%%%%%%%%
\subsection{}
We now state our results precisely.  Henceforth, all representations are assumed to be smooth. We assume that~$\G^\so$ is not isomorphic to~$\SO(1,1)(\F_\so)\simeq \F_\so^\times$, whose representation theory, in any case, is well-known.

%We begin with~\ref{intro:parti}.  
In~\cite{St08} for complex representations, extended to representations on~$\CC$ vector spaces in~\cite{RKSS}, an explicit list of pairs~$(\J,\l)$, called \emph{cuspidal types}, are constructed consisting of a compact open subgroup~$\J$ of~$\G^\so$ and an irreducible representation~$\l$ of~$\J$ such that the induced representation~$\ind_{\J}^{\G^\so}\l$ is irreducible and cuspidal.  The main results of the cited works~say that every irreducible cuspidal representation~$\pi$ of~$\G^\so$ contains a cuspidal type~$(\J,\l)$, i.e. it is compactly induced~$\pi\simeq \ind_{\J}^{\G^\so} \l$.   In other words,  this gives an explicit model of~$\pi$ in terms of the cuspidal type~$(\J,\l)$.  

There is a precise recipe to construct cuspidal types whence cuspidal representations, however it is a recipe which requires many choices and it is far from clear when the procedure results in isomorphic representations.  We prove the following \emph{intertwining implies conjugacy result}:

\begin{theorem*}[Theorem~\ref{thmIIC}]
For~$i=1,2$, let~$(\J_i,\l_i)$ be cuspidal types and put~$\pi_i\simeq \ind_{\J_i}^{\G^\so} \l_i$.  Then~$\pi_1\simeq \pi_2$ if and only if there exists~$g\in\G^\so$ such that~$\J_1^g=\J_2$ and~$\l_1^g\simeq \l_2$. 
\end{theorem*}
Here~$\l_1^g$ is the representation of~$\J_1^g=g^{-1}\J_1 g$ defined by~$\l_1^g(j)=\l_1(gjg^{-1})$ for all~$j\in \J_1^g$.  
%This completes a classification by conjugacy classes of cuspidal types of the irreducible cuspidal representations of~$\G^\so$.  {\color{red} I guess technically one had a classification before in terms of intertwining classes of cuspidal types.... the point being that it is not really clear when two cuspidal types intertwine.... For conjugacy I guess it is clearer...? Can one say something about the initial data defining the conjugate types? } 
\shaun{This result is not unexpected, by analogy with results for inner forms of general linear groups~\cite{BK93,SeStSupercuspidals}, but the proof for classical groups requires considerably more technical machinery.  % (in particular from~\cite{RKSS} and~\cite{SkSt}). 
A major reason for this added complexity can be interpreted via the local Langlands correspondence:~$L$-packets for classical groups are not singletons.}
%has required a substantial amount of work and relies on the main results of a number of papers; recently,~\cite{RKSS} and~\cite{SkSt}. {\color{red} One of the reasons this result turned out to be more elusive for classical groups is.... contrast with added complexity in endo-parameters}%\red{Mention Hakim--Murnaghan...}% We expect this theorem to find many applications in arithmetic - and be useful whenever detailed analysis of cuspidal representations of~$\G^\so$ is required.

%{\color{green}Shaun: I left Daniel's and my versions of the next paragraph for you to decide what you prefer.  Ideally it needs to emphasise the importance of this paper. }
%
%\daniel{The proof of Theorem~\ref{thmIIC} consists of three major steps. In the first step~\cite{SkSt}, the first and the second author discover the relation between skew semisimple characters which are contained in the same irreducible cuspidal representation. Secondly in~\cite{RKSS} the first and the third author prove intertwining implies conjugacy for cuspidal types under the condition that one can parametrize them by closely related skew semisimple characters. Finally, in the present work, we prove that this condition is always satisfied.}

\rob{A special case of Theorem~\ref{thmIIC}, where the self-dual semisimple characters underlying the cuspidal types are assumed to be closely related, is proved in~\cite{RKSS}.  The proof of Theorem~\ref{thmIIC} combines the %technical
 work of this paper to control the choice in arithmetic data in the construction of cuspidal representations, together with an intertwining implies conjugacy result for semisimple characters of \cite{SkSt}, to show that it is always possible to arrange for this to be the case. }

%%%%%%%%%%%%%%%%%%%%%%%%%%%%%%%%%%%%%%%%%
\subsection{}
In the main theme of this paper, we generalize Bushnell and Henniart's theory of potential simple character and endo-equivalence, originally defined in~\cite{BH96}, to potential semisimple characters and semisimple endo-equivalence for general linear groups, and to self-dual potential semisimple characters and self-dual semisimple endo-equivalence for classical groups.  As well as appearing in an essential way in our proof of Theorem~\ref{thmIIC}, this theory warrants independent study and forms a key part of our parametrization of intertwining classes of (self-dual) semisimple characters via \emph{endo-parameters} which we introduce at the end of the paper.  

%This work then allows one to associate to an irreducible representation of~$\G^\so$ a fine invariant: the endo-parameter of any self-dual semisimple character it contains. {\color{red} contrast with depth?} 

%%%%%%%%%%%%%%%%%%%%%%%%%%%%%%%%%%%%%%%%%
\subsection{}
To proceed further, we need to introduce more notation.  First we realize our classical group as a subgroup of the group of isometries of a signed hermitian form.

Let~$\Sigma=\langle\s\rangle$ denote an abstract finite group of order two.  Let~$\F/\F_\so$ be a quadratic or trivial extension of non-archimedean local fields of odd residual characteristic~$p$, and let~$\ov{\phantom{a}}$ denote the generator of~$\Gal(\F/\F_\so)$. Let~$\V$ be an~$\F$-vector space and~$\tG=\Aut_\F(\V)$. Let~$\e=\pm1$ and~$h:\V\times\V\rightarrow \F$ be an~$\e$-hermitian form defining a unitary, symplectic or orthogonal group~$\G=\U(\V,h)$:
\[\G=\{g\in\tG\mid h(gv,gw)=h(v,w)\text{ for all }v,w\in \V\}=\tG^\Sigma,\]
where~$\s$ acts on~$\tG$ by the inverse of the adjoint anti-involution of~$h$.  

We let~$\G^\so$ denote~$\G$ in the unitary and symplectic cases, and the subgroup of all isometries of determinant one in the orthogonal case; we call~$\G^\so$ a \emph{classical group}.  We fix a non-trivial character~$\psi_\so$ of the additive group of~$\F_\so$, of level one, and put~$\psi=\psi_\so\circ\Tr_{\F/\F_\so}$.  

We consider our sign~$\e$ and extension~$\F/\F_\so$ fixed.  However, we will vary our~$\F$-vector space~$\V$ and~$\e$-hermitian form~$h:\V\times\V\rightarrow\F$.  Indeed, this flexibility will be one of the charms of the theory of endo-equivalence.  We still use the notation~$\tG=\GL_\F(\V)$,~$\G$ for the group of isometries of~$h$, and~$\G^\so$ its classical subgroup.  While the notation~$\tG$ does not specify~$\V$, nor~$\G$ or~$\G^\so$ specify~$h$, we trust this will cause no confusion.  Indeed, non-isometric~$\e$-hermitian forms can define the same isometry group up to isomorphism.

\subsection{}
We now introduce an abstraction of the data underlying the construction of semisimple characters, following Bushnell and Henniart in the simple case~\cite{BH96}. 
Let~$(k,\b)$ be a \emph{semisimple pair}, that is~$\E=\F[\b]$ is a sum~$\bigoplus_{i\in\I}\E_i$ of %pairwise non-isomorphic 
field extensions~$\E_i$ of~$\F$ and~$k$ is an integer satisfying a certain technical bound (see Definition~\ref{def:sesipair}). We call~$\I$ the \emph{indexing set}.   %We write~$\beta=\sum_{i\in\I}\beta_i$ for the decomposition of~$\beta$ in~$\bigoplus_{i\in\I}\E_i$.

We let~$\Qq(k,\b)$ denote the class of all quadruples~$(\V,\vphi,\La,r)$ consisting of an~$\F$-vector space~$\V$, an embedding~$\vphi:\E\rightarrow \End_\F(\V)$, a~$\vphi(\o_\E)$-lattice sequence~$\La$ in~$\V$, and an integer~$r$ closely related to~$k$, see Section~\ref{secPSS}.  

Using work of Bushnell--Kutzko~\cite{BK93} and the third author~\cite{St05}, to~$(\V,\vphi,\La,r)\in \Qq(k,\b)$ we associate a set~$\Cc(\La,r,\vphi(\b))$ of \emph{semisimple characters} (which depend %mildly 
on our initial fixed character~$\psi$) of a compact open subgroup~$\H^{r+1}(\vphi(\b),\La)$ of~$\G$.  In the special case where~$\E$ is a field we call the characters in~$\Cc(\La,r,\vphi(\b))$ \emph{simple characters}.  

Corresponding to the decomposition~$\beta=\sum_{i\in\I}\beta_i$ of~$\beta$ in~$\bigoplus_{i\in\I}\E_i$, we have decompositions~$\V=\bigoplus_{i\in\I}\V^i$ and~$\La=\bigoplus_{i\in\I}\La^i$.  Moreover, there are a natural embedding
\[
\H^{r+1}(\vphi(\beta_i),\La^i)\hookrightarrow \H^{r+1}(\vphi(\beta),\La)\text{ and a map~}\Cc(\La,r,\vphi(\b))\rightarrow \Cc(\La^i,r,\vphi(\b_i)),
\] 
which we write~$\t\mapsto\t_i$.  Thus from a semisimple character~$\t$ we get a collection of simple characters~$\t_i$, for~$i\in\I$, which we call its \emph{simple block restrictions}.

%%%%%%%%%%%%%%%%%%%%%%%%%%%%%%%%%%%%%%%%%
\subsection{}
%An important result on intertwining semisimple characters is the \emph{matching theorem} of the second and third authors~\cite[10.1]{SkSt}, quoted in Theorem~\ref{thm:MatchingForChar}.   
%
Let~$(k,\b)$ and~$(k,\b')$ be semisimple pairs with indexing sets~$\I$ and~$\I'$ respectively,~$(\V,\vphi,\La,r)\in \Qq(k,\b)$ and~$(\V,\vphi',\La',r')\in \Qq(k,\b')$ with~$\La$ and~$\La'$ of the same period as~$\o_\F$-lattice sequences.  

Suppose we have semisimple characters~$\t\in\Cc(\La,r,\vphi(\b))$ and~$\t'\in\Cc(\La',r',\vphi'(\b'))$ which intertwine in~$\tG=\Aut_\F(\V)$.  The \emph{matching theorem} of the second and third authors~\cite[Theorem~10.1]{SkSt}, tells us that there exists a \gre{unique} bijection~$\z:\I\rightarrow \I'$ and~$g\in\tG$ such that~$g\V^i=\V^{\z(i)}$ and~$\presuper{g}\t_i$ and~$\t_{\z(i)}'$ intertwine in~$\Aut_\F(\V^{\z(i)})$.  In this case, we say that~$\t$ intertwines with~$\t'$ with \emph{matching}~$\z$.

%%%%%%%%%%%%%%%%%%%%%%%%%%%%%%%%%%%%%%%%%
\subsection{}
There are natural (bijective) \emph{transfer} maps between the sets of semisimple characters defined by a pair of quadruples in~$\Qq(k,\b)$ (see Lemma~\ref{lemTransfer}).  Following the methodology of Bushnell and Henniart~\cite{BH96}, we collect together the semisimple characters related by transfer, into \emph{pss-characters}: a pss-character~$\Theta$ supported on~$(k,\b)$ is a function from~$\Qq(k,\b)$ to the class of all semisimple characters, \gre{such that~$\Theta(\V,\vphi,\La,r)\in \Cc(\La,r,\vphi(\b))$,} whose values are related by transfer.  

We call a value of a pss-character a \emph{realization} of the pss-character.  Thus, by definition, a pss-character is determined by any one of its realizations.  This definition generalizes the definition of~\cite{BH96} of \emph{ps-characters} - which forms the special case where~$\F[\b]$ is a field.
%{\color{red}is it necessary to write this:} between the sets of characters defined by different quadruples: given~$(\V,\vphi,\La,r),(\V',\vphi',\La',r')\in \Qq(k,\b)$ we have a natural bijection
%\[\tau_{\La,\La',\vphi,\vphi',\b}:\Cc(\La,r,\vphi(\b))\rightarrow \Cc(\La',r',\vphi'(\b)),\]
%which in addition depends mildly on~$r,r'$, but we do not include them in our notation.
 %
%We collect together semisimple characters related by transfer, into \emph{pss-characters} (\emph{potential-semisimple characters}), \emph{supported on a semisimple pair}, 
%
%is a function from the class of all quadruples~$(\V,\vphi,\La,r)$ (defined with respect to the semisimple pair as above) to the class of all semisimple characters whose values are related by transfer.  
%Given a simple character~$\t\in\Cc(\La,r,\b)$, its \emph{degree} is~$[\F[\b]:\F]$, this depends only on~$\t$, and is independent of transfer and intertwining.  
The \emph{degree} of a pss-character~$\Theta$ supported on~$(k,\b)$ is~$\deg(\Theta)=[\F[\b]:\F]$. 

A pss-character~$\Theta$ supported on~$(k,\b)$ with index set~$\I$ gives rise to a collection of ps-characters~$\Theta_i$, for~$i\in\I$, supported on simple pairs~$(k_i,\b_i)$, which we call its \emph{component ps-characters}.  See Lemma~\ref{lemma:pssdecompositions} for more details on this decomposition and Definition~\ref{def:sesipair} for the definition of~$k_i$.

% This depends only on the pss-character and is independent of the choice of~$\b$.  
%The \emph{degree} of a full simple endo-class~$c\in\mathcal{E}$ is the common degree of the values of a ps-character with endo-class~$c$, we denote it by~$\deg(c)$. 
Our next step is to generalize Bushnell and Henniart's notion of endo-equivalence from ps-characters to pss-characters.
Let~$\Theta$ be a pss-character supported on the semisimple pair~$(k,\b)$ and~$\Theta'$ be a pss-character supported on the semisimple pair~$(k',\b')$.  We say that~$\Theta$ and~$\Theta'$ are \emph{endo-equivalent}, written~$\Theta\approx\Theta'$, if 
\begin{enumerate}\setlength\itemsep{5pt}
\item $k=k'$ and~$\deg(\Theta)= \deg(\Theta')$; and
\item there exist realizations on a common~$\F$-vector space~$\V$ which intertwine in~$\tG=\GL_\F(\V)$.
\end{enumerate}
We relate endo-equivalence of pss-characters with endo-equivalence of their component ps-characters:

%\begin{theorem*}[{Part of Theorem~\ref{thm:Endopss}}]
%Let~$\Theta$ and~$\Theta'$ be pss-characters supported on semisimple pairs~$(k,\b)$ and~$(k'\b')$ respectively.  Let~$\I$ and~$\I'$ be the indexing sets of~$(k,\b)$ and~$(k,\b')$ respectively.
%\begin{enumerate}
%\item\label{IntroThzeta} Then~$\Theta$ and~$\Theta'$ are endo-equivalent if and only if there is a bijection~$\z:\I\rightarrow\I'$ such that for all~$i\in\I$ the ps-characters~$\Theta_i$ and~$\Theta'_{\z(i)}$ are endo-equivalent. 
%\item\label{IntroThzeta2} Suppose that~$\Theta$ and~$\Theta'$ are endo-equivalent.  Then any two realizations of~$\Theta$ and~$\Theta'$ on an~$\F$-vector space satisfying~$\dim_\F(\V^i)=\dim_\F(\V^{\z(i)})$ for all~$i\in\I$, where~$\z:\I\rightarrow\I'$ is the map of~\ref{IntroThzeta}, intertwine. 
%\item Endo-equivalence of~pss-characters is an equivalence relation.
%\end{enumerate}
%\end{theorem*}

\begin{theorem*}[{Part of Theorem~\ref{thm:Endopss}}]
Let~$\Th$ and~$\Th'$ be pss-characters supported on semisimple pairs~$(k,\b)$ and~$(k,\b')$ respectively, with index sets~$\I$ and~$\I'$ respectively. 
\begin{enumerate}\setlength\itemsep{5pt}
\item\label{thm:Endopssintro-i} We have~$\Th\approx\Th'$ if and only if there is a bijection~$\z:\I\to\I'$ such that, for all~$i\in\I$, the component ps-characters~$\Th_i$ and~$\Th_{\z(i)}$ are endo-equivalent. Moreover, if~$\Th\approx\Th'$ then the map~$\z$ is uniquely determined.
\item\label{thm:Endopssintro-ii} Suppose that~$\Th\approx\Th'$ and let~$\z:\I\to\I'$ be the bijection of~\ref{thm:Endopssintro-i}. Let~$(\V,\vphi,\La,r)\in\Qq(k,\b)$ and~$(\V,\vphi',\La',r')\in\Qq(k,\b')$.
%\begin{enumerate}\setlength\itemsep{5pt}
%\item\label{thm:Endopss-iia1} For all~$i\in\I$, we have 
%\begin{equation}\label{eqppsFieldDegrees}
%e(\E_i|\F)=e(\E'_{\z(i)}|\F),\ f(\E_i|\F)=f(\E'_{\z(i)}|\F),\ k_\F(\b_i)=k_\F(\b'_{\z(i)}).
%\end{equation}
%\item\label{thm:Endopss-iia2} If%~$(\V,\vphi,\La,r)\in\Qq(k,\b)$ and~$(\V',\vphi',\La',r')\in\Qq(k,\b')$ are such that
%~$e(\La)=e(\La')$ then~$(\V,\vphi,\La,r')\in\Qq(k,\b)$.
%\item
% If~$\V=\V'$ and~$\Th(\V,\vphi,\La,r)$ and~$\Th'(\V,\vphi',\La',r')$ intertwine in~$\tG$ with matching~$\xi$, then~$\xi=\z$.
%\item 
If~$\dim_\F(\V^i)=\dim_\F(\V^{\z(i)})$, for all~$i\in\I$, then~$\Th(\V,\vphi,\La,r)$ and~$\Th'(\V,\vphi',\La',r')$ intertwine in~$\tG$ with matching~$\z$.
%\end{enumerate}
%\item\label{thm:Endopss-iia1} If~$\Th\approx\Th'$ and~$\z:\I\to\I'$ is the bijection given by~\ref{thm:Endopss-i} then, for all~$i\in\I$, we have 
%\begin{equation}\label{eqppsFieldDegrees}
%e(\E_i|\F)=e(\E'_{\z(i)}|\F),\ f(\E_i|\F)=f(\E'_{\z(i)}|\F),\ k_\F(\b_i)=k_\F(\b'_{\z(i)}).
%\end{equation}
\item Endo-equivalence of pss-characters is an equivalence relation.
\end{enumerate}
\end{theorem*}

We call the endo-equivalence classes of pss-characters \emph{semisimple endo-classes}. % and the special case of endo-equivalence classes of ps-characters \emph{simple endo-classes}.  
Given endo-equivalent pss-characters as in Theorem~\ref{thm:Endopss}, we call the map~$\z$ of~\ref{thm:Endopssintro-i} a \emph{matching}.
The condition,~$\dim_\F(\V^i)=\dim_\F(\V^{\z(i)})$ for all~$i\in\I$, in~\ref{thm:Endopssintro-ii} is necessary, %. In constrast to the special case of ps-characters, for pss-characters it is not true that any two realizations on a common vector space intertwine,
 as follows from~\cite[Theorem~10.1]{SkSt}.  In the special case of ps-characters, of course, this condition is automatic. %, and~\ref{thm:Endopssintro-ii} has the slicker formulation: if~$\Theta\approx \Theta'$ then any two realizations of~$\Theta$ and~$\Theta'$ on a common~$\F$-vector space intertwine in~$\tG$.

%%%%%%%%%%%%%%%%%%%%%%%%%%%%%%%%%%%%%%%%%
\subsection{}
\gre{Now we turn to the analogous constructions for our classical group~$\G$, so consider the action of the involution~$\s$ on the data involved.}  Let~$(k,\b)$ be a semisimple pair with indexing set~$\I$ and~$\E=\F[\beta]$.  
\shaun{We call~$(k,\b)$~\emph{self-dual} if the \rob{Galois} involution generating~$\Gal(\F/\F_\so)$ extends to a \rob{Galois} involution on~$\E$ sending~$\b$ to~$-\b$; %~$\ov{\phantom{a}}:\F\rightarrow\F$ extends to an involution~$\ov{\phantom{a}}:\E\rightarrow\E$ sending~$\b$ to~$-\b$, 
in this case we call the~$\F$-algebra~$\E=\F[\b]$ \emph{self-dual} (though %.  Note that really 
self-duality is really a property of the pair~$(\E,\beta)$). In this case, the \rob{Galois} involution induces an action of~$\s$ on the indexing set~$\I$, which decomposes as~$\I=\I_+\cup \I_0\cup\I_-$ with~$\I_0$ the~$\s$-fixed indices,~$\I_+$ a set of representatives for the orbits of size~$2$ and~$\I_-=\s(\I_+)$.}% the other representatives.

Suppose that~$(k,\b)$ is self-dual.  We let~$\Qq_-(k,\b)$ denote the class of all quadruples~$((\V,h),\vphi,\La,r)$ such that~$(\V,\vphi,\La,r)\in\Qq(k,\b)$, the~$\F$-vector space~$\V$ is equipped with an~$\e$-hermitian form~$h:\V\times\V\rightarrow\F$ and~$\vphi,\La$ are \emph{self-dual}.
If~$((\V,h),\vphi,\La,r)\in\Qq_-(k,\b)$ then~$\H^{r+1}(\b,\La)$ is~$\Sigma$-stable and~$\Sigma$ acts on~$\Cc(\La,r,\vphi(\b))$ with fixed points~$\Cc^\Sigma(\La,r,\vphi(\b))$, where as before~$\s$ acts via the inverse of the adjoint anti-involution of~$h$.   We set~$\H^{r+1}_-(\b,\La)=\H^{r+1}(\b,\La)^\Sigma$ and define the set of \emph{self-dual semisimple characters}~$\Cc_-(\La,r,\vphi(\b))$ of~$\H^{r+1}_-(\b,\La)$ by restriction from~$\Cc^\Sigma(\La,r,\vphi(\b))$. 
%
%consisting of an~$\F$-vector space~$\V$, an embedding~$\vphi:\E\rightarrow \End_\F(\V)$, a~$\vphi(\o_\E)$-lattice sequence~$\La$ in~$\V$, and an integer~$r$ closely related to~$k$, see Section~\ref{secPSS}.
%
 %and consider a quadruple~$((\V,h),\vphi,\La,r)$ such that~$(\V,\vphi,\La,r)\in\Qq(k,\b)$, the space~$\V$ equipped with an~$\e$-hermitian form~$h:\V\times\V\rightarrow\F$ and~$\vphi,\La$ satisfy certain duality properties, 
 By the Glauberman correspondence, this restriction is \shaun{injective} and, given~$\t_-\in\Cc_-(\La,r,\vphi(\b))$ we call the unique semisimple character in~$\Cc^\Sigma(\La,r,\vphi(\b))$ whose restriction is~$\t_-$ its \emph{lift}.

%%%%%%%%%%%%%%%%%%%%%%%%%%%%%%%%%%%%%%%%%
\subsection{}
%Given~$((\V,h),\vphi,\La,r),((\V',h'),\vphi',\La',r')\in\Qq_-(k,\b)$ the transfer map~$\Cc(\La,r,\vphi(\b))\rightarrow \Cc(\La',r',\vphi'(\b))$ commutes with the actions of~$\Sigma$ and~$\Sigma'$, the cyclic groups generated by the inverse of the adjoint anti-involutions of~$h$ and~$h'$ respectively, and defines a bijection~$\Cc_-(\La,r,\vphi(\b))\rightarrow \Cc_-(\La',r',\vphi'(\b))$ we again call \emph{transfer}.  See Section .....?
\shaun{Since, for self-dual semisimple pairs, the transfer maps commute with the action of~$\s$,}   
 there are natural (bijective) \emph{transfer} maps between the sets of self-dual semisimple characters defined by a pair of quadruples in~$\Qq_-(k,\b)$ (see Section~\ref{sect:Transferofsesichars}).  Thus we can follow the methodology of Bushnell and Henniart~\cite{BH96} once more.
%
%
%{\color{red}Don't need such explicit def of transfer:}Given two such quadruples~$((\V,h),\vphi,\La,r)$ and~$((\V',h'),\vphi',\La',r')$, the transfer map~$\tau_{\La,\La',\b}:\Cc(\La,r,\vphi(\b))\rightarrow \Cc(\La',r',\vphi'(\b))$,
%commutes with the action of~$\Sigma$ and~$\Sigma'$, the cyclic groups generated by the inverse of the adjoint anti-involutions of~$h$ and~$h'$ respectively, and defines a bijection we again call \emph{transfer}
%\[\tau_{\La,\La',\vphi,\vphi',\b}:\Cc_-(\La,r,\vphi(\b))\rightarrow \Cc_-(\La',r',\vphi'(\b)).\]

A~\emph{self-dual pss-character}, supported on a self-dual semisimple pair~$(k,\b)$, is a function~$\Theta_-$ from~$\Qq_-(k,\b)$ to the class of all self-dual semisimple characters, \gre{such that~$\Theta_-((\V,h),\vphi,\La,r)\in \Cc_-(\La,r,\vphi(\b))$,} whose values are related by transfer.   We call a value of a self-dual pss-character a \emph{self-dual realization} of the self-dual pss-character.  Thus a self-dual pss-character is determined by any one of its self-dual realizations.%, and collects together all the self-dual semisimple characters related to a given one by transfer. 

A pss-character~$\Theta$ supported on the self-dual semisimple pair~$(k,\b)$ is called \emph{$\s$-invariant} if, for any (or equivalently, some)~$((\V,h),\vphi,\La,r)\in\Qq_{-}(k,\b)$ the realization~$\Th(\V,\vphi,\La,r)$ is~$\s$-invariant. %for all (or equivalently one) self-dual realization of~$(k,\b)$ the value is~$\s$-invariant (note that,~$\s$ is varying . 
By the Glauberman correspondence, a self-dual pss-character~$\Theta_-$ comes uniquely from the \emph{restriction} of a~$\s$-invariant pss-character~$\Theta$ (see Section {\color{red}9.3} for the precise statement), which we call its \emph{lift}, and we set~$\deg(\Theta_-)=\deg(\Theta)$.

%A~self-dual pss-character~$\Theta$ supported on~$(k,\beta)$ with index set~$\I=\I_+\cup \I_0\cup\I_-$ gives rise to a collection of self-dual ps-characters~$\Theta_{i,-}$, for~$i\in \I_0$, and ....For all~$i\in\I_0$
%
%
%~$\Theta_-$ supported on a self-dual semisimple pair~$(k,\b)$ there is a unique pss-character~$\Theta$ supported~$(k,\b)$ which has realizations lifting the self-dual realizations of the self-dual pss-character, we call~$\Theta$ the \emph{lift} of~$\Theta_-$.{\color{red}rewrite not quite right}

Let~$\Theta_-$ be a self-dual pss-character supported on the self-dual semisimple pair~$(k,\b)$ and~$\Theta'_-$ be a self-dual pss-character supported on the self-dual semisimple pair~$(k',\b')$.  We say that~$\Theta_-$ and~$\Theta'_-$ are \emph{endo-equivalent} if 
\begin{enumerate}\setlength\itemsep{5pt}
\item $k=k'$ and~$\deg(\Theta_-)=\deg(\Theta'_-)$; and
\item there exist self-dual realizations on a common~$\e$-hermitian~$\F$-space~$(\V,h)$ which intertwine in~$\G=\U(\V,h)$.
\end{enumerate}

%We note that if self-dual realizations on a common~$\e$-hermitian~$\F$-space intertwine in~$\G$ then their lifts intertwine by the same element so we have a matching~$\z:\I\to\I'$ between the index sets~$\I$ and~$\I'$ of~$(k,\b)$ and~$(k,\b')$ respectively.

%%%%%%%%%%%%%%%%%%%%%%%%%%%%%%%%%%%%%%%%%
\subsection{}
Let~$(k,\beta)$ a self-dual simple pair,~$\Theta_-$ a self-dual ps-character supported on~$(k,\beta)$, and
\[
((\V,h),\vphi,\La,r), ((\V,h),\vphi',\La',r')\in\Qq_-(k,\beta).
\]
\rob{It follows from \cite[Theorem 5.2]{SkSt} that}~$\Theta_-((\V,h),\vphi,\La,r)$ intertwines with~$\Theta_-((\V,h),\vphi',\La',r')$ in~$\G$ if and only if~$\vphi$ and~$\vphi'$ are conjugate in~$\G$.  
%
%
 % {\color{blue}Let~$(k,\b)$ be a self-dual simple pair, and~$((\V,h),\vphi,\La,r)$ and~$((\V,h),\vphi',\La',r')$ be self-dual quadruples as above.  Then~$\t\in\Cc_-(\La,r,\vphi(\b))$ intertwines with~$\tau_{\La,\La',\vphi,\vphi',\b}(\t)$ in~$\G$ if and only if~$\vphi$ and~$\vphi'$ are conjugate in~$\G$ {\color{red}REF}.   To understand intertwining of self-dual realizations of pss-characters in general, and develop endo-equivalence, we need a more general notion that conjugacy. }  
However, to develop endo-equivalence of self-dual pss-characters -- where we may be dealing with embeddings of non-isomorphic fields -- we need a more general notion than conjugacy.  With this in mind, in Section~\ref{sec:wittandtransfer} we go back to the start and the theory of~$\e$-hermitian spaces and Witt groups.  
%
%{\color{red} rewrite paragraph: }
%Let~$(\V,h)$ be an~$\varepsilon$-hermitian space and~$\E=\F[\b]$ be a \emph{self-dual field extension}.  An embedding~$\vphi:\E\hookrightarrow \End_\F(\V)$ is called \emph{self-dual} if~$\ov{\vphi(x)}=\vphi(\ov{x})$, for all~$x\in\E$, where on the left~$\ov{\phantom{a}}$ denotes the adjoint anti-involution of~$h$ on~$\End_\F(\V)$. % For self-dual embeddings~$\vphi,\vphi'$.....

%$\e$-hermitian~$\F/\F_\so$-spaces~$(\V,h)$ and~$(\V',h')$ which are isometric. Set~$\A=\End_\F(\V)$ and~$\A'=\End_\F(\V')$, suppose we have self-dual embeddings~$\vphi:\E\into\A$ and~$\vphi':\E'\into\A'$. 
%The pairs~$(\b,\vphi)$ and~$(\b',\vphi')$ are \emph{$(h,h')$-concordant} (or just \emph{concordant}.... 
%Let~$\E'=\F[\b']$ be another self-dual field extension and~$\vphi:\E\hookrightarrow \End_\F(\V)$ and~$\vphi:\E'\hookrightarrow\End_\F(\V)$ self-dual embeddings.
We introduce an equivalence relation, which we call \emph{concordance}, on the set of self-dual embeddings of self-dual field extensions into~$\End_\F(\V)$ (where the embedding is self-dual with respect to a hermitian form on~$\V$), see Definition~\ref{def:concordance}\orange{; more precisely, this is a relation on pairs~$(\b,\vphi)$. This relation generalizes conjugacy: if~$\vphi,\vphi':\F[\beta]\hookrightarrow\End_\F(\V)$ are self-dual embeddings, then~$(\b,\vphi)$ and~$(\b,\vphi')$ are concordant if and only if~$\vphi(\b)$ and~$\vphi'(\b)$ are conjugate in~$\G$, see Remark~\ref{rem:concordance}.}

\subsection{}
%Our development of concordance, together with a study of intertwining self-dual simple characters
We carry concordance through the construction of self-dual simple characters, leading to the following result:

\begin{proposition*}[Proposition~\ref{prop:TiGandGIntertwiningSameNonSympl}]
Let~$\t_-\in\Cc_-(\La,r,\vphi(\b))$ and~$\t'_-\in\Cc_-(\La',r,\vphi'(\b'))$ be self-dual simple characters, and suppose that the periods of~$\La$ and~$\La'$ as sequences of~$\o_\F$-lattices coincide.  Then~$\t_-$ and~$\t'_-$ intertwine in~$\G$ if and only if their lifts intertwine in~$\tG$ and the \orange{pairs~$(\b,\vphi)$ and~$(\b',\vphi')$} are concordant.
\end{proposition*}
In fact, %we show that 
the additional concordance hypothesis is only necessary in the symplectic case when~$\e=-1$ and~$\F=\F_\so$; it is implied by the intertwining of the lifts in all other cases.  %In this case there are two~$\G$-conjugacy classes of self-dual embeddings of a simple element~$\b$.

%%%%%%%%%%%%%%%%%%%%%%%%%%%%%%%%%%%%%%%%%
\subsection{}
%We extend our notion of concordance to semisimple elements. Let~$(k,\b)$ and~$(k,\b')$ be self-dual semisimple pairs with indexing sets~$\I$ and~$\I'$ respectively.  Given a bijection~$\z:\I\rightarrow \I'$  we say that self-dual embeddings
%
In Definition~\ref{def:WittConcordanceSemisimple}, we extend our notion of concordance to \emph{self-dual} embeddings of self-dual~$\F$-algebras. Let~$(k,\b)$ and~$(k,\b')$ be self-dual semisimple pairs with indexing sets~$\I$ and~$\I'$ respectively, and~$\E=\F[\b]$,~$\E'=\F[\b']$.   Let~$(\V,h)$ be an~$\e$-hermitian space and~$\vphi:\E\hookrightarrow \End_\F(\V)$ and~$\vphi':\E'\hookrightarrow \End_\F(\V)$ \emph{self-dual}~$\F$-algebra embeddings.  Suppose we have a bijection~$\z:\I\rightarrow\I'$.  \orange{We say that~$(\b,\vphi)$ and~$(\b',\vphi')$ are \emph{$\z$-concordant} if, for all~$i\in\I_0$, the restrictions of~$(\b,\vphi)$ and~$(\b,\vphi')$ to~$\E_i$ and~$\E'_{\z(i)}$ respectively are concordant.} % field embeddings. %, possibly via an isometry. 

\subsection{}
We can now state our main result on endo-equivalence of self-dual pss-characters:
%\begin{theorem*}[{Theorem~\ref{thmEndoSemisimplev4}}]
%Let~$\Theta_-$ and~$\Theta'_-$ be self-dual pss-characters supported on self-dual semisimple pairs~$(k,\b)$ and~$(k,\b')$ respectively, with index sets~$\I$ and~$\I'$ respectively, and let~$\Theta$ and~$\Theta'$ be their respective lifts.  
%Then, the following assertions are equivalent:
%\begin{enumerate}
% \item The self-dual pss-characters~$\Theta_-$ and~$\Theta'_-$ are endo-equivalent;
% \item The pss-characters~$\Theta$ and~$\Theta'$ are endo-equivalent.
% \item There is a bijection~$\z:\I\rightarrow \I'$, equivariant for the action of the extensions of~$\ov{\phantom{a}}$ to~$\E$ and~$\E'$, such that all pairs of self-dual realizations with~$\z$-concordant embeddings intertwine in~$\G$
% \end{enumerate}
%\end{theorem*}

\begin{theorem*}[{Theorem~\ref{thmEndoSemisimplev4}}]
Let~$\Th_-$ and~$\Th'_-$ be self-dual pss-characters {supported on~$(k,\b)$ and~$(k,\b')$, respectively,} 
and~$\Th$ and~$\Th'$ their respective lifts.
Then, the following assertions are equivalent:
\begin{enumerate}\setlength\itemsep{5pt}
 \item %\label{part1endosemi}
 The self-dual pss-characters~$\Th_-$ and~$\Th'_-$ are endo-equivalent;
 \item %\label{part2endosemi}
 The lifts~$\Th$ and~$\Th'$ are endo-equivalent.
 \item %\label{part3endosemi} %There is a  bijection~$\z:\I\to\I'$ which commutes with~$\s$ such that: for all pairs~$\t,\t'$ of realizations of~$\Th,\Th'$ respectively on a common hermitian space~$(\V,h)$ with~$\z$-concordant embeddings, the characters~$\t,\t'$ intertwine in~$\G=\U(\V,h)$ {\color{red}with matching~$\z$}.
\bob{$\deg(\Th_-)=\deg(\Th'_-)$ and t}\shaun{here is a  bijection~$\z:\I\to\I'$ which commutes with~$\s$ \shauny{ with the following property: if~$((\V,h),\vphi,\La,r)\in\Qq_-(k,\b)$ and~$((\V,h),\vphi',\La',r')\in\Qq(k,\b')$ are such that~$(\vphi,\b)$ and~$(\vphi',\b')$ are~$\z$-concordant and~$\dim_\F\V^i=\dim_\F\V'^{\zeta(i)}$, for~$i\in\I$,} then the realizations~$\Th_-((\V,h),\vphi,\La,r)$ and~$\Th'_-((\V,h),\vphi',\La',r')$ intertwine in~$\G=\U(\V,h)$ with matching~$\z$.}
\end{enumerate}
\end{theorem*}

As a consequence of Theorems~\ref{thm:Endopss} and~\ref{thmEndoSemisimplev4}, we obtain that endo-equivalence of self-dual pss-characters is an equivalence relation.% we call the associated equivalence classes \emph{self-dual semisimple endo-classes}.

%%%%%%%%%%%%%%%%%%%%%%%%%%%%%%%%%%%%%%%%%
%\subsection{}
%An important corollary of Theorem {\color{red}\ref{???}} is a transitivity of~$\G$-intertwining of self-dual semisimple characters statement in Corollary~\ref{SelfdualSemisimpletransendoclass}, which we extend to an analogous statement for~$\G^\so$-intertwining of self-dual semisimple characters in Corollary~\ref{corSOIntEquivRel}. %  While intertwining of characters is obviously a reflexive and symmetric relation in general, it is clearly not necessarily transitive, thus the transitivity statement we obtain is a reflection of the structure in the collection of all (self-dual) semisimple characters.  
%These statements are key to our proof of intertwining implies conjugacy for cuspidal types (Theorem {\color{red}SD}) {\color{red}check now}.

%%%%%%%%%%%%%%%%%%%%%%%%%%%%%%%%%%%%%%%%%
\subsection{}
\shaun{We turn now to the notion of \emph{endo-parameter}.} 
We call a semisimple character \emph{full} if it lies in a set of semisimple characters~$\Cc(\La,0,\b)$, and we call an endo-class \emph{full} if it contains a pss-character supported on a semisimple pair~$(0,\b)$.  
\shaun{Likewise, we call a self-dual pss-character, endo-class or semisimple character~\emph{\full} if the corresponding lift is \full.  Every smooth representation of~\rob{$\tG$} (respectively~$\G^\so$) contains a \full\ (respectively, \full\ self-dual) semisimple character by~\cite[Propositions~7.5,8.5]{Finitude}.}
% This is not a strong restriction: every smooth representation of~$\tG$ contains a full semisimple character by~\cite{Finitude}. 

\gre{We call \gre{full} (self-dual) semisimple characters \emph{endo-equivalent} if they are realizations of endo-equivalent \gre{full} (self-dual) pss-characters.}  By~\cite[Intertwining Theorem]{BHIntertwiningSimple}, full simple characters of~$\tG$ intertwine if and only if they are endo-equivalent.  This not only implies that intertwining of full simple characters is transitive, it also shows that the simple endo-classes of degree~\rob{dividing~$\dim_\F(\V)$} parametrize the intertwining classes of simple characters of~$\tG$.  In the final section we prove a broad generalization of this result to semisimple and self-dual semisimple characters, introducing \emph{endo-parameters} \shaun{to parametrize the intertwining classes}.%, introducing \emph{endo-parameters}.  %The definition of which in the classical case is more involved, % by introducing \emph{endo-parameters}.  {\color{red}say why endo-parameters...}

\rob{First we recall, in the special case of \full\ characters, the transitivity of intertwining statements obtained %\full\ (self-dual) semisimple characters 
from Theorems \ref{thm:Endopss} and~\ref{thmEndoSemisimplev4}:
}% (in the special case of \full\ characters):}
\shaun{
\begin{proposition*}[Corollaries~\ref{cor:equivOfIntertwiningSschar},~\ref{cor:IntertwiningEquivalenceRelG}]
\begin{enumerate}\setlength\itemsep{5pt}
\item Suppose~$\t^{(l)}\in\Cc(\La^{(l)},0,\b^{(l)})$, for~$l=1,2,3$, are semisimple characters such that~$\t^{(1)}$ intertwines with~$\t^{(2)}$, and~$\t^{(2)}$ intertwines with~$\t^{(3)}$, and~$[\F[\beta^{(l)}]:\F]$ is independent of~$l$. Then~$\t^{(1)}$ and~$\t^{(3)}$ intertwine. 
\item Suppose~$\t^{(l)}_-\in\Cc_-(\La^{(l)},0,\b^{(l)})$, for~$l=1,2,3$, are self-dual semisimple characters such that~$\t^{(1)}_-$ intertwines with~$\t^{(2)}_-$ in~$\G$, and~$\t^{(2)}_-$ intertwines with~$\t^{(3)}_-$ in~$\G$, and~$[\F[\beta^{(l)}]:\F]$ is independent of~$l$. Then~$\t^{(1)}_-$ and~$\t^{(3)}_-$ intertwine in~$\G$. 
\end{enumerate}
\end{proposition*}
In Corollary~\ref{cor:IntertwiningEquivalenceRelGo} we prove the analogous transitivity statement for intertwining of self-dual semisimple characters in~$\G^\so$ for special orthogonal groups. This transitivity of intertwining reflects the structure in the collection of semisimple characters.}

%%%%%%%%%%%%%%%%%%%%%%%%%%%%%%%%%%%%%%%%%
\subsection{}
Let~$\Ee$ denote the set of all endo-classes of full ps-characters.  An \emph{endo-parameter} is a function~$\f$ from the set~$\Ee$ to the set~$\NN_0$ of non-negative integers, with finite support. We define the \emph{degree} of an endo-parameter~$\f$ by
\[
\deg(\f):=\sum_{c\in\Ee} \deg(c) \f(c).
\]
Our main theorem on endo-parameters for general linear groups is then:
\begin{theorem*}[Theorem~\ref{thmClassifyConjClassGL}]
The set of intertwining classes of \full\ semisimple characters for~$\tG=\GL_\F(\V)$ is in canonical bijection with the set of endo-parameters~$\f$ of degree~$\dim_\F(\V)$.
\end{theorem*}
\rob{See the statement of Theorem~\ref{thmClassifyConjClassGL} for the description of this map.}
%
%This theorem is essentially a formal consequence of earlier our results on intertwining semisimple characters.%{\color{red}Write more}%, building on earlier work of the second and third authors~\cite{SkSt}.

%%%%%%%%%%%%%%%%%%%%%%%%%%%%%%%%%%%%%%%%%
\ignore{\subsection{}
We call a self-dual pss-character, endo-class or semisimple character~\emph{\full} if the corresponding lift is \full.  Every smooth representation of~$\G^\so$ contains a full self-dual semisimple character by~\cite[Proposition 8.5]{Finitude}.   As a corollary of Theorem~\ref{thmEndoSemisimplev4}, we show that the collection of self-dual semisimple characters enjoys transitivity of~$\G$-intertwining properties, we recall in the special case of full self-dual semisimple characters:
\begin{corollary}[\ref{cor:IntertwiningEquivalenceRelG}]
Suppose~$\t^{(l)}_-\in\Cc_-(\La^{(l)},0,\b^{(l)})$, for~$l=1,2,3$, are self-dual semisimple characters such that~$\t^{(1)}_-$ intertwines with~$\t^{(2)}_-$ in~$\G$, and~$\t^{(2)}_-$ intertwines with~$\t^{(3)}_-$ in~$\G$, and~$[\F[\beta^{(l)}]:\F]$ is independent of~$l$. Then~$\t^{(1)}_-$ and~$\t^{(3)}_-$ intertwine in~$\G$. 
\end{corollary}
In Corollary~\ref{cor:IntertwiningEquivalenceRelGo} we provide the analogue transitivity statement for intertwining of self-dual semisimple characters in~$\G^\so$ for special orthogonal groups.
}

%%%%%%%%%%%%%%%%%%%%%%%%%%%%%%%%%%%%%%%%%
\subsection{}
The definition of endo-parameters for classical groups is more intricate. Let~$(0,\b)$ and~$(0,\b')$ be self-dual \emph{simple} pairs, and~$\Theta_-$ and~$\Theta'_-$ be self-dual ps-characters supported on~$(0,\b)$ and~$(0,\b')$ respectively.  If~\shaun{$\Theta_-$ and~$\Theta'_-$ are endo-equivalent then:}
%~$\Theta_-\approx\Theta'_-$, i.e.~if they define the same element of~$\Ee_-$, then 
\begin{enumerate}\setlength\itemsep{5pt}
\item\label{part1introconps}  the extensions~$\F[\b]/\F$ and~$\F[\b']/\F$ share many arithmetic invariants 
\shaun{-- in particular, by Corollary~\ref{cor:endosimilar}, % and we call 
the extensions are \emph{similar}, in the sense of Definition~\ref{def:similar};}
\item\label{part2introconps}  if~$((\V,h),\vphi,\La,r)\in\Qq_-(0,\b)$ and~$((\V,h),\vphi',\La',r')\in\Qq_-(0,\b')$, then~$\Th_{-}((\V,h),\vphi,\La,r)$ and~$\Th'_{-}((\V,h),\vphi',\La',r')$ intertwine in~$\G=\U(\V,h)$ if and only if~$(\b,\vphi)$ and~$(\b',\vphi')$ are concordant, (Proposition~\ref{prop:endoconcordinter}).
\end{enumerate}
Thus, by~\ref{part2introconps} to parametrize the~$\G$-intertwining class of a self-dual simple character we need to take into account the \emph{concordance class} of the embedding for which it is a realization of a self-dual ps-character and not just the self-dual endo-class of the ps-character. Moreover, by~\ref{part1introconps} we only need consider similar extensions.

%Let~$\Ee_-$ denote the set of all self-dual endo-classes of full self-dual ps-characters.  %The involution~$\ov{\phantom{a}}$ induces an 
\shaun{Our involutions induce an action of~$\Sigma$ on~$\Ee$% we also denote by~$\ov{\phantom{a}}$
, see Definition~\ref{defClassicalInv}, and %, in Section~\ref{subsecOrbitsAndSelfDualEndoClasses}, 
%we can identify~$\Ee_-$ with the set of orbits
we denote by~$\Ee/\Sigma$ the set of orbits. Note that orbits of length one correspond precisely to (the lifts of) endo-classes of self-dual simple ps-characters, but there are also orbits of length two.} % for~$\ov{\phantom{a}}$ acting on~$\Ee$. % For the rest of this introduction, we make the same identification.
%
%
%
%Thus when to parametrize intertwining classes of self-dual simple characters we need to create an invariant corresponding to the embed
%
%Let us collect some invariants of the intertwining classes of full self-dual semisimple characters.
%
%Let~$(0,\b)$ and~$(0,\b')$ be semisimple pairs with indexing sets~$\I$ and~$\I'$ respectively.  Let~$\t_-\in\Cc_-(\La,0,\vphi(\b))$ and~$\t_-'\in\Cc_-(\La',0,\vphi'(\b'))$ be self-dual semisimple characters which intertwine in~$\G$.  
%\begin{enumerate}
%\item They define endo-equivalent self-dual pss-characters.
%\item 
% \item (Theorem~\ref{thm:MatchingForCharForG}) As their lifts intertwine and we have a matching~$\z:\I\rightarrow\I'$ between their indexing sets which moreover commutes with the action of~$\s$.  We show that for~$i\in\I_0$, the spaces~$(\V^i,h\mid_{\V^i})$ and~$(\V^{\z(i)},h\mid_{\V^{\z(i)}})$ are isometric and the embeddings~$\vphi\mid_{\E_i}$ and~$\vphi_{\mid_{\E'_{\z(i)}}}$ are concordant via this isometry.
%\end{enumerate}
%
%
%
%
%
Using the theory of concordance, we attach to an element of~$\fo\in\Ee/\Sigma$ a
\shaun{set~$\WT(\fo)$ of invariants to carry this concordance information, which we call the set of \emph{Witt types} for~$\fo$. When~$\fo$ has cardinality one, so corresponds to the endo-class of a self-dual simple ps-character supported on some~$(0,\b)$ with~$\E=\F[\b]$, the set~$\WT(\fo)$ is in bijection with the Witt group of~$\e$-hermitian forms over~$\E/\E_\so$; on the other hand, when~$\fo$ has cardinality two,~$\WT(\fo)$ is a singleton.} 
%n invariant which carries this concordance information we call a \emph{Witt type}, the set of Witt types which can be paired with a given~$\fo$ is denoted~$\WT(\fo)$.  
A \emph{self-dual endo-parameter}~$\f_-$ is then a section of the map
\begin{align*}
\bigsqcup_{\fo\in\Ee/\Sigma} \WT(\fo)\times \mathbb{N}_0&\rightarrow \Ee/\Sigma,\qquad (w,a)_{\fo}\mapsto \fo
\end{align*}
with finite support.  %We define the degree of~$\f_-$ by
\shaun{Attached to~$\f_-$, we have its \emph{degree}~$\deg(\f_-)$ and an element~$\herm_{\F/\F_\so}(\f_-)$ of the Witt group of~$\e$-hermitian forms over~$\F/\F_\so$ (see Section~\ref{subsec:endoG}).}

We denote by~$\EP(h,\G)$ the set of self-dual endo-parameters with~$\herm_{\F/\F_0}(\f_-)=[h]$ and~$\deg(\f_-)=\dim_\F\V$, and we call it \emph{the set of endo-parameters for~$(h,\G)$}. Note that these depend not only on the isomorphism class of~$\G$, but \gre{on the isometry class of \rob{the} hermitian form}~$h$ too. Our main theorem on endo-parameters for~$\G$ is then:
\begin{theorem*}[Theorem~\ref{thmClassifyConjClassG}]
The set of intertwining classes of~full~self-dual semisimple characters for~$\G$ is in canonical bijection with the set~$\EP(h,\G)$.% of self-dual endo-parameters~$\f_-$ which satisfy~$\deg(\f_-)=\dim_\F\V$ and~$\herm_{\F/\F_0}(\f_-) =[h].$
\end{theorem*}

\rob{See the statement of Theorem~\ref{thmClassifyConjClassG} for the description of this map, which depends on the hermitian form~$h$, not only on its isometry class.}

%%%%%%%%%%%%%%%%%%%%%%%%%%%%%%%%%%%%%%%%%
\subsection{}
\shaun{In the case that~$\G^\so$ is a special orthogonal group, the partition of the set of all self-dual semisimple characters for~$\G$ into~$\G^\so$-intertwining classes is in general finer than the partition into~$\G$-intertwining classes (see Theorem~\ref{thmSOintertwining}).  It is therefore necessary to augment the set of self-dual endo-parameters of Theorem~\ref{thmClassifyConjClassG}: in Section~\ref{sect:sporthendoparameter} we define a set~$\EP(h,\G^\so)$ of endo-parameters for~$(h,\G^\so)$ and prove that it is in canonical bijection with the set of~$\G^\so$-intertwining classes of~\full~self-dual semisimple characters (Corollary~\ref{CorollaryendoparamSO}).}

\ignore{Suppose now that~$\G=\U(\V,h)$ is orthogonal, so that~$\G^\so$ is a special orthogonal group and not equal to~$\G$.  Then the partition of the set of all self-dual semisimple characters for~$\G^\so$ into~$\G^\so$-intertwining classes is in general finer than the partition into~$\G$-intertwining classes (see Theorem~\ref{thmSOintertwining}).  It is therefore necessary to augment the set of self-dual endo-parameters of Theorem~\ref{thmClassifyConjClassG}.
Let~$\f_-$ be a self-dual endo-parameter for~$(h,\G)$.  In Section~\ref{sect:sporthendoparameter}, we associate to~$\f_-$ a set~$\mathcal{H}(\f_-)$, which has cardinality two whenever it is non-trivial and which is always trivial when~$\V$ has odd dimension.  
We define the \emph{set of endo-parameters for}~$(h,\G^\so)$ by
\[
\EP(h,\G^\so):=\{(\f_-,\fy)\mid \f_-\in\EP(h,\G),\ \fy\in\mcH(\f_-)\}.
\]
\begin{theorem*}[Corollary~\ref{CorollaryendoparamSO}]
The set of~$\G^\so$-intertwining classes of~full~self-dual semisimple characters for the special orthogonal subgroup~$\G^\so$ of~$\G=\U(\V,h)$ is in canonical bijection with~$\EP(h,\G^\so)$.
\end{theorem*}
See just after Corollary~\ref{CorollaryendoparamSO} for the description of this map.
}

%%%%%%%%%%%%%%%%%%%%%%%%%%%%%%%%%%%%%%%%%
\subsection{}
%We now conjecture a compatibility of endo-parameters and loc
%
%
%We temporarily assume that~$\F_\so$ has characteristic zero and 
We now conjecture a \rob{Galois}-theoretic interpretation of endo-parameters via the conjectural local Langlands correspondence.  Although our results are for arbitrary classical groups and we expect a similar picture in that situation, we only make a precise conjecture in the case of a quasi-split classical group~$\G$.

Let~$\W_{\F_\so}$ denote the \emph{Weil group} of~$\F_\so$ with \emph{inertia subgroup}~$\I_{\F_\so}$.  Let~$\P_{\F_\so}$ denote the \emph{wild inertia subgroup} of~$\W_{\F_\so}$, that is the pro-$p$ Sylow subgroup of~$\I_{\F_\so}$.  Let~$\W_{\F_\so}'=\W_{\F_\so}\times \SL_2(\mathbb{C})$ denote the \emph{Weil--Deligne group}, and
%We temporarily assume that the algebraic group~$\mathbb{G}^\so$ underlying~$\G^\so$ is quasi-split.  Let~$\hGo$ denote the complex points of the reductive group dual to~$\mathbb{G}^\so$ 
let~$\LG=\hGo\rtimes\W_{\F_{\so}}$ the Langlands dual group of~$\G^\so$ over the complex numbers. 
%\gre{We identify~$\W_{\F_\so}$ and~$\SL_2(\mathbb{C})$ with the subgroups~$\W_{\F_\so}\times 1$ and~$1\times\SL_2(\mathbb{C})$ of~$\W_{\F_\so}'$ respectively, and~$\hGo$ with the subgroup~$\hGo\rtimes 1$ of~$\LG$.}  
For a group~$\H$ we write~$\Z(\H)$ for its centre \shaun{and~$\C_\H(\X)$ for the centralizer in~$\H$ of a subgroup~$\X$ of~$\H$}.  

Let~$(\vrho,\chi_{\vrho})$ be an (extended) \emph{Langlands parameter} for~$\G^\so$.  As these appear in various guises in the literature, we recall one formulation:
 \begin{enumerate}\setlength\itemsep{5pt}
 \item $\vrho:\W_{\F_\so}'\rightarrow \LG$ is a \orange{continuous} homomorphism such that
 \begin{enumerate}
 \item $\vrho(\I_{\F_\so})$ is finite,~$\vrho$ is Frobenius-semisimple, and~$\vrho:{\SL_2(\mathbb{C})}\rightarrow \hGo$ is algebraic, 
 \item the composition~$\W_{\F_\so}\xrightarrow{\vrho}\LG\rightarrow \W_{\F_\so}$ is the identity; 
% \item $\vrho$ is \emph{relevant} for~$\G^\so$ (an empty condition if \gre{the algebraic group underlying}~$\G^\so$ is~$\F_\so$-quasi-split, \gre{cf.~\cite[Definition 1.2.1]{KMSW}}).
 \end{enumerate}\setlength\itemsep{5pt}
  \item \gre{$\chi_{\vrho}$ is an irreducible complex representation of the group}
  \[
 \orange{\mathcal{S}_{\vrho}:=%\pi_0(\C_{\hGo}(\vrho(\W_{\F_\so}'))/ \Z(\hGo)^{\W_{\F_\so}}).}
 \C_{\hGo}(\vrho(\W_{\F_\so}'))/\C_{\hGo}(\vrho(\W_{\F_\so}'))^\circ \Z(\hGo)^{\W_{\F_\so}}.}
  \]  
%   which transforms under~$\Z(\hGo)^{\W_{\F_\so}}$ by a fixed character corresponding, via Kottwitz's isomorphism, to viewing~$\G^\so$ as an inner form of a quasi-split connected reductive group defined over~$\F_\so$.
 %  via Kottwitz's isomorphism to a  for the unique quasi-split }
 \end{enumerate}
%Two (extended) Langlands parameters~$(\vrho,\chi_{\vrho})$ and~$(\vrho',\chi_{\vrho'})$ for~$\G^\so$ are \emph{equivalent} if~$\vrho$ and~$\vrho'$ are conjugate in~$\hGo$ and this conjugation takes~$\chi_{\vrho}$ to~$\chi_{\vrho'}$.  We write~$\Lang(\G^\so)$ for the set of equivalence classes of (extended) Langlands parameters for~$\G^\so$.  
\gre{We write~$\Lang(\G^\so)$ for the set of equivalence classes of (extended) Langlands parameters for~$\G^\so$ under~$\hGo$-conjugacy.}
%It should be possible to remove our quasi-split hypothesis..... , cf.~\cite{KalMingShinWhite}.  

%{\color{red}Cut down a lot, use~$\SL 2$ form:}
The \emph{local Langlands correspondence} for~$\G^\so$ predicts a natural bijection 
\gre{(dependent on %some choices, for example in the case the algebraic group underlying~$\G^\so$ is~$\F_\so$-quasi-split it depends on
 fixing a non-degenerate character of the unipotent radical of a Borel subgroup of~$\G^\so$)}
\[
\LL:\Irr(\G^\so)\rightarrow \Lang(\G^\so),
\]
where~$\Irr(\G^\so)$ denotes the set of isomorphism classes of irreducible smooth representations of~$\G^\so$ on complex vector spaces. %, {\color{red}and this bijection depends on a choice of non-degenerate character of some unipotent group}.   
When~$\F_\so$ has characteristic zero,~$\LL$ is known \gre{for tempered representations} of split classical groups~\cite{Arthur} and quasi-split unitary groups~\cite{Mok}.  \gre{There is also work in progress in a generalization to inner forms of unitary groups~\cite{KMSW}.}  When~$\F_\so$ has positive characteristic, Arthur's results for split classical groups have been extended in some characteristics~\cite{MR3709003}.%, and there is general work of~\cite{???}.
%
% in special cases~\cite{Arthur,Mok,poschar,KalMingShinWhite} {\color{red}rewrite sentence}.

%%%%%%%%%%%%%%%%%%%%%%%%%%%%%%%%%%%%%%%%%
\subsection{}
Let~$\wrho$ be a \emph{wild inertial parameter} for~$\G^\so$, that is a homomorphism~$\wrho:\P_{\F_{\so}}\rightarrow \LG$ which extends to a Langlands parameter~$\vrho:\W_{\F_\so}'\rightarrow \LG$.  Set
\begin{align*}
\C_{\LG}(\wrho)&=\{(g,w)\in\LG \mid (g,w)\wrho(w^{-1}pw)(g,w)^{-1}=\wrho(p),~\text{for all } p\in\P_{\F_\so}\}.
%&\simeq \C_{\hGo}(\wrho)\rtimes_{\Ad_{\vrho}} \W_\F.
\end{align*}
As in~\cite{DHKM}, we notice that~$\C_{\LG}(\wrho)\simeq \C_{\hGo}(\wrho)\rtimes_{\Ad_{\vrho}} \W_{\F_\so}$ which implies that~
\begin{equation}
\label{eq1}
\Z(\hGo)^{\W_{\F_\so}}\leqslant\Z(\C_{\LG}(\wrho))\simeq \Z(\C_{\hGo}(\wrho))^{\vrho(\W_{\F_\so})}\leqslant \C_{\hGo}(\vrho(\W_{\F_\so}')),
\end{equation} 
as the centre of~$\W_{\F_\so}$ is trivial and~$\vrho(\SL_2(\mathbb{C}))\subseteq \C_{\hGo}(\wrho)$.  
\orange{We set
\[
\mathcal{S}_\wrho:=\Z(\C_{\LG}(\wrho))/\Z(\hGo)^{\W_{\F_\so}}.
\]}
By~\rob{(}\ref{eq1}\rob{)}, we thus have a map from representations of~$\mathcal{S}_{\vrho}$ to representations of~\orange{$\mathcal{S}_\wrho$} %\gre{~$\Z(\C_{\LG}(\wrho))$} 
by considering a representation of\gre{~$\mathcal{S}_{\vrho}$} as a representation of~$\C_{\hGo}(\vrho(\W_{\F_\so}'))$ (trivial on\gre{~$\C_{\hGo}(\vrho(\W_{\F_\so}'))^\circ \Z(\hGo)^{\W_{\F_\so}}$}) and restricting to~$\Z(\C_{\LG}(\wrho))$.  

%\begin{lemma}\label{lemma1}
%Let~$\vrho$ be a Langlands parameter extending~$\wrho$.  Then
%\begin{enumerate}
%\item\label{i} $\C_{\LG}(\wrho)\simeq \C_{\hGo}(\wrho)\rtimes_{\Ad_{\vrho}} \W_\F$;\item\label{ii} $\Z(\C_{\LG}(\wrho))\simeq \Z(\C_{\hGo}(\wrho))^{\vrho(\W_\F)}\leqslant \Z(\C_{\hGo}(\vrho))$.
%%\item \label{iii}$\Z(\C_{\LG}(\wrho))$ is a subgroup of~$\Z(\C_{\hGo}(\vrho))$
%\end{enumerate}
%\end{lemma}
%
%\begin{proof}
%{\color{red}remove lemma, summarise quick}
%This is implicit in~\cite{Dat} (see also~\cite{DHKM} for a more general scheme theoretic version of~\ref{i}).  We have an exact sequence
%\begin{equation*}
%1\rightarrow \C_{\hGo}(\wrho)\xrightarrow{\alpha} \C_{\LG}(\wrho)\xrightarrow{\beta} \W_\F
%\end{equation*}
%where $\alpha(g)=(g,1)$ and~$\beta(g,w)=w$.  As~$\vrho$ extends~$\wrho$,
%\[\vrho(w)\wrho(w^{-1}pw)\vrho(w^{-1})=\vrho(w)\vrho(w^{-1}pw)\vrho(w^{-1})=\vrho(p)=\wrho(p),\]
%and~$\vrho(w)\in\C_{\LG}(\wrho)$.  Moreover,~$\beta\circ\vrho=1$ as~$\vrho$ is an~$\L$-parameter, thus the extension~$\vrho$ splits~$\beta$ giving~\ref{i}. The initial isomorphism of~\ref{ii} follows from~\ref{i} as the centre of~$\W_\F$ is trivial, and the containment as~$\C_{\hGo}(\vrho)=\C_{\hGo}(\wrho)^{\vrho(\W_\F)}$.
%\end{proof}
%Let~$\vrho$ be a Langlands parameter for~$\G$ and put
%\[\rS_{\vrho}=\Z(\C_{\hGo}(\vrho))/\Z(\hGo).\]
%It is an abelian~$2$-group.  

An \emph{extended wild inertial parameter} for~$\G^\so$ is a pair~$(\wrho,\chi_{\wrho})$ such that~$\wrho:\P_{\F_\so}\rightarrow \LG$ is a homomorphism and~$\chi_{\wrho}$ is a representation of~\orange{$\mathcal{S}_\wrho$}
%~$\Z(\C_{\LG}(\wrho))$ 
such that there is an extended Langlands parameter~$(\vrho,\chi_{\vrho})$ with~$(\wrho,\chi_{\wrho})=(\vrho\vert_{\P_{\F_\so}},\chi_{\vrho}\vert_{\Z(\C_{\LG}(\wrho))})$. 
%Two extended wild inertial parameters~$(\wrho,\chi_{\wrho})$ and~$(\wrho',\chi_{\wrho'})$ for~$\G^\so$ are \emph{equivalent} if~$\wrho$ and~$\wrho'$ are conjugate in~$\hGo$ and this conjugation takes~$\chi_{\wrho}$ to~$\chi_{\wrho'}$.  We write~$\WP(\G^\so)$ for the set of equivalence classes of extended wild parameters for~$\G^\so$.
\gre{We write~$\WP(\G^\so)$ for the set of equivalence classes of extended wild \rob{inertial} parameters for~$\G^\so$ under~$\hGo$-conjugacy.}
%For any Langlands parameter~$\vrho$ extending~$\wrho$, we can consider representations of~$\mathcal{S}_\vrho$ as representations of~$\C_{\hGo}(\vrho(\W_{\F_\so}'))$ and restrict them to~$\mathcal{S}_{\wrho}$.
%
Thus we have a well-defined restriction map~$\Res:\Lang(\G^\so)\rightarrow\WP(\G^\so)$ given by~$(\vrho,\chi_{\vrho})\mapsto (\vrho\vert_{\P_{\F_\so}},\chi_{\vrho}\vert_{\Z(\C_{\LG}(\wrho))})$.
%\begin{align*}
%\Res:\Lang(\G^\so)&\rightarrow\WP(\G^\so);\\
%(\vrho,\chi_{\vrho})&\mapsto (\wrho=\vrho\vert_{\P_{\F_\so}},\chi_{\vrho}\vert_{\Z(\C_{\LG}(\wrho))}).
%\end{align*}

Let~$\EP(h,\G^\so)$ denote the set of self-dual endo-parameters for~$\G^\so%$ considered as a subgroup of~$
\subseteq\U(\V,h)$.  We have a map~$\vartheta:\Irr(\G^\so)\rightarrow\EP(h,\G^\so)$ which takes~$\pi\in\Irr(\G^\so)$ to the self-dual endo-parameter \rob{attached to the intertwining class of any full} self-dual semisimple character contained in~$\pi$, \gre{we note \rob{that} this map depends on the hermitian form~$h$}.

\begin{conjectureLLC}[Wild local Langlands]\label{conjecture:WLL}
There is a unique bijection
\[
\WLL:\EP(h,\G^\so)\rightarrow \WP(\G^\so)
\]
compatible with the local Langlands correspondence; \shaun{that is,} the following diagram commutes
\begin{equation}\label{Diagramintro}
\xymatrix{
\Irr(\G^\so) \ar[r]^{\LL}\ar[d]_{\vartheta} &\Lang(\G^\so)\ar[d]^{\Res} \\
\EP(h,\G^\so) \ar[r]^{\WLL~~} &\WP(\G^\so)\\
}
\end{equation}
\end{conjectureLLC}

%See~\cite[6.1 Theorem]{BHEffective} for general linear groups {\color{red}only in the simple case a corollary of our main result and BlHeSt gln I think does it for gln}, and 
%
In the special case of \gre{cuspidal representations} of symplectic groups, and assuming~$\LL$, work of the third author with Blondel and Henniart~\cite[Theorem 7.6]{BlHeSt} (together with Theorem~\ref{thmClassifyConjClassG} to define the map~$\vartheta$ as above) shows that if we further project from the set of endo-parameters for~$\G^\so$ by \emph{forgetting their Witt type data} and further project from~$\WP(\G^\so)$ to the set of (non-extended) wild parameters for~$\G^\so$\shaun{, then we get a bijection for which} the resulting diagram commutes.  %The main intrigue of the conjecture is in relating the subtle information contained in the Witt type data in an endo-parameter and the second part of an extended wild inertial parameter.
\subsection{}
In an orthogonal direction to Bushnell and Kutzko's generalization~\cite{BK93} of Howe's construction of cuspidal representations of~$p$-adic general linear groups in the tame case~\cite{Howe}, Yu constructed cuspidal representations of a broad class of~$p$-adic connected reductive groups~$\H$ defined over~$\F_\so$ \cite{Yutame}, a construction which Fintzen recently proved exhausts all cuspidal representations whenever the residual characteristic of~$\F_\so$ does not divide the order of the Weyl group of~$\H$~\cite{Fintzen}.  Hakim and Murnaghan~\cite{HM} considered the flexibility in the data defining Yu's cuspidal representations and developed a \emph{refactorization} procedure to classify isomorphism classes of Yu's cuspidal representations by equivalence classes of these data. It would be interesting to develop \shaun{notions of pss-characters, endo-equivalence, and endo-parameters in this setting of more general groups~$\H$}.% It is tempting to think that there is pss-characters, endo-equivalence, and endo-parameters in this setting too.
%, but at present there are no notions of \emph{pss-character} or \emph{endo-equivalence} in this setting.

\begin{ack}
This work was supported by the Engineering and Physical Sciences Research Council (EP/H00534X/1 and EP/M029719/1), the Heilbronn Institute for Mathematical Research, Imperial College London, ShanghaiTech University, and University of East Anglia.  We thank Colin Bushnell and Marie-France Vign\'eras for their interest in this work, Jean-Fran\c cois Dat for sharing details of his current work, and David Helm and Guy Henniart for helpful conversations.  \rob{Finally, we thank the referees for very helpful comments and suggestions, which have greatly improved the paper.}
\end{ack}

\section{Notation}\label{sec:notation}

Let~$\F/\F_\so$ be an extension of locally compact nonarchimedean local fields of odd residual characteristic~$p$, of degree at most two, and denote by~$x\mapsto\ov x$ the generator of~$\Gal(\F/\F_\so)$. %We also write~$\F_-=\{x\in\F\mid \ov x=-x\}$ so that~$\F$ is the direct sum of~$\F_\so$ and~$\F_-$, as an~$\F_\so$-vector space.
For~$\E/\F_\so$ any finite extension, we use the usual notation:~$\o_\E$ its ring of integers,~$\p_E$ its maximal ideal,~$\mathrm{k}_\E$ its residue field,~$\val_\E$ the additive valuation on~$\E$ with image~$\ZZ$. 
\shaun{We also set~$\U^n_\E=1+\p_\E^n$, for~$n\ge 1$.}
\shauns{If~$\E/\L$ is any finite extension of fields, we usually write~$\N_{\E/\L}$ for the norm map and~$\T_{\E/\L}$ for the trace map; if the fields are nonarchimedean local then we write~$e(\E/\L)$ for the ramification index and~$f(\E/\L)$ for the residue degree.}
%There is one excpetion to this: if~$\F/\F_\so$ is an extension as above then we write~$\N_{\F/\F_\so}(\F^\times)$ for the group
%\[
%\N_{\F/\F_\so}(\F^\times)=\{x\ov x\mid x\in\F^\times\};z
%\]
%if~$\F=\F_\so$ then this is in fact the group~$(\F^\times)^2$.

Let~$\CC$ be an algebraically closed field of characteristic~$\ell\neq p$. Throughout, we consider smooth representations of locally compact topological groups on vector spaces over~$\CC$.
%Henceforth, all representations considered are supposed to be smooth and acting on~$\CC$-vector spaces.

Let~$\G$ be a locally compact topological group, and let~$\H$ and~$\H'$ be compact open subgroups of~$\G$. Let~$\rho$ and~$\rho'$ be representations of~$\H$ and~$\H'$ respectively. For~$g\in\G$, we define~$\I_g(\rho,\rho')$ to be the~$\CC$-vector space
\[
\I_g(\rho,\rho')=\Hom_{\presuper{g}\H\cap \H'}(\presuper{g}\rho,\rho'),
\]
where~$\presuper{g}\H=g\H g^{-1}$ and~$\presuper{g}\rho$ is the representation of~$\presuper{g}\H$ defined by~$\presuper{g}\rho(x)=\rho(g^{-1}xg)$ for all~$x\in\presuper{g}\H$.  Moreover, we set
\[
\I_\G(\rho,\rho')=\{g\in\G:\I_g(\rho,\rho')\neq 0\}.
\]
%We say~$\rho$ and~$\rho'$ \emph{intertwine} if~$\I_\G(\rho,\rho')\neq \emptyset$, this is well-defined as~$g\mapsto g^{-1}$ defines a bijection~$\I_\G(\rho,\rho')\rightarrow \I_\G(\rho',\rho)$.\red{This last sentence is only obviously true if the restricted representations are semisimple, which is the case over~$\mathbb{C}$, or if they are characters to start with.}  For~$g\in \G$, 
We say that~$g$ \emph{intertwines $\rho$ with~$\rho'$} if~$\I_g(\rho,\rho')\neq \emptyset$, and that~$\rho$ \emph{intertwines with~$\rho'$ in~$\G$} if~$\I_\G(\rho,\rho')\neq \emptyset$. If~$\CC=\mathbb{C}$ or~$\rho$ and~$\rho'$ are characters, then the definition is symmetric, because then the map~$g\mapsto g^{-1}$ restricts to a bijection from~$\I_\G(\rho,\rho')$ to~$\I_\G(\rho',\rho)$. % if one of the sets is non-empty.  
In this case, we just say that~$\rho$ and~$\rho'$ intertwine in~$\G$. When~$\rho'=\rho$ we abbreviate~$\I_\G(\rho)=\I_\G(\rho,\rho)$.
%~$\I_\G(\rho,\rho')\neq \emptyset$.
%\gre{If the restriction has not always its image in~$\I_\G(\rho',\rho)$, then we should not say that it is generally a bijection. 
%This is the reason why I have changed this small paragraph.}

%
%
%{\color{red}Define intertwining and introduce notation for intertwining spaces}
%{\color{red}Move to Notation I in general}We denote by~$\I(\theta,\theta')$ the set of elements of~$\widetilde{G}$ which intertwine~$\theta$ with~$\theta'$, i.e.~those~$g\in \widetilde{G}$ such that
%\[\Hom_{\presuper{g}H^{r+1}(\beta,\Lambda)\cap H^{r+1}(\beta',\Lambda')}(\presuper{g}\theta,\theta')\neq 0.\]
%We put~$I_G(\theta,\theta')=I(\theta,\theta')\cap G$. 

\orange{Finally, we denote by~$\Sigma=\{1,\s\}$ an abstract group of order two, which will act on various objects.}

\section{Witt groups and transfer}\label{sec:wittandtransfer}

In this section we cover the necessary background for our results from the theory of signed hermitian spaces and introduce a new notion: \emph{concordance} of self-dual embeddings of field extensions.  %%%%%%%%%%%%%%%%%%%%%%%%%%%%%%%%%%%%

%%%%%%%%%%%%%%%%%%%%%%%%%%%%%%%%%%%%
\subsection{Self-dual extensions}\label{subsec:extensions}
%%%%%%%%%%%%%%%%%%%%%%%%%%%%%%%%%%%%
We begin with some basic results on quadratic extensions. For~$\E$ a finite extension of~$\F_\so$, we write
\[
\E^{\even}=\{x\in\E^\times\mid\val_{\E}(x)\text{ is even}\}, \qquad
\E^{\odd}=\{x\in\E^\times\mid\val_{\E}(x)\text{ is odd}\}.
\]

\shaun{
\begin{lemma}\label{lem:sdexts1}
Suppose~$\F/\F_\so$ is quadratic. Then, 
\begin{equation}\label{eqNormFFo}
\N_{\F/\F_\so}(\F^\times)=((\F^\times)^2\cap\F_\so^{\even} )\cup((-(\F^\times)^2)\cap\F_\so^{\odd}).
\end{equation}
In particular: 
\begin{enumerate}\setlength\itemsep{5pt}
\item\label{lem:sdexts1newi} $-1\in(\F^\times)^2$ if and only if~$-1\in \N_{\F/\F_\so}(\F^\times)$;
\item\label{lem:sdexts1i} if~$-1\in(\F^\times)^2$ then~$\N_{\F/\F_\so}(\F^\times)=(\F^\times)^2\cap\F_\so^\times$;
\item\label{lem:sdexts1newiii} if~$\F/\F_\so$ is unramified then~$\N_{\F/\F_\so}(\F^\times)=\F_\so^{\even}$.
\end{enumerate}
\end{lemma}
}

\begin{proof}
\shaun{
The assertions~\ref{lem:sdexts1newi}--\ref{lem:sdexts1newiii} are immediate consequences of~\eqref{eqNormFFo}. We first prove
\begin{equation}\label{eqNormIntegers}
\N_{\F/\F_\so}(\o_\F^\times)=(\o_\F^\times)^2\cap\F_\so^\times.
\end{equation}
Since~$1+\mathfrak{p}_{\F_\so}$ is a subset of~$(\o_{\F_\so}^\times)^2$, by Hensel's lemma, it suffices to show 
\[
\N_{\F/\F_\so}(\o_\F^\times)/(1+\mathfrak{p}_{\F_\so})=((\o_\F^\times)^2\cap\F_\so^\times)/(1+\mathfrak{p}_{\F_\so}).
\]
Writing also~\rob{$\N_{\F/\F_\so}$} for the map on the residue field~$\mathrm{k}_\F$ induced by the norm, this is equivalent to
\[
\N_{\F/\F_\so}(\mathrm{k}_\F^\times)=(\mathrm{k}_\F^\times)^2\cap\mathrm{k}_{\F_\so}^\times.
\]
If~$\F/\F_\so$ is unramified then both sides are equal to~$\mathrm{k}_{\F_\so}^\times$, while if~$\F/\F_\so$ is ramified, then both sides are~$(\mathrm{k}_{\F_\so}^\times)^2$. Thus we have proved~\eqref{eqNormIntegers}.
}

\shaun{
Now both sides of~\eqref{eqNormFFo} are subgroups of~$\F_\so^\times$, containing the subgroup in~\eqref{eqNormIntegers}. If~$\F/\F_\so$ is unramified then~$(-(\F^\times)^2)\cap\F_\so^{\odd}$ is empty and both sides of~\eqref{eqNormFFo} are generated by the square of a \rob{uniformizer} of~$\F_\so$ and~\eqref{eqNormIntegers}, If~$\F/\F_\so$ is ramified then, if~$\w_\F$ is a uniformizer of~$\F$ satisfying~$\ov{\w_\F}=-\w_\F$, then both sides of~\eqref{eqNormFFo} are generated by~$-\w_\F^2$ and~\eqref{eqNormIntegers}. This completes the proof.
}
\end{proof}

Let~$\E=\F[\b]$ be a \rob{field} extension of~$\F$ with a distinguished generator~$\b$. If the generator for~$\Gal(\F/\F_\so)$ extends to an involution on~$\E$ which maps~$\b$~\rob{to}~$-\b$ then we say the pair~$(\E,\b)$ is a \emph{self-dual extension of~$\F/\F_\so$}. We again denote by~$x\mapsto\ov{x}$ this involution on~$\E$, and by~$\E_\so$ the subfield of fixed points. Note that, provided~$\b\ne 0$, the extension~$\E/\E_\so$ is always quadratic, since~$\b\not\in\E_\so$. 

\begin{corollary} \label{cor:sdexts1}
If~$(\E,\b)$ is a self-dual extension of~$\F/\F_\so$ with~$\b\neq 0$, then $-1\in(\E^\times)^2$ if and only if~$\b^2\in\N_{\E/\E_\so}(\E^\times)$.
\end{corollary}

\begin{proof}
By Lemma~\ref{lem:sdexts1}\ref{lem:sdexts1i}, if~$-1\in (\E^\times)^2$ then~$\b^2\in\N_{\E/\E_\so}(\E^\times)$.  Conversely, if~$-1\not\in (\E^\times)^2$ then~$\E/\E_\so$ is ramified and~$\val_\E(\beta)$ is odd since~$\ov{\b}=-\b$.  Hence~$\val_{\E_\so}(\beta^2)$ is odd, and it follows from~\eqref{eqNormFFo} that~$\b^2\not\in\N_{\E/\E_\so}(\E^\times)$. 
\end{proof}

We will also need the following lemma on norms through self-dual extensions.

\begin{lemma}\label{lem:norms}
Suppose~$\F/\F_\so$ is quadratic. Let~$\E_\so/\F_\so$ be a finite extension in an algebraic closure of~$\F$ which does not contain~$\F$ and set~$\E=\F\E_\so$. Then
\[
\N_{\E/\E_\so}(\E^\times)=\{\a\in\E_\so^\times \mid \N_{\E_\so/\F_\so}(\alpha)\in \N_{\F/\F_\so}(\F^\times)\}.
\]
\end{lemma}

We note also that, in the situation of the lemma, for~$\a\in\E_\so$, we have~$\N_{\E_\so/\F_\so}(\alpha)=\N_{\E/\F}(\alpha)$, since any~$\F_\so$-basis for~$\E_\so$ is also an~$\F$-basis for~$\E=\F\E_\so$.

\begin{proof}
We denote the right hand side of the asserted equation by~$\R_{\E/\F}$. If~$\L/\F$ is a subextension of~$\E$ and~$\L_\so=\L\cap\E_\so$ then we have
\[
\R_{\E/\F} = \{\a\in\E_\so^\times \mid \N_{\E_\so/\L_\so}(\alpha)\in \R_{\L/\F}\}
\]
so the lemma follows from the special cases where~$\E/\F$ is separable or purely inseparable.

Suppose first that~$\E/\F$ is purely inseparable, so has odd degree. Then any element of~$\F^\times_\so$ which is not in the image of~$\N_{\F/\F_\so}$ lies in~$\E_\so$ but not in~$\R_{\E/\F}$; in particular~$\R_{\E/\F}\neq\E^\times_\so$. Since certainly~$\N_{\E/\E_\so}(\E^\times)\subseteq\R_{\E/\F}\subseteq\E^\times_\so$, while~$\N_{\E/\E_\so}(\E^\times)$ has index two in~$\E^\times_\so$, it follows that~$\E^\times_\so\neq\R_{\E/\F}=\N_{\E/\E_\so}(\E^\times)$.

Now suppose~$\E/\F$ is separable, so the same is true of~$\E_\so/\F_\so$. By local class field theory, for any finite abelian extension of local fields~$\L/\K$ (contained in a given separable closure) we have the Artin reciprocity isomorphism
\[
\Art_{\L/\K}:\K^\times/\N_{\L/\K}(\L^\times)\simeq\Gal(\L/\K).
\]
Applying this to the extensions~$\E/\E_\so$ and~$\F/\F_\so$, the base change property of class field theory implies that on~$\E_\so^\times/\N_{\E/\E_\so}(\E^\times)$ we have
\[
\Res^{\E}_{\F}\circ\Art_{\E/\E_\so} = \Art_{\F/\F_\so}\circ \N_{\E_\so/\F_\so}.
\]
The restriction map induces an isomorphism~$\Gal(\E/\E_\so)\to\Gal(\F/\F_\so)$ and the Artin reciprocity maps are isomorphisms so we see that~$\Art_{\E/\E_\so}$ is trivial on the class of~$\a\in\E_\so^\times$ if and only if~$\Art_{\F/\F_\so}$ is trivial on the class of~$\N_{\E_\so/\F_\so}(\alpha)$, and the claim follows.
\end{proof}

\shaun{
Finally, we have the following result on ramification indices.
}

\shaun{
\begin{lemma}\label{lem:sdeodd}
Suppose~$(\E,\b)$ is a self-dual extension of~$\F/\F_\so$ with~$\b\ne 0$ and ramification index~$e(\E/\E_\so)=2$. Then~$\val_\E(\b)$ is odd and either
\begin{enumerate}\setlength\itemsep{5pt}
\item $\F=\F_\so$; or
\item $\F/\F_\so$ is quadratic ramified and the ramification index~$e(\E/\F)$ is odd.
\end{enumerate}
\end{lemma}
}

\begin{proof}
%%%%% \red{To write, or reference.}. %%%%%%%
\shaun{
Since~$\b=-\ov\b$ and~$e(\E/\E_\so)=2$, the first assertion is clear. If~$\F/\F_\so$ is \bob{quadratic} unramified then there is a unit~$\z\in\o_\F^\times$ such that~$\ov\z=-\z$; since~$\z\in\o_\E^\times$, this contradicts the assumption that~$\E/\E_\so$ is ramified. For the final assertion, suppose~$e(\E/\F)=e(\E_\so/\F_\so)=2r$ is even, let~$\w_\so$ be a uniformizer of~$\E_\so$ and let~$\w_\F$ be a uniformizer of~$\F$ such that~$\ov{\w_\F}=-\w_\F$; then~$\z=\w_\F\w_\so^{-r}$ is a unit of~$\E^\times$ satisfying~$\ov\z=-\z$, again contradicting the assumption that~$\E/\E_\so$ is ramified.
}
\end{proof}

%%%%%%%%%%%%%%%%%%%%%%%%%%%%%%%%%%%%
\subsection{Hermitian spaces}\label{subsec:spaces}
%%%%%%%%%%%%%%%%%%%%%%%%%%%%%%%%%%%%
Let~$\e=\pm 1$. By an~\emph{$\e$-hermitian space over~$\F/\F_\so$}, we mean a finite-dimensional~$\F$-vector space~$\V$ equipped with a non-degenerate~$\e$-hermitian form~$h:\V\times \V\to \F$, that is, a non-degenerate sesquilinear form (linear in the \emph{second} variable) such that
\[
h(\bw,\bv)=\e \ov{h(\bv,\bw)},\qquad\text{for all }\bv,\bw\in\V.
\]
Given two such spaces~$(\V_i,h_i)$, for~$i=1,2$, for the same~$\e$, we can form their \emph{orthogonal direct sum}, which is the space~$\V=\V_1\oplus\V_2$ equipped with the form~$h=h_1\oplus h_2$ defined by
\[
h(\bv_1+\bv_2,\bw_1+\bw_2)=h_1(\bv_1,\bw_1)+h_2(\bv_2,\bw_2),\qquad\text{for }\bv_i,\bw_i\in\V_i.
\]

If~$(\V,h)$ and~$(\V',h')$ are~$\e$-hermitian spaces over~$\F/\F_\so$, then an \emph{isometry} from~$(\V,h)$ to~$(\V',h')$ is an~$\F$-linear isomorphism~$f:\V\to\V'$ such that
\[
h'(f(\bv),f(\bw))=h(\bv,\bw),\qquad\text{for all }\bv,\bw\in\V.
\]
When there is such an isometry, we say that~$(\V,h)$ and~$(\V',h')$ are \emph{isometric}, and write~$(\V,h)\cong (\V',h')$, or just~$h\cong h'$ for short. Note that orthogonal direct sums behave well with respect to isometry: that is, if~$h_1\cong h'_1$ and~$h_2\cong h'_2$ then~$h_1\oplus h'_1\cong h_2\oplus h'_2$.
%if~$\V_1\cong \V'_1$ and~$\V_2\cong\V'_2$ then~$\V_1\oplus\V'_1\cong\V_2\oplus\V'_2$.

We write~$\Hh_\e(\F/\F_\so)$ for the set of isometry classes of~$\e$-hermitian spaces over~$\F/\F_\so$. It is a monoid with the operation induced by the orthogonal direct sum and identity element the (class of the) zero space.

The \emph{Gram matrix} of an~$\e$-hermitian space~$(\V,h)$ with respect to a basis~$\bv_1,\ldots,\bv_n$ is the~$n\times n$ matrix~$J$ whose~$(i,j)$-entry is~$h(\bv_i,\bv_j)$. This is an~$\e$-hermitian matrix: that is~$J^T=\e\ov J$, where~$J^T$ denotes the transpose of~$J$ and~$\ov J$ denotes the matrix obtained by applying the \rob{Galois} involution~$x\mapsto\ov x$ to each entry. The Gram matrix of~$(\V,h)$ with respect to any other basis takes the form~$\ov B^TJB$, where~$B$ is the change of basis matrix to~$\bv_1,\ldots,\bv_n$. The determinant~$\det(J)$ of the Gram matrix satisfies~$\ov{\det(J)}=\e^{\dim_\F V}\det(J)$. 

\shauns{The \emph{determinant~$\det(\V)$} (or~$\det(h)$) of an~$\e$-hermitian space~$(\V,h)$ is defined to be the class in~$\F^\times/\N_\F$ of the determinant of any Gram matrix for~$(\V,h)$, where
\[
\N_\F=\begin{cases}
\N_{\F/\F_\so}(\F^\times), &\text{ if~$\F/\F_\so$ is quadratic,}\\[3pt]
(\F^\times)^2, &\text{ otherwise.} 
\end{cases}
\]}
This is well-defined and moreover depends only on the isometry class of~$(\V,h)$. Thus we get a morphism of monoids 
\[
\det:\Hh_\e(\F/\F_\so)\to \F^\times/\N_\F.
\]

\medskip

An~$\e$-hermitian space~$(\V,h)$ is called \emph{isotropic} if there is a non-zero~$\bv\in\V$ such that~$h(\bv,\bv)=0$, and \emph{anisotropic} otherwise. (Note that the zero space is anisotropic.) In particular, we have the smallest isotropic~$\e$-hermitian space, the \emph{hyperbolic \daniel{plane}}~$(\H,h_\H)$: it is two-dimensional with basis~$\be_{-1},\be_1$ such that
\[
h_\H(\be_{-1},\be_1)=1\qquad\text{and}\qquad h_\H(\be_i,\be_i)=0,\quad\text{for }i=\pm 1.
\]
Thus the Gram matrix of~$\H$ with respect to the basis~$\be_{-1},\be_1$ is
\[
\begin{pmatrix}
0 & 1 \\ \e & 0
\end{pmatrix},
\]
so that~$\det(\H)=(-\e)\N_\F$. \rob{Up to} isometry,~$\H$ is the unique two-dimensional isotropic~$\e$-hermitian space. For~$n\ge 0$ an integer, we write~$n(\H,h_\H)$ for the orthogonal sum of~$n$ copies of~$(\H,h_\H)$. \daniel{An~$\e$-hermitian space~$(\V,h)$ isometric to~$n(\H,h_\H)$ for some~$n$ is called a~\emph{hyperbolic space}; these spaces 
possess a \emph{complete polarization}, i.e. a direct sum decomposition
\[\V=\V^{1}\oplus\V^{-1}\]
with totally isotropic spaces $\V^{1}$ and~$\V^{-1}$.} 

\begin{remark}\label{remark:hyperbolicplane}
The notation~$(\H,h_\H)$ for hyperbolic \daniel{plane} does not specify either the extension~$\F/\F_\so$ or~$\e$, which will be left implicit. We trust this will cause no confusion, even where it is used for different fields. 
\end{remark}

At the opposite extreme, we have the smallest non-trivial anisotropic spaces, which are one-dimensional when they exist. (There are no non-trivial anisotropic spaces when~$\F=\F_\so$ and~$\e=-1$, the symplectic case.)  They are given by a single element~$\a\in\F^\times$ such that~$\ov\a=\e\a$, and we denote the corresponding space (or its isometry class) by~$\la\a\ra$: it has a basis with Gram matrix~$(\a)$. 

\begin{remark}
Again, the notation~$\la \a\ra$, while standard, does not specify~$\F/\F_\so$, and for example we consider~$\la 1\ra$ as a~$(+1)$-hermitian space over different fields.
\end{remark}

The isometry class of~$\la\a\ra$ is determined precisely by the coset of~$\a$ in~$\F^\times/\N_\F$. Thus we have the following isometry classes of one-dimensional spaces in~$\Hh_\e(\F/\F_\so)$: 
\begin{itemize}\setlength\itemsep{5pt}
\item if~$\F/\F_\so$ is quadratic,~$\b\in\F^\times$ satisfies~$\ov\b=-\b$ and~$\a\in\F_\so^\times\setminus \N_{\F/\F_\so}(\F^\times)$, then 
\begin{align*}
&\la 1\ra\text{ and }\la \a\ra,\qquad \text{if }\e=1, \\
&\la \b\ra\text{ and }\la \b\a\ra,\quad\ \text{if }\e=-1; 
\end{align*}
\item if~$\F=\F_\so$ has uniformizer~$\varpi$, and~$\a$ is a non-square unit of~$\F^\times$, then 
\[
\la 1\ra,\ \la\a\ra,\ \la\varpi\ra,\text{ and }\la\varpi\a\ra.
\]
\end{itemize}
Any anisotropic space is an orthogonal direct sum of one-dimensional anisotropic subspaces so, with respect to a suitable basis, has a diagonal Gram matrix.

\begin{remark}\label{remark:MaxElement}
%\red{Reference} %%%%%%%%%%%%%%%%%%%%%%%%%%%%%%%%%%%%
Up to isomorphism, there is a unique maximal anisotropic~$\e$-hermitian space over~$\F/\F_\so$. More precisely, and with the notation above, it is
\[\begin{cases}
\la 1 \ra \oplus \la-\a\ra, &\text{if~$\F/\F_\so$ is quadratic and }\e=1, \\
\la \b \ra \oplus \la-\b\a\ra, &\text{if~$\F/\F_\so$ is quadratic and }\e=-1, \\
\la 1 \ra \oplus \la-\a\ra\oplus\la \varpi \ra \oplus \la-\varpi\a\ra,\quad &\text{if~$\F=\F_\so$ and }\e=1, \\
\bs 0 &\text{if~$\F=\F_\so$ and }\e=-1.
\end{cases}\]
\end{remark}

By Witt's Theorem, for any~$\e$-hermitian space~$(\V,h)$, we have an isometry
\[
(\V,h)\cong n(\H,h_\H)\oplus(\V_{\an},h_{\an}),
\]
with~$(\V_{\an},h_{\an})$ an anisotropic space; moreover, the \emph{Witt index}~$n$ and the isometry class of~$(\V_{\an},h_{\an})$ are uniquely determined by~$(\V,h)$. We write~$[h]$ for the isometry class of~$(\V_{\an},h_{\an})$ and call it the \emph{anisotropic class} of~$(\V,h)$. We also write~$\diman(\V)=\dim_{\F}(\V_{\an})$ and call it the \emph{anisotropic dimension} of~$(\V,h)$.

\begin{remark}
If~$\F/\F_\so$ is quadratic, the isometry class of an~$\e$-hermitian space~$(\V,h)$ is uniquely determined by the pair~$(\dim(\V),\det(\V))$. If~$\F=\F_\so$ and~$\e=-1$ (the \emph{symplectic case}) then the isometry class of an~$\e$-hermitian space~$(\V,h)$ is uniquely determined by~$\dim(\V)$, which is necessarily even.
\end{remark}

%%%%%%%%%%%%%%%%%%%%%%%%%%%%%%%%%%%%
\subsection{Unitary groups}\label{subsec:groups}
%%%%%%%%%%%%%%%%%%%%%%%%%%%%%%%%%%%%

Let~$(\V,h)$ be an~$\e$-hermitian space over~$\F/\F_\so$. The ring~$\End_\F(\V)$ is equipped with the adjoint anti-involution~$a\mapsto\ov a$ induced by~$h$, defined by
\[
h(a\bv,\bw)=h(\bv,\ov a\bw), \quad\text{ for all }\bv,\bw\in\V.
\]
We set
\[
\U(\V,h)=\{a\in\Aut_\F(\V): a\overline{a}=1\}
\]
which is the group of all isometries from~$\V$ to itself. This is the group of~$\F_\so$-points of a reductive group defined over~$\F_\so$: more precisely, it is a unitary group if~$\F/\F_\so$ is quadratic, a symplectic group if~$\F=\F_\so$ and~$\e=-1$, and a \emph{full} orthogonal group if~$\F=\F_\so$ and~$\e=1$.  

\begin{remark} 
If~$(\V,h)$ and~$(\V',h')$ are isometric~$\e$-hermitian space over~$\F/\F_\so$ then the isometry induces an isomorphism~$\U(\V,h)\simeq\U(\V',h')$. The converse, however, is false: for example, if~$\F\ne\F_\so$ and~$n$ is odd then there are two isometry classes of~$n$-dimensional hermitian spaces over~$\F/\F_\so$ but their isometry groups are isomorphic.
\end{remark}

We introduce the following useful technique which will sometimes allow us to reduce to cases which are easier to treat (in particular, the non-symplectic case). Given an element~$a\in \End_\F(\V) $ such that~$\ov{a}=\eta a$, with~$\eta=\pm$, we define the \emph{twisted form}~$a^*(h)$ on~$\V$ by 
\[
a^*(h)(\bv,\bw)= h(\bv,a\bw),\qquad\text{for }\bv,\bw\in\V.
\]
If~$a$ is invertible then~$(\V,a^*(h))$ is an~$\eta\e$-hermitian space over~$\F/\F_\so$. Moreover, the adjoint anti-involution on~$\End_\F(\V)$ induced by the form~$a^*(h)$ is given by~$b\mapsto a^{-1}\ov{b}a$, for~$b\in\End_\F(\V)$.

A particular case of this twisting occurs when~$a=\g\in\F^\times$ satisfies~$\ov\g=\eta\g$. Given such a~$\gamma$, the twisted form~$\g^*(h)$ makes sense for \emph{any}~$\e$-hermitian space~$(\V,h)$ over~$\F/\F_\so$.

%%%%%%%%%%%%%%%%%%%%%%%%%%%%%%%%%%%%
\subsection{Witt groups}\label{subsec:witt}
%%%%%%%%%%%%%%%%%%%%%%%%%%%%%%%%%%%%
The \emph{Witt group}~$\Ww_\e(\F/\F_\so)$ is defined to be the set of isometry classes of anisotropic~$\e$-hermitian spaces over~$\F/\F_\so$, equipped with the operation induced by taking the orthogonal sum, that is, the unique (well-defined) operation such that the following commutes:
\[
\xymatrix{
\Hh_\e(\F/\F_\so)\times \Hh_\e(\F/\F_\so) \ar[r]^{\ \qquad\oplus}\ar[d] & \Hh_\e(\F/\F_\so)\ar[d] \\
\Ww_\e(\F/\F_\so)\times \Ww_\e(\F/\F_\so) \ar[r]^{\ \qquad\oplus} & \Ww_\e(\F/\F_\so)\\
}
\]
where the map~$\Hh_\e(\F/\F_\so)\to\Ww_\e(\F/\F_\so)$ sends the isometry class of~$(\V,h)$ to its anisotropic class~$[h]$. We will sometimes refer to elements of the Witt group as \emph{Witt towers}: that is, we will identify an element of~$\Ww_\e(\F/\F_\so)$ with its fibre under the map~$\Hh_\e(\F/\F_\so)\to\Ww_\e(\F/\F_\so)$. Note that~$\Ww_\e(\F/\F_\so)$ is an abelian group, and the inverse of the isometry class of an anisotropic space~$(\V,h)$ is given by the class of~$(\V,-h)$. We write~$\bs 0$ for the identity in~$\Ww_\e(\F/\F_\so)$, which is the Witt tower of sums of hyperbolic \daniel{planes}~$n\H$.

The structure of the Witt group is given by the following proposition, where~$\C_n$ denotes the cyclic group of order~$n$.

\begin{proposition}
%\red{Reference} %%%%%%%%%%%%%%%%%%%%%%%%%%%%%%%%%%%%
\begin{enumerate}\setlength\itemsep{5pt}
\item Unitary case: if~$\F/\F_\so$ is quadratic then~$\Ww_\e(\F/\F_\so)$ is of order~$4$ and
\[
\Ww_\e(\F/\F_\so) \simeq \begin{cases} \C_2\times \C_2 \quad&\text{ if }-1\in \N_\F,\\
\C_4 &\text{ otherwise.}
\end{cases}
\]
\item Symplectic case: if~$\F=\F_\so$ and~$\e=-1$ then~$\Ww_\e(\F/\F_\so)$ is trivial.
\item Orthogonal case: if~$\F=\F_\so$ and~$\e=1$ then~$\Ww_\e(\F/\F_\so)$ is of order~$16$ and
\[
\Ww_\e(\F/\F_\so) \simeq \begin{cases} \C_2\times \C_2\times \C_2\times \C_2 \quad&\text{ if }-1\in \N_\F,\\
\C_4\times \C_4 &\text{ otherwise.}
\end{cases}
\]
\end{enumerate}
\end{proposition}

The Witt group is generated by (the classes of) one-dimensional anisotropic spaces~$\la\a\ra$ and we use the same notation to represent the class in~$\Ww_\e(\F/\F_\so)$. For example, we see that~$\la 1\ra\oplus\la 1\ra=\bs 0$ in~$\Ww_1(\F/\F_\so)$ if and only if~$-1\in\N_\F$. We also call the (class of) the unique maximal anisotropic~$\e$-hermitian space over~$\F/\F_\so$  the \emph{maximal} element of~$\Ww_\e(\F/\F_\so)$, or the \emph{Witt tower of maximal anisotropic dimension}. \shauny{ (See Remark~\ref{remark:MaxElement} above for an explicit description of this maximal element.)}  

If~$\g\in\F^\times$ satisfies~$\ov\g=\eta\g$, with~$\eta=\pm$, and~$(\V,h)$ is an~$\e$-hermitian space over~$\F/\F_\so$, then we defined the twisted form~$\g^*(h)$ on~$\V$ in the previous subsection 
%: 
%\[
%\g^*(h)(\bv,\bw)= h(\bv,\bw)\g, \quad\text{ for all }\bv,\bw\in\V,
%\]
so that~$(\V,\g^*(h))$ is an~$\eta\e$-hermitian space. Twisting by~$\g$ preserves orthogonal direct sums, isometries and hyperbolic spaces and thus induces a homomorphism
\[
\g*:\Ww_\e(\F/\F_\so) \to \Ww_{\e\eta}(\F/\F_\so),
\]
which is an isomorphism since it has inverse~$(\g^{-1})^*$.

%%%%%%%%%%%%%%%%%%%%%%%%%%%%%%%%%%%%
\subsection{Transfer}\label{subsec:transfer}
%%%%%%%%%%%%%%%%%%%%%%%%%%%%%%%%%%%%
Let~$(\E,\b)$ be a self-dual extension of~$\F/\F_\so$ and set~$n=[\E:\F]$. Let~$\l:\E\to\F$ be any non-zero~$\F$-linear form on~$\E$ which is \rob{Galois-}equivariant, that is
\[
\l(\ov{x})=\ov{\l(x)},\quad\text{ for all~$x\in\E$.}
\]
Such forms always exist and we have the particular~$\F$-linear form~$\l_\b$ given by setting
\[
\l_\b(\b^i)=\begin{cases} 1 &\text{ for }i=0,\\
0 &\text{ for }1\le i\le n-1.
\end{cases}
\]
Moreover, every such form can be written uniquely as~$\l(x)=\l_\b(\g x)$, for some~$\g\in\E_\so^\times$: indeed, every non-trivial~$\F$-linear form can be written in this way for some~$\g\in\E^\times$, and the \rob{Galois}-equivariance implies that~$\g\in\E_\so$.

\shaun{Now suppose~$\E/\F$ is any finite extension of degree~$n$ to which the \rob{Galois} involution on~$\F$ extends, with fixed field~$\E_\so$.} 
%% I think we want this because we will be using intermediate extensions where there is not necessarily an identified element \beta, so we can't just say ``a self-dual extension.''
Let~$(\V,h)$ be an~$\e$-hermitian space over~$\E/\E_\so$ and let~$\l:\E\to\F$ be a non-zero \rob{Galois-}equivariant~$\F$-linear form on~$\E$. Then it is easy to check that~$(\V,\l\circ h)$ is an~$\e$-hermitian space over~$\F/\F_\so$, called the \emph{transfer}~$(\V,\l^* h)$ of~$(\V,h)$. Transfer preserves orthogonal direct sums and isometries, so induces a morphism of monoids
\[
\l^*:\Hh_\e(\E/\E_\so) \to \Hh_\e(\F/\F_\so)
\]
Moreover, we have~$\l^*(\H)=n\H$, so it also induces a group homomorphism of Witt groups
\[
\l^*:\Ww_\e(\E/\E_\so) \to \Ww_\e(\F/\F_\so)
\]
This map depends on the choice of~$\l$ but nonetheless all these maps share some properties.

\begin{proposition}\label{prop:lambda*}
\begin{enumerate}\setlength\itemsep{5pt}
\item\label{prop:lambda*.i} The image~$\lambda^*(\Ww_\e(\E/\E_\so))$ is independent of the choice of~$\l$.
\item\label{prop:lambda*.ii} The map~$\l^*$ sends the maximal element of~$\Ww_\e(\E/\E_\so)$ to the maximal element of~$\Ww_\e(\F/\F_\so)$.
\end{enumerate}
\end{proposition}

\begin{proof} For arbitrary choices~$\l$ and~$\l'$, we know that~$\l(x)=\l'(\g x)$, for some~$\g\in\E_\so^\times$, so that~$\l^*=\l'^*\circ \g^*$. %, where we recall that~$\g^*$ is induced by twisting by~$\g$. 
Now~\ref{prop:lambda*.i} follows since~$\g^*$ is an isomorphism. On the other hand,~\ref{prop:lambda*.ii} is given by~\cite[Theorem~4.4]{SkSt} for a particular linear form, and follows in general by the proof of~\ref{prop:lambda*.i} since~$\g^*$ maps the maximal element to itself.
\end{proof}

The transfer map~$\l^*$ is in general neither injective nor surjective, as can be seen by taking~$\E/\F$ of even degree. However, we have the following rather surprising result.  We write~$\Ww^\even_\e(\E/\E_\so)$ for the subgroup of~$\Ww_\e(\E/\E_\so)$ consisting of Witt towers of even anisotropic dimension; if~$\Ww_\e(\E/\E_\so)$ is non-trivial then it is a subgroup of index two, and we write~$\Ww^\odd_\e(\E/\E_\so)$ for its non-identity coset, consisting of Witt towers of odd anisotropic dimension. 

\begin{proposition}\label{prop:transferinj}
Let~$(\E,\b)$ be a self-dual extension of~$\F/\F_\so$ and suppose we are not in the symplectic case:~$\F/\F_\so$ is quadratic or~$\e=1$. Then the restrictions of~$\l^*$ to~$\Ww^\even_\e(\E/\E_\so)$ and to~$\Ww^\odd_\e(\E/\E_\so)$ are both injective. 
\end{proposition}

\begin{proof}
Note that by the choice of~$\E$ we have~$\beta=0$ or~$\E\neq \E_\so$. The proof of Proposition~\ref{prop:lambda*} also shows that~$\l^*(\Ww^\even_\e(\E/\E_\so))$ and~$\l^*(\Ww^\odd_\e(\E/\E_\so))$ do not depend on the choice of~$\l$ so it is sufficient to prove the result for a single choice of~$\l$. 

If~$\b=0$ then~$\l_\b^*$ is the identity, and the result is immediate. If~$\E\neq \E_\so$ then~$\Ww^\even_\e(\E/\E_\so)$ and~$\Ww^\odd_\e(\E/\E_\so)$ each contain two elements, whose difference is always the maximal element of~$\Ww_\e(\E/\E_\so)$; injectivity follows, since the image of this maximal element is non-zero, by Proposition~\ref{prop:lambda*}\ref{prop:lambda*.ii}.
%The result is transitive in the extension~$\E/\F$ so may assume~$\E/\F$ is either separable or purely inseparable. In the unitary case, by twisting by an element~$\g\in\F$ such that~$\ov\g=-\g$ if necessary, we may assume~$\e=+1$; moreover,  composing then~$\l^*$ with the transfer~$\T_{\F/\F_\so}^*:\Ww_1(\F/\F_\so)\to\Ww_1(\F_\so/\F_\so)$ induced by the trace map~$\T_{\F/\F_\so}$, we see that it is sufficient to prove the result in the orthogonal case. 
%
%Thus we assume~$\F=\F_\so$ and~$\e=1$. For separable extensions~$\E/\F$, the statement for~$\l=\T_{\E/\F}$ is proved in~\cite{Milnor}, where it is also explained that the result holds for inseparable extensions (cf.~\cite[Remark~1.4]{Milnor}). A different proof for inseparable extensions can be found in the proof of~\cite[Theorem~4.4]{SkSt}. 
\end{proof}

We will also need more precise information on the transfer map in particular instances.

\begin{proposition}\label{prop:transfer}
Let~$(\E,\b)$ be a self-dual extension of~$\F/\F_\so$ and set~$n=[\E:\F]$.
\begin{enumerate}\setlength\itemsep{5pt}
\item 
For~$(\V,h)$ an~$\e$-hermitian space over~$\E/\E_\so$, we have
\[
\det(\l^*(\V))=\det(\l^*\la 1\ra)^{\dim_\E(\V)} \N_{\E/\F}(\det(\V)).
\]
\item\label{prop:transfer.ii} 
In~$\Ww_1(\F/\F_\so)$, we have:
\[
\l_\b^*(\la 1\ra) = \begin{cases} 
\la 1\ra &\text{ if~$n$ is odd,} \\
\la 1\ra \oplus \la (-1)^{\frac n2+1}\N_{\E/\F}(\b)\ra \quad &\text{ otherwise,}
\end{cases}
\]
and in~$\Ww_{-1}(\F/\F_\so)$: 
\[
\l_\b^*(\la \b\ra) = \begin{cases} 
\la (-1)^{\frac{n-1}{2}}\N_{\E/\F}(\b)\ra &\text{ if~$n$ is odd,} \\
\bs 0 \quad &\text{ otherwise.}
\end{cases}
\]
\end{enumerate}
\end{proposition}

\begin{proof}
The analogue of these statements for the transfer of quadratic forms are proved by Scharlau in~\cite[Lemma~5.8,~Theorem~5.12]{Scharlau}. The hermitian case follows \emph{mutatis mutandis}, taking care of the extra signs which appear; for this reason, we sketch the proof of~\ref{prop:transfer.ii}. Suppose~$\b$ has minimal polynomial~$X^n+b_{n-1}X^{n-1}+\cdots+b_1X+b_0$. Then we can easily calculate the Gram matrix of the~$\e$-hermitian \emph{space}~$\l_\b^*(\la 1\ra)$ with respect to the basis~$1,\b,\ldots,\b^{n-1}$, which looks like
\[
\begin{pmatrix}
\phantom{\vdots}1& 0 &\cdots&\cdots&\cdots& 0\\
\phantom{\vdots}0& 0& \cdots& \cdots&0& +b_0\\
\vdots&\vdots&&\bdots& -b_0&\star\\
\vdots&\vdots&\bdots&\bdots&\star&\star\\
\vdots&0&(-1)^{n-1}b_0&\star&\star&\star\\
\phantom{\vdots}0&(-1)^n b_0&\star&\star&\star&\star
\end{pmatrix}
\]
If~$n=[\E:\F]$ is odd then the space~$\l_\b^*(\la 1\ra)$ is the orthogonal direct sum of the subspace~$\la 1\ra$ spanned by~$1$ and the subspace~$X$ spanned by~$\b,\b^2,\ldots,\b^{n-1}$; but~$X$ has a totally isotropic subspace of half its dimension, generated by~$\b,\b^2,\ldots,\b^{\frac{n-1}2}$, hence is hyperbolic. Thus~$\l_\b^*(\la 1\ra)\cong\frac{n-1}2\H\oplus\la 1\ra$.

Similarly, if~$n=[\E:\F]$ is even then we find that~$\l_\b^*(\la 1\ra)\cong\frac{n-2}2\H\oplus\la 1\ra\oplus\la\g\ra$, where~$\g=(-1)^{\frac n2+1}\N_{\E/\F}(\b)$. The proof of the second assertion in~\ref{prop:transfer.ii} is similar.
\end{proof}

\begin{remark}
In the case that~$\F/\F_\so$ is quadratic, since the isometry class of an~$\e$-hermitian space is determined by its dimension and its determinant modulo the norm group~$\N_\F$, Proposition~\ref{prop:transfer} %, together with Lemma~\ref{lem:norms},
 completely characterizes the standard transfer map~$\l_\b^*$, for~$(\E,\b)$ a self-dual extension of~$\F/\F_\so$.
\end{remark}

%%%%%%%%%%%%%%%%%%%%%%%%%%%%%%%%%%%%
\subsection{Embeddings}\label{subsec:embeddings}
%%%%%%%%%%%%%%%%%%%%%%%%%%%%%%%%%%%%
Eventually, we will need to make comparisons of Witt towers for different self-dual extensions. In a first instance, we begin by considering the case of the same extension but embedded in different ways. Thus let~$(\E,\b)$ be a self-dual extension of~$\F/\F_\so$, and fix a non-zero \rob{Galois-}equivariant~$\F$-linear form~$\lambda$ as in the previous subsection. 

Let~$(\V,h)$ be an~$\e$-hermitian space over~$\F/\F_\so$ and let~$\A=\End_\F(\V)$. We say that an embedding~$\varphi:\E\into\A$ is \emph{self-dual} if
\[
\varphi(\ov{x})=\ov{\varphi(x)},\quad\text{ for all }x\in\E,
\]
where we recall that~$x\mapsto \ov x$ denotes the \rob{Galois} involution on~$\E$, while, on the right hand side,~$a\mapsto\ov a$ is the adjoint anti-involution on~$\A$. Such an embedding gives~$\V$ the structure of an~$\E$-vector space, and we write~$\V_\varphi$ when we want to emphasize that we are considering~$\V$ as an~$\E$-vector space via~$\varphi$ in this way. The~$\F$-linear map
\begin{align*}
\Hom_\E(\V_\varphi,\E)&\to \Hom_\F(\V,\F) \\
\psi&\mapsto \l\circ\psi
\end{align*}
is an isomorphism of~$\F$-vector spaces. For each~$\bv\in\V$, there is a unique~$\E$-linear map~$\psi_{\bv}\in\Hom_\E(\V_\varphi,\E)$ such that~$h(\bv,-)=\l\circ\psi_{\bv}$ and we define~$h_\varphi:\V_\varphi\times\V_\varphi\to\E$ by
\[
h_\varphi(\bv,\bw)=\psi_{\bv}(\bw),\qquad\text{for }\bv,\bw\in\V_\varphi.
\]

\begin{lemma}[{\cite[Lemma~5.3]{BS09}}]\label{lem:hphi}
The map~$h_\varphi:\V_\varphi\times\V_\varphi\to\E$ is a nondegenerate~$\e$-hermitian form. Moreover, it is the unique~$\e$-hermitian form on~$\V_\varphi$ such that~$h(\bv,\bw)=\l(h_\varphi(\bv,\bw))$, for all~$\bv,\bw\in\V$.
\end{lemma}

Suppose now that we have a second self-dual embedding~$\varphi':\E\into\A$ and let~$(\V_{\varphi'},h_{\varphi'})$ be the corresponding~$\e$-hermitian space over~$\E/\E_\so$. 
%An observation of the second author in~\cite{SkodField}, is the following useful corollary of Lemma~\ref{lem:hphi}.
\daniel{We have the following useful corollary of Lemma~\ref{lem:hphi}.}

\begin{corollary}[{\cite[Proposition~1.3]{SkodField}}]\label{cor:isomconj}
The~$\e$-hermitian spaces~$(\V_{\varphi},h_{\varphi})$ and~$(\V_{\varphi'},h_{\varphi'})$ over~$\E/\E_\so$ are isometric if and only if the embeddings~$\varphi,\varphi'$ are conjugate in~$\U(\V,h)$.
\end{corollary}

\begin{proof}
\daniel{
Any isometry from~$(\V_{\varphi},h_{\varphi})$ to~$(\V_{\varphi'},h_{\varphi'})$ is an element of~$\U(\V,h)$ which conjugates~$\varphi$ to~$\varphi'$. Conversely, an element~$g$ of~$\U(\V,h)$ conjugating~$\varphi$ to~$\varphi'$ is an isometry from~$(\V_{\varphi},h_{\varphi})$ to~$(\V_{\varphi'},h_{\varphi'})$, because~$h_{\varphi'}\circ(g\times g)$ and~$h_{\varphi}$ coincide by the uniqueness part of Lemma~\ref{lem:hphi}.
}
\end{proof}

\begin{remark}\label{remark:ConjugacyClassesOfEmbedding}
Suppose we have a self-dual embedding~$\varphi$ of~$\E$ into~$\A$. 
\daniel{
Since~$\dim_\E \V_\varphi$ is independent of the embedding, Corollary~\ref{cor:isomconj} implies that the~$\U(\V,h)$-orbits of self-dual embeddings of~$\E$ are in bijection with the set of classes in the fibre of the transfer map~$\l^*$ above~$[h]$ which have dimension of the same parity as~$\dim_\E \V_\varphi$. In particular, Proposition~\ref{prop:transferinj} then implies the following:
\begin{enumerate}
\item\label{remark:ConjugacyClassesOfEmbedding-i} provided we are not in the symplectic case, there is a \emph{unique}~$\U(\V,h)$-orbit of self-dual embeddings of~$\E$; 
\item\label{remark:ConjugacyClassesOfEmbedding-ii} in the symplectic case there are precisely two orbits of embeddings if~$\b\ne 0$.
\end{enumerate}}
\end{remark}

It will be useful also to observe the relationship between this lifting process of forms and the twisting of forms introduced previously. The following lemma comes immediately from the definitions and the uniqueness in Lemma~\ref{lem:hphi}.

\begin{lemma}\label{lem:hphitwist}
Let~$(\V,h)$ be an~$\e$-hermitian space over~$\F/\F_\so$, let~$\A=\End_\F(\V)$ and let~$\varphi:\E\into\A$ be a self-dual embedding. Then
\[
\b^*(h_\varphi)=(\b^* h)_\varphi.
\]
\end{lemma}

Note that this asserts the equality of two~$(-\e)$-hermitian~$\E/\E_\so$-forms on the space~$V_\varphi$.

%%%%%%%%%%%%%%%%%%%%%%%%%%%%%%%%%%%%
\subsection{Comparison}\label{subsec:comparison}
%%%%%%%%%%%%%%%%%%%%%%%%%%%%%%%%%%%%
We suppose now that we are given two self-dual extensions~$(\E,\b)$ and~$(\E',\b')$ of~$\F/\F_\so$, so that we have Witt groups~$\Ww_\e(\E/\E_\so)$ and~$\Ww_\e(\E'/\E'_\so)$. We assume moreover that~$\b,\b'$ are both non-zero, so that these Witt groups are both of order four. There are then unique bijections
\[
\sw_{\e,\b',\b}:\Ww_\e(\E/\E_\so) \to \Ww_\e(\E'/\E'_\so)
\]
which preserve anisotropic dimension and such that
\[
\sw_{-1,\b',\b}(\la\b\ra)=\la\b'\ra,\quad\text{ and }\quad
\sw_{1,\b',\b}(\la\b^2\ra)=\la\b'^2\ra.
\]
We will use these maps~$\sw_{\e,\b',\b}$ to compare self-dual embeddings of~$\E$ and of~$\E'$ in~$\e$-hermitian~$\F/\F_\so$-spaces. It is useful to notice that the maps are related via twisting:
\begin{equation}\label{eq:twistw}
\b'^*\circ\,\sw_{-1,\b',\b}=\sw_{1,\b',\b}\circ\b^*.
\end{equation}
We will sometimes skip the subscripts~$\b,\b'$ and just write~$\sw_\e$ if~$\b,\b'$ are fixed.

\begin{remark}
Since there are two bijections~$\Ww_\e(\E/\E_\so) \to \Ww_\e(\E'/\E'_\so)$ which preserve anisotropic dimension, the choice for~$\sw_{\e,\b',\b}$ made above may seem arbitrary -- for example, in the case~$\e=1$ one could instead have chosen the bijection sending~$\la 1\ra$ to~$\la 1\ra$. However, we will see that this choice is better suited to compatibility with the distinguished transfer maps~$\l_\b^*$ and~$\l_{\b'}^*$. 
\end{remark}

The relationship between~$\sw_{\e,\b',\b}$ and the bijection sending~$\la 1\ra$ to~$\la 1\ra$ will prove to be an important consideration. As an immediate consequence of Corollary~\ref{cor:sdexts1}, we have:

\shaun{
\begin{lemma}\label{lemma:w1oneifsquares}
In the situation above, the following are equivalent:
\begin{enumerate}\setlength\itemsep{5pt}
\item $\sw_{1,\b',\b}(\la 1\ra)=\la 1\ra$;
\daniel{\item either~$-1$ belongs to both~$(\E^\times)^2$ and~$(\E'^\times)^2$ or it belongs to neither of them;}
%\item either~$-1\in (\E^\times)^2\cap (\E'^\times)^2$ or~$-1\not\in(\E^\times)^2\cup (\E'^\times)^2$;
\item $-1\in (\F^\times)^2$ or the residue class degrees~$f(\E/\F),f(\E'/\F)$ have the same parity.
\end{enumerate}
\end{lemma}
}

In the case~$\b=\b'=0$ which we have so far excluded, we define~$\sw_{\e,0,0}$ to be the identity on~$\Ww_\e(\F/\F_\so)$. In \rob{both} cases, we now have the following maps of Witt groups:
\[
\xymatrix{
\Ww_\e(\E/\E_\so) \ar[r]^{\sw_{\e,\b',\b}}\ar[d]_{\l_\b^*} & \Ww_\e(\E'/\E'_\so)\ar[d]^{\l_{\b'}^*} \\
\Ww_\e(\F/\F_\so) & \Ww_\e(\F/\F_\so) \\
}
\]

\begin{remark}\label{rem:isoconcordance}
Suppose~$f:\E'\to\E$ is an~\bob{$\F$-linear} isomorphism of fields such that~$f(\b')=\b$. From the definitions, we see that~$\l_{\b'}=\l_\b\circ f$ and that the map~$\sw_{\e,\b',\b}$ is likewise induced by composition with~$f$: that is, it is induced by the map
\begin{align*}
\Hh_\e(\E/\E_\so)&\to \Hh_\e(\E'/\E'_\so) \\
(\V_\E,h_\E) &\mapsto (\V_\E,f^{-1}\circ h_\E).
\end{align*}
Moreover, if~$(\V,h)$ is an~$\e$-hermitian~$\F/\F_\so$-space and~$\varphi:\E\into\End_\F(\V)$ is a self-dual embedding, then checking the definitions from the previous subsection shows that
\[
h_\varphi=f\circ h_{\varphi\circ f},
\]
so that~$\sw_{\e,\b',\b}([h_\varphi])=[h_{\varphi\circ f}]$.
\end{remark}

Suppose now we are given~$\e$-hermitian~$\F/\F_\so$-spaces~$(\V,h)$ and~$(\V',h')$ which are isometric. Set~$\A=\End_\F(\V)$ and~$\A'=\End_\F(\V')$, suppose we have self-dual embeddings~$\varphi:\E\into\A$ and~$\varphi':\E'\into\A'$. Then we get elements~$[h_\varphi]$ and~$[h_{\varphi'}]$ of the respective Witt groups such that~$\l_\b^*([h_\varphi])=[h]=[h']=\l_{\b'}^*([h'_{\varphi'}])$, as in the previous subsection.

\begin{definition}\label{def:concordance}
The pairs $(\b,\varphi)$ and $(\b',\varphi')$ are \emph{$(h,h')$-concordant} (or just \emph{concordant} if~$h=h'$ and the form is clear from context), if~$\b$ and~$\b'$ are either both zero or both non-zero, and~$\sw_{\e,\b',\b}([h_\varphi])=[h'_{\varphi'}]$.
\end{definition}

\shaun{
\begin{remarks}\label{rem:concordance}
\begin{enumerate}\setlength\itemsep{5pt}
\item It is immediate from the definition that concordance is an equivalence relation. 
%% This is true but perhaps slightly misleading: we really mean~$(h,h')$-concordance is an ``equivalence relation'' but that has to be made to make sense!
\item\label{rem:concordance.ii} In the special case that~$\b=\b'$, so that~$\sw_{\e,\b,\b}$ is the identity map, and~$h=h'$, it follows from Corollary~\ref{cor:isomconj} that~$(\b,\varphi)$ and~$(\b,\varphi')$ are concordant if and only if~$\varphi(\b)$ and~$\varphi'(\b)$ are conjugate by an element of~$\U(\V,h)$.
%% \item\label{concordance.iii} Again if~$\b=\b'$ but now we have~$g$ is an isometry from~$h$ to~$h'$ then~$(\b,\varphi)$ and~$(\b,{}^g\varphi)$ are~$(h,h')$-concordant.
\item If we have an isomorphism~$f:\E'\to\E$ such that~$f(\b')=\b$, Remark~\ref{rem:isoconcordance} shows that~$(\b,\varphi)$ and~$(\b',\varphi\circ f)$ are concordant. Putting this together with%~\ref{rem:concordance.iii} and
~\ref{rem:concordance.ii}, we see that the pairs~$(\b,\varphi)$ and~$(\b',\varphi')$ are~$(h,h')$-concordant if and only if there is an isometry from~$(\V,h)$ to~$(\V',h')$ which conjugates~$\varphi(\b)$ to~$\varphi'(\b')$.
\end{enumerate}
\end{remarks}
}

%% I had put this in, but I don't think it is needed.
%% \begin{remark}\label{rem:independentofisometry}
%% Suppose~$(\V,h)$ is an~$\e$-hermitian~$\F/\F_\so$-space,~$\A=\End_\F(\V)$ and~$\varphi:\E\into\A$ is a self-dual embedding. If~$g\in\U(\V,h)$ is an isometry, then we also get a map~$g_*:\A\to\A$ given by~$g_*(a)=g^{-1}ag$ and hence an embedding~${}^g\varphi=g_*\circ\varphi:\E\into\A$, which is self-dual since~$g$ is an isometry. Then it is straightforward to check that~$g$ is also an isometry between~$(\V_\varphi,h_\varphi)$ and~$(\V_{{}^g\varphi},h_{{}^g\varphi})$. In particular, we get~$[h_\varphi]=[h_{{}^g\varphi}]$ and we see that the pairs~$(\beta,\varphi)$ and~$(\beta,{}^g\varphi)$ are concordant. We deduce also that the definition of~$(h,h')$-concordance is independent of the choice of isometry.
%% \end{remark}

\shaun{
Using the previous remarks, together with Lemma~\ref{lem:hphitwist}, we can use twisting to relate concordance in a symplectic space to concordance in orthogonal spaces obtained by twisting.}

\shaun{
\begin{lemma}\label{lemma:twistSympToOrthConc}
Suppose that~$(\V,h)$ is a skew-hermitian~$\F/\F_\so$-space, that~$\b$ and~$\b'$ are both non-zero, and that the spaces~$(\V,\b^*h)$ and~$(\V,\b'^*h)$ are isometric. Then the following are equivalent:
\begin{enumerate}\setlength\itemsep{5pt}
  \item $(\b,\varphi)$ and~$(\b',\varphi')$ are concordant.
  \item $(\b,\varphi)$ and~$(\b',\varphi')$ are~$(\b^*h,\b'^*h)$-concordant.
 \end{enumerate}
\end{lemma}
}

\shaun{
We will be able to use this whenever we have additional information on~$\b,\b'$ which enables us to see that the spaces~$(\V,\b^*h)$ and~$(\V,\b'^*h)$ are isometric.
}

\shaun{
\begin{proof}
If~$(\b,\varphi)$ and~$(\b',\varphi')$ are concordant then, from Lemma~\ref{lem:hphitwist} and~\eqref{eq:twistw} we get
\begin{align*}
\sw_{1,\b',\b}\([(\b^*h)_\varphi]\) = \sw_{1,\b',\b}\([\b^*(h_\varphi)]\) &= \sw_{1,\b',\b}\circ \b^* \([h_\varphi]\) \\
&= \b'^*\circ \sw_{-1,\b',\b}\([h_\varphi]\) = \b'^*\([h_{\varphi'}]\) = [(\b'^*h)_{\varphi'}].
\end{align*}
Thus~$(\b,\varphi)$ and~$(\b',\varphi')$ are~$(\b^*h,\b'^*h)$-concordant. 
%Since the maps~$\sw_{\e,\b',\b}$ and~$\b^*,\b'^*$ are bijections, 
\bob{Since the map~$\b'^*$ is injective,}
the converse follows immediately.
\end{proof}
}

%%%%%%%%%%%%%%%%%%%%%%%%%%%%%%%%%%%%
\subsection{Concordance in the non-symplectic case}\label{subsec:non-symp}
%%%%%%%%%%%%%%%%%%%%%%%%%%%%%%%%%%%%
\shaun{
We now look more closely at the non-symplectic case: indeed, %since 
Lemma~\ref{lemma:twistSympToOrthConc} allows us to relate the skew-hermitian case to the hermitian case. %, we assume~$\e=1$. 
%Suppose we are given a single hermitian~$\F/\F_\so$-space~$(\V,h)$ and set~$\A=\End_\F(\V)$. 
% so we assume that either~$\e=1$ or~$\F\ne\F_\so$. Suppose we are given a single~$\e$-hermitian~$\F/\F_\so$-space~$(\V,h)$ and set~$\A=\End_\F(\V)$. 
Suppose we are given~$\e$-hermitian~$\F/\F_\so$-spaces~$(\V,h)$ and~$(\V',h')$ which are isometric, and set~$\A=\End_\F(\V)$ and~$\A'=\End_\F(\V')$.
%% Let~$(\E,\b)$,~$(\E',\b')$ be self-dual extensions with~$\b,\b'$ non-zero, and suppose we have self-dual embeddings~$\varphi:\E\into\A$ and~$\varphi':\E'\into\A$. 
Let~$(\E,\b)$,~$(\E',\b')$ be self-dual extensions with~$\b,\b'$ non-zero, and suppose we have self-dual embeddings~$\varphi:\E\into\A$ and~$\varphi':\E'\into\A'$. 
We prove the following first result on concordance in the case~$\e=1$.
}

\shaun{
\begin{proposition}\label{prop:firstmatch}
%% In the situation above, suppose that~$\dim_\E\V_\varphi$ and~$\dim_{\E'}\V_{\varphi'}$ have the same parity and moreover that either
In the situation above, with~$\e=1$, suppose that~$\dim_\E\V_\varphi$ and~$\dim_{\E'}\V'_{\varphi'}$ have the same parity and moreover that either
\begin{enumerate}\setlength\itemsep{5pt}
\item\label{cor:firstmatch-1} this common parity is even; or
\item\label{cor:firstmatch-2} this common parity is odd,%~$\e=1$,
~$\sw_1(\la 1\ra)=\la 1\ra$ and one of the following conditions is satisfied:
\begin{enumerate}\setlength\itemsep{5pt}
\item\label{cor:firstmatch-2a} $\dim_\F\V$ is odd;
\item\label{cor:firstmatch-2b} %% $\F=\F_\so$ and there exists an extension~$\K/\F$ contained in~$\varphi(\E)\cap\varphi'(\E')$ which is invariant under the adjoint anti-involution on~$\A$ but not fixed pointwise. 
$\F=\F_\so$ and there exist an extension~$\K/\F$ contained in~$\varphi(\E)$ which is invariant under the adjoint anti-involution on~$\A$ but not fixed pointwise, and an isometry~$g:\V\to\V'$ such that~$g\K g^{-1}\subseteq\varphi'(\E')$.
\end{enumerate}
\end{enumerate}
%% Then the pairs~$(\b,\varphi)$ and~$(\b',\varphi')$ are concordant.
Then the pairs~$(\b,\varphi)$ and~$(\b',\varphi')$ are~$(h,h')$-concordant.
\end{proposition}
}

\shaun{
In order to prove this we notice that, whenever we are in the non-symplectic case,
 concordance is related to the diagrams
\begin{equation}\label{eqDiagConcOdd}
%\xymatrix{
%\Ww^{\odd}_1(\E/\E_\so) \ar[r]^{\sw_{1,\b',\b}}\ar[d]_{\l_\b^*} & \Ww^{\odd}_1(\E'/\E'_\so)\ar[d]^{\l_{\b'}^*} \\
%\Ww_1(\F/\F_\so) \ar[r]^{\id} & \Ww_1(\F/\F_\so) \\
%}
\xymatrix{
\Ww^{\odd}_\e(\E/\E_\so) \ar[r]^{\sw_{\e,\b',\b}}\ar[d]_{\l_\b^*} & \Ww^{\odd}_\e(\E'/\E'_\so)\ar[d]^{\l_{\b'}^*} \\
\Ww_\e(\F/\F_\so) \ar[r]^{\id} & \Ww_\e(\F/\F_\so) \\
}
\end{equation}
and
\begin{equation}\label{eqDiagConcEven}
%\xymatrix{
%\Ww^{\even}_1(\E/\E_\so) \ar[r]^{\sw_{1,\b',\b}}\ar[d]_{\l_\b^*} & \Ww^{\even}_1(\E'/\E'_\so)\ar[d]^{\l_{\b'}^*} \\
%\Ww_1(\F/\F_\so) \ar[r]^{\id} & \Ww_1(\F/\F_\so) \\
%}
\xymatrix{
\Ww^{\even}_\e(\E/\E_\so) \ar[r]^{\sw_{\e,\b',\b}}\ar[d]_{\l_\b^*} & \Ww^{\even}_\e(\E'/\E'_\so)\ar[d]^{\l_{\b'}^*} \\
\Ww_\e(\F/\F_\so) \ar[r]^{\id} & \Ww_\e(\F/\F_\so) \\
}
\end{equation}
If~$\dim_\E\V_\varphi$ and~$\dim_{\E'}\V'_{\varphi'}$ have the same parity then~$(\b,\varphi)$ and~$(\b',\varphi')$ are concordant if and only if the diagram of the corresponding parity commutes: this follows because, in both diagrams, the maps~$\lambda_\b^*$ and~$\lambda_{\b'}^*$ are injective by Proposition~\ref{prop:transferinj}. Therefore we analyze cases when these diagrams are commutative.}

\shaun{
\begin{lemma}\label{lemma:commutConcDiag}
 %Suppose we are in the non-symplectic case, and~$\b$ and~$\b'$ are non-zero. 
 Suppose that~$\e=1$ and that~$\b$ and~$\b'$ are non-zero.
 \begin{enumerate}\setlength\itemsep{5pt}
 \item\label{lemma:commutConcDiageven} The diagram~\eqref{eqDiagConcEven} is always commutative. 
 \item\label{lemma:commutConcDiagodd} Suppose that~$\sw_1(\la 1\ra)=\la 1\ra$% if~$\e=1$
, that~$[\E:\F]$ and~$[\E':\F]$ have the same parity, and that one of the following conditions is satisfied:
 \begin{enumerate}\setlength\itemsep{5pt}
  \item\label{lemma:commutConcDiag-ii} $\F\neq\F_\so$ %, and~$(-1)^{\frac{\e+1}{2}+[\E:\F]}=1$. 
  and~$[\E:\F]$ is odd.
  \item\label{lemma:commutConcDiag-i} $\F=\F_\so$ 
  and there exist extensions~$\K/\F$ and~$\K'/\F$ contained in~$\E$ and~$\E'$ respectively, which are invariant under the \rob{Galois} involution but not fixed pointwise, such that~$[\E:\K]$ and~$[\E':\K']$ have the same parity and there is a \rob{Galois}-equivariant~$\F$-\daniel{linear field} isomorphism from~$\K$ to~$\K'$. 
  %and there are~$\ov{(\ )}$-invariant but not~$\ov{(\ )}$-fixed field extensions~$\K|\F$ and~$\K'|\F$ in~$\E$ and~$\E'$, respectively, such that~$[\E:\K]$ and~$[\E':\K']$ have the same parity and there is a~$\ov{(\ )}$-equivariant isomorphism from~$\K|\F$ to~$\K'|\F$.
 \end{enumerate}
Then the diagram~\eqref{eqDiagConcOdd} is commutative. 
 \end{enumerate}
\end{lemma}
}

\begin{proof} 
\shaun{
Diagram~\eqref{eqDiagConcEven} is commutative because the maximal anisotropic class is mapped to the maximal anisotropic class, by Proposition~\ref{prop:lambda*}\ref{prop:lambda*.ii}. 
}

\shaun{
We now suppose that~$\sw_1(\la 1\ra)=\la 1\ra$ and that~$[\E:\F]$ and~$[\E':\F]$ have the same parity, and consider diagram~\eqref{eqDiagConcOdd}, recalling that all maps in it are injective. Write
\[
\Ww^{\odd}_\e(\E/\E_\so)=\{\la 1\ra, \la a\ra\} \quad\text{ and }\quad \Ww^{\odd}_\e(\E'/\E'_\so)=\{\la 1\ra, \la a'\ra\}.
\]
Then, since~$\la a\ra - \la 1\ra$ is the maximal element of~$\Ww_\e(\E/\E_\so)$, it follows from Proposition~\ref{prop:lambda*}\ref{prop:lambda*.ii} that~$\l_\b^*(\la a\ra) - \l_\b^*(\la 1\ra)$ is the maximal element of~$\Ww_\e(\F/\F_\so)$. The same applies to~$\l_{\b'}^*(\la a'\ra) - \l_{\b'}^*(\la 1\ra)$ so that 
\[
\l_\b^*(\la a\ra) - \l_\b^*(\la 1\ra) = \l_{\b'}^*(\la a'\ra) - \l_{\b'}^*(\la 1\ra).
\]
Thus it is enough to check
% to
%In each case, we will show that the images of~$\l_\b^*$ and~$\l_{\b'}^*$ coincide. When this image has only two elements, we need only check 
that~$\l_\b^*(\la 1\ra)=\l_{\b'}^*(\la 1\ra)$ to prove commutativity of~\eqref{eqDiagConcOdd}.}

\shaun{
In the situation of~\ref{lemma:commutConcDiag-ii}, %the images of~$\l_\b^*$ and~$\l_{\b'}^*$ are both contained in~$\Ww^{\odd}_1(\F/\F_\so)$, which has two elements, and 
we have~$\l_\b^*(\la 1\ra)=\la 1\ra=\l_{\b'}^*(\la 1\ra)$, by Proposition~\ref{prop:transfer}\ref{prop:transfer.ii}.}
\shaun{In case~\ref{lemma:commutConcDiag-i}, again by Proposition~\ref{prop:transfer}\ref{prop:transfer.ii}, we have
\begin{equation}\label{prooffirstmatch:equation}
\l_\b^*(\la 1\ra) =\la 1\ra \oplus \la (-1)^{m}\N_{\E/\F}(\b)\ra,\text{ and }\l_{\b'}^*(\la 1\ra) =\la 1\ra \oplus \la (-1)^{m'}\N_{\E'/\F}(\b')\ra
\end{equation}
for some integers~$m,m'$ whose values are not needed for the proof. In particular, these both have anisotropic dimension at most two so cannot be the Witt tower of maximal anisotropic dimension.}

\shaun{If~$[\E:\K]$ and~$[\E':\K']$ are even, then the images of~$\l_\b^*$ and~$\l_{\b'}^*$ %in~$\Ww_1(\F/\F_\so)$ coincide, and 
consist of~$\bs 0$ and the maximal element. Since~\eqref{prooffirstmatch:equation} shows that neither is maximal, we have%Witt tower of maximal anisotropic dimension. Thus we have
~$\l_\b^*(\la 1\ra)=\bs 0=\l_{\b'}^*(\la 1\ra)$.}

\shaun{Suppose now~$[\E:\K]$ and~$[\E':\K']$ are odd and denote by~$\K_\so$ the fixed field of~$\K$ under the \rob{Galois} involution on~$\E$, so that~$\K/\K_\so$ is quadratic. Given~$\l:\E\to\K$ a non-zero \rob{Galois-}equivariant~$\K$-linear form, the induced transfer map~$\l^*:\Ww_1(\E/\E_\so)\rightarrow\Ww_1(\K/\K_\so)$ is then bijective. It follows from Proposition~\ref{prop:lambda*}\ref{prop:lambda*.i} that the image of~$\l_\b^*$ coincides with that of~$\l_\K^*$, for any non-zero \rob{Galois-}equivariant~$\F$-linear form~$\l_\K:\K\to\F$. Since there is a \rob{Galois-}equivariant~$\F$-\daniel{linear field} isomorphism from~$\K$ to~$\K'$, this also coincides with the image of~$\l_{\K'}^*$, for any non-zero \rob{Galois-}equivariant~$\F$-linear form~$\l_{\K'}:\K'\to\F$. In particular, the images of~$\l_\b^*$ and~$\l_{\b'}^*$ in~$\Ww_1(\F/\F_\so)$ coincide.}% and we denote this common image by~$\M$.}

\shaun{If this image has order two then it consists of~$\bs 0$ and the maximal element, and we again have~$\l_\b^*(\la 1\ra)=\bs 0=\l_{\b'}^*(\la 1\ra)$. Otherwise, it has order~$4$ and, since~$\Ww_1(\E/\E_\so)$ and~$\Ww_1(\E'/\E'_\so)$ also have order~$4$, the transfer maps~$\l_\b^*$ and~$\l_{\b'}^*$ are injective. %Then the image of~$\Ww_1^{\odd}(\E/\E_\so)$ under~$\l_\b^*$ must consist of two spaces of anisotropic dimension~$2$, and likewise for~$\l_{\b'}^*$. Thus, again, we need only check that~$\l_\b^*(\la 1\ra)=\l_{\b'}^*(\la 1\ra)$ to prove commutativity of~\eqref{eqDiagConcOdd}.}
%\shaun{
We set~$\bs a=\l_\b^*(\la 1\ra)$ and~$\bs b=\l_{\b'}^*(\la 1\ra)$, which are neither~$\bs 0$ nor the maximal element, by injectivity.  Assume for contradiction that~$\bs a \neq \bs b$ so that~\eqref{prooffirstmatch:equation} implies that~$\bs a-\bs b$ is also neither~$\bs 0$ nor maximal. Thus~$\bs a -\bs b$ is either~$\bs a$ or~$\bs b$, %as these are the only two classes in~$\M$ with anisotropic dimension not maximal or zero.  
therefore~$\bs a=2\bs b$, as~$\bs b$ is non-zero. By symmetry, we also have~$\bs b=2\bs a$ and it follows that~$\bs a=\bs b=\bs 0$, which is absurd.}
\end{proof}

\begin{proof}[Proof of Proposition~\ref{prop:firstmatch}]
\shaun{The three parts follow immediately from the corresponding parts of Lemma~\ref{lemma:commutConcDiag}, once we notice, in~\ref{cor:firstmatch-2b}, that~$[\E:\K]$ and~$[\E':\K']$ have the same parity as~$\dim_\K\V=\dim_{\K'}\V'$. 
}
\end{proof}

In order to go further than this, we need to add some conditions on~$\b,\b'$; in particular, we will require them to be related in some way.

%%%%%%%%%%%%%%%%%%%%%%%%%%%%%%%%%%%%
\subsection{Similar extensions}\label{subsec:similar}
%%%%%%%%%%%%%%%%%%%%%%%%%%%%%%%%%%%%
\shaun{
We now introduce a notion of \emph{similarity} on self-dual extensions. We fix a uniformizer~$\w_\F$ of~$\F$; if~$\F\ne\F_\so$ then we assume further that~$\ov{\w_\F}=-\w_\F$. For~$(\E,\b)$ a self-dual extension of~$\F/\F_\so$ with~$\b\ne 0$, we write~$y_\b$ for the image of~$\w_\F^{n/g}\b^{e/g}$ in the residue field~$\mathrm{k}_\E$, where~$e=e(\E/\F)$ is the ramification index,~$n=-\val_\E(\b)$, and~$g=\gcd(n,e)$. We also set~$y_0=0$ in~$k_\F$.
}

\begin{definition}\label{def:similar}
\shaun{
We say that self-dual extensions~$(\E,\b)$ and~$(\E',\b')$ of~$\F/\F_\so$ are \emph{similar} if:
\begin{enumerate}\setlength\itemsep{5pt}
\item %$[\E:\F]=[\E':\F]$, the residue class degree~$f(\E/\F),f(\E'/\F)$ have the same parity, and~$e(\E/\E_\so)=e(\E'/\E'_\so)$;
$f(\E/\F)=f(\E'/\F)$,~$e(\E/\F)=e(\E'/\F)$ and~$e(\E/\E_\so)=e(\E'/\E'_\so)$;
%we have equalities of residue class degrees~$f(\E/\F)=f(\E'/\F)$ and of ramification degrees~$e(\E/\F)=e(\E'/\F)$ and~$e(\E/\E_\so)=e(\E'/\E'_\so)$;
\item $\val_\E(\b)=\val_{\E'}(\b')$; and
\item there is a~$\mathrm{k}_\F$-\daniel{linear field} isomorphism from~$\mathrm{k}_\E$ to~$\mathrm{k}_{\E'}$ which sends~$y_\b$ to~$y_{\b'}$.
%%% Really isomorphism, or just embedding? If isomorphism then we might as well assume residue class degrees and ramification indices are equal.
\end{enumerate}
}
\end{definition}

\shaun{
Note that the notion of similarity is independent of the choice of uniformizer~$\w_\F$. In the end we will mostly be concerned about concordance in cases where we already know that the extensions are similar.
}

\shaun{
%We return now to the situation of the previous subsection, \emph{with the additional assumption that~$\F\ne\F_\so$}. Thus~$\e=1$, 
Suppose, as before, we are given hermitian~$\F/\F_\so$-spaces~$(\V,h)$ and~$(\V',h')$ which are isometric, and set~$\A=\End_\F(\V)$ and~$\A'=\End_\F(\V')$. We also have~$(\E,\b)$ and~$(\E',\b')$, self-dual extensions with~$\b,\b'$ non-zero, and we suppose we have self-dual embeddings~$\varphi:\E\into\A$ and~$\varphi':\E'\into\A'$. We have the following result.
}
%%%%% Do we ever use this when~$\b,\b'$ are not minimal? If not, then we should probably define minimal here and just state it in that case. It would be simpler!
%%%%% Daniel says we do.

%% Do we need F\ne F_\so and/or \e=1 ? 

\shaun{
\begin{lemma}\label{lem:concordwithyb}
Suppose that~$\F\ne\F_\so$ and that the self-dual extensions~$(\E,\b)$ and~$(\E',\b')$ are similar. 
\begin{enumerate}\setlength\itemsep{5pt}
\item\label{lem:concordwithyb.i} $\w_\F^{-1}\b\in \N_{\E/\E_\so}(\E^\times)$ if and only~$\w_\F^{-1}\b'\in \N_{\E'/\E'_\so}(\E'^\times)$.
\item\label{lem:concordwithyb.ia} \orange{The diagrams~\eqref{eqDiagConcOdd} and~\eqref{eqDiagConcEven} are commutative.}
\item\label{lem:concordwithyb.ii} The pairs~$(\b,\varphi)$ and~$(\b',\varphi')$ are~$(h,h')$-concordant.
\end{enumerate}
\end{lemma}
}

\orange{We will see later (see Corollary~\ref{cor:similardiagramscommute}) that~\ref{lem:concordwithyb.ia} is in fact also true without the hypothesis~$\F\ne\F_\so$.}

\begin{proof}
\shaun{
We set~$e=e(\E/\F)=e(\E'/\F)$, ~$e_\so=e(\E/\E_\so)=e(\E'/\E'_\so)$ and~$d=[\E:\F]=[\E':\F]$. We also set~$n=-\val_\E(\b)=-\val_{\E'}\rob{(\b')}$ \daniel{ and observe that~$\w_\F^{-1}\b$ is fixed by the involution.}
}

\shaun{
\ref{lem:concordwithyb.i} If~$e_\so=2$ then both~$e$ and~$n$ are odd, by Lemma~\ref{lem:sdeodd}. Hence \daniel{Hensel's Lemma implies that}~$\w_\F^{-1}\b$ is a square in~$\E$ if and only if~$y_\b$ is a square in~$\mathrm{k}_\E$, and similarly for~$\w_\F^{-1}\b'$. \daniel{On the other hand~$y_\b$ is a square in~$\mathrm{k}_\E$ if and only if~$y_{\b'}$ is a square in~$\mathrm{k}_{\E'}$, because we have a~$\mathrm{k}_\F$-\daniel{linear field} isomorphism from~$\mathrm{k}_\E$ to~$\mathrm{k}_{\E'}$ which sends~$y_\b$ to~$y_{\b'}$.} The result now follows from the description of norms in Lemma~\ref{lem:sdexts1}.
}

\shaun{
If~$e_\so=1$ then~$\w_\F^{-1}\b\in \N_{\E/\E_\so}(\E^\times)$ if and only if~$\w_\F^{-1}\b$ has even valuation, by Lemma~\ref{lem:sdexts1}. Since it has the same valuation as~$\w_\F^{-1}\b'$, the result follows.}

\shaun{
\orange{\ref{lem:concordwithyb.ii} follows immediately from~\ref{lem:concordwithyb.ia}, while the commutativity of~\eqref{eqDiagConcEven} is immediate since the maximal element is mapped to the maximal element. To complete the proof of~\ref{lem:concordwithyb.ia}, we need to prove that the diagram~\eqref{eqDiagConcOdd} is commutative.} Since~$f(\E/\F)=f(\E'/\F)$, Lemma~\ref{lemma:w1oneifsquares} implies that~$\sw_{1}(\la 1\ra)=\la 1\ra$ so that we only need to prove that~$\l_\b^*(\la 1\ra)=\l_{\b'}^*(\la 1\ra)$ (for the case~$\e=1$) and~$\l_\b^*(\la \b\ra)=\l_{\b'}^*(\la \b'\ra)$ (for the case~$\e=-1$). Now Lemma~\ref{lem:norms} and~\ref{lem:concordwithyb.i} imply that~$\N_{\E/\F}(\w_\F^{-1}\b)=\w_\F^{-d}\N_{\E/\F}(\b)$ lies in~$\N_{\F/\F_\so}(\F^\times)$ if and only if~$\w_\F^{-d}\N_{\E'/\F}(\b')$ does also. The result now follows by applying Proposition~\ref{prop:transfer}\ref{prop:transfer.ii}. 
%, we obtain~$\l_\b^*(\la 1\ra)=\l_{\b'}^*(\la 1\ra)$ as required.
}
\end{proof}

\section{Classical groups}
%%%%%%%%%%%%%%%%%%%%%%%%%%%%%%%%%%%%
%%%%%%%%%%%%%%%%%%%%%%%%%%%%%%%%%%%%

Let~$(\V,h)$ be an~$\e$-hermitian space over~$\F/\F_\so$, and put~$\A=\End_\F(\V)$ and~$\tG=\Aut_\F(\V)$. The ring~$\A$ is equipped with the adjoint anti-involution~$a\mapsto\ov a$ induced by~$h$. 
\orange{We let our abstract group~$\Sigma=\{1,\s\}$ act both on~$\tG$, with~$\s(g)=(\ov g)^{-1}$ for~$g\in\tG$, and on~$\A$, with~$\s(a)=-\ov a$ for~$a\in\A$.}

%By~$\s$ we will denote both the involution on~$\tG$ defined by~$g\mapsto (\ov g)^{-1}$, and its derivative on~$\A$, which is the map~$a\mapsto -\ov a$. We set~$\Sigma=\{1,\s\}$, where~$1$ acts as the identity on both~$\tG$ and~$\A$.

We set~$\G:=\tG^\Sigma=\U(\V,h)$. We write~$\G^\so$ for the group of~$\F_\so$-points of the connected component of the underlying reductive group, so that~$\G^\so=\G$ except in the orthogonal case when it is the special orthogonal group. We call the group~$\G^\so$ a \emph{classical group}.

For~$\J$ a~$\s$-stable subgroup of~$\tG$, we will write~$\J_-=\J^\Sigma=\J\cap\G$. % and~$\J_-^\so=\J\cap\G^\so$. 
Similarly, if~$\X$ is any~$\s$-stable~$\o_\F$-submodule of~$\A$ then we write
\[
\X_-=\X^\Sigma=\{x\in\X\mid \ov x=-x\}, \qquad
\X_+ = \{x\in\X\mid \ov x=x\},
\]
for the set of skew-symmetric (respectively, symmetric) elements of~$\X$. Note that~$\A_-$ is the Lie algebra of~$\G$ (and~$\G^\so$).

%%%%%%%%%%%%%%%%%%%%%%%%%%%%%%%%%%%%
%%%%%%%%%%%%%%%%%%%%%%%%%%%%%%%%%%%%
\section{Simple strata and concordance}\label{sec:simplestrata}
%%%%%%%%%%%%%%%%%%%%%%%%%%%%%%%%%%%%
%%%%%%%%%%%%%%%%%%%%%%%%%%%%%%%%%%%%

%%%%%%%%%%%%%%%%%%%%%%%%%%%%%%%%%%%%
%The first step in the Bushnell--Kutzko--Stevens explicit construction of cuspidal representations of~$\tG,\G$ is to write down a collection pairs of compact open pro-$p$ subgroups of~$\tG,\G$ and characters of these subgroups such that every cuspidal representation contains one of these characters.  This is rigidified in the theory of strata: 4-tuples of data, which under mild conditions, after one has fixed a character of~$\F$, correspond to explicit characters of an explicit compact open subgroups of~$\tG$.  If a cuspidal representation of~$\tG,$ or $\G$ contains two such characters then they intertwine, and we have a corresponding notion of intertwining of strata.{\color{red}rewrite in introduction need to say semisimple characters necessary etc..}

In this section, we investigate intertwining of self-dual pure strata and introduce concordance of self-dual pure strata (Definition \ref{Def:wittstrata}).  The main result is Proposition~\ref{prop:strataintertwiningovertG}.

%%%%%%%%%%%%%%%%%%%%%%%%%%%%%%%%%%%%
\subsection{Lattice sequences and parahoric subgroups}\label{subsec:latticeseq}
%%%%%%%%%%%%%%%%%%%%%%%%%%%%%%%%%%%%
We recall that an~\emph{$\o_\F$-lattice sequence} in~$\V$ is a map~$\La$ from~$\ZZ$ to the set of~$\o_\F$-lattices in~$\V$ which is decreasing and periodic; that is,
\begin{enumerate}\setlength\itemsep{5pt}
\item $\La(k+1)\subseteq \La(k)$, for all~$k\in\ZZ$;
\item there is a positive integer~$e=e(\La)=e(\La|\o_\F)$ such that~$\p_\F\La(k)=\La(k+e)$, for all~$k\in\ZZ$.
\end{enumerate}
The integer~$e$ is called the~$\o_\F$-period of~$\La$. If~$\dim_{\mathrm{k}_\F}(\La(k)/\La(k+1))$ is independent of~$k\in\ZZ$ we say that~$\La$ is \emph{regular}. We call~$\La$ \emph{strict} if $\La(k+1)\subsetneq \La(k)$, for all~$k\in\ZZ$. For~$a,b\in\ZZ$,~$a>0$, we let~$a\La+b$ denote the~$\o_\F$-lattice sequence in~$\V$ defined by
\[
(a\La+b)(r)=\La(\lfloor (r-b)/a\rfloor),\text{ for all }r\in\rob{\mathbb{Z}}.
\]
\shauns{We call~$a\La+b$ an \emph{affine translation of~$\La$} and say that lattice sequences~$\La,\La'$ are \emph{in the same affine class} if they have a common affine translation.}

\daniel{The direct sum of~$\o_\F$-lattice sequences~$\Lambda$ and~$\Lambda'$ of the same~$\o_\F$-period is defined by
\[
(\La\oplus\La')(r):=\La(r)\oplus\La'(r),\ r\in\mathbb{Z}.
\]
}

An~$\o_\F$-lattice sequence~$\La$ in~$\V$ defines an~$\o_\F$-lattice sequence in~$\A$, by setting
\[
\aa_n(\La)=\{a\in\A \mid a\La(k)\subseteq  \La(k+n),\text{ for all }k\in\ZZ\},\
\]
for~$n\in\ZZ$. The~$\o_\F$-lattice~$\aa_0(\La)$ is a hereditary~$\o_\F$-order in~$\A$ with Jacobson radical~$\aa_1(\La)$. Note that a strict~$\o_\F$-lattice sequence~$\La$ is regular if and only if~$\aa_0(\La)$ is a principal order. We also get a valuation map~$\val_\La$ on~$\A$ by setting
\[
\val_\La(x) = \sup\{n\in\ZZ \mid x\in\aa_n(\La)\}, \qquad\text{for }x\in\A,
\]
with the understanding that~$\val_\La(0)=\infty$. 

The normalizer in~$\Aut_\F(\V)$ of~$\La$ is a compact mod-centre subgroup
\[
\KK(\La)=\{g\in\Aut_\F(\V)\mid \text{there exists }n\in\ZZ\text{ such that }g(\La(k))=\La(k+n),\text{ for all }k\in\ZZ\}.
\]
\bob{The restriction of the valuation map to~$\KK(\La)$ defines a} group homomorphism~$\val_{\La}:\KK(\La)\rightarrow \ZZ$. % by setting~$\val_{\La}(g)=n$, if~$g(\La(0))=\La(n)$. 
The kernel of \bob{which} is a compact open subgroup~$\P(\La)$ of~$\Aut_\F(\V)$ which coincides with the group of units of the order~$\aa_0(\La)$. This subgroup has a decreasing filtration by compact open pro-$p$ subgroups, given by~$\P^n(\La)=1+\aa_n(\La)$, for~$n\geqslant 1$.

For~$\L$ an~$\o_\F$-lattice in~$\V$, we define the \emph{dual lattice}
\[
\L^\#=\{v\in \V\mid h(v,\L)\subseteq \p_\F\}.
\]  
For~$\La$ an~$\o_\F$-lattice sequence in~$\V$ we define the~\emph{dual lattice sequence}~$\La^\#$ in~$\V$ by
\[
\La^\#(r)=\La(1-r)^\#,
\] 
for all~$r\in\ZZ$, and we call~$\La$ \emph{self-dual} if~$\La^\#=\La+d$, for some~$d\in\ZZ$. If~$\La$ is self-dual then the lattices~$\aa_n(\La)$ are fixed by the adjoint anti-involution on~$\A$, and we put
\[
\aa_{n,-}(\La)=\aa_n(\La)\cap \A_-,\quad \P_-(\La)=\P(\La)\cap \G,\quad \P^m_-(\La)=\P^m(\La)\cap \G, \qquad\text{ for }m,n\in\ZZ,\ m\ge 1.
\]
Note that while~$\P(\La)$ is a \emph{parahoric subgroup} of~$\tG$ in the sense of Bruhat--Tits,~$\P_-(\La)$ is not always a parahoric subgroup: it is the full \shauny{stabilizer} of a \bob{point} in the Bruhat--Tits building of~$\G$. See section~\ref{sectionIIC} below for the definition of parahoric subgroup for~$\G^\so$, when we will need it.%The parahoric subgroup~$\P_-^\so(\La)$ of~$\G^\so$ is the inverse image in~$\P_-(\La)\cap\G^\so$ of the connected component of the identity in~$\P_-(\La)/\P^1_-(\La)$, which is a finite reductive group over~$\mathrm{k}_\F$.

\shauns{Finally in this subsection, suppose that~$\E$ is a subfield of~$\A$ containing~$\F$. Then we can consider~$\V$ as an~$\E$-vector space, so we have the notion of~$\o_\E$-lattice sequence in~$\V$; these are in fact~$\o_\F$-lattice sequences which are normalized by~$\E^\times$. We have the following elementary but useful lemma on the existence of lattice sequences with prescribed properties.}

\begin{lemma}\label{lem:changetoconjlatt}
\shauns{Let~$\E,\E'$ be subfields of~$\A$ containing~$\F$, such that~$e(\E/\F)=e(\E'/\F)$ and~$f(\E/\F)=f(\E'/\F)$, and let~$\La$ be an~$\o_\E$-lattice sequence in~$\V$. Then there exist an~$\o_{\E'}$-lattice sequence~$\La'$ in~$\V$ and~$g\in\tG$ such that~$g\La'=\La$.}
\end{lemma} 

\begin{proof}
\shauns{There is an~$\F$-linear isomorphism from~$\E$ to~$\E'$ which maps~$\p_\E^n$ to~$\p_{\E'}^n$, for each~$n\in\ZZ$. Now we choose an~$\E$-basis of~$\V$ which splits~$\La$ and map it to an~$\E'$-basis of~$\V$, using this~$\F$-linear isomorphism, and the image of~$\La$ has the required property.}
\end{proof}

%%%%%%%%%%%%%%%%%%%%%%%%%%%%%%%%%%%%
\subsection{A self-dual~$\dag$-construction}\label{subsec:dag}
%%%%%%%%%%%%%%%%%%%%%%%%%%%%%%%%%%%%
\shaun{We recall briefly the~\emph{$\dag$-construction} of~\cite[\S4]{RKSS}, which is a useful way of generalizing results originally proved only for strict lattice \rob{sequences} to the general case, and introduce a self-dual version.} 

\shaun{Let~$\La$ be an~$\o_\F$-lattice sequence in~$\V$ of~$\o_F$-period~$e=e(\La)$. Let~$\V^\dag=\V\oplus \cdots\oplus \V$ ($e$ times) and define the~$\o_\F$-lattice sequence~$\La^\dag$ in~$\V^\dag$ by
\[
\La^\dag=\bigoplus_{k=0}^{e-1}(\La-k).
\]
Then~$\La^\dag$ is a strict regular~$\o_F$-lattice sequence in~$\V^\dag$ of period~$e$. We denote by~$\Mm^\dag$ the Levi subalgebra of~$\A^\dag=\End_\F(\V^\dag)$ which is the stabilizer of the decomposition~$\V^\dag=\V\oplus \cdots\oplus \V$. Any~$\b\in\A$ then induces an element~$\b^\dag=\b\oplus \cdots\oplus\b$ in~$\Mm^\dag$. We write~$\tM^\dag$ for the group of units of~$\Mm^\dag$, which is a Levi subgroup of~$\tG^\dag=\Aut_\F(\V^\dag)$. Then~$\P(\La^\dag)\cap\M^\dag\simeq \P(\La)\times\cdots\times\P(\La)$, and similarly for~$\P^n(\La^\dag)$, while the~$\tG^\dag$-conjugacy class of~$\P(\La^\dag)$ is independent of~$\La$, depending only on the period~$e$\daniel{, see~\cite[1.5.2(ii)]{bushnellFroehlich:85}}.}

%%%%% Not sure we need all this notation %%%%%

\shaun{Now we introduce a self-dual variant. We will again use the notation~$\dag$; when we use it, we will make it clear when we are applying this self-dual version. Let~$\La$ be a self-dual lattice sequence in~$\V$ with~$e=e(\La)$. Let~$\V^\dag=\V\oplus \cdots\oplus \V$ ($2e$ times, indexed by~$j\in\{\pm 1,\ldots,\pm e\}$) and write vectors~$\bv\in\V^\dag$ as tuples:~$\bv=\(v_j\)_{j=-e}^e$, where we understand that~$0$ is omitted and~$v_j$ is in the~$j$-th copy of~$\V$. We define the form~$h^\dag$ on~$\V^\dag$ by
\[
h^\dag\( \(v_j\)_{j=-e}^e, \(w_j\)_{j=-e}^e \) = \rob{\sum_{j=-e}^e} h(v_j,w_{-j}),
\]
so that each copy of~$\V$ is isotropic\shauns{: indeed, the space~$(\V^\dag,h^\dag)$ is hyperbolic}. Now we define the~$\o_\F$-lattice sequence~$\La^\dag$ in~$\V^\dag$ by
\[
\La^\dag = \bigoplus_{j=1}^{e} (\La-j)\oplus  \bigoplus_{j=1}^{e} (\La-j)^\#,
\]
where we understand that~$\La-j$ is in the~$j$-th copy of~$\V$, and~$(\La-j)^\#$ in the~$(-j)$-th copy. Then~$\La^\dag$ is a regular strict lattice sequence in~$\V^\dag$ and is self-dual with respect to~$h^\dag$; indeed~$(\La^\dag)^\#=\La^\dag$. We again set~$\A^\dag=\End_\F(\V^\dag)$ and~$\tG^\dag=\Aut_\F(\V^\dag)$, and denote by~$\Mm^\dag$ the Levi subalgebra of~$\A^\dag$ which is the stabilizer of the decomposition~$\V^\dag=\V\oplus \cdots\oplus \V$. As above, any~$\b\in\A$ induces an element~$\b^\dag$ in~$\Mm^\dag$, which is skew whenever~$\b$ is skew. We also set~$\G^\dag=\U(\V^\dag,h^\dag)$ \shauns{and note that the map~$g\mapsto g^\dag$ defines an embedding of~$\G$ in~$\G^\dag$.}}

%For~$\La$ a self-dual lattice sequence in~$\V$ with~$e=e(\La)$, we define
%\begin{align*}
%\La^\dag&=(\La-e+1)\oplus\cdots\oplus (\La-0)\oplus (\La-0)^\#\oplus\cdots\oplus (\La-e+1)^\#,\\
%h^\dag&=\left(\begin{smallmatrix}&&h\\&\bdots\\h\end{smallmatrix}\right),~\V^\dag=\V\oplus\cdots\oplus\V\quad\text{(2e)-times},\\
%\A^\dag&=\End_\F(\V^\dag),\quad~\tG^\dag=(\A^\dag)^\times,\quad~\G^\dag=\U(\V^\dag,h^\dag).
%\end{align*}
%Then~$\La^\dag$ is a regular strict lattice sequence in~$\V^\dag$ and is self-dual with respect to~$h^\dag$. 
\shaun{This construction becomes particularly useful when applied to two self-dual lattice sequences~$\La,\La'$ with~$e(\La)=e(\La')$ (which we can always ensure by an affine translation) but with~$\La,\La'$ possibly not~$\G$-conjugate. The lattice sequences~$\La^\dag,\La'^\dag$, as they are self-dual, regular and strict of the same~$\o_\F$-period, are conjugate in $\G^\dag$ by~\cite[Proposition 5.2]{SkodField}.}

%%%%%%%%%%%%%%%%%%%%%%%%%%%%%%%%%%%%
\subsection{Strata}\label{subsec:strata}
%%%%%%%%%%%%%%%%%%%%%%%%%%%%%%%%%%%%
A \emph{stratum} in~$\A$ is a~$4$-tuple~$[\La,n,r,\b]$ where
\begin{enumerate}\setlength\itemsep{5pt}
\item $\La$ is an~$\o_\F$-lattice sequence in~$\V$;
\item $n\geqslant r\geqslant 0$ are integers;
\item $\b\in \aa_{-n}(\La)$. 
\end{enumerate}
The fraction~$\max\{-\val_\La(\b),r\}/{e(\La)}$ is called the \emph{depth} of the stratum. We call the stratum~$[\La,r,r,0]$ a \emph{null stratum}. Two strata~$[\La,n,r,\b_i]$, for~$i=1,2$, are called \emph{equivalent} if
\[
\b_1-\b_2\in\aa_{-r}(\La).
\]

An element~$g\in\tG$ \emph{intertwines} strata~$[\La,n,r,\b]$ and~$[\La',n',r',\b']$ if
\[
g(\b+\aa_{-r}(\La))g^{-1}\cap (\b'+\aa_{-r'}(\La'))\neq \emptyset.
\] 
For a subgroup~$\J$ of~$\tG$, we say~$[\La,n,r,\b]$ and~$[\La',n',r',\b']$ \emph{intertwine in~$\J$} if there exists an element of~$\J$ which intertwines the strata.  We say that~$[\La,n,r,\b]$ and~$[\La',n',r',\b']$ are \emph{conjugate in~$\J$} if~$n=n'$,~$r=r'$ and there exists~$g\in\J$ such that
\[
g\La=\La'\text{ and }g\b g^{-1}=\b'. 
\]

An~\emph{affine translation} of a stratum~$[\La,n,r,\b]$ is a stratum~$[\La',n',r',\b]$ such that there exist~$a,b\in\ZZ$,~$a>0$, with~$\La'=a\La+b$,~$n'=an$ and\rob{~$\lfloor r'/a\rfloor= r$}. \shauns{We say that two strata are \emph{in the same affine class} if they have affine translations which are equal. As we shall see, many objects we later associate to a stratum are in fact shared by all strata in the same affine class.}

\shaun{We can also make a~$\dag$-construction for strata. If~$[\La,n,r,\b]$ is a stratum in~$\A$ then we have the lattice sequence~$\La^\dag$ in~$\V^\dag$ and the element~$\b^\dag$ of~$\A^\dag$, giving us a new stratum~$[\La^\dag,n,r,\b^\dag]$.} \shauns{This process behaves well with respect to intertwining: if~$g\in\tG$ intertwines two strata~$[\La,n,r,\b]$ and~$[\La',n',r',\b']$ then the element~$g^\dag\in\tG^\dag$ intertwines~$[\La^\dag,n,r,\b^\dag]$ and~$[\La'^\dag,n',r',\b'^\dag]$.} 

We fix a uniformizer~$\w_\F$ of~$\F$. Given a stratum~$[\La,n,r,\b]$ with~$r<n$, we write~$y_\b$ for the image of~$\w_F^{n/g}\b^{e/g}$ in~$\aa_0(\La)/\aa_1(\La)$, where~$e=e(\La)$ and~$g=\gcd(n,e)$. The characteristic polynomial of~$y_\b$ (in~$\mathrm{k}_\F[X]$) is called the \emph{characteristic polynomial of the stratum~$[\La,n,r,\b]$}, while its minimal polynomial is called the \emph{minimal polynomial of the stratum~$[\La,n,r,\b]$}. These depend only on the equivalence class of the stratum~$[\La,n,n-1,\b]$ (and the choice of uniformizer). A stratum~$[\La,n,n-1,\b]$ is called \emph{fundamental} if its characteristic polynomial is not a power of~$X$; this property is independent of the choice of uniformizer.

A stratum~$[\La,n,r,\b]$ is called \emph{pure} if either it is null or the following three conditions are satisfied:
\begin{enumerate}\setlength\itemsep{5pt}
\item $\E=\F[\b]$ is a field;
\item $\La$ is an~$\o_\E$-lattice sequence in~$\V$;
\item $\val_{\La}(\b)=-n$.
\end{enumerate}
We call~$[\E:\F]$ the~\emph{degree} of such a stratum, and write~$\B$ for the centralizer in~$\A$ of~$\b$ and~$\bb_n(\La)=\aa_n(\La)\cap\B$, for~$n\in\ZZ$. We set
\[
\nn_k(\b,\La)=\{x\in\aa_0(\La)\mid \b x-x\b\in \aa_k(\La)\}
\]
and define the \emph{critical exponent}~$k_0(\b,\La)$ by
\[
k_0(\b,\La)=
\begin{cases}
-\infty, &\text{ if }\b=0,\\
\max\left\{\val_{\La}(\b),\sup\{k\in\ZZ\mid\nn_{k}(\b,\La)\not\subseteq \bb_0(\La)+\aa_1(\La)\}\right\}, &\text{ otherwise.}
\end{cases}
\]
For~$\E=\F[\b]$ a finite extension of~$\F$, we set
\[
k_\F(\b)=\frac{k_0(\b,\p_\E^\ZZ)}{e(\E/\F)},
\]
where~$\p_\E^\ZZ$ denotes the~$\mathfrak{o}_F$-lattice sequence in~$\E$ (considered as an~$\F$-vector space) given by~$i\mapsto \mathfrak{p}_E^i$,~$i\in\mathbb{Z}$.  Note that, our definition of~$k_\F(\b)$ differs from that in~\cite[(1.4)]{BH96} by the normalization~$1/e(\E/\F)$.  By \cite[Lemma 5.6]{St01}, we have
\[
k_0(\b,\La)= e(\La|\o_\F) k_\F(\b).
\]

A pure stratum~$[\La,n,r,\b]$ is called \emph{simple} if~$k_0(\b,\La)<-r$; in particular, a pure stratum~$[\La,n,n,\b]$ is simple if and only if it is the null stratum~$[\La,n,n,0]$. A particularly nice case occurs when~$r=n-1$ (see~\cite[1.4.15]{BK93}): a pure stratum~$[\La,n,n-1,\b]$ is simple if and only if~$\b$ is \emph{minimal over~$\F$}, that is:
\begin{enumerate}\setlength\itemsep{5pt}
\item $\val_\E(\beta)$ is prime to~$e(\E/\F)$;
\item $\b^{e(\E/\F)}\varpi_\F^{-\val_\E(\beta)}+\p_\E$ generates the residue field~$\mathrm{k}_{\E}$ over~$\mathrm{k}_\F$.
\end{enumerate}

\shaun{We observe that the notions of pure and simple behave well under the~$\dag$-construction: if~$[\La,n,r,\b]$ is pure then~$[\La^\dag,n,r,\b^\dag]$ is also pure, since~$\F[\b]\simeq\F[\b^\dag]$; and if~$[\La,n,r,\b]$ is simple then the same applies to~$[\La^\dag,n,r,\b^\dag]$, since~$k_\F(\b)=k_\F(\b^\dag)$.}

When pure strata intertwine, they share several invariants, and we also get a certain isomorphism between subextensions of the residue fields which is important for concordance in the unitary case (see Lemma~\ref{lem:concordwithyb}).

\begin{lemma}\label{lemma:intertwiningminimalstrataramindices}\label{lemma:isoResfields}
Let~$[\La,n,r,\b]$ and~$[\La',n',r',\b']$ be non-null pure strata, with~$r<n$ and~$r'<n'$, which intertwine in~$\tG$, and put~$\E=\F[\b]$ and~$\E'=\F[\b']$. Then
\begin{enumerate}\setlength\itemsep{5pt}
\item\label{lemma:intertwiningminimalstrataramindices.i} the strata have the same depth, $n/e(\La)=n'/e(\La')$;
\item\label{lemma:intertwiningminimalstrataramindices.ii} if both strata are simple \shauns{and~$r/e(\La)=r'/e(\La')$} then~$e(\E/\F)=e(\E'/\F)$ and~$f(\E/\F)=f(\E'/\F)$;
\item\label{lemma:intertwiningminimalstrataramindices.iii} the strata have the same characteristic and minimal polynomials;
\item\label{lemma:isoResfields.iv} there is a~$\mathrm{k}_\F$-\daniel{linear field} isomorphism from~$\mathrm{k}_{\F}[y_\b]$ to~$\mathrm{k}_{\F}[y_{\b'}]$ which sends~$y_\b$ to~$y_{\b'}$.
\end{enumerate}
\end{lemma}

\begin{proof}
\shaun{\ref{lemma:intertwiningminimalstrataramindices.i} is given by~\cite[Proposition~6.9]{SkSt}. For~\ref{lemma:intertwiningminimalstrataramindices.ii}, by a~$\dag$-construction, we can assume that~$\La$ and~$\La'$ are regular strict and of the same period, hence conjugate; then~\cite[Theorem~2.6.1]{BK93} implies that the strata are conjugate up to equivalence, and the result follows from~\cite[Theorem~2.4.1(ii)]{BK93}. Finally, for~\ref{lemma:intertwiningminimalstrataramindices.iii} and~\ref{lemma:isoResfields.iv}, conjugating by an element which intertwines, we may assume the strata are intertwined by the identity, in which case~$y_\b^i=y_{\b'}^i$ in~$(\aa_0(\La)+\aa_0(\La'))/(\aa_1(\La)+\aa_1(\La'))$, for all~$i\ge 0$, and the results follow.}
\end{proof}

We also note that, under modest conditions, null strata and non-null simple strata do not intertwine.

\begin{lemma}[{\cite[Proposition~6.9]{SkSt}}]\label{lemma:fundNull}
Let~$[\La,n,r,\b]$ and~$[\La',n',n',0]$ be simple strata, with~$\b\neq 0$, and suppose that~$n'/e(\La')<n/e(\La)$. Then the strata do not intertwine in~$\tG$.
\end{lemma}

A stratum~$[\La,n,r,\b]$ is called \emph{self-dual} if~$\La$ is a self-dual~$\o_\F$-lattice sequence and~$\b\in\A_{-}$ \rob{Note that, }\daniel{this is a slight change of terminology: In~\cite[Definition 2.1]{St08} }\rob{these }\daniel{ strata }\rob{ are }\daniel{ called \emph{skew} strata; }\rob{we reserve \emph{skew} for certain self-dual strata which satisfy an additional condition, see Definition \ref{skewsesistratum}.}

A self-dual stratum~$[\La,n,r,\b]$ is called \emph{standard} if~$\La$ has even~$\o_\F$-period and~$\La=\La^\#$. Any self-dual stratum has an affine translation which is standard self-dual. \shaun{Note also that self-dual strata~$[\La,n,r,\b]$ behave well with respect to the self-dual~$\dag$-construction: that is, if~$[\La,n,r,\b]$ is self-dual then the stratum~$[\La^\dag,n,r,\b^\dag]$ obtained by the self-dual~$\dag$-construction is standard self-dual with respect to~$h^\dag$.} 
\shauns{Again, the self-dual~$\dag$-construction behaves well with respect to intertwining: if~$g\in\G$ intertwines two self-dual strata~$[\La,n,r,\b]$ and~$[\La',n',r',\b']$ then the element~$g^\dag\in\G^\dag$ intertwines~$[\La^\dag,n,r,\b^\dag]$ and~$[\La'^\dag,n',r',\b'^\dag]$.}

\shauns{An important application of this shows that equivalent self-dual simple strata have an additional invariant:}

\begin{lemma}\label{lemma:equivstrataramindicesequal}
\shauns{Let~$[\La,n,r,\b]$ and~$[\La',n,r,\b']$ be self-dual simple strata which intertwine in~$\G$, and suppose~$e(\La)=e(\La')$. Put~$\E=\F[\b]$,~$\E'=\F[\b']$. Then we have an equality of ramification indices~$e(\E/\E_\so)=e(\E'/\E'_\so)$.}
%Let~$[\La,n,r,\b],[\La,n,r,\b']$ be equivalent self-dual simple strata and put~$\E=\F[\b]$,~$\E'=\F[\b']$. Then we have an equality of ramification indices~$e(\E/\E_\so)=e(\E'/\E'_\so)$.
\end{lemma}

\begin{proof}
\shauns{By a self-dual~$\dag$-construction, without loss of generality we can assume that~$\La,\La'$ are \bob{regular} standard self-dual, so conjugate in~$\G$. Then~\cite[Theorem~8.5]{SkSt} implies that the strata are conjugate up to equivalence in~$\G$ so, by conjugating, we may assume they are equivalent.} 
Then, by \cite[5.2(i)]{BK94} the residue fields of~$\E$ and~$\E'$ coincide in~$\aa_0(\La)/\aa_1(\La)$ and thus the induced action of the adjoint anti-involution on the residue fields coincides, which finishes the proof.  
\end{proof}

\shaun{Many results concerning simple strata are proved ``by induction along~$r$'': that is, they are proved for minimal strata first, when~$r=n-1$ and then in general using the following fundamental approximation result.}

\begin{proposition}[{\cite[Theorem~2.4.1]{BK93}, \cite[Proposition~1.10]{St00}}]\label{prop:approx}
\shaun{Let~$[\La,n,r,\b]$ be a pure stratum. Then there is a simple stratum~$[\La,n,r,\g]$ equivalent to it and, for any such stratum,
\[
\text{$f(\F[\g]/\F)$ divides~$f(\F[\b]/\F)$\qquad and\qquad $e(\F[\g]/\F)$ divides~$e(\F[\b]/\F)$.}
\]
Moreover, if~$[\La,n,r,\b]$ is self-dual then~$[\La,n,r,\g]$ may be taken to be self-dual also.}
\end{proposition}
%%%%%%%%%%%%%%%%%%%%%%%%%%%%%%%%%%%%%%%%%%%%%%%%%%%%%%%%%%%%%%%%%%%%%%%%%%%%%
%% Perhaps we need (self-dual) pure is equivalent to (self-dual) simple here?
%%%%%%%%%%%%%%%%%%%%%%%%%%%%%%%%%%%%%%%%%%%%%%%%%%%%%%%%%%%%%%%%%%%%%%%%%%%%%

Finally in this subsection, we introduce concordance of pure strata. We introduce the following notation:

\begin{notation} 
For~$\b\in\A$ with~$\E=\F[\b]$ a field, we denote by~$\varphican$ the canonical embedding of~$\E$ in~$\A$.
\end{notation}

%Note that, if
If~$[\La,n,r,\b]$ is a self-dual pure stratum with~$\E=\F[\b]$, then~$(\E,\b)$ is a self-dual extension of~$\F/\F_\so$ and the canonical embedding~$\varphican$ %of~$\E$ in~$\A$ 
is a self-dual embedding.

\begin{definition}\label{Def:wittstrata}
Let~$[\La,n,r,\b]$ and~$[\La',n',r',\b']$ be self-dual pure strata in~$\A$. We say that they are \emph{concordant} if the pairs~$(\b,\varphican)$ and~$(\b',\varphicanprime)$ are concordant.
\end{definition}

If two self-dual pure strata are conjugate in~$\G$ then they are concordant (see Remark~\ref{rem:concordance}\ref{rem:concordance.ii}). The purpose of this section is to investigate the relationship between \emph{intertwining} of self-dual strata and concordance. 

\shauns{One particular case where we get concordance for free is by using the self-dual~$\dag$-construction. If~$[\La,n,r,\b]$ is a self-dual pure stratum with~$\E=\F[\b]$ and we write~$\varphi_\b$ also for the canonical embedding of~$\E$ in~$\A^\dag$ then~$[h^\dag_{\varphi_\b}]$ is always the trivial class. \shauny{Thus we get the following result:}}

\begin{lemma}\label{lem:dagconcord}
\shauny{If~$[\La,n,r,\b]$ and~$[\La',n',r',\b']$ are} \bob{non-null} \shauny{self-dual pure strata with~$e(\La)=e(\La')$, then~$[\La^\dag,n,r,\b^\dag]$ and~$[\La'^\dag,n',r',\b'^\dag]$ are concordant.}
\end{lemma}

%%%%%%%%%%%%%%%%%%%%%%%%%%%%%%%%%%%%
\subsection{Minimal elements and tamely ramified extensions}
%%%%%%%%%%%%%%%%%%%%%%%%%%%%%%%%%%%%
We will need the following \rob{lemmas} on minimal elements and tamely ramified extensions:
%Something motivational ``atoms'' according to BK. 

\begin{lemma}\label{lemma:tameminimal}
Suppose~$\E_1=\F[\b_1]$ is a tamely ramified finite extension, with~$\b_1$ a minimal element, and~$\E_2/\F$ is another finite extension, with~$\b_2\in\E_2^\times$. For~$i=1,2$, write~$e_i=e(\E_i/\F)$ for the ramification index and~$n_i=\val_{\E_i}(\b_i)$, and suppose that 
\begin{enumerate}\setlength\itemsep{5pt}
\item $n_1/e_1=n_2/e_2$, and
\item $\b_1^{e_1}\w_\F^{-n_1}+\p_{\E_1}$ and~$\b_2^{e_1}\w_\F^{-n_{1}}+\p_{\E_2}$ have the same minimal polynomial over~$\mathrm{k}_\F$.
\end{enumerate}
Then, there is a unique~$\F$-embedding~$\phi:\E_1\to\E_2$ such that 
\begin{equation}\label{eq:tameminimal}
\phi(\b_1)\b_2^{-1} \in \U^1_{\E_2}. 
\end{equation}
Furthermore, if~$(\E_1,\b_1)$ and~$(\E_2,\b_2)$ are self-dual field extensions of~$\F$, then~$\phi$ is~$(\ov{\phantom{a}})$-equivariant.
\end{lemma}

\begin{proof}
Let~$\mathsf{p}(X)\in \mathrm{k}_\F[X]$ denote the common minimal polynomial of~$\b_1^{e_1}\w_\F^{-n_1}\bmod{\p_{\E_1}}$ and~$\b_2^{e_1}\w_\F^{-n_1}\bmod{\p_{\E_2}}$ over~$\mathrm{k}_\F$. We take a monic polynomial~$P(X)\in\o_\F[X]$ whose reduction modulo~$\p_\F$ is~$\mathsf{p}(X)$. By Hensel's Lemma, for~$i=1,2$, we have roots~$\gamma_i\in\E_i$ of~$P(X)$ satisfying~$\gamma_i\equiv \b_i^{e_1}\w_\F^{-n_1}\bmod{\p_{\E_i}}$.
There is an~$\F$-monomorphism from~$\E_1$ into a separable closure of~$\E_2$ which maps~$\gamma_1$ to~$\gamma_2$.  Thus we can assume that~$\E_1/\F$ is totally ramified and~$\gamma_1=\gamma_2$.  

We may suppose that the uniformizer~$\w_\F$ is an~$e_1$-th power in~$\E_1$, so that~$\b_1^{e_1}\w_\F^{-n_1}$ is also an~$e_1$-th power. The latter is equal to~$\b_2^{e_1}\w_\F^{-n_1}\bmod{\p_{\E_2}}$ and Hensel's Lemma provides~$e_1$-th roots~$\xi_i\in E_i^\times$ of~$\b_i^{e_1}\w_\F^{-n_1}$  such that~$\xi_1\bmod{\p_{\E_1}}$ is equal to~$\xi_2\bmod{\p_{\E_2}}$ as elements of~$\mathrm{k}_\F$. Then~$\b_i\xi_i^{-1}$, for~$i=1,2$, are roots of the polynomial~$X^{e_1}-\w_\F^{n_1}$, which is irreducible over~$\F$ because~$\b_1\xi_1^{-1}$ generates~$\E_1$. The~$\F$-monomorphism~$\phi$ which maps~$\b_1\xi_1^{-1}$ to~$\b_2\xi_2^{-1}$ then satisfies~\eqref{eq:tameminimal}.

We now prove the uniqueness of~$\phi$. First observe that~\eqref{eq:tameminimal} implies that the map on~$\mathrm{k}_{\E_1}$ induced by~$\phi$ sends~$\b_1^{e_1}\w_\F^{-n_1}+\p_{\E_1}$ 
to~$\b_2^{e_1}\w_\F^{-n_{\blue{1}}}+\p_{\E_2}$. Since~$\b_1^{e_1}\w_\F^{-n_1}+\p_{\E_1}$ generates the residue field of~$\E_1$, we see that~$\phi$ is uniquely determined on the maximal unramified subextension of~$\E_1$ and, as above, we can assume without loss of generality that~$\E_1/\F$ is totally ramified. Again, we may suppose that the uniformizer~$\w_\F$ is an~$e_1$-th power in~$\E_1$.

By B\'ezout's Lemma, there are integers~$r,s$ such that~$\w_1=\b_1^r\w_\F^s$ is a uniformizer of~$\E_1$, and we set~$\w_2=\b_2^r\w_\F^s$ (which is not necessarily a uniformizer of~$\E_2$). Then~\eqref{eq:tameminimal} implies that~$\phi(\w_1)\w_2^{-1}$ is an element of~$\U^1_{\E_2}$. Therefore, if~$\phi'$ also satisfies~\eqref{eq:tameminimal}, then~$\phi(x)\phi'(x)^{-1}$ is an element of~$\U^1_{\E_2}$, for all~$x\in\E_1^\times$. In particular, if~$x$ is an~$e_1$-th root of~$\w_\F$, then~$\phi(x)\phi'(x)^{-1}$ is an~$e_1$-th root of unity in~$\U^1_{\E_2}$ and thus equal to~$1$, since $p$ does not divide~$e_1$. This completes the proof of uniqueness.

The equivariance assertion follows from the uniqueness since~$\phi$ and~$(\ov{\phantom{a}})\circ\phi\circ(\ov{\phantom{a}})$ both satisfy~\eqref{eq:tameminimal}.
\end{proof}

\begin{lemma}\label{lem:tameminimal}
Suppose~$\b$ is a minimal element of an algebraic closure of~$\F$ and set~$\E=\F[\b]$. Let~$\Ft/\F$ be the maximal tamely ramified field extension of~$\E/\F$ and set~$e_p:=[\E:\F]/[\Ft:\F]$, the wild ramification index of~$\E/\F$.
\begin{enumerate}\setlength\itemsep{5pt}
\item\label{lem:tameminimal.i} There is a non-zero element~$\bt$ of~$\Ft$ such that~$\b^{e_p}(\bt)^{-1}\in\U_\E^1$.
\item\label{lem:tameminimal.ii} Any element~$\bt$ as in~\ref{lem:tameminimal.i} is minimal over~$\F$ and generates~$\Ft$ over~$\F$.
\end{enumerate}
\end{lemma}

\begin{proof}
\shauny{We set~$n=\val_\E(\b)$.} 
\begin{enumerate}\setlength\itemsep{5pt}
\item Take a uniformizer~$\wt$ of~$\Ft$. 
\daniel{Then~$\val_{\E}(\wt)=e_p$ and there is a unit~$x$ of~$\Ft$ such that
$\b^{e_p}(\wt)^{-n}x^{-1}$ belong to~$\U_\E^1$.
The element~$\bt:=(\wt)^{n}x$ satisfies the assertion. 
}
% Then~$\val_{\E}(\wt)=e_p$, and therefore there is an integer~$s$ such that
% \[
% \val_{\E}((\wt)^s)=\val_{\E}(\b^{e_p}).
% \]
% Thus there is a unit~$x$ of~$\Ft$ such that
% \[
% \b^{e_p}(\wt)^{-s}x^{-1}\in \U_\E^1.
% \]
% The element~$\bt:=(\wt)^{s}x$ satisfies the assertion. 
\item We take an element~$\bt$ as in~\ref{lem:tameminimal.i}. It generates a sub-extension~$\L/\F$ of~$\Ft/\F$. Set~$e=e(\E/\F)$, $e_\tame=e(\Ft/\F)$ and~$n_\tame=\val_{\Ft}(\bt)$. 
\daniel{Then~$n=n_\tame$ and} there is an element~$u\in \U^1_\E$ such that
\[
\b^{e}\w_\F^{-n}u=(\bt)^{e_\tame}\w_\F^{-n_\tame}.
\]
Thus, by the minimality of~$\b$, we obtain that the residue class of 
\begin{equation}\label{eqResclassMin}
(\bt)^{e_\tame}\w_\F^{-n_\tame}
\end{equation}                                                                                              
generates the residue field extension~$\mathrm{k}_{\E}/\mathrm{k}_\F$. Thus~$\mathrm{k}_\L=\mathrm{k}_{\E}$, and~$\E$ contains the maximal unramified sub-extension of~$\E/\F$. Further~$n_\tame$ is prime to the ramification index~$e_\tame$, since~$e_\tame$ divides~$e$ and~$\b$ is minimal. Thus
\[
\ZZ\cup\{\infty\}=\val_{\Ft}(\Ft)=\val_{\Ft}(\L),
\]
and therefore~$\Ft=\L$. So~$\bt$ generates~$\Ft$ and the minimality follows. 
\end{enumerate}
\end{proof}

%%%%%%%%%%%%%%%%%%%%%%%%%%%%%%%%%%%%
\subsection{Tame subextensions}
%%%%%%%%%%%%%%%%%%%%%%%%%%%%%%%%%%%%
%%%%%% Some words of introduction?

\begin{lemma}\label{lem:tamesubconj}
\shaun{Let~$[\La,n,n-1,\b]$ be a self-dual pure stratum, and~$[\La',n,n-1,\g]$ be a self-dual simple stratum, in~$\A$, which intertwine in~$\G$. Let~$\Ft$ denote the maximal tamely ramified subextension of~$\F[\g]/\F$ and set~$\E=\F[\b]$.
\begin{enumerate}\setlength\itemsep{5pt}
\item\label{lem:tamesubconj.i} There exists~$g\in \G$ such that~$g\Ft g^{-1}\subseteq \E$.
\item\label{lem:tamesubconj.ii} If~$\La'=\La$, then there exists~$g\in \P_-(\La)$ such that~$g\Ft g^{-1}\subseteq \E$.
\end{enumerate}
}
\end{lemma}

\begin{proof}
\shaun{Set~$e_p=[\F[\g]:\Ft]$. By Lemma~\ref{lem:tameminimal}, there is an element~$\gt$ in~$\Ft$ such that~$\gt \g^{-e_p}\in \U^1_{\F[\g]}$; since~$\g$ is skew and~$e_p$ is odd, we can choose~$\gt$ to be skew, and it is also minimal.}

\shaun{The strata~$[\La',ne_p,ne_p-1,\g^{e_p}]$ and~$[\La',ne_p,ne_p-1,\gt]$ are then equivalent. Thus the pure strata~$[\La,ne_p,ne_p-1,\b^{e_p}]$ and~$[\La',ne_p,ne_p-1,\gt]$ intertwine, so have a common characteristic polynomial and a common minimal polynomial, by Lemma~\ref{lemma:intertwiningminimalstrataramindices}. Thus we can apply Lemma~\ref{lemma:tameminimal} with~$\b_1=\gt$ and~$\b_2=\b^{e_p}$ to deduce that there is an equivariant monomorphism~$\phi:\Ft\rightarrow \E$ such that~$\phi(\gt)\b^{-e_p} \in \U^1_\E$. Then the strata~$[\La,ne_p,ne_p-1,\phi(\gt)]$ and~$[\La,ne_p,ne_p-1,\b^{e_p}]$ are equivalent.}

\shaun{It follows that the simple strata~$[\La',ne_p,ne_p-1,\gt]$ and~$[\La,ne_p,ne_p-1,\phi(\gt)]$ intertwine. Then~\cite[Theorem~5.2]{SkSt} implies the existence of an element of~$\G$ which conjugates~$\gt$ to~$\phi(\gt)$. Thus we have found~$g\in\G$ such that~$g\Ft g^{-1}=\phi(\Ft)\subseteq \E$.}

\shaun{Finally, if~$\La'=\La$ then~\cite[Theorem 1.2]{SkodField} implies that the element conjugating~$\gt$ to~$\phi(\gt)$ can be chosen to be in~$\P_-(\La)$.}
\end{proof}

Now let~$[\La,n,n-1,\b]$ and~$[\La',n,n-1,\b']$ be self-dual pure strata in~$\A$ which intertwine in~$\G$, and put~$\E=\F[\b],\E'=\F[\b']$.  Let~$[\La,n,n-1,\gamma]$ and~$[\La',n,n-1,\gamma']$ be self-dual simple strata in~$\A$, respectively equivalent to~$[\La,n,n-1,\b]$ and~$[\La',n,n-1,\b']$.

\begin{corollary}\label{cor:strataK}
With the notation above, there exists~$g\in \G$ such that~$\E\cap g\E'g^{-1}$ contains a tamely ramified extension~$\K/\F$ which is stable under the adjoint anti-involution but is not fixed pointwise. Moreover,~$\K$ can be chosen to be~$\P_-(\La)$-conjugate to the maximal tamely ramified subextension of~$\F[\gamma]/\F$. Furthermore, if~$\La=\La'$ then we can take~$g\in\P_-(\La)$.
\end{corollary}

\begin{proof}
\shaun{Let~$\Ft$ and~$\Fprt$ denote the maximal tamely ramified subextensions of~$\F[\g]/\F$ and~$\F[\g']/\F$ respectively. We apply Lemma~\ref{lem:tamesubconj} several times to find elements~$x\in\P_-(\La)$,~$x'\in\P_-(\La')$ and~$y\in\G$ such that
\[
x\Ft x^{-1}\subseteq\E,\qquad x'^{-1}\Fprt x'\subseteq\E',\qquad y^{-1}\Ft y\subseteq \Fprt.
\]
Then~$\K=x\Ft x^{-1}$ and~$g=xyx'$ are as required. Moreover, if~$\La=\La'$ then~$y$ can be chosen in~$\P_-(\La)$, by Lemma~\ref{lem:tamesubconj}\ref{lem:tamesubconj.ii}, and then~$g\in\P_-(\La)$ as required.}
\end{proof}

%%%%%%%%%%%%%%%%%%%%%%%%%%%%%%%%%%%%
\subsection{Concordance in the symplectic case}
%%%%%%%%%%%%%%%%%%%%%%%%%%%%%%%%%%%%
Using the technique of twisting, we now get an analogue of Proposition~\ref{prop:firstmatch} for the symplectic case, when we have an additional hypothesis on intertwining of strata.

\begin{lemma}\label{lemma:symplecticstrataconditionsimplyconcordance1}
Suppose that~$\e=-1$ and~$\F=\F_\so$. Let~$[\La,n,n-1,\b]$ and~$[\La',n,n-1,\b']$ be self-dual pure strata in~$\A$ which intertwine in~$\G$, and put~$\E=\F[\b],\E'=\F[\b']$. Suppose further that~$\dim_\E\V,\dim_{\E'}\V$ have the same parity, and either
\begin{enumerate}\setlength\itemsep{5pt}
\item this common parity is even; or
\item this common parity is odd and~$\sw_1(\la 1\ra)=\la 1\ra$.
\end{enumerate}
Then the strata are concordant.
\end{lemma}

\begin{proof}
\shaun{Conjugating by an element of~$\G$ (which does not affect concordance, by Remark~\ref{rem:concordance}), we can assume that the strata are intertwined by the identity. Then~\cite[Lemma~5.3]{SkSt} applied twice (as in the proof of~\cite[Theorem~5.2]{SkSt}) implies that the spaces~$(\V,\b^*h)$ and~$(\V,\b'^*h)$ are isometric (by an element of~$\P^1(\La')\P^1(\La)$).}

\shaun{Now Corollary~\ref{cor:strataK} implies that the hypotheses of Proposition~\ref{prop:firstmatch} are satisfied so that the pairs~$(\b,\varphican)$ and~$(\b',\varphicanprime)$ are~$(\b^*h,\b'^*h)$-concordant, and it follows from Lemma~\ref{lemma:twistSympToOrthConc} that our original strata are concordant.}
\end{proof}

\shaun{The following Lemma will be useful when we need to understand whether concordance is preserved when we pass from pure strata to equivalent simple strata.}

\begin{lemma}\label{lemma:symplecticstrataconditionsimplyformshyperbolic}
Suppose that~$\e=-1$ and~$\F=\F_\so$. Let~$[\La,n,n-1,\b],~[\La,n,n-1,\b']$ be self-dual pure strata in~$\A$ which intertwine in~$\G$ and put~$\E=\F[\b]$ and~$\E'=\F[\b']$. % and let~$\varphican$ and~$\varphicanprime$ denote their respective canonical embeddings into~$\A$.  
Suppose further that~$\dim_\E\V$ is odd,~$\dim_{\E'}\V$ is even and~$[\La,n,n-1,\b']$ is simple.  
\shaun{Then~$h_{\varphicanprime}$ is hyperbolic if and only if~$[\b^*(h_{\varphican})]=\la 1\ra$.}
\end{lemma}

\shaun{Note that the condition~$[\b^*(h_{\varphican})]=\la 1\ra$ can be translated into a condition on~$h_\varphican$, using Corollary~\ref{cor:sdexts1}: either~$[h_{\varphican}]=\la \b \ra$ and~$-1$ is a square in~$\E$, or~$[h_{\varphican}]\neq\la \b \ra$ and~$-1$ is a non-square in~$\E$.}

\begin{proof}
\shaun{The stratum~$[\La,n,n-1,\b]$ is equivalent to a self-dual simple stratum~$[\La,n,n-1,\g]$ in~$\A$, which also intertwines~$[\La,n,n-1,\b']$. By~\cite[8.5]{SkSt}, this implies that~$[\La,n,n-1,\g]$ is, up to equivalence, conjugate to~$[\La,n,n-1,\b']$ in~$\G$ and, replacing~$[\La,n,n-1,\b']$ by its conjugate, we may assume that~$[\La,n,n-1,\b]$ and~$[\La,n,n-1,\b']$ are equivalent. Then~$(\V,\b^*(h))$ is isometric to~$(\V,\b'^*(h))$ by an element~$u\in\P^1(\La)$ by~\cite[Lemma~5.3]{SkSt}.%\cite[Proposition~3.1]{SkSt} (applied to the identity map).
}

\shaun{We show first that~$\l_\b^*(\la 1\ra)=\bs 0$. By Corollary~\ref{cor:strataK} we can choose~$u$ such that~$u^{-1}\E' u\cap \E$ contains a \rob{Galois-}invariant subfield~$\K$ with~$\K\neq \K_\so$. The assumptions imply that~$[\E:\K]$ is even, since~$\dim_\E\V$ is odd while~$\dim_\K\V$ is even because~$\K$ is contained in~$u^{-1}\E' u$.}

\shaun{Choose non-zero \rob{Galois-}equivariant linear forms
\[
\lambda_{\K/\F}:\K\rightarrow \F,\ \lambda_{\E/\K}:\E\rightarrow \K.
\]
Then~$\l_\b^*$ and~$\lambda_{\K/\F}^*\circ\lambda_{\E/\K}^*$ on~$\Ww_1(\E/\E_\so)$ have the same image, by Proposition~\ref{prop:lambda*}\ref{prop:lambda*.i}. Now the image of~$\lambda_{\E/\K}^*$ is contained in~$\Ww_1^\even(\K/\K_\so)$, so consists of~$\bs 0$ and the maximal element. Then, by Proposition~\ref{prop:lambda*}, the image of~$\l_\b^*$ also consists of~$\bs 0$ and the maximal element (which has anisotropic dimension four). However,~$\lambda_\b^*(\la 1\ra)$ has anisotropic dimension at most~$2$, by Proposition~\ref{prop:transfer}\ref{prop:transfer.ii}, so~$\lambda_\b^*(\la 1\ra)=\bs 0$.}

\shaun{Now, since~$\dim_\E\V$ is odd, the class~$[\b^*(h_\varphican)]$ is that of a~$1$-dimensional anisotropic space. Since~$[\b^*h] = \l_\b^*([\b^*(h_\varphican)])$ and~$\l_\b^*$ is injective on the~$1$-dimensional anisotropic spaces, it follows that~$\b^*h$ is hyperbolic if and only if~$[\b^*(h_\varphican)]=\la 1\ra$.}

\shaun{On the other hand, since~$\dim_{\E'}\V$ is even, the class~$[\b'^*h]$ is either~$\bs 0$ or maximal. By injectivity of~$\l_{\b'}^*$ on~$\Ww_1^\even(\E'/\E'_\so)$ and Proposition~\ref{prop:lambda*}\ref{prop:lambda*.ii}, we deduce that~$\b'^*h$ is hyperbolic if and only if~$\b'^*(h_{\varphicanprime})$ is hyperbolic, which occurs if and only if~$h_{\varphicanprime}$ is hyperbolic since~$\b'^*$ is an isomorphism.}

\shaun{Putting together the results of the last two paragraphs, since~$(\V,\b^*h)$ and~$(\V,\b'^*h)$ are isometric we see that~$h_{\varphicanprime}$ is hyperbolic if and only if~$[\b^*(h_\varphican)]=\la 1\ra$, as required.}
\end{proof}

%%%%%%%%%%%%%%%%%%%%%%%%%%%%%%%%%%%%
\subsection{Concordance of intertwining simple strata}
%%%%%%%%%%%%%%%%%%%%%%%%%%%%%%%%%%%%
\shaun{We are now in a position to prove that self-dual simple strata which intertwine in~$\G$ are concordant.} 
\shauns{In fact, we deduce it from the following numerical criterion for concordance of pure strata.}

\begin{lemma}\label{lemma:purestrataintertwineandramimpliesconcordant}
\shauns{Let~$[\La,n,n-1,\b]$ and~$[\La',n,n-1,\b']$ be self-dual pure strata which intertwine in~$\G$. Setting~$\E=\F[\b]$ and~$\E'=\F[\b']$, suppose further that
\[
e(\E/\F)=e(\E'/\F),~f(\E/\F)=f(\E'/\F),~\text{and}~e(\E/\E_\so)=e(\E'/\E'_\so),
\]
Then the strata are concordant.}
\end{lemma}

\begin{proof}
\shauns{It follows from the assumptions and Lemma~\ref{lemma:isoResfields} that~$e(\La)=e(\La')$ and~$\val_\E(\b)=\val_{\E'}(\b')$, while there is a~$\mathrm{k}_\F$-\daniel{linear field isomorphism between~$\mathrm{k}_\E$ and~$\mathrm{k}_{\E'}$} which sends~$y_\b$ to~$y_{\b'}$.} 
\shaun{Thus the self-dual \rob{extensions}~$(\E,\b)$ and~$(\E',\b')$ are \emph{similar}, in the sense of Definition~\ref{def:similar}.}
 
\shaun{The result now follows from Lemma~\ref{lemma:symplecticstrataconditionsimplyconcordance1} when~$\G$ is symplectic, from Proposition~\ref{prop:firstmatch} together with Corollary~\ref{cor:strataK} when~$\G$ is orthogonal, and from Lemma~\ref{lem:concordwithyb} when~$\G$ is unitary.}
\end{proof}

\begin{corollary}\label{cor:strataintertwineandramimpliesconcordant}
\shauns{Let~$[\La,n,r,\b]$ and~$[\La',n,r,\b']$} be self-dual simple strata which intertwine in~$\G$. Then they are concordant.
\end{corollary}

\begin{proof}
\shauns{If~$r=n$ then both strata are null so there is nothing to prove. Otherwise, the elements~$\b$ and~$\b'$ are necessarily both non-zero by the definition of simple stratum, because the third parameter~$r$ is smaller than~$n$. Put~$\E=\F[\b]$ and~$\E'=\F[\b']$. Then \rob{Lemmas}~\ref{lemma:intertwiningminimalstrataramindices} and~\ref{lemma:equivstrataramindicesequal} imply that~$e(\La)=e(\La')$ and that
\[
e(\E/\F)=e(\E'/\F),~f(\E/\F)=f(\E'/\F),~\text{and}~e(\E/\E_\so)=e(\E'/\E'_\so),
\] 
and the result follows from Lemma~\ref{lemma:purestrataintertwineandramimpliesconcordant}.}
\end{proof}

%%%%%%%%%%%%%%%%%%%%%%%%%%%%%%%%%%%%
\subsection{Intertwining of concordant pure strata}
%%%%%%%%%%%%%%%%%%%%%%%%%%%%%%%%%%%%
Finally in this section, we consider the~$\G$-intertwining of self-dual pure strata which intertwine in~$\tG$:

\begin{proposition}\label{prop:strataintertwiningovertG}
\shauns{Let~$[\La,n,r,\b]$ and~$[\La',n,r,\b']$} be self-dual pure strata in~$\A$ which intertwine in~$\tG$.
\begin{enumerate}\setlength\itemsep{5pt}
\item\label{prop:strataintertwiningovertG.i} 
\shaun{If~$\e=1$ or~$\F\neq\F_\so$ then they intertwine in~$\G$.}
\item\label{prop:strataintertwiningovertG.ii} 
\shaun{If the strata are simple and concordant then they intertwine in~$\G$.}
\end{enumerate}
\end{proposition}
\shaun{Note that, together with Corollary~\ref{cor:strataintertwineandramimpliesconcordant}, this implies that, in the non-symplectic case, self-dual simple strata which intertwine in~$\tG$ are automatically concordant; this is not true in the symplectic case.}

\begin{proof}
\shauns{If~$r=n$ then all strata are null and there is nothing to prove, so we assume~$r<n$.
As the strata intertwine in~$\tG$, the stratum~$[\La\oplus \La',n,r,\b+\b']$ is equivalent to a simple stratum~$[\La\oplus \La',n,r,\gamma]$ in~$\End_{\F}(\V\oplus \V)$ by~\cite[Proposition~7.1]{SkSt}. By~\cite[Theorem~6.16]{SkSt}, we can moreover take~$[\La\oplus \La',n,r,\gamma]$ to be self-dual with respect to~$h\oplus h$ and~$\gamma=(\gamma_1,\gamma_2)\in\A\oplus\A$. Replacing~$\b,\b'$ with~$\gamma_1,\gamma_2$, we can therefore assume that~$\b,\b'\in\A_-$ have the same minimal polynomial over~$\F$. In the situation of~\ref{prop:strataintertwiningovertG.ii}, Corollary~\ref{cor:strataintertwineandramimpliesconcordant} ensures that the strata are still concordant so this replacement is possible.}

\shaun{Now~\ref{prop:strataintertwiningovertG.i} follows from \cite[Corollary~5.1]{SkSt}, while~\ref{prop:strataintertwiningovertG.ii} follows from Remark~\ref{rem:concordance}\ref{rem:concordance.ii}.}
\end{proof}

\orange{
For later use, we also get the following corollary on similar self-dual extensions.
\begin{corollary}\label{cor:similardiagramscommute}
Suppose~$(\E,\b)$ and~$(\E',\b')$ are similar self-dual extensions. Then the diagrams~\eqref{eqDiagConcOdd} and~\eqref{eqDiagConcEven} are commutative.
\end{corollary}}
\begin{proof}
\orange{There is nothing to prove in the symplectic case, while the result is given by Lemma~\ref{lem:concordwithyb}\ref{lem:concordwithyb.ia} if~$\F\ne\F_\so$. Thus we suppose we are in the orthogonal case:~$\F=\F_\so$ and~$\e=1$. Replacing~$\b,\b'$ by~$\w_\F^{2k}\b,\w_\F^{2k}\b'$ respectively, for suitable~$k<0$, we may assume that~$\val_\E(\b)=\val_{\E'}(\b')<0$.}

\orange{
\bob{
Let~$(\V,h_\E)$ be a~$2$-dimensional hyperbolic space over~$\E/\E_\so$ and~$\La$ a strict self-dual~$\o_\E$-lattice sequence in~$\V$ of~$\o_\E$-period~$2$ such that~$\La(0)^{\#_{h_\E}}=\La(0)$. \shauny{By~\cite[Lemma 5.5]{BS09}, we can choose a non-zero Galois-equivariant~$\F$-linear map~$\lambda:\E\rightarrow\F$ such that, setting~$h:=\lambda\circ h_\E$, we have 
\[
\L^{\#_{h_\E}}=\L^{\#_h}, \text{for all~$\o_\E$-lattices~$\L$ in~$\V$.}
\]
In particular,~$\La(0)^{\#_{h}}=\La(0)$.}
%\[\La(z)^{\#_h}=\La(-z),\ z\in\ZZ.\]
Doing the same with~$\E'/\E'_\so$, we obtain a space~$(\V',h')$ isometric to~$(\V,h)$ and a regular self-dual~$\o_{\E'}$-lattice sequence~$\La'$ with~$e(\La|\o_\F)=~e(\La'|\o_\F)$ and~$\La'(0)^{\#_{h'}}=\La'(0)$.}
% Let~$(\V,h_\E)$ be a~$2$-dimensional hyperbolic space over~$\E/\E_\so$ and~$\La$ a strict self-dual~$\o_\E$-lattice sequence in~$\V$ of~$\o_\E$-period~$2$. We set~$h=\l_\b^*(h_\E)$ so that~$(\V,h)$ is a hyperbolic orthogonal space over~$\F$ and~$\La$ has~$\o_\F$-period~$e(\La|\o_\F)=2e(\E/\F)$. Doing the same with~$\E'/\E'_\so$, we obtain a space~$(\V',h')$ isometric to~$(\V,h)$ and a strict self-dual~$\o_{\E'}$-lattice sequence~$\La'$ with~$e(\La|\o_\F)=~e(\La'|\o_\F)$.
By~\cite[Proposition 5.2]{SkodField}, there is an isometry from~$(\V,h)$ to~$(\V',h')$ which sends~$\La$ to~$\La'$ so we may assume~$(\V,h)=(\V',h')$ and~$\La=\La'$.
}

\orange{
%We identify~$\b,\b'$ with their images under~$\vphi.\vphi'$ respectively, so that
Now~$[\La,n,n-1,\b]$ and~$[\La,n,n-1,\b']$ are self-dual pure strata, where~$n=-\val_{\La}(\b)=-\val_{\La}(\b')$. By~\cite[2.5.8,2.5.11]{BK93} both strata are equivalent to simple strata in~\emph{$\g$-standard form} for the same~$\g$, since~$y_\b,y_{\b'}$ have the same minimal polynomial. Thus the strata intertwine in~$\tG$, and Proposition~\ref{prop:strataintertwiningovertG} implies that they intertwine in~$\G$. Now Corollary~\ref{cor:strataK} implies that there is a} \gre{$g\in\G$ and an}  \orange{extension~$\K/\F$ contained in~$\E\cap g\E'g^{-1}$ which is stable under the adjoint anti-involution but is not fixed pointwise. Moreover, since~$\E/\F$ and~$\E'/\F$ have the same residue degree, Lemma~\ref{lemma:w1oneifsquares} implies that~$\sw_1(\la 1\ra)=\la 1\ra$. Thus the hypotheses of Lemma~\ref{lemma:commutConcDiag} are satisfied and we conclude that the diagrams commute as required.
}
\end{proof}

%%%%%%%%%%%%%%%%%%%%%%%%%%%%%%%%%%%%

%%%%%%%%%%%%%%%%%%%%%%%%%%%%%%%%%%%%
\section{Self-dual simple characters: intertwining and concordance}\label{sec:simplechars}
%%%%%%%%%%%%%%%%%%%%%%%%%%%%%%%%%%%%
In this section, we investigate intertwining of self-dual simple characters and concordance of their underlying simple strata.  The main result is Proposition~\ref{prop:TiGandGIntertwiningSameNonSympl}.  %We will say that self-dual simple characters are \emph{Witt-concordant} if they are defined relative to concordant self-dual field extensions~$\F[\b],\F[\b']$, and, as a simple character does not determine the field extension, show that this is equivalent to any self-dual field extensions defining the simple characters being concordant.

In previous works on self-dual simple characters it is often assumed that the characters take values in the complex numbers~$\mathbb{C}$, for example in~\cite{St05}. However, as they are characters of pro-$p$ groups and~$\CC$ contains a complete set of $p$-power roots of unity, all of the results apply equally well over~$\CC$, and often we just refer to the results over~$\mathbb{C}$.

For the rest of the paper, we fix a non-trivial character~$\psi_{\so}:\F_\so\rightarrow \CC^\times$ of conductor~$\p_{\F_\so}$, and define~$\psi:\F\rightarrow\CC^\times$ by~$\psi=\psi_\so\circ\T_{\F/\F_\so}$.

%%%%%%%%%%%%%%%%%%%%%%%%%%%%%%%%%%%%
\subsection{Simple characters}
%%%%%%%%%%%%%%%%%%%%%%%%%%%%%%%%%%%%
Let~$[\La,n,r,\b]$ be a non-zero simple stratum in~$\A$.  Associated to~$[\La,n,r,\b]$ is an~$\o_{\F}$-order~$\HH(\b,\La)$~in~$\A$ defined inductively, see~\cite[\S3.1]{BK93} for the original definition when~$\La$ is strict and~\cite[\S5]{BK99} in general. 
% If~$[\La,n,n-1,\b]$ is simple, i.e.~$\b$ is minimal, then we set  \[\HH(\b,\La)=;\] otherwise, let~$[....]$ be a simple stratum equivalent to.... and we set, 
%\[\HH(\b,\La)=...\]
For~$m\geqslant 1$, we put
\[
\H^{m}(\b,\La)=\HH(\b,\La)\cap \P^{m}(\La),
\]
a compact open subgroup of~$\tG$.  

Also associated to~$[\La,n,r,\b]$, and our fixed character~$\psi$, is a set~$\Cc(\La,r,\b)$ of \emph{simple characters} of~$\H^{r+1}(\b,\La)$, defined in \cite[\S 3.2]{BK93} when~$\La$ is strict and in~\cite[\S5]{BK99} in general. In the case of a zero simple stratum~$[\La,n,n,0]$ we put~$\H^{n+1}(0,\La)=\P^{n+1}(\La)$ and define the associated set of simple characters to be~$\Cc(\La,n,0)=\{1_{\P^{n+1}(\La)}\}$.  

Given two simple strata~$[\La,n,r,\b]$ and~$[\La',n',r',\b]$ in~$\A$, with~$\E=\F[\b]$, such that~$\left\lfloor\frac{r}{e(\La|\o_\E)}\right\rfloor=\left\lfloor\frac{r'}{e(\La'|\o_\E)}\right\rfloor$, there is a canonical bijection~$\tau_{\La',\La,\b}:\Cc(\La,r,\b)\rightarrow \Cc(\La',r',\b)$ called~\emph{transfer}, see \cite[3.6.1]{BK93},~\cite[Section~2.1]{St05} and~\cite[Section~3.1]{secherreI}; if~$\theta\in\Cc(\La,r,\b)$ then~$\tau_{\La',\La,\b}(\theta)$ is the unique simple character~$\theta'\in\Cc(\La',r',\b)$ such that $1\in\tG$ intertwines~$\theta$ with~$\theta'$. \shaun{Note that, although we omit it from our notation, the transfer map depends on the integers~$(r,r')$.}

\daniel{In general intertwining between simple characters does not imply intertwining between underlying strata, but we still have the following important implication: }\rob{Suppose that}\daniel{~$[\La,n,r,\b]$ and~$[\La',n',r',\b']$ are simple strata and~$g$ is an element of~$\tG$ which intertwines a character in~$\Cc(\La,r,\b)$ with a character in~$\Cc(\La',r',\b')$. Then~$g$ intertwines~$[\La,n,\max(n-1,r),\b]$ with~$[\La',n',\max(n'-1,r'),\b']$, noting that every element of~$\Cc(\La,r,\b)$ and~$\Cc(\La',r',\b')$ restricts to the character attached to~$[\La,n,\max(n-1,r),\b]$ and~$[\La',n',\max(n'-1,r'),\b']$, respectively, see~\cite[2.1]{St08} and~\cite[9.5]{SkSt}.} 

%\rob{THIS DOES NOT SEEM SYMMETRIC IN ~$[\La,n,r,\b]$ and~$[\La,n',r',\b']$ as we have $max(n-1,r')$ in the second? ALSO want to note something for the restriction of the characters in~$\Cc(\La',r',\b')$ as well rather than leave this implicit.}

By induction on~$k_\F(\b)$, the groups and sets of simple characters only depend on the affine class of the strata:

\begin{proposition}
Let~$[\La,n,r,\b]$ and~$[\La',n',r',\b]$ be simple strata in the same affine class.  Then
\[
\H^{r+1}(\b,\La)=\H^{r'+1}(\b,\La'),\qquad \Cc(\La,r,\b)=\Cc(\La',r',\b),
\]
and the transfer map~$\tau_{\La',\La,\b}:\Cc(\La,r,\b)\rightarrow \Cc(\La',r',\b)$ is the identity.
\end{proposition}

%by \cite{}{\color{red}Provide reference} this is well-defined independent of the choice of stratum.
The next proposition shows how intertwining of simple characters interacts with certain arithmetic invariants of the underlying simple strata: 

\begin{proposition}[{cf. \cite[3.5.1]{BK93}}]\label{prop:SimpleDegrees}
Let~$[\La,n,r,\b]$ and~$[\La',n',r',\b']$ be simple strata in~$\A$ satisfying~$e(\La)=e(\La')$. Suppose that one of the following two conditions is satisfied:
\begin{enumerate}\setlength\itemsep{5pt}
\item $r=r'$ and there are simple characters~$\theta\in\Cc(\La,r,\b)$ and~$\theta'\in\Cc(\La',r,\b')$ which intertwine in~$\tG$; \label{prop:SimpleDegrees.i}
\item $\Cc(\La,r,\b)$ and~$\Cc(\La',r',\b')$ intersect non-trivially.\label{prop:SimpleDegrees.ii}
\end{enumerate}
Then, we have
\[ 
k_0(\b,\La)=k_0(\b',\La'),\quad e(\F[\b]/\F)=e(\F[\b']/\F),~\text{and}~f(\F[\b]/\F)=f(\F[\b']/\F).
\]
\end{proposition}

Note that in the second part we allow~$r\neq r'$. The condition~$e(\La)=e(\La')$ can always be obtained by changing  the strata in their affine classes.

\begin{proof}
Suppose that condition~\ref{prop:SimpleDegrees.i} holds.  If~$\theta$ is trivial then the first stratum is null, but then the other stratum is also null (since otherwise~$[\La,r,r,0]$ would intertwine with~$[\La',n',n'-1,\b']$, which is impossible by Lemma~\ref{lemma:fundNull}) and the result follows. Otherwise, both characters are non-trivial and the strata are non-null. Furthermore, by a~$\dag$-construction we can assume that the lattice sequences are strict and regular, of the same period. Then there is an element of~$\tG$ which maps~$\La$ to~$\La'$ so, by~\cite[Theorem~3.5.11]{BK93}, the simple characters are conjugate, and the result then follows from~\cite[Proposition~3.5.1]{BK93}. 

Suppose that condition \ref{prop:SimpleDegrees.ii} holds, and let~$\theta\in\Cc(\La,r,\b)\cap\Cc(\La',r',\b')$. Without loss of generality we assume that~$r\leq r'$.  Let~$\widetilde{\theta}\in\Cc(\La',r,\b')$ be an extension of~$\theta$ (cf.~\cite[(3.2.5)]{BK93}). Then~$\tilde{\theta}$ intertwines with~$\theta$ and condition~\ref{prop:SimpleDegrees.i} holds, and we conclude by the last case. 
\end{proof}

The \emph{degree} of a simple character~$\theta$ is the index~$[\F[\b]:\F]$ for a stratum~$[\La,n,r,\b]$ such that~$\theta\in\Cc(\La,r,\b)$; by Proposition~\ref{prop:SimpleDegrees} this is well-defined, i.e. it is independent of the choice of the stratum.

\shaun{In order to prove results by induction along~$r$, we also need to know what happens when we restrict a simple character.}

\begin{proposition}[{\cite[Corollary~3.3.20,~3.2.5]{BK93},~\cite[Remarks~3.14]{St05}}]\label{prop:restrictsimplechar}
\shaun{Let~$[\La,n,r,\b]$ be a simple stratum and let~$[\La,n,r+1,\g]$ be a simple stratum equivalent to~$[\La,n,r+1,\b]$. Then~$\H^{r+1}(\b,\La)=\H^{r+1}(\g,\La)$ and restriction to~$\H^{r+2}(\b,\La)$ induces a surjective map~$\Cc(\La,r,\b)\to\Cc(\La,r+1,\g)$. Moreover, the assignment
\[
\theta\mapsto \theta\psi_{\b-\g}
\]
defines a bijection~$\Cc(\La,r,\g)\to\Cc(\La,r,\b)$.}
\end{proposition}

\shaun{Let~$[\La,n,r,\b]$ be a simple stratum and let~$[\La,n,r+1,\g]$ be a simple stratum equivalent to~$[\La,n,r+1,\b]$. In this situation, we will write~$\B_\g=\End_{\F[\g]}(\V)$ and~$\tG_\g=\B_\g^\times$. Associated to the field~$\F[\g]$, there is a map~$s_\g:\A\to\B_\g$, called a \emph{tame corestriction} (see~\cite[Definition~1.3.3]{BK93} and~\cite[Definition~6.12]{SkSt}). If~$\theta$ \rob{belongs to}~$\Cc(\La,r,\b)$ and~$\theta_0\in\Cc(\La,r,\g)$ is any extension of~$\theta|_{\H^{r+2}(\b,\La)}$, then we can write~$\theta=\theta_0\psi_{\b-\g+c}$, for some~$c\in\aa_{-(r+1)}(\La)$. The stratum
\[
[\La,r+1,r,s_\g(\b-\g+c)]
\]
is called a \emph{derived stratum} in~$\B_\g$; this derived stratum is equivalent to a simple stratum (\cite[Theorem~6.14]{SkSt}).}

%%%%%%%%%%%%%%%%%%%%%%%%%%%%%%%%%%%%
\subsection{Self-dual simple characters}
%%%%%%%%%%%%%%%%%%%%%%%%%%%%%%%%%%%%
Let~$[\La,n,r,\b]$ be a self-dual simple stratum in~$\A$.  Then the subgroup~$\H^{r+1}(\b,\La)$ together with the set of simple characters~$\Cc(\La,r,\b)$ are stable under the involution~$\sigma$, and we define
\begin{align*}
\H^{r+1}_-(\b,\La)&=\H^{r+1}(\b,\La)^\Sigma=\H^{r+1}(\b,\La)\cap\G;\\
\Cc^\Sigma(\La,r,\b)&=\{\theta\in \Cc(\La,r,\b):\theta^\sigma=\theta\}.
\end{align*}
Thus~$\H^{r+1}_-(\b,\La)$ is a compact open subgroup of~$\G$, and we have a set of~\emph{self-dual simple characters} of~$\H^{r+1}_-(\b,\La)$ defined by restriction:
\[
\Cc_-(\La,r,\b)=\left\{\theta\mid_{\H^{r+1}_-(\b,\La)}:\theta\in\Cc^\Sigma(\La,r,\b)\right\}.
\]
This restriction of characters coincides with the Glauberman correspondence (see \cite[\S 2]{St00}), and defines a bijection $\Cc^\Sigma(\La,r,\b)\rightarrow \Cc_-(\La,r,\b)$. Given~$\theta_-\in\Cc_-(\La,r,\b)$ we will call the unique~$\theta\in\Cc^\Sigma(\La,r,\b)$ such that~$\theta\mid_{\H^{r+1}_-(\b,\La)}=\theta_-$ the \emph{lift of~$\theta_-$ \shauny{with respect to~$(\La,r,\b)$}}. \shauny{ (Below, we will simply write \emph{lift of~$\theta_-$}, because the stratum will be given implicitly.)}  We also define the \emph{degree} of a self-dual simple character to be the degree of \daniel{any} of its lift. \daniel{This is well-defined by Proposition~\ref{prop:SimpleDegrees}\ref{prop:SimpleDegrees.i} and the following proposition:}

A consequence of the Glauberman correspondence, see \cite[2.5]{St00}, is the following proposition:
 
\begin{proposition}\label{prop:Glauberman}
Let~$\theta_-\in\Cc_-(\La,r,\b)$ and~$\theta'_-\in\Cc_-(\La',r',\b')$ be self-dual simple characters with lifts~$\theta$ and~$\theta'$ respectively.  Then an element of~$\G$ intertwines~$\theta$ with~$\theta'$ if and only if it intertwines~$\theta_-$ with~$\theta_-'$.
\end{proposition} 
% if~$\theta_-\in\Cc_-(\La,r,\b)$ and~$\theta'_-\in\Cc_-(\La',r',\b')$ are self-dual simple characters with lifts~$\theta$ and~$\theta'$ respectively, then~$g\in\G$ intertwines~$\theta$ with~$\theta'$ if and only if it intertwines~$\theta_-$ with~$\theta_-'$.
 %
%
%{\color{red}Define~$\H^{r+1}_-(\b,\La)\leqslant \P_-^{r+1}(\La)$, \emph{self-dual simple characters}~$\Cc_-(\La,r,\b)$, %Glauberman correspondence,  
%}
%
%
%\begin{definition}
%Let~$\theta\in\Cc_-(\La,r,\b)$,~$\theta'\in\Cc_-(\La',r',\b')$ be simple characters with~$\b,\b'\not\in\F$.  We say~$\theta,\theta'$ are \emph{Witt-concordant} if the canonical embeddings of~$\F[\b],\F[\b']$ in~$\A$ are concordant.
%{\color{red}We could extend the definition of Witt-concordance from strata to simple characters: however~$\F[\b]$ is not determined by~$\theta$, so maybe this doesn't make sense so don't want to do this, and it should be to do with the data one has fixed to define~$\theta$, i.e. the stratum.}

\shaun{When we restrict self-dual simple characters, as in Proposition~\ref{prop:restrictsimplechar}, we may do so relative to a self-dual simple approximation~$[\La,n,r+1,\g]$ of our stratum. In particular, when only self-dual simple characters are considered, the derived strata we obtain will also be self-dual. Writing~$\varphi_\g$ for the canonical embedding of~$\F[\g]$ in~$\A$ as usual, we also write~$\G_\g$ for the unitary group of the form~$h_{\varphi_\g}$ (defined relative to the standard linear form~$\l_\g$).}

\shauns{We also have a self-dual~$\dag$-construction for self-dual simple characters. Since self-dual simple characters are in bijection with~$\sigma$-stable characters via Glauberman, we describe it for these. Let~$[\La,n,r,\b]$ be a self-dual simple stratum and~$\theta\in\Cc^\Sigma(\La,r,\b)$, and denote by~$\M^\dag$ the Levi subgroup of~$\G^\dag$ which stabilizes the decomposition~$\V^\dag=\V\oplus\cdots\oplus\V$. There is a unique simple character~$\theta^\dag\in\Cc(\La^\dag,r,\b^\dag)$ whose restriction to~$\H^{r+1}(\b^\dag,\La^\dag)$ has the form~$\theta\otimes\cdots\otimes\theta$ (see~\cite[Lemma~2.7]{BSS}). Moreover, by uniqueness~$\theta^\dag\in\Cc^\Sigma(\La^\dag,r,\b^\dag)$. If we have two self-dual simple characters~$\theta,\theta'$ and~$g\in\G$ intertwines~$\theta$ with~$\theta'$ then~$g^\dag\in\G^\dag$ intertwines~$\theta^\dag$ with~$\theta'^\dag$, by~\cite[Lemma~4.4]{RKSS}.}

%%%%%%%%%%%%%%%%%%%%%%%%%%%%%%%%%%%%
\subsection{\rob{Lemmas}}
%%%%%%%%%%%%%%%%%%%%%%%%%%%%%%%%%%%%
In this subsection, we prove some partial results towards the main result in the next subsection. Let~$[\La,n,r,\b],[\La',n,r,\b]$ be self-dual simple strata in~$\A$, put~$\E=\F[\b]$ and~$\E'=\F[\b']$. %{\color{red}Is it clear to the reader what~$\E_\so$ etc is here... check we have said this in the Witt sections before.}  {\color{red}Make sure earlier sections say that~$e(\La)=e(\La|\o_\F)$.}
\shaun{The first lemma we prove gives a strong ``intertwining implies conjugacy'' result in the non-symplectic case: one does not even need to assume that the intertwining takes place in~$\G$.}
 
% \begin{lemma}\label{lemma:intoverGmeansramsequal}
% Let~$\theta\in\Cc^\Sigma(\La,r,\b)$ and~$\theta'\in\Cc^\Sigma(\La',r,\b')$ be simple characters which intertwine in~$\G$ and suppose that~$e(\La)=e(\La')$.   Then~$e(\E/\E_\so)=e(\E'/\E'_\so)$.
% \end{lemma}
% \begin{proof}
% \blue{We use a~$\dagger$-construction to reduce to the case of principal lattice chains and by~\cite[Proposition 5.2]{skodFields} there is an element of~$G^\dag$ 
% which maps~$\La^\dag$ to~$\La'^\dag$. Then by~\cite[Theorem 10.2]{SkSt} there is an element of~$G^\dag$ which conjugates~$\theta^\dag$ to~$\theta'^\dag$. 
% And by~\cite[Theorem (3.5.8)]{BK93} we can assume that both strata define the same set of semisimple characters. 
% Now we conclude as in the proof of Lemma~\cite{lemma:equivstrataramindicesequal}.}
% %{\color{red}self-dual~$\dagger$-construction Lemma 3.2, should be the same as the one in the new strata section.
% }
% \end{proof}

\begin{lemma}\label{lemma:InTildeGConGSimple}
Suppose that we are in the non-symplectic case, and let~$\theta_{-}\in \Cc_-(\La,r,\b)$ and $\theta'_{-}\in \Cc_-(\La,r,\b')$ be self-dual simple characters with lifts~$\theta$ and~$\theta'$, respectively. Suppose that~$\theta$ and~$\theta'$ intertwine in~$\tG$, then~$\theta_-$ and~$\theta'_-$ are conjugate in~$\P_-(\La)$.
%Then, their lifts are conjugate over~$\G$ if and only if their lifts intertwine in~$\tG$.
\end{lemma}

\begin{proof}
% Let~$\theta\in \Cc^\Sigma(\La,r,\b)$ be the lift of~$\theta_-$ and~$\theta'\in \Cc^\Sigma(\La,r,\b')$ be the lift of~$\theta'_-$.  
\shaun{There is nothing to show if both strata are null. The proof is by induction along~$r$. For the base case~$r=n-1$, Proposition~\ref{prop:strataintertwiningovertG}\ref{prop:strataintertwiningovertG.i} implies that the strata intertwine in~$\G$, and then~\cite[Theorem 8.5]{SkSt} implies that they are conjugate in~$\P_-(\La)$. Assume now that~$r<n-1$. Since~$\theta|_{\H^{r+2}(\b,\La)}$ and~$\theta'|_{\H^{r+2}(\b',\La)}$ are simple characters by Proposition~\ref{prop:restrictsimplechar} \orange{(for approximations~$[\La,n,r+1,\g]$ and~$[\La,n,r+1,\g']$ of the strata with~$\b$ and~$\b'$ respectively)}, by the inductive hypothesis they are conjugate by an element of~$\P_-(\La)$; conjugating by this element, we can assume that they are equal, so that
\[
\Cc(\La,r+1,\g)=\Cc(\La,r+1,\g')
\]
by~\cite[Proposition~9.23]{SkSt}. \shauns{%Let~$[\La,n,r+1,\g]$ be a self-dual simple stratum equivalent to~$[\La,n,r+1,\b]$. 
Then, by~\cite[Theorem~9.26]{SkSt}, there is a self-dual simple stratum~$[\La,n,r,\b'']$ such that
\[
\Cc(\La,r,\b')=\Cc(\La,r,\b'') \quad\text{and}\quad\text{$[\La,n,r+1,\b'']$ is equivalent to~$[\La,n,r+1,\g]$.}
\]
Replacing~$\b'$ by~$\b''$, we may therefore assume that~$\g'=\g$; that is,~$[\La,n,r+1,\g]$ is equivalent to both $[\La,n,r+1,\b]$ and~$[\La,n,r+1,\b']$.}
%such that~$\theta$ and~$\theta'$ coincide on~$\H^{r+1}(\g,\La)$.
Thus there are a skew-symmetric element~$c$ of~$\aa_{-(r+1)}(\La)$ and a simple character~$\theta_0\in \Cc^\Sigma(\La,r,\g)$ 
%{\color{red}What is~$\a_{-(r+1)}$} 
such that~$\theta=\theta_0\psi_{\b-\g+c}$ and~$\theta'=\theta_0\psi_{\b'-\g}$. Now~\cite[Proposition~9.17(i)]{SkSt} implies that the derived strata~$[\La,r+1,r,s_\g(\b-\g+c)]$ and~$[\La,r+1,r,s_\g(\b'-\g)]$ intertwine in the centralizer~$\tG_\g$, whence also in~$\G_\g$ by the base case. But then~\cite[Proposition~9.27(ii)]{SkSt} implies that~$\theta$ and~$\theta'$ intertwine in~$\G$ and the result then follows from~\cite[Theorem 10.3]{SkSt}.}
\end{proof}

% This lemma leads to a useful corollary, improving Lemma \ref{lemma:intoverGmeansramsequal} in the non-symplectic setting:
% 
% \begin{corollary}\label{corEqualEE0}
% Suppose that we are in the non-symplectic case, let~$\theta\in \Cc^\Sigma(\La,r,\b)$ and~$\theta'\in \Cc^\Sigma(\La',r,\b')$ be simple characters which intertwine in~$\tG$, and suppose that~$e(\La)=e(\La')$. Then~$e(\E/\E_\so)=e(\E'/\E'_\so)$.
% \end{corollary}
% 
% \begin{proof}
% As in the proof of Lemma~\ref{lemma:intoverGmeansramsequal} 
% %~\ref{lemRamOfEE0determinedbyGIntertwining}
% %{\color{red}Check this step}, 
% we can reduce to the case where~$\La$ and~$\La'$ coincide.  Proposition~\ref{propInTildeGConGSimple} implies that the characters are conjugate over~$\G$, and hence intertwine \blue{in}~$\G$.  Hence we can conclude by Lemma~\ref{lemma:intoverGmeansramsequal}.
% \end{proof}

%\gre{We should maybe somewhere say that if we write~$\Cc^\Sigma(\La',r,\b')$ that we mean implicitly that the stratum is self-dual.}
%\gre{We also should define Witt concordance for the case when~$\b$ and~$\b'$ are zero. Ans we should also emphasize thateven if we say that two embeddings are concordant that
%we mean that there are choices of~$\b$ and~$\b'$ behind it. For example, if we change~$\b$ to~$-\b$ then the embeddings may fail to be concordant. 
%See, def 4.1 Def:wittstrata and the def of concordant embeddings in the the Witt file.}
\begin{lemma}\label{lemma:MatchWittforIntertwCharacters}
%Let~$\theta\in \Cc^\Sigma(\La,r,\b)$ and~$\theta'\in \Cc^\Sigma(\La',r,\b')$ be simple characters which intertwine in~$\tG$. Suppose that~$e(\La)=e(\La')$, and if~$\G$ is symplectic suppose further that~$\theta$ and~$\theta'$ intertwine in~$\G$. Then 
\shaun{Let~$\theta\in \Cc^\Sigma(\La,r,\b)$ and~$\theta'\in \Cc^\Sigma(\La',r,\b')$ be simple characters which intertwine in~$\tG$, and suppose that~$e(\La)=e(\La')$. If~$\G$ is symplectic suppose further that~$\theta$ and~$\theta'$ intertwine in~$\G$. Then} 
\begin{enumerate}\setlength\itemsep{5pt}
\item\label{lemma:MatchWittforIntertwCharacters.i} $e(\E/\E_\so)=e(\E'/\E'_\so)$;
\item\label{lemma:MatchWittforIntertwCharacters.ii} \orange{the pairs~$(\b,\varphican)$ and~$(\b',\varphicanprime)$ are concordant.}
%the canonical embeddings of~$\b$ and~$\b'$ in~$\A$ are concordant.
\end{enumerate}
\end{lemma}

\begin{proof}
\shaun{We first prove the equality in~\ref{lemma:MatchWittforIntertwCharacters.i}. By a self-dual~$\dagger$-construction we can reduce to the case of standard strict regular lattice sequences of the same period. By~\cite[Proposition 5.2]{SkodField} there is then an element of~$\G$ which maps~$\La$ to~$\La'$. Thus~$\theta$ and~$\theta'$ intertwine by an element of~$\G$, in the symplectic case by assumption and in the non-symplectic case by Lemma~\ref{lemma:InTildeGConGSimple}. Then by~\cite[Theorem 10.3]{SkSt} there is an element of~$\G$ which conjugates~$\theta$ to~$\theta'$ and, by~\cite[Theorem~3.5.8]{BK93}, we can conjugate to assume that both strata define the same set of simple characters. We now conclude as in the proof of Lemma~\ref{lemma:equivstrataramindicesequal}, by looking at the image of the residue fields~$\mathrm{k}_\E$,~$\mathrm{k}_{\E'}$ in~$\aa_0(\La)/\aa_1(\La)$.}

\shaun{We now turn to~\ref{lemma:MatchWittforIntertwCharacters.ii}. If either character is trivial then, since they intertwine, both are by Lemma~\ref{lemma:fundNull}; then~$\b$ and~$\b'$ vanish, and the result follows. Otherwise, both characters are non-trivial and both strata are non-null. From the intertwining of the two characters we find
\begin{itemize}\setlength\itemsep{5pt}
\item $[\La,n,n-1,\b]$ and~$[\La',n,n-1,\b']$ intertwine in~$\G$, by restriction; 
\item $e(\E/\F)=e(\E'/\F)$ and~$f(\E/\F)=f(\E'/\F)$, by Proposition~\ref{prop:SimpleDegrees}\ref{prop:SimpleDegrees.i};
\item $e(\E/\E_0)=e(\E'/\E'_0)$, by~\ref{lemma:MatchWittforIntertwCharacters.i}. 
\end{itemize}
The result now follows from Lemma~\ref{lemma:purestrataintertwineandramimpliesconcordant}.}
\end{proof}

%Let~$[\La,n,r,\b]$ and~$[\La',n,r,\b']$ be self-dual simple strata in~$\A$.  Fix defining sequences $([\La,n,r+i,\g_i])_{i=0}^{n-r}$ for~$[\La,n,r,\b]$ and ~$([\La',n,r+i,\g'_i])_{i=0}^{n-r}$ for~$[\La',n,r,\b']$ of self-dual simple strata. We now show that, in the presence of intertwining simple characters, concordance is inherited along the defining sequences:

\begin{lemma}\label{lemma:WitttowersDefSecuence}
%Under the notation above, let~$\theta\in \Cc^\Sigma(\La,r,\b)$ and~$\theta'\in \Cc^\Sigma(\La',r,\b')$ be simple characters which intertwine in~$\tG$ and suppose that~$e(\La)=e(\La')$. If~$\G$ is symplectic that the canonical embeddings of~$\b$ and~$\b'$ in~$\A$ are concordant.  Then, for all~$i$, the canonical embeddings of~$\g_i$ and~$\g_i'$ in~$\A$ are concordant.
\shaun{Let~$\theta\in \Cc^\Sigma(\La,r,\b)$ and~$\theta'\in \Cc^\Sigma(\La',r,\b')$ be simple characters which intertwine in~$\tG$ and suppose that~$e(\La)=e(\La')$. If~$\G$ is symplectic suppose further that \orange{the pairs~$(\b,\varphican)$ and~$(\b',\varphicanprime)$ are concordant.} %the canonical embeddings of~$\b$ and~$\b'$ in~$\A$ are concordant. 
Let~$[\La,n,r+1,\g]$ and~$[\La',n,r+1,\g']$ be self-dual simple strata equivalent to~$[\La,n,r+1,\b]$ and~$[\La',n,r+1,\b']$ respectively. \orange{Then the pairs~$(\g,\varphigcan)$ and~$(\g',\varphigcanprime)$ are concordant.}
%Then the canonical embeddings of~$\g$ and~$\g'$ are concordant.
}
\end{lemma}

\begin{proof}
\shaun{Since the restrictions~$\theta|_{\H^{r+2}(\b,\La)}$ and~$\theta'|_{\H^{r+2}(\b',\La')}$ are simple characters for~$\g,\g'$ respectively which intertwine in~$\tG$, in the non-symplectic case the result follows from Lemma~\ref{lemma:MatchWittforIntertwCharacters}. Suppose now that we are in the symplectic case. %We denote the canonical embedding of~$\F[\b]$ by~$\varphi_\b$, and use similar notation for the other fields. 
If~$r=n-1$ then~$\g,\g'$ are both zero and there is nothing to prove, so we suppose~$r<n-1$.}

\shaun{If~$\dim_{\F[\b]}(\V)$ and~$\dim_{\F[\g]}(\V)$ have the same parity then~$(\b,\varphican)$ and~$(\g,\varphigcan)$ are concordant, by Lemma~\ref{lemma:symplecticstrataconditionsimplyconcordance1}. 
(Note that, since~$f(\F[\g]/\F)$ divides~$f(\F[\b]/\F)$ and~$e(\F[\g]/\F)$ divides~$e(\F[\b]/\F)$, if the common parity is odd then~$f(\F[\g]/\F)$ and~$f(\F[\b]/\F)$ have the same~$2$-power divisor so~$\sw_{1,\g,\b}(\la 1\ra)=\la 1\ra$, by Lemma~\ref{lemma:w1oneifsquares}.) Since the invariants for~$\b',\g'$ are the same as for~$\b,\g$ respectively (by Proposition~\ref{prop:SimpleDegrees}), we also have that~$(\b',\varphicanprime)$ and~$(\g',\varphigcanprime)$ are concordant, and the result follows by transitivity of concordance.}

\shaun{Otherwise,~$\dim_{\F[\b]}(\V)$ is odd and~$\dim_{\F[\g]}(\V)$ is even. Let~$[\La,n,n-1,\g_0]$ be a simple stratum equivalent to~$[\La,n,n-1,\g]$, so that~$\dim_{\F[\g_0]}(\V)$ is also even. As in the previous case, the pairs~$(\g,\varphigcan)$ and~$(\g_0,\varphi_{\g_0})$ are concordant. Using analogous notation for~$\b',\g'$, we also have concordant pairs~$(\g',\varphigcanprime)$ and~$(\g'_0,\varphi_{\g'_0})$. Finally, since~$-1$ is a square in~$\F[\b]$ if and only if it is a square in~$\F[\b']$, it follows from Lemma~\ref{lemma:symplecticstrataconditionsimplyformshyperbolic} (see also the remark following that lemma) and the fact that~$(\b,\varphican)$ and~$(\b',\varphicanprime)$ are concordant that~$(\g_0,\varphi_{\g_0})$ and~$(\g'_0,\varphi_{\g'_0})$ are also concordant. The result again follows by transitivity.}
\end{proof}

%%%%%%%%%%%%%%%%%%%%%%%%%%%%%%%%%%%%
\subsection{Intertwining self-dual simple characters}
%%%%%%%%%%%%%%%%%%%%%%%%%%%%%%%%%%%%
%After Sections~\ref{MatchWitt} and~\ref{Symplecticcasesection}, 
In the main result of the section, Proposition~\ref{prop:TiGandGIntertwiningSameNonSympl} below, we investigate the relation between~$\G$-intertwining of self-dual simple characters and~$\tG$-intertwining of their lifts.  This improves Lemma~\ref{lemma:InTildeGConGSimple} in the non-symplectic case to allow for non-conjugate lattice sequences, and proves the analogue in the symplectic case using concordance.  We start with the case of the same lattice sequence:

\begin{proposition}\label{prop:InTildeGConGSimpleSymp}
Let~$\theta_-\in \Cc_-(\La,r,\b)$  and~$\theta'_-\in \Cc_-(\La,r,\b')$ \gre{be self-dual simple characters} with lifts~$\theta$ and~$\theta'$ respectively.
Then the following assertions are equivalent:
\begin{enumerate}\setlength\itemsep{5pt}
\item\label{prop:InTildeGConGSimpleSymp.i} $\theta_-$ and~$\theta'_-$ are conjugate in~$\P_-(\La)$.
\item\label{prop:InTildeGConGSimpleSymp.ii} $\theta_-$ and~$\theta'_-$ intertwine in~$\G$.
\item\label{prop:InTildeGConGSimpleSymp.iii} $\theta$ and~$\theta'$ are conjugate in~$\P(\La)$ and \orange{the pairs~$(\b,\varphican)$ and~$(\b',\varphicanprime)$ are concordant.} %the canonical embeddings of~$\b$ and~$\b'$ in~$\A$ are concordant.
\item\label{prop:InTildeGConGSimpleSymp.iv} $\theta$ and~$\theta'$ intertwine in~$\tG$ and \orange{the pairs~$(\b,\varphican)$ and~$(\b',\varphicanprime)$ are concordant.} %the canonical embeddings of~$\b$ and~$\b'$ in~$\A$ are concordant.
\end{enumerate}

%Then~$\theta_-$ and~$\theta'_-$ intertwine in~$\G$ if and only if their lifts are conjugate over~$\tG$ and the canonical embeddings of~$\b$ and~$\b'$ in~$\A$ are concordant.
\end{proposition}

\begin{proof}

%The proof is similar to the proof of Proposition~\ref{propInTildeGConGSimple}.  
The first equivalence \ref{prop:InTildeGConGSimpleSymp.i}$\Leftrightarrow$\ref{prop:InTildeGConGSimpleSymp.ii} and the last equivalence \ref{prop:InTildeGConGSimpleSymp.iii}$\Leftrightarrow$\ref{prop:InTildeGConGSimpleSymp.iv} follow from~\cite[Theorem 10.2 and 10.3]{SkSt}.  So we only have to prove the second equivalence  \ref{prop:InTildeGConGSimpleSymp.ii}$\Leftrightarrow$\ref{prop:InTildeGConGSimpleSymp.iii}.

If~$\theta_-$ and~$\theta'_-$ intertwine in~$\G$, then their lifts intertwine in~$\G$ and hence are conjugate by an element of~$\P(\La)$, by~\cite[Theorem 10.2]{SkSt} (see also~\cite[3.5.11]{BK93} when~$\La$ is \bob{strict}). Moreover, by Lemma~\ref{lemma:MatchWittforIntertwCharacters}, \orange{the pairs~$(\b,\varphican)$ and~$(\b',\varphicanprime)$ are concordant.} %the canonical embeddings of~$\b$ and~$\b'$ are concordant. 

\shauns{We prove the converse by induction along~$r$. If~$r=n-1$ then the simple characters~$\theta,\theta'$ (that is, the strata~$[\La,n,n-1,\b]$ and~$[\La,n,n-1,\b']$) intertwine in~$\G$ by Proposition~\ref{prop:strataintertwiningovertG}. The proof of the inductive step is now identical to that in Lemma~\ref{lemma:InTildeGConGSimple}, with two small additional arguments: first we use Lemma~\ref{lemma:WitttowersDefSecuence} and the induction hypothesis to conjugate~$\t|_{\H^{r+2}(\b,\La)}$ to~$\t'|_{\H^{r+2}(\b',\La)}$; secondly, when we obtain derived strata~$[\La,r+1,r,s_\g(\b-\g+c)]$ and~$[\La,r+1,r,s_\g(\b'-\g)]$ which intertwine in the centralizer~$\tG_\g$, %since this centralizer is a unitary group, 
Proposition~\ref{prop:strataintertwiningovertG}\ref{prop:strataintertwiningovertG.i} implies that, since~$\G_\g$ is a unitary group, %and Corollary~\ref{cor:strataintertwineandramimpliesconcordant} imply 
these strata intertwine in~$\G_\g$.}%are also concordant, which means that the base case can be applied.}
\end{proof}

\shaun{From Proposition~\ref{prop:InTildeGConGSimpleSymp} we now get a strengthening (in the symplectic case) of Lemma~\ref{lemma:MatchWittforIntertwCharacters}\ref{lemma:MatchWittforIntertwCharacters.i}.}
%From Proposition \ref{prop:InTildeGConGSimpleSymp} we get an analogue of Lemma~\ref{lemma:MatchWittforIntertwCharacters}\ref{lemma:MatchWittforIntertwCharacters.i} with the difference that we replace~$\G$-intertwining hypothesis by concordance in the symplectic case. 
%\red{and~$\tG$-intertwining of the lifts?}. 
% \gre{The Lemma and the Corollary are quite similar. One could state the Corollary only for the symplectic case.}\red{Sure, but~$\G$-intertwining cannot be replaced solely by concordant embeddings? one needs~$\tG$ intertwining as well?}
%to Corollary~\ref{corEqualEE0}, also valid in the symplectic case, with a similar proof: 

\begin{corollary}\label{cor:EqualEE0Sympl}
\shaun{Let~$\theta\in \Cc^\Sigma(\La,r,\b)$ and~$\theta'\in \Cc^\Sigma(\La',r,\b')$ be simple characters which intertwine in~$\tG$, and suppose that~$e(\La)=e(\La')$. %If~$\G$ is symplectic suppose further that the canonical embeddings of~$\b$ and~$\b'$ in~$\A$ are concordant. 
%Then~$e(\E/\E_\so)=e(\E'/\E'_\so)$.}
\orange{Then~$(\E,\b)$ and~$(\E',\b')$ are similar self-dual extensions (see Definition~\ref{def:similar}).}}
\end{corollary}

\begin{proof} 
\orange{We already know, by Proposition~\ref{prop:SimpleDegrees}, that~$\E/\F$ and~$\E'/\F'$ have the same ramification index and residue class degree, and, by Lemma~\ref{lemma:intertwiningminimalstrataramindices} applied to the pure strata~$[\La,n,n-1,\b]$ and\rob{~$[\La',n,n-1,\b']$}, that the elements~$y_\b$ and~$y_{\b'}$ have the same (irreducible) minimal polynomial over~$\mathrm{k}_\F$. Since~$\val_\E(\b)=-n e(\E/\F)/e(\La)=\val_{\E'}(\b')$, it only remains to show that~$e(\E/\E_\so)=e(\E'/\E'_\so)$.}

\shaun{For this, by a self-dual~$\dag$-construction, we may assume further that \orange{the pairs~$(\b,\varphican)$ and~$(\b',\varphicanprime)$ are concordant\shauny{, by Lemma~\ref{lem:dagconcord}}.} %the canonical embeddings of~$\b$ and~$\b'$ in~$\A$ are concordant. 
The proof is now the same as that of Lemma~\ref{lemma:MatchWittforIntertwCharacters}\ref{lemma:MatchWittforIntertwCharacters.i}, except that we use Proposition~\ref{prop:InTildeGConGSimpleSymp} to obtain that the characters~$\t^\dag$ and~$\t'^\dag$ intertwine in~$\G^\dag$.}
%
%The non-symplectic case is given by Lemma~\ref{lemma:MatchWittforIntertwCharacters}\ref{lemma:MatchWittforIntertwCharacters.i}, and the proof of the symplectic case is the same as that of Lemma~\ref{lemma:MatchWittforIntertwCharacters}\ref{lemma:MatchWittforIntertwCharacters.i}, except that we use Proposition~\ref{prop:InTildeGConGSimpleSymp} to obtain that the characters~$\t^\dag$ and~$\t'^\dag$ intertwine in~$\G^\dag$. 
%
%\red{There is a problem of sorts here: we have not proved that if~$\b,\b'$ are concordant then so are~$\b^\dag,\b'^\dag$. But in fact, if~$\varphi_\b,\varphi_{\b'}$ are the canonical embeddings (of~$\b^\dag,\b'^\dag$), then aren't~$h^\dag_{\varphi_\b}$ and~$h^\dag_{\varphi_{\b'}}$ both hyperbolic anyway? In which case we always have concordance and the concordance hypothesis in the corollary is unnecessary?}
\end{proof}

\begin{proposition}\label{prop:TiGandGIntertwiningSameNonSympl}
Let~$\theta_-\in\Cc_{-}(\La,r,\b)$ and~$\theta'_-\in\Cc_{-}(\La',r,\b')$ be self-dual simple characters of~$\G$, and suppose that~$e(\La)=e(\La')$.  
%\begin{enumerate}
%\item\label{prop:TiGandGIntertwiningSameNonSympl-i} In the non-symplectic case,~$\theta_-$ and~$\theta'_-$ intertwine in~$\G$ if and only if their lifts intertwine in~$\tG$.  
%\item In the symplectic case,~$\theta_-$ and~$\theta'_-$ intertwine in~$\G$ if and only if their lifts intertwine in~$\tG$ and the canonical embeddings of~$\b$ and~$\b'$ are concordant.
%\end{enumerate}
\shaun{Then~$\theta_-$ and~$\theta'_-$ intertwine in~$\G$ if and only if their lifts intertwine in~$\tG$ and \orange{the pairs~$(\b,\varphican)$ and~$(\b',\varphicanprime)$ are concordant.}}%the canonical embeddings of~$\b$ and~$\b'$ are concordant.}
\end{proposition}

\shaun{We remark that, in the non-symplectic case the hypothesis on concordance %of \orange{the pairs~$(\b,\varphican)$ and~$(\b',\varphicanprime)$ %the canonical embeddings of~$\b,\b'$ 
is in fact not necessary: if the lifts of~$\theta_-$ and~$\theta'_-$ intertwine in~$\tG$ then, by Lemma~\ref{lemma:MatchWittforIntertwCharacters}, \orange{the pairs~$(\b,\varphican)$ and~$(\b',\varphicanprime)$ are automatically concordant.}} %the canonical embeddings of~$\b$ and~$\b'$ are automatically concordant.}

%\blue{Apparently this proposition needs a more technical proof. The circumstance of different lattice sequences produces a stronger complication. 
%This is the reason for the next lemma.}

To prove Proposition~\ref{prop:TiGandGIntertwiningSameNonSympl}, we will need the following lemma:

\begin{lemma}\label{lemma:diagonal}
\shaun{Let~$[\La,n,r,\b]$ and~$[\La',n,r,\b']$ be self-dual simple strata and let~$\theta\in\Cc^\Sigma(\La,r,\b)$ and~$\theta'\in\Cc^\Sigma(\La',r,\b')$. Suppose that~$\theta,\theta'$ intertwine in~$\tG$ and that \orange{the pairs~$(\b,\varphican)$ and~$(\b',\varphicanprime)$ are concordant.} %the canonical embeddings of~$\b$ and~$\b'$ are concordant. 
Then, there are self-dual simple strata~$[\La,n,r,\b_1]$ and~$[\La',n,r,\b'_1]$ such that~$\b_1$ and~$\b'_1$ have the same characteristic polynomial,~$\theta\in\Cc^\Sigma(\La,r,\b_1)$ and~$\theta'\in\Cc^\Sigma(\La',r,\b'_1)$.  Moreover, for any such~$\b_1,\b'_1$, \orange{the pairs~$(\b_1,\vphi_{\b_1})$ and~$(\b'_1,\vphi_{\b'_1})$ are concordant.} }%the canonical embeddings of~$\b_1$ and~$\b'_1$ are concordant.}
\end{lemma}

Granting this, we complete the proof of Proposition~\ref{prop:TiGandGIntertwiningSameNonSympl}:

\begin{proof}
Let~$\theta\in \Cc^\Sigma(\La,r,\b)$ and~$\theta'\in \Cc^\Sigma(\La',r,\b')$ denote the lifts of~$\theta_-$ and~$\theta'_-$, respectively. %We write~$\varphi$ an~$\varphi'$ for the canonical embeddings of~$\b$ and~$\b'$. 
Suppose first that~$\theta$ and~$\theta'$ intertwine by an element of~$\G$. Then they intertwine by an element of~$\tG$ and, by Lemma~\ref{lemma:MatchWittforIntertwCharacters}\ref{lemma:MatchWittforIntertwCharacters.ii}, \orange{the pairs~$(\b,\varphican)$ and~$(\b',\varphicanprime)$ are concordant.} %the embeddings~$\varphi$ and~$\varphi'$ are concordant. 

Suppose now that~$\theta$ and~$\theta'$ intertwine by an element of~\rob{$\tG$} and \orange{the pairs~$(\b,\varphican)$ and~$(\b',\varphicanprime)$ are concordant.} %the embeddings~$\varphi,\varphi'$ are concordant. 
By Lemma~\ref{lemma:diagonal} we can assume that~$\b$ and~$\b'$ have the same characteristic polynomial and \orange{the pairs~$(\b,\varphican)$ and~$(\b',\varphicanprime)$ are still concordant;} %are concordant; 
thus, by Remark~\ref{rem:concordance}\ref{rem:concordance.ii}, they are conjugate by an element~$g\in\G$. The characters~$\theta$ and~$\theta'':=\tau_{g\La,\La',\b'}(\theta')$ intertwine by an element of~$\tG$ by~\cite[Theorem~1.11]{BSS}. Thus~$\theta$ and~$\theta''$ are conjugate by an element of~$\G$, by Proposition~\ref{prop:InTildeGConGSimpleSymp}. We deduce that~$\theta$ and~$\theta'$ intertwine in~$\G$, since~$\theta'$ and~$\theta''$ are intertwined by~$1$. 
\end{proof}

It remains only to prove Lemma~\ref{lemma:diagonal}:

\begin{proof}
The proof is by induction along~$r$. There is nothing to show if both strata are null so we assume that they are both non-null; in particular both characters are non-trivial.

The base case is~$r=n-1$. Applying \cite[Proposition~7.1]{SkSt}, as in the proof of Proposition~\ref{prop:strataintertwiningovertG}, we can replace~$\b$ and~$\b'$ by elements~$\b_1$ and~$\b'_1$ without changing the equivalence classes of the simple self-dual strata and such that~$\b_1$ and~$\b'_1$ have the same minimal polynomial. Proposition~\ref{prop:strataintertwiningovertG}\ref{prop:strataintertwiningovertG.ii} also implies that the strata intertwine in~$\G$. Thus \orange{the pairs~$(\b_1,\vphi_{\b_1})$ and~$(\b'_1,\vphi_{\b'_1})$ are concordant} %the canonical embeddings of~$\b_1$ and~$\b'_1$ are concordant 
by Corollary~\ref{cor:strataintertwineandramimpliesconcordant}.

\shaun{Suppose now that~$r<n-1$ and let~$[\La,n,r+1,\g]$ and~$[\La',n,r+1,\g']$ be self-dual simple strata equivalent to~$[\La,n,r+1,\b]$ and~$[\La',n,r+1,\b']$ respectively. Note that \orange{the pairs~$(\g,\varphigcan)$ and~$(\g',\varphigcanprime)$ are concordant,} %the canonical embeddings of~$\g$ and~$\g'$ are concordant, 
by Lemma~\ref{lemma:WitttowersDefSecuence}. By the induction hypothesis there are concordant self-dual simple strata $[\La,n,r+1,\g_1]$ and~$[\La',n,r+1,\g'_1]$, such that~$\Cc(\La,r+1,\g)=\Cc(\La,r+1,\g_1)$ and~$\Cc(\La',r+1,\g')=\Cc(\La',r+1,\g'_1)$ and such that the minimal polynomials of~$\g_1$ and~$\g'_1$ coincide. By Remark~\ref{rem:concordance}\ref{rem:concordance.ii}, concordance provides an element~$g$ of~$\G$ such that~$g\g_1g^{-1}=\g'_1$. Now,~$\theta|_{\H^{r+2}(\g_1,\La)}$ and~$\tau_{g\La,\La',\g'_1}(\theta'|_{\H^{r+2}(\g'_1,\La')})$ intertwine by an element of~$\tG$ by~\cite[Theorem~1.11]{BSS}, so Proposition~\ref{prop:InTildeGConGSimpleSymp} implies they are conjugate by an element of~$\G$ which maps~$\La$ to~$g\La$. Thus, conjugating by this element, we can assume that~$\theta|_{\H^{r+2}(\g_1,\La)}=\tau_{\La,\La',\g'_1}(\theta'|_{\H^{r+2}(\g'_1,\La')})$ and that~$\g_1=\g'_1$.
}

\shaun{By~\cite[Theorem~9.26]{SkSt}, there is a self-dual simple stratum~$[\La,n,r,\b_0]$ such that
\[
\Cc(\La,r,\b)=\Cc(\La,r,\b_0) \quad\text{and}\quad\text{$[\La,n,r+1,\b_0]$ is equivalent to~$[\La,n,r+1,\g_1]$;}
\]
moreover, \orange{the pairs~$(\b,\varphican)$ and~$(\b_0,\vphi_{\b_0})$ are concordant,} %the canonical embeddings of~$\b,\b_0$ are then concordant, 
by Lemma~\ref{lemma:MatchWittforIntertwCharacters}. Thus we may replace~$\b$ by~$\b_0$ and assume that~$\g=\g_1$. By the same argument for~$\b'$, we see that we may assume~$\g=\g'$. Then there are a character~$\theta_0\in\Cc^\Sigma(\La,r,\g)$ with transfer~$\theta'_0\in\Cc^\Sigma(\La',r,\g)$ and an element~$c\in\aa^-_{-(1+r)}(\La')$
such that
\[
\theta=\theta_0\psi_{\b-\g},\ \theta'=\theta'_0\psi_{\b'-\g+c}.
\]
}
	
%Step 1: we show that we can reduce to the case~$\g=\g'$ and~$\tau_{\La,\La',\g}(\theta'|_{\H^{r+2}(\g,\La')})=\theta|_{\H^{r+2}(\g,\La)}$. By the induction hypothesis there are concordant self-dual simple strata $[\La,n,r+1,\g_1]$ and~$[\La',n,r+1,\g'_1]$, such that~$\Cc(\La,r+1,\g)=\Cc(\La,r+1,\g_1)$ and~$\Cc(\La',r+1,\g')=\Cc(\La',r+1,\g'_1)$ and such that the minimal polynomials of~$\g_1$ and~$\g'_1$ coincide. \blue{Concordance provides an element~$g$ of~$\G$ such that~$g\g_1g^{-1}$ is equal to~$\g'_1$. Now,~$\theta|_{\H^{r+2}(\g_1,\La)}$ and~$\tau_{g\La,\La',\g'_1}(\theta'|_{\H^{r+2}(\g'_1,\La')})$ intertwine by an element of~$\tG$ by~\cite[Theorem 1.11]{BSS}, and thus by Proposition~\ref{prop:InTildeGConGSimpleSymp} they are conjugate by an element of~$\G$ which maps~$\La$ to~$g\La$. Thus without loss of generality we can assume~$\theta|_{\H^{r+2}(\g_1,\La)}=\tau_{\La,\La',\g'_1}(\theta'|_{\H^{r+2}(\g'_1,\La')})$. By the translation principle~\cite[Theorem 9.26]{SkSt} we can assume~$\g=\g'_1=\g'$. Note the latter Theorem changes~$\b$ and~$\b'$ but without changing the sets of simple characters. Therefore Lemma~\ref{lemma:MatchWittforIntertwCharacters} still ensures that in the symplectic case~$\b$ and~$\b'$ are still concordant after the change. Now, there are a character~$\theta_0\in\Cc^\Sigma(\La,r,\g)$ with transfer~$\theta'_0\in\Cc^\Sigma(\La',r,\g)$ and an element~$c\in\aa^-_{-(1+r)}(\La')$ such that
%\[\theta=\theta_0\psi_{\b-\g}\ \theta'=\theta'_0\psi_{\b'-\g+c}.\]
%}

\shaun{By a self-dual~$\dag$-construction we obtain characters
\[
\theta^\dag=\theta_0^\dag\psi_{\b^\dag-\g^\dag},\ \theta'^\dag=\theta'^\dag_0\psi_{\b'^\dag -\g^\dag+c^\dag}
\]
which intertwine in~$\tG^\dag$. Moreover, there exists an element~$g\in\tG^\dag_{\g^\dag}$ such that~$g\La^\dag=\La'^\dag$; then~$^g\theta_0^\dag=\theta'^\dag_0$, because $\tau_{\La'^\dag,\La^\dag,\g^\dag}(\theta_0^\dag)=\theta'^\dag_0$ (as, by its definition, the~$\dag$-construction commutes with transfer). Writing~$s$ for a tame corestriction with respect to~$\g^\dag$, the strata~$[g\La^\dag,r+1,r,s(g\b^\dag g^{-1}-\g^\dag)]$ and~$[\La'^\dag,r+1,r,s(\b'^\dag-\g^\dag+c^\dag)]$ intertwine by an element of~$\tG^\dag_{\g^\dag}$, by~\cite[Proposition~9.17(i)]{SkSt}. Conjugating by this element, we may apply~\cite[Proposition~7.6]{SkSt}, which then implies that the strata~$[\La^\dag,n,r,\b^\dag]$ and~$[\La'^\dag,n,r,\b'^\dag+c^\dag]$ intertwine.}

\shaun{\orange{Since~$\psi_{c^\dag}$ is intertwined by~$\tG_\g$, it follows from~\cite[2.4.11]{BK93} that~$s(c^\dag)$ is congruent to an element of~$\F[\g]$ modulo~$\aa_{-r}(\La'^\dag)$.} 
Thus~$[\La'^\dag,n,r,(\b'+c)^\dag]$ is equivalent to a simple stratum by~\cite[Proposition 6.14]{SkSt}. Since it intertwines with~$[\La^\dag,n,r,\b^\dag]$, the stratum~$[\La^\dag\oplus\La'^\dag,n,r,\b^\dag+(\b'+c)^\dag]$ is also equivalent to a simple stratum, by~\cite[Proposition~7.1]{SkSt}. Then~\cite[Theorem~6.16]{SkSt} implies that it is moreover equivalent to a simple stratum which is split by the decomposition~$\V^\dag\oplus\V^\dag=\V\oplus\cdots\oplus\V$. In particular, restricting to two copies of~$\V$, we see that~$[\La\oplus\La',n,r,\b+(\b'+c)]$ is equivalent to a simple stratum, and again, by~\cite[Theorem~6.16]{SkSt}, to a self-dual simple stratum~$[\La\oplus\La',n,r,\b_1+\b'_1]$ split by}~\bob{$\V\oplus \V$}.

\shaun{Thus we have found self-dual simple strata~$[\La,n,r,\b_1]$ and~$[\La',n,r,\b'_1]$ equivalent to~$[\La,n,r,\b]$ and~$[\La',n,r,\b'+c]$ respectively, such that~$\b_1$ and~$\b'_1$ have the same minimal polynomial. Then~$\Cc(\La,r,\b)$ and~$\Cc(\La,r,\b_1)$ coincide, while 
\[
\theta'\in\Cc(\La',r,\g)\psi_{\b'-\g+c}=\Cc(\La',r,\g)\psi_{\b'_1-\g}=\Cc(\La',r,\b'_1),
\]
so we are done.}

%\blue{Step 4: We show that the canonical embeddings for~$\b_1$ and~$\b'_1$ are concordant. We write~$\varphi_\b$ etc. for the canonical embeddings. }

\shaun{Finally, we prove that \orange{the pairs~$(\b_1,\vphi_{\b_1})$ and~$(\b'_1,\vphi_{\b'_1})$ are concordant.} %the canonical embeddings of~$\b_1$ and~$\b'_1$ are concordant. We write~$\varphi_\b$ etc. for the canonical embeddings. 
Since~$\theta$ lies in~$\Cc(\La,r,\b)$ and in~$\Cc(\La,r,\b_1)$, Proposition~\ref{lemma:MatchWittforIntertwCharacters}\ref{lemma:MatchWittforIntertwCharacters.ii} implies that~$(\b,\varphican)$ and~$(\b_1,\vphi_{\b_1})$ are concordant; similarly~$(\b',\varphicanprime)$ and~$(\b'_1,\vphi_{\b'_1})$ are concordant. Since~$(\b,\varphican)$ and~$(\b',\varphicanprime)$ are concordant by assumption, the result follows by transitivity of concordance.} 
\end{proof}

%%%%%%%%%%%%%%%%%%%%%%%%%%%%%%%%%%%%

%%%%%%%%%%%%%%%%%%%%%%%%%%%%%%%%%%%%%
\section{Self-dual ps-characters and simple endo-classes}\label{secPS}
%%%%%%%%%%%%%%%%%%%%%%%%%%%%%%%%%%%%%
In this section we consider the collection of all self-dual simple characters while varying our~$\e$-hermitian space, for fixed~$\e$ and~$\F/\F_\so$. We first recall results of Bushnell and Henniart~\cite{BH96}, and their extensions to non-strict lattice sequences which are special cases of results in~\cite{BSS}, on the foundational theory of ps-characters and simple endo-classes. %In fact, we mildly relax the conditions given in the definition of endo-equivalence of~\cite{BSS} to allow for intertwining simple characters attached to different lattice sequences not related by translation. 
Then we develop the theory in the presence of an~$\e$-hermitian form over~$\F$.

%
%\blue{We need to extend their results to relate endo-equivalence of ps-characters to intertwining simple characters attached to different lattice sequences.} \red{Can you write a brief sentence here saying why we need to extend these results? did they assume the lattice sequences were the same?}\gre{The point is that we have to allow~$\La\neq\La'$ even after translation of the domain. } 

For the remainder, while our~$\F$-vector space~$\V$ and~$\e$-hermitian space~$(\V,h)$ over~$\F$ may be varying, we still use the notation~$\tG=\Aut_\F(\V)$ and~$\G=\U(\V,h)$.  

%%%%%%%%%%%%%%%%%%%%%%%%%%%%%%%%%%%%%
\subsection{Ps-characters}
%%%%%%%%%%%%%%%%%%%%%%%%%%%%%%%%%%%%%
A \emph{simple pair over~$\F$} is a pair~$(k,\b)$ 
\shauns{consisting of an element~$\b$ of some finite field extension of~$\F$ and \rob{an} integer~$k$ satisfying~$0\leqslant k<-k_\F(\b)e(\F[\b]/\F)$.}
% %such that~$\E=\F[\b]$ is a finite field extension and~$k$ is an integer satisfying~$0\leqslant k<-k_\F(\b)e(\E/\F)$. 
For~$(k,\b)$ a simple pair, we write~$\E=\F[\b]$ and denote by~$\Qq(k,\b)$ the class of all quadruples~$(\V,\vphi,\La,r)$ consisting of
\begin{enumerate}\setlength\itemsep{5pt}
\item a finite dimensional~$\F$-vector space~$\V$; 
\item an embedding~$\vphi:\E\hookrightarrow \A$, where~$\A=\End_\F(\V)$; 
\item an~$\o_{\vphi(\E)}$-lattice sequence~$\La$ in~$\V$; 
\item and an integer~$r$ such that~$\left\lfloor r/e(\La|\o_{\vphi(\E)})\right\rfloor=k$.
\end{enumerate}
In this situation, we will abuse notation and write~$e(\La|\o_\E)$ for~$e(\La|\o_{\vphi(\E)})$, the period of~$\La$ as an~$\o_{\vphi(\E)}$-lattice sequence. We will also abbreviate~$e(\La)=e(\La|\o_\F)$ for the period of~$\La$ as an~$\o_\F$-lattice sequence.

Given~$(\V,\vphi,\La,r)\in\Qq(k,\b)$, we set
\[
n=\begin{cases} 
r &\text{ if }\b=0; \\
-\nu_\E(\b)e(\La|\o_\E)&\text{ otherwise;}
\end{cases}
\]
we then obtain a simple stratum~$[\La,n,r,\vphi(\b)]$ in~$\A$ which we call a \emph{realization} of the simple pair~$(k,\b)$. It is simple because~$k_0(\b,\La)=e(\La|\o_\E)e(\E/\F)k_\F(\b)$ by \cite[1.4.13]{BK93} (see also~\cite[5.1]{BK99}).  

We let~$\fCc(k,\b)$ denote the collection of all simple characters defined by a realization of a simple pair~$(k,\b)$: 
\[%\begin{equation}\label{eqnCkbetaNonsd}
\fCc(k,\b)=\bigcup_{\substack{(\V,\vphi,\La,r)\\ \in\Qq(k,\b)}}\Cc(\La,r,\vphi(\b)).
\]%\end{equation}
%Note that, as introduced in \cite{BH96}, the standard notation for~$\Cc(k,\b)$ is~$\mathfrak{R}(k,\b)$.
Given two realizations~$[\La,n,r,\vphi(\b)]$ and~$[\La',n',r',\vphi'(\b)]$ of a simple pair~$(k,\b)$ there is a canonical bijection
\[
\shaun{\tau_{\La',\vphi',\La,\vphi,\b}}:\Cc(\La,r,\vphi(\b))\rightarrow \Cc(\La',r',\vphi'(\b)),
\]
defined in~\cite[3.6.14]{BK93},~\cite[Section 2.1]{St05} and~\cite[Section~3.1(53)]{secherreI}, called \emph{transfer}, and generalizing the transfer recalled in the previous section.
\shaun{Although the transfer depends on~$r,r'$, we do not include them in our notation; indeed, we will usually omit~$\vphi',\vphi$ also and just write~$\tau_{\La',\La,\b}$, as is usual in the literature.}

\shauny{We recall briefly some of the main properties of transfer. Given realizations~$[\La,n,r,\vphi(\b)]$, $[\La',n',r',\vphi'(\b)]$, and~$[\La'',n'',r'',\vphi''(\b)]$ of a simple pair~$(k,\b)$, the associated transfer maps satisfy the following:
\begin{itemize}
\item (symmetry) $\tau_{\La,\La',\b}=\tau_{\La',\La,\b}^{-1}$;
\item (transitivity) $\tau_{\La'',\La,\b}=\tau_{\La'',\La',\b}\circ \tau_{\La',\La,\b}$;
\item (intertwining) suppose the embeddings~$\vphi,\vphi'$ have image in the endomorphisms of the same space~$\vphi,\vphi':\E\hookrightarrow \End_\F(\V)$, and let~$\t\in\Cc(\La,r,\vphi(\b))$; then~$\tau_{\La',\La,\b}(\t)$ is the unique simple character~$\t'\in\Cc(\La',r',\vphi'(\b))$ such that~$\t$ is intertwined with~$\t'$ by an element of~$\tG$ which conjugates~$\vphi$ to~$\vphi'$.
\end{itemize}
In the final property, in fact every element of~$\tG$ which conjugates~$\vphi$ to~$\vphi'$ also intertwines~$\t$ with its transfer~$\tau_{\La',\La,\b}(\t)$.}

\shauny{It is also possible to describe the transfer map explicitly in terms of restrictions. Suppose we are given realizations~$[\La,n,r,\vphi(\b)]$ and~$[\La',n',r',\vphi'(\b)]$ of a simple pair~$(k,\b)$ on spaces~$\V,\V'$ respectively, and~$\La,\La'$ have the same period (so that also~$n'=n$). We set~$\V''=\V\oplus\V'$ so that we have an embedding~$\vphi''=\vphi+\vphi'$ of~$\E$ in~$\End_\F(\V'')$; we also set~$\La''=\La\oplus\La'$ and~$r''=\min\{r,r'\}$. Then we get a further realization~$[\La'',n,r'',\vphi''(\b)]$ on~$\V''$. Now, given a simple character~$\t\in\Cc(\La,r,\vphi(\b))$ there is a unique simple character~$\t''\in\Cc(\La'',r'',\vphi''(\b))$ such that~$\t$ is the restriction of~$\t''$ to~$\H^{r+1}(\vphi''(\b),\La'')\cap\Aut_\F(\V)$. Then the transfer~$\t'=\tau_{\La',\La,\b}(\t)$ is the restriction of~$\t''$ to~$\H^{r'+1}(\vphi''(\b),\La'')\cap\Aut_\F(\V')$.}

\medskip

\ignore{It is possible to describe transfer in terms of restrictions: 
Assume that~$\V$ has a smaller~$\F$-dimension than~$\V'$ and take a splitting~$\W_1\oplus\W_2=\V'$ of~$\La'$ into~$\F[\vphi'(\b)]$-vector spaces such that~$\V$ and~$\W_1$ have the same $\F$-dimension.
Denote the projection of~$\vphi'$ 
to~$\End_\F(\W_1)$ by~$\vphi'_1$. Now consider
a character~$\t\in\Cc(\La,r,\vphi(\b))$ and its transfers~$\t'_1\in\Cc(\La'\cap\W_1,r',\vphi'_1(\b))$ and $\t'\in\Cc(\La',r',\vphi'(\b))$. Then
\begin{itemize}
  \item $\t'_1$ is the restriction of~$\t'$ to~$\H^{r'+1}(\vphi'(\b),\La')\cap\Aut_\F(\W_1)$ and
  \item $\t'_1$ is the unique element of~$\Cc(\La'\cap\W_1,r',\vphi'_1(\b))$ such that~$\t$ intertwines with~$\t'_1$  by an~$\F$-linear isomorphism from~$\V$ to~$\W_1$ which conjugates~$\vphi$ to $\vphi'_1$.
 \end{itemize}
}
\ignore{THERE is no definition of~$\t'$ above.  Is there a unique~$\theta'$ which restricts to~$\theta_1'$ and the point is this is a transfer map, and one composes the inverse of this with the transfer~$\theta$ to~$\theta_1'$??}

\ignore{
THE REFEREE asks for either a definition or some of the main properties, so we could do the latter. Something like:
For realizations~$[\La,n,r,\vphi(\b)]$~$[\La',n',r',\vphi'(\b)]$, and~$[\La'',n'',r'',\vphi''(\b)]$ of a simple pair~$(k,\b)$, the associated transfer maps enjoy the following key properties:
\begin{itemize}
\item (symmetry) $\tau_{\La,\vphi,\La',\vphi',\b}=\tau_{\La',\vphi',\La,\vphi,\b}^{-1}$;
\item (transitivity) $\tau_{\La'',\vphi'',\La,\vphi,\b}=\tau_{\La'',\vphi'',\La',\vphi',\b}\circ \tau_{\La',\vphi',\La,\vphi,\b}$;
\item suppose the embeddings~$\vphi,\vphi'$ have image in the endomorphisms of the same space~$\vphi,\vphi':\E\hookrightarrow \End_\F(\V)$, and let~$\theta\in\Cc(\La,r,\vphi(\b))$, then~$\tau_{\La',\vphi',\La,\vphi,\b}(\theta)$ is the unique simple character~$\theta'\in\Cc(\La',r',\vphi'(\b))$ such that~$\t$ intertwines with~$\t'$ by an element of~$\tG$ which conjugates~$\vphi$ to~$\vphi'$.
\end{itemize}
}

A \emph{potential simple character}, or \emph{ps-character},\emph{ supported on the simple pair}~$(k,\b)$ is a function~$\Th:\Qq(k,\b)\rightarrow \fCc(k,\b)$ such that
\begin{enumerate}\setlength\itemsep{5pt}
\item $\Th(\V,\vphi,\La,r)\in \Cc(\La,r,\vphi(\b))$, for~$(\V,\vphi,\La,r)\in\Qq(k,\b)$;
\item\label{property2} $\Th(\V',\vphi',\La',r')=\tau_{\La',\La,\b}(\Th(\V,\vphi,\La,r)),$
for~$(\V,\vphi,\La,r),(\V',\vphi',\La',r')\in\Qq(k,\b)$. 
\end{enumerate}
For~$(\V,\vphi,\La,r) \in\Qq(k,\b)$, we call~$\Th(\V,\vphi,\La,r)$ a \emph{realization} of~$\Th$. Thus, by property~\ref{property2}, a ps-character is determined by any one of its realizations. We define the \emph{degree} of~$\Th$ to be~$\deg(\Th)=[\F[\b]:\F]$.

%Let~$\Th$ be a ps-character supported on the simple pair~$(k,\b)$ and~$\Th'$ be a ps-character supported on the simple pair~$(k',\b')$.  
Let~$\Th,\Th'$ be ps-characters supported on the simple pairs~$(k,\b),(k',\b')$ respectively. 

\begin{definition}\label{def:EndoEquivalent}
We say that~$\Th$ and~$\Th'$ are \emph{endo-equivalent}, denoted~$\Th\approx\Th'$, if 
\begin{enumerate}\setlength\itemsep{5pt}
\item\label{Prop1endodef} $\deg(\Th)=\deg(\Th')$;
\item $k=k'$; 
\item\label{Prop3endodef} there exist realizations on a common~$\F$-vector space which intertwine, i.e. there exist a finite dimensional~$\F$-vector space~$\V$ and quadruples $(\V,\vphi,\La,r)\in \Qq(k,\b)$ and~$(\V,\vphi',\La',r')\in\Qq(k',\b')$, such that~$\Th(\V,\vphi,\La,r)$ and~$\Th'(\V,\vphi',\La',r')$ intertwine in~$\tG=\Aut_\F(\V)$.
\end{enumerate}
\end{definition}

Note that the formulation of endo-equivalence in~\cite[8.6]{BH96} and~\cite[1.10]{BSS} differs mildly from the above. In particular they do not consider ps-characters with a trivial character in the image. Therefore we need the following \rob{remarks}.  

\begin{remarks}\label{rem:defEndoEquivalent}
\begin{enumerate}\setlength\itemsep{5pt}
\item\label{rem:defEndoEquivalent-1} In Definition~\ref{def:EndoEquivalent}\ref{Prop3endodef}, we could impose that~$\La=\La'$ and that~$\La$ is strict without changing the relation. Indeed, suppose that~$\La\neq\La'$ or~$\La$ is not strict. By changing the lattice sequences in their affine classes, we can assume that~$e(\La)=e(\La')$. Then, performing a~$\dag$-construction, there exists~$g\in\tG^\dag$ such that~$g\La^\dag=\La'^\dag$, and the characters~$^g\t^\dag=\tau_{g\La^\dag, ^g\vphi^\dag,\La,\vphi,\b}(\t)$ and $\t'^\dag=\tau_{\La'^\dag,\vphi'^\dag,\La',\vphi',\b}(\t')$ intertwine in~$\tG^\dag$, because~$\t$ and~$\t'$ intertwine in~$\tG$.
\item\label{rem:defEndoEquivalent-2} For every non-negative integer~$k$ we have exactly one ps-character supported on~$(k,0)$, which we call the~\emph{zero ps-character}~${\bf 0}_k$. It is not endo-equivalent to any other ps-character, which can be seen as follows. Suppose~$\Th$ is a ps-character supported on~$(k,\b)$, with~$\b\ne 0$, which is endo-equivalent to~${\bf 0}_k$. Then~$\F[\b]=\F$ and there are realizations~$\t\in\Cc(\La,r,\b)$ of~$\Th$ and~$\t'\in\Cc(\La',r',0)$ of~${\bf 0}_k$ on the same vector space such that~$e(\La)=e(\La')$ and~$\t,\t'$ intertwine.  We then get
\[
\left\lfloor r'/e(\La)\right\rfloor=k<\frac{-k_0(\b,\La)}{e(\La)}=\frac{-\val_{\La}(\b)}{e(\La)}\in\ZZ,
\]
but this contradicts Lemma~\ref{lemma:fundNull}.
\item \label{rem:defEndoEquivalentiii} \shaun{It follows from the previous remarks and~\cite[Corollary 8.10]{BH96} that endo-equivalence is indeed an equivalence relation.} 
\end{enumerate}
\end{remarks}

We can now state some initial results on endo-equivalence of ps-characters from~\cite{BH96}.  

\begin{proposition}[{cf.~\cite[8.4,~8.10]{BH96}}]\label{prop:firstPropEndoSimple}
Let~$\Th,\Th'$ be ps-characters supported on the simple pairs~$(k,\b),(k,\b')$ respectively, and put~$\E=\F[\b]$ and~$\E'=\F[\b']$.  Suppose that~$\Th\approx\Th'$. Then:
%Let~$\Th$ be a ps-character supported on the simple pair~$(k,\b)$ and~$\Th'$ be a ps-character supported on the simple pair~$(k',\b')$, and put~$\E=\F[\b]$ and~$\E'=\F[\b']$.  Suppose that~$\Th\approx\Th'$. Then:
\begin{enumerate}\setlength\itemsep{5pt}
\item\label{prop:firstPropEndoSimple.i} We have~$e(\E/\F)=e(\E'/\F)$,~$f(\E/\F)=f(\E'/\F)$ and~$k_\F(\b)=k_\F(\b')$. 
\item\label{prop:firstPropEndoSimple.ii} If~$(\V,\vphi,\La,r)\in\Qq(k,\b)$,~$(\V,\vphi',\La',r')\in\Qq(k,\b')$ and~$e(\La)=e(\La')$ then we have~$(\V,\vphi,\La,r')\in\Qq(k,\b)$, i.e.~$\left\lfloor\frac{r'}{e(\La|\o_\E)}\right\rfloor=k$. 
\end{enumerate}
%Moreover, endo-equivalence is an equivalence relation.
\end{proposition}

\begin{proof}
If~$\Th$ is zero then~$\Th'=\Th$ by Remark~\ref{rem:defEndoEquivalent}\ref{rem:defEndoEquivalent-2}, and the result follows, so we suppose both~$\Th,\Th'$ are non-zero. Then~\ref{prop:firstPropEndoSimple.i} follows from Remark~\ref{rem:defEndoEquivalent}\ref{rem:defEndoEquivalent-1} and~\cite[Proposition 8.4]{BH96}. By~\ref{prop:firstPropEndoSimple.i} we have~$e(\La|\o_\E)=\frac{e(\La)}{e(\E/\F)}=e(\La'|\o_{\E'})$ and thus
\[
\left\lfloor r'/e(\La|\o_\E)\right\rfloor=\left\lfloor r'/e(\La'|\o_{\E'})\right\rfloor=k,
\]
which proves~\ref{prop:firstPropEndoSimple.ii}. 
\end{proof}

\begin{definition}
We call the equivalence classes of ps-characters under endo-equivalence \emph{simple endo-classes}. We define the~\emph{degree} of a simple endo-class to be the degree of any ps-character in the equivalence class.  
\end{definition}

A miracle of the theory is that, while endo-equivalence is defined via the existence of realizations on a common vector space which intertwine (property~\ref{Prop3endodef} of the definition), all realizations of endo-equivalent ps-characters on common vector spaces intertwine:

\begin{theorem}[{cf.~\cite[Theorem 1.11]{BSS} and~\cite[Corollary 8.7]{BH96}}]\label{thm:EndoEquivMeanspairwiseIntforAllrealizations}
 %\red{Suppose that we are given two }endo-equivalent ps-characters~$\Th$ and~$\Th'$ supported on~$(k,\b)$ and~$(k,\b')$, respectively, and 
Let~$\Th,\Th'$ be \rob{endo-equivalent} ps-characters supported on the simple pairs~$(k,\b),(k,\b')$ respectively. 
%Let~$\Th$ be a ps-character supported on the simple pair~$(k,\b)$ and~$\Th'$ be a ps-character supported on the simple pair~$(k',\b')$.  
Let~$\t,\t'$ be realizations of~$\Th,\Th'$ respectively, on the same vector space~$\V$. Then,~$\t$ and~$\t'$ intertwine in~$\tG$. 
\end{theorem}

\begin{proof}
%We consider the realizations~$\t\in\Cc(\La,r,\vphi(\b))$ and~$\t'\in\Cc(\La',r',\vphi'(\b))$, of~$\Th$ and~$\Th'$ respectively, such that~$e(\La)=e(\La')$. \red{
Without loss of generality, we can assume~$\t\in\Cc(\La,r,\vphi(\b))$ and\gre{~$\t'\in\Cc(\La',r',\vphi'(\b'))$}, for simple strata~$[\La,n,r,\vphi(\b)]$ and\gre{~$[\La',n',r',\vphi'(\b')]$} with~$e(\La)=e(\La')$ by adjusting the strata in their affine classes.  If one of the ps-characters is zero then~$\t$ and~$\t'$ are trivial and therefore intertwine. Thus we assume that both ps-characters are non-zero.

\shaun{We first consider the case~$r=r'$. By Proposition~\ref{prop:firstPropEndoSimple}\ref{prop:firstPropEndoSimple.i} we can apply Lemma~\ref{lem:changetoconjlatt} to find an}\gre{~$\o_{\vphi'(\E')}$-lattice} \shaun{sequence~$\La''$ in~$\V$ and an element~$g\in\tG$ such that~$g\La''=\La$. Then~$\t$ and~$\Th'(\V, \presuper{g}\vphi',\La,r')$ are conjugate in~$\tG$ by~\cite[1.13]{BSS}. Thus~$\t$  and~$\Th'(\V,\vphi',\La'',r')$ are conjugate and hence, as the latter is intertwined with~$\t'$ by~$1$, we see that~$\t$ and~$\t'$ intertwine.}
 
We now assume, without loss of generality, that~$r\leq r'$. The quadruple~$(\V,\vphi',\La',r)$ is an element of~$\Qq(k,\b)$ by Proposition~\ref{prop:firstPropEndoSimple}\ref{prop:firstPropEndoSimple.ii}, and by the~$r=r'$ case the characters~$\t$ and~$\Th'(\V,\vphi',\La',r)$ intertwine. Thus~$\t$ and~$\t'$ intertwine because~$\t'$ is the restriction of~$\Th'(\V,\vphi',\La',r)$. 
\end{proof}

% We then have:
% \begin{theorem}[{\cite[Corollary 8.3]{BSS}}]\label{BHendoequivalence} 
% Endo-equivalence defines an equivalence relation on the class of ps-characters. 
% \end{theorem}

That endo-equivalence is a transitive relation leads to the following transitivity of intertwining statement for simple characters:
%of Bushnell--Henniart (\emph{c.f.} \cite[Corollary 8.3]{BSS} for non-strict lattice sequences):

\begin{theorem}\label{thm:transintertwiningoverGL}
Let~$\t_i\in\Cc(\La_i,r_i,\b_i)$, for~$i=1,2,3$. Suppose that~$\t_1$ and~$\t_2$ intertwine in~$\tG$,~$\t_2$ and~$\t_3$ intertwine in~$\tG$, and that either
\begin{enumerate}\setlength\itemsep{5pt}
\item\label{thm:transintertwiningoverGL.ii} $\left\lfloor\frac{r_1}{e(\La_1|\o_{\E_1})}\right\rfloor=\left\lfloor\frac{r_2}{e(\La_2|\o_{\E_2})}\right\rfloor=\left\lfloor\frac{r_3}{e(\La_3|\o_{\E_3})}\right\rfloor$ and~$\t_1,\t_2$ and~$\t_3$ have the same degree; or
\item\label{thm:transintertwiningoverGL.i} $e(\La_1)=e(\La_2)=e(\La_3)$ and~$r_1=r_2=r_3$.
\end{enumerate}
Then~$\t_1$ and~$\t_3$ intertwine in~$\tG$.
%
%~$\I_{\tG}(\t_1,\t_2)$ and~$\I_{\tG}(\t_2,\t_3)$ are non-empty then~$\t_1$ and~$\t_3$ intertwine in~$\tG$. 
\end{theorem}

\begin{proof}
\shaun{In case~\ref{thm:transintertwiningoverGL.ii}, let~$\Th_i$ be the ps-character with realization~$\t_i$ in~$\Cc(\La_i,r_i,\b_i)$, for~$i=1,2,3$. We have~$\Th_1\approx\Th_2$ and~$\Th_2\approx\Th_3$ by assumption, and thus~$\Th_1\approx\Th_3$ by transitivity, and therefore~$\t_1$ and~$\t_3$ intertwine by Theorem~\ref{thm:EndoEquivMeanspairwiseIntforAllrealizations}.
% In the case all lattice sequences are strict this is~\cite[Corollary 2]{BHIntertwiningSimple}.  {\color{red}In general,....}  The ps-characters~$\Th$ and~$\Th''$ defined by~$\t$ and~$\t''$ are endo-equivalent to each other by Theorem~\ref{thmIntertwiningOfSimpleCharIsEQuivalenceRelation}{\color{red}Wrong reference} and thus~$\t$ and~$\t''$ intertwine by some element of~$\tG$ by Theorem~\ref{thm:EndoEquivMeanspairwiseIntforAllrealizations}.
Case~\ref{thm:transintertwiningoverGL.i} follows from case~\ref{thm:transintertwiningoverGL.ii} by Proposition~\ref{prop:SimpleDegrees}\ref{prop:SimpleDegrees.i}.}
\end{proof}

% In the definition of endo-class, the equality of the degrees of the ps-characters (Property \ref{Prop1endodef}) is a result of their intertwining if one assumes that both characters are defined by lattice sequences of the same period and~$r=r'$:
% \begin{theorem}\label{thmEqualDegrees}
% Let~$\t\in\Cc(\La,r,\b)$ and~$\t'\in\Cc(\La',r,\b')$ be~$\tG$-intertwining simple characters with~$e(\La)=e(\La')$. Then
%  $e(\E/\F)=e(\E'/\F)$,~$f(\E/\F)=f(\E'/\F)$ and~$k_0(\b,\La)=k_0(\b',\La')$. 
% \end{theorem}
% 
% \begin{proof}
%  Using the~$\dag$-construction, we can assume that~$\La$ and~$\La'$ are lattice chains of the same period. Then by~\cite[3.5.11]{BK93} these characters are conjugate and thus~\cite[3.5.1]{BK93} provides the desired equalities.
% \end{proof}
%  

%%%%%%%%%%%%%%%%%%%%%%%%%%%%%%%%%%%%%
\subsection{Self-dual ps-characters}
%%%%%%%%%%%%%%%%%%%%%%%%%%%%%%%%%%%%%
We fix the extension~$\F/\F_\so$, as usual let~$\ov{\phantom{a}}$ denote the generator of~$\Gal(\F/\F_\so)$, and fix a sign~$\e=\pm1$. In this section we introduce the theory of endo-class for self-dual simple characters under an~$\e$-hermitian form over~$\F$.
 
A simple pair~$(k,\b)$ over~$\F$ is called \emph{self-dual} if~$(\E,\b)$ is a self-dual extension of~$\F/\F_\so$. For~$(k,\b)$ a self-dual simple pair, we denote by~$\Qq_-(k,\b)$ the class of all quadruples~$((\V,h),\vphi,\La,r)$ consisting of
\begin{enumerate}\setlength\itemsep{5pt}
\item a finite-dimensional~$\varepsilon$-hermitian space~$(\V,h)$ over~$\F/\F_\so$; 
\item a self-dual embedding~$\vphi:\E\rightarrow \A$, where~$\A=\End_\F(\V)$; 
\item a self-dual~$\o_{\vphi(\E)}$-lattice sequence~$\La$ in~$\V$; 
\item and an integer~$r$ such that~$\left\lfloor r/e(\La|\o_{\vphi(\E)})\right\rfloor=k$.
\end{enumerate}
In particular, we then have $(\V,\vphi,\La,r)\in\Qq(k,\b)$. Given~$((\V,h),\vphi,\La,r)\in\Qq_-(k,\b)$ we obtain a self-dual simple stratum~$[\La,n,r,\vphi(\b)]$ which we call a \emph{self-dual realization} of the simple pair~$(k,\b)$. 
 
Let~$(k,\b)$ be a self-dual simple pair. By \cite[Proposition 2.12]{St05}, if we have self-dual realizations~$[\La,n,r,\vphi(\b)]$ and~$[\La',n',r',\vphi'(\b)]$ of~$(k,\b)$, then the transfer map~$\tau_{\La',\La,\b}$ commutes with the involutions defined on~$\Cc(\La,r,\vphi(\b))$ and~$\Cc(\La',r',\vphi'(\b))$ and restricts to give a bijection 
\[
\tau_{\La',\La,\b}:\Cc_{-}(\La,r,\vphi(\b))\rightarrow \Cc_{-}(\La',r',\vphi'(\b)).
\]  
\bob{We let~$\fCc_-(k,\b)$ denote the collection of all self-dual simple characters defined by a realization of the self-dual simple pair~$(k,\b)$:
\[
\fCc_-(k,\b)=\bigcup_{((\V,h),\vphi,\La,r)\in\Qq_-(k,\b)}\Cc_-(\La,r,\vphi(\b)).
\]}

\begin{definition}
Let~$(k,\b)$ be a self-dual simple pair.  
\begin{enumerate}\setlength\itemsep{5pt}
\item A ps-character~$\Th$ supported on~$(k,\b)$ is called \emph{$\sigma$-invariant} if, for all quadruples $((\V,h),\vphi,\La,r)\in\Qq_-(k,\b)$,~$\Th(\V,\vphi,\La,r)$ is~$\sigma$-invariant with respect to~$(\V,h)$.
\item A \emph{self-dual ps-character supported on~$(k,\b)$} is a function,~$\Th_-:\Qq_{-}(k,\b)\rightarrow\bob{ \fCc_{-}(k,\b)}$ such that, for all~$((\V,h),\vphi,\La,r),((\V',h'),\vphi',\La',r')\in\Qq_{-}(k,\b)$,
\begin{align*}
&\Th_-((\V,h),\vphi,\La,r)\in \Cc_{-}(\La,r,\vphi(\b));\\
&\Th_-((\V',h'),\vphi',\La',r')=\tau_{\La',\La,\b}(\Th_-((\V,h),\vphi,\La,r)).
\end{align*}
\end{enumerate}
\end{definition}

We call a value of a self-dual ps-character, a \emph{self-dual realization} of the self-dual ps-character. Thus, again, a self-dual ps-character is determined by any one of its self-dual realizations. By the Glauberman correspondence, every self-dual ps-character arises uniquely by restriction from of a~$\sigma$-invariant ps-character. 

\shaun{More precisely, for a self-dual ps-character~$\Th_-$ supported on~$(k,\b)$, there is a unique~$\sigma$-invariant ps-character~$\Th$, supported on~$(k,\b)$, such that the following diagram commutes:}
%\[
%\xymatrix{\Qq_-(k,\b)\ar[dr]^{\mathrm{Gl}\ \circ \ \Th_-}\ar[d]\\%&\fCc_-(k,\b)\ar[d] \\
%\Qq(k,\b)\ar[r]^\Th&\fCc(k,\b)
%}
%\]
\[
\begin{tikzcd}[column sep=3pc]
 \bob{ \Qq_-(k,\b)}\arrow[d] \arrow[dr,"\mathrm{Gl}\ \circ\ \Th_-"]& {} \\
  %B \arrow{rru}{} \arrow{r}{} &
  \bob{\Qq(k,\b)} \arrow[r,"\Th"] &\bob{\fCc(k,\b)}
\end{tikzcd}
\]
where the vertical arrow is the forgetful map~$((\V,h),\vphi,\La,r)\mapsto(\V,\vphi,\La,r)$ and\bob{~$\mathrm{Gl}\ \circ\ \Th_-((\V,h),\vphi,\La,r)$ is the Glauberman lift in~$\Cc(\La,r,\varphi(\b))$ of~$\Th_-((\V,h),\vphi,\La,r)\in\Cc_-(\La,r,\varphi(\b))$.}  
%
%
%right vertical arrow is the Glauberman lifting of self-dual simple characters. 
We call~$\Th$ the \emph{lift} of~$\Th_-$. We also define the \emph{degree} of~$\Th_-$ to be the degree of its lift, so~$\deg(\Th_-)=[\F[\b]:\F]$.

%Let~$\Th_-$ be a self-dual ps-character supported on the self-dual simple pair~$(k,\b)$ and~$\Th'_-$ be a self-dual ps-character supported on the self-dual simple pair~$(k',\b')$.  
Let~$\Th_-,\Th'_-$ be self-dual ps-characters supported on the self-dual simple pairs~$(k,\b),(k',\b')$ respectively. 
\begin{definition}
We say that~$\Th_-$ and~$\Th'_-$ are \emph{endo-equivalent}, denoted~$\Th_-\approx\Th'_-$, if 
\begin{enumerate}\setlength\itemsep{5pt}
\item\label{Prop1sdendodef} $\deg(\Th_-)=\deg(\Th'_-)$;
\item $k=k'$; 
\item\label{Prop3sdendodef} there exist realizations on a common~$\e$-hermitian space~$(\V,h)$ over~$\F$ which intertwine in~$\G=\U(\V,h)$, i.e. there exist a finite-dimensional~$\e$-hermitian space~$(\V,h)$ over~$\F/\F_\so$ and quadruples $((\V,h),\vphi,\La,r)\in \Qq_-(k,\b)$ and~$((\V,h),\vphi',\La',r')\in\Qq_-(k',\b')$, such that~$\Th_-(\bob{(\V,h)},\vphi,\La,r)$ and~$\Th'_-(\bob{(\V,h)},\vphi',\La',r')$ intertwine in~$\G$.
\end{enumerate}
\end{definition}

We can now prove our main result on endo-equivalence of self-dual ps-characters.  

\begin{theorem}\label{thm:Endo}
%Let~$\Th_-$ be a self-dual ps-character supported on the self-dual simple pair~$(k,\b)$ and~$\Th'_-$ be a self-dual ps-character supported on the self-dual simple pair~$(k',\b')$.  Let~$\Th$ denote the lift of~$\Th_-$, and~$\Th'$ denote the lift of~$\Th'_-$.
%
Let~$\Th_-,\Th'_-$ be self-dual ps-characters supported on the self-dual simple pairs~$(k,\b),(k,\b')$ respectively. Denote by~$\Th,\Th'$ the lifts of~$\Th_-,\Th'_-$ respectively. 
%Let~$(k,\b)$ and~$(k',\b')$ be self-dual simple pairs which satisfy~$k=k'$ and~$[\E:\F]=[\E':\F]$.  Let~$\Th_-$ and~$\Th'_-$ be two self-dual ps-characters supported on~$(k,\b)$ and~$(k',\b')$, respectively, and~$\Th$ and~$\Th'$ their lifts.
Suppose that~$\deg(\Th_-)=\deg(\Th'_-)$. Then the following assertions are equivalent:
\begin{enumerate}\setlength\itemsep{5pt}
\item\label{point1} $\Th$ and~$\Th'$ are endo-equivalent;
\item\label{point2} $\Th_-$ and~$\Th'_-$ are endo-equivalent;
\item\label{point3} for all~$((\V,h),\vphi,\La,r)\in\Qq_-(k,\b)$ and~$((\V,h),\vphi',\La',r')\in\Qq_-(k,\b')$ with~$(\b,\vphi)$ and~$(\b',\vphi')$ concordant the realizations~$\Th_{-}((\V,h),\vphi,\La,r)$ and~$\Th'_{-}((\V,h),\vphi',\La',r')$ intertwine in~$\G=\U(\V,h)$; 
\item\label{point4} there are~$((\V,h),\vphi,\La,r)\in\Qq_-(k,\b)$ and~$((\V,h),\vphi',\La',r')\in\Qq_-(k,\b')$ with~$(\b,\vphi)$ and~$(\b',\vphi')$ concordant such that~$\Th_{-}((\V,h),\vphi,\La,r)$ and $\Th'_{-}((\V,h),\vphi',\La',r')$ intertwine in~$\G=\U(\V,h)$. 
%  intertwine is a comparison pair such that the realizations of~$\Th_-$ and~$\Th'_-$ intertwine in~$\G$; 
%  \item\label{point4} For all Witt comparison pairs, the realizations of~$\Th_-$ and~$\Th'_-$ intertwine in~$\G$.  {\color{red}remove Witt comparison pair and just write what it is}
\end{enumerate}
Suppose further that~$\F\ne\F_\so$ or~$\e=1$. Then these four assertions are equivalent to:
\begin{enumerate}\addtocounter{enumi}{4}\setlength\itemsep{5pt}
\item \label{point5} for all~$((\V,h),\vphi,\La,r)\in\Qq_-(k,\b)$ and~$((\V,h),\vphi',\La',r')\in\Qq_-(k,\b')$ the realizations~$\Th_{-}((\V,h),\vphi,\La,r)$ and~$\Th'_{-}((\V,h),\vphi',\La',r')$ intertwine in~$\G=\U(\V,h)$. 
\end{enumerate}
\end{theorem}

Indeed, we will see in the proof that, in the non-symplectic case (i.e.~$\F\ne\F_\so$ or~$\e=1$), if~$\Th$ and~$\Th'$ are endo-equivalent then, for any two self-dual embeddings~$\vphi,\vphi'$ of~$\b,\b'$ respectively into any~$\e$-hermitian space~$(\V,h)$ over~$\F/\F_\so$, the pairs~$(\b,\vphi)$ and~$(\b',\vphi')$ are concordant.

\begin{proof}
Certainly~\ref{point2} follows from~\ref{point4}, and~\ref{point1} follows from~\ref{point2}, by definition and Proposition~\ref{prop:Glauberman}. Clearly~\ref{point3} follows from~\ref{point5}.

\shaun{Suppose now that~$\Th,\Th'$ are endo-equivalent and we have~$((\V,h),\vphi,\La,r)\in\Qq_-(k,\b)$ and~$((\V,h),\vphi',\La',r')\in\Qq_-(k,\b')$. Replacing~$\La,\La'$ by an affine translation, which does not affect the realization, we can assume~$e(\La)=e(\La')$; moreover, by Proposition~\ref{prop:firstPropEndoSimple}\ref{prop:firstPropEndoSimple.ii} we may replace~$r,r'$ by~$\min\{r,r'\}$ and hence assume they are equal. Note that these changes do not affect the concordance of~$(\b,\vphi)$ and~$(\b',\vphi')$. Then~$\Th(\V,\vphi,\La,r)$ and~$\Th'(\V,\vphi',\La',r)$ intertwine by Theorem~\ref{thm:EndoEquivMeanspairwiseIntforAllrealizations} and, further, when~$\F\ne\F_\so$ or~$\e=1$, the pairs~$(\b,\vphi)$ and~$(\b',\vphi')$ are automatically concordant by Lemma~\ref{lemma:MatchWittforIntertwCharacters}\ref{lemma:MatchWittforIntertwCharacters.ii}.
%. If either~$(\vphi,\b)$ and~$(\vphi',\b')$ are concordant, or~$\F\ne\F_\so$ or~$\e=1$, then 
Then, provided we have concordant pairs~$(\b,\vphi)$ and~$(\b',\vphi')$ in the symplectic case,} \gre{Proposition~\ref{prop:TiGandGIntertwiningSameNonSympl}} \shaun{implies that~$\Th_{-}((\V,h),\vphi,\La,r)$ and~$\Th'_{-}((\V,h),\vphi',\La',r')$ intertwine in~$\G=\U(\V,h)$. Thus~\ref{point1} implies~\ref{point3}, and~\ref{point1} implies~\ref{point5} when~$\F\ne\F_\so$ or~$\e=1$.}

\shaun{Now suppose the two ps-characters satisfy~\ref{point3}. In order to show~\ref{point4}, we have to find elements of~$\Qq_-(k,\b)$ and~$\Qq_-(k,\b')$ defined on the same~$\e$-hermitian space \emph{with concordant pairs}. Let~$h_\E$ and~$h_{\E'}$ be hyperbolic~$\e$-hermitian spaces over~$\E$ and~$\E'$, respectively, of the same dimension, and recall that~$[\E:\F]=[\E':\F]$, since~$\Th_-,\Th'_-$ have the same degree. Then (in the notation of subsection~\ref{subsec:transfer})~$\l_\b^*(h_\E)$ and~$\l_{\b'}^*(h_{\E'})$ are hyperbolic spaces over~$\F$ of the same dimension; hence they are isometric and we can assume without loss of generality that they are the same space~$(\V,h)$. The pairs~$(\b,\vphi_\b)$ and~$(\b',\vphi_{\b'})$ %of~$\b$ and~$\b'$ into~$\A$ 
are then concordant, because~$h_\E$ and~$h_{\E'}$ are hyperbolic. Now, for any self-dual~$\o_\E$-lattice sequence~$\La$ in~$\V$ and self-dual~$\o_{\E'}$-lattice sequence~$\La'$ in~$\V$, we have~$((\V,h),\vphi,\La,ke(\La|\o_\E))\in\Qq_-(k,\b)$ and~$((\V,h),\vphi',\La',ke(\La'|\o_{\E'}))\in\Qq_-(k,\b')$.} 
\end{proof}

\shaun{In fact, concordance exactly determines whether realizations of endo-equivalent self-dual ps-characters intertwine:}

\begin{proposition}\label{prop:endoconcordinter}
\shaun{Let~$\Th_-,\Th'_-$ be endo-equivalent self-dual ps-characters supported on the self-dual simple pairs~$(k,\b),(k,\b')$ respectively. %Suppose~$\deg(\Th_-)=\deg(\Th'_-)$ and 
Let~$((\V,h),\vphi,\La,r)\in\Qq_-(k,\b)$ and~$((\V,h),\vphi',\La',r')\in\Qq_-(k,\b')$. Then the realizations~$\Th_{-}((\V,h),\vphi,\La,r)$ and~$\Th'_{-}((\V,h),\vphi',\La',r')$ intertwine in~$\G=\U(\V,h)$ if and only if~$(\b,\vphi)$ and~$(\b',\vphi')$ are concordant.}
\end{proposition}

\begin{proof}
\shaun{Suppose~$\theta=\Th_{-}((\V,h),\vphi,\La,r)$ and~$\theta'=\Th'_{-}((\V,h),\vphi',\La',r')$ intertwine in~$\G=\U(\V,h)$. Replacing~$\La,\La'$ by an affine translation, which does not affect the realization, we can assume~$e(\La)=e(\La')$; moreover, by Proposition~\ref{prop:firstPropEndoSimple}\ref{prop:firstPropEndoSimple.ii} we may replace~$r,r'$ by~$\max\{r,r'\}$ (which is equivalent to restricting~$\theta,\theta'$ to subgroups) and hence assume they are equal. Then Proposition~\ref{prop:TiGandGIntertwiningSameNonSympl} implies that~$(\b,\vphi)$ and~$(\b',\vphi')$ are concordant. The converse is given by Theorem~\ref{thm:Endo}.}
\end{proof}

%The symplectic case is completely analogous using matching Witt towers.
%
%\begin{proposition}\label{propTiGandMatchWittTowerandGIntertwiningSameSympl}
%In the symplectic case: Two self-dual simple characters for same~$r$ and lattice sequences of same period intertwine in~$G$ if and only if 
%their lifts intertwine in~$\tG$ and the Witt towers match.
%\end{proposition}
%
%\begin{proof}
%The ``only-if-part''  follows from Proposition~\ref{propMatchWittforIntertwCharacters}. The ``if-part'' is completely analogous to the proof of Proposition 
%\ref{propTiGandGIntertwiningSameNonSympl} using Corollary~\ref{corEqualEE0Sympl} and Theorem~\ref{thm:EndoStrongSympl}.
%\end{proof}
%
%
%We now get to the Theorem for the symplectic case. 
%
%\begin{theorem}\label{thm:EndoSympl}
%In the symplectic case: Let~$\Th_-$ and~$\Th'_-$ be two self-dual ps-characters and~$\Th$ and~$\Th'$ their lifts.
%Then, the following assertions are equivalent
%\begin{enumerate}
% \item$\Th_-$ and~$\Th'_-$ are equivalent,
% \item There is a comparison pair such that the realizations of~$\Th_-$ and~$\Th'_-$ intertwine in~$G$.  
%  \item For all comparison pairs with matching Witt towers the realizations of~$\Th_-$ and~$\Th'_-$ intertwine in~$G$. 
%\end{enumerate}
%\end{theorem}
%
%\begin{proof}
% Completely analogous to the proof of Theorem~\ref{thm:Endo} using Proposition~\ref{propTiGandMatchWittTowerandGIntertwiningSameSympl}.
%\end{proof}
As endo-equivalence of ps-characters for general linear groups is an equivalence relation, from Theorem~\ref{thm:Endo} we deduce the corresponding result for self-dual ps-characters:

\begin{corollary}\label{remEndoIsEqRelationSimple}
Endo-equivalence defines an equivalence relation on the class of self-dual ps-characters.
% with respect to~$(\s,\e)$.{\color{red}State as corollary}
%\emph{simple $(\sigma,\epsilon)$-endo-classes}.  or \emph{simple classical endo-classes} with respect to~$(\sigma,\epsilon)$.
\end{corollary}

\orange{We also deduce that endo-equivalent self-dual ps-characters must be supported on similar self-dual extensions (see Definition~\ref{def:similar}).
\begin{corollary}\label{cor:endosimilar}
Let~$\Th_-,\Th'_-$ be endo-equivalent self-dual ps-characters supported on the self-dual simple pairs~$(k,\b),(k,\b')$ respectively and put~$\E=\F[\b]$ and~$\E'=\F[\b']$. Then the self-dual extensions~$(\E,\b)$ and~$(\E',\b')$ are similar.
\end{corollary}
\begin{proof}
Denote by~$\Th,\Th'$ the lifts of~$\Th_-,\Th'_-$ respectively; they are endo-equivalent by Theorem~\ref{thm:Endo}. We choose self-dual realizations of~$\Th_-,\Th'_-$ on a common space~$\V$ for which the lattice sequences have the same period. Then their lifts are realizations of the endo-equivalent~$\Th,\Th'$ so intertwine in~$\tG=\Aut_\F(\V)$ by Theorem~\ref{thm:EndoEquivMeanspairwiseIntforAllrealizations}. Then Corollary~\ref{cor:EqualEE0Sympl} says that the extensions~$(\E,\b)$ and~$(\E',\b')$ are similar.
\end{proof}
}

We call the equivalence classes of self-dual ps-characters under endo-equivalence \emph{self-dual simple endo-classes}. We also obtain the self-dual version of Theorem~\ref{thm:transintertwiningoverGL}, the transitivity of intertwining of self-dual simple characters: 
%that~$\G$-intertwining of self-dual simple characters of~$\G$ is a transitive relation, hence an equivalence relation:

\begin{corollary}\label{corIntEquivalenceRelation}
Let~$\t_{i,-}\in\Cc_-(\La_i,r_i,\b_i)$ be self-dual simple characters, for~$i=1,2,3$. Suppose that~$\t_{1,-}$ and~$\t_{2,-}$ intertwine in~$\G$,~$\t_{2,-}$ and~$\t_{3,-}$ intertwine in~$\G$, and that either%that their respective lifts~$\t_i$,~$i=1,2,3$, satisfy~\ref{thm:transintertwiningoverGL.ii} in Theorem~\ref{thm:transintertwiningoverGL}. 
\begin{enumerate}\setlength\itemsep{5pt}
\item\label{corIntEquivalenceRelation.ii} $\left\lfloor\frac{r_1}{e(\La_1|\o_{\E_1})}\right\rfloor=\left\lfloor\frac{r_2}{e(\La_2|\o_{\E_2})}\right\rfloor=\left\lfloor\frac{r_3}{e(\La_3|\o_{\E_3})}\right\rfloor$ and~$\t_{1,-},\t_{2,-}$ and~$\t_{3,-}$ have the same degree; or 
\item\label{corIntEquivalenceRelation.i} $e(\La_1)=e(\La_2)=e(\La_3)$ and~$r_1=r_2=r_3$.
\end{enumerate}
%\gre{I thought about putting it in but somehow I do not want to put it here.  You can decide. If so, then extend the proof. Be careful (i) does not imply (ii) but (i) and intertwining impies (ii).} % and that~$\I_\G(\t_{1,-},\t_{2,-})$ and~$\I_\G(\t_{2,-},\t_{3,-})$ are non-empty.
Then~$\t_{1,-}$ and~$\t_{3,-}$ intertwine in~$\G$.
% ,\t'_-$ and~$\t''_-$ be self-dual simple characters with the same group level and the same degree.
% If~$\t_-$ intertwines with~$\t'_-$ and~$\t'_-$ intertwines with~$\t''_-$ over~$\G$, then~$\t_-$ intertwines with~$\t''_-$ over~$\G$.
\end{corollary}

\begin{proof}
\shaun{In case~\ref{corIntEquivalenceRelation.ii}, let~$\Th_{i-}$ be the ps-character with realization~$\t_{i,-}$ in~$\Cc_-(\La_i,r_i,\b_i)$, for~$i=1,2,3$. We have~$\Th_{1,-}\approx\Th_{2,-}$ and~$\Th_{2,-}\approx\Th_{3,-}$ by assumption, and thus~$\Th_{1,-}\approx\Th_{3,-}$ by Corollary~\ref{remEndoIsEqRelationSimple}. We abbreviate~$\vphi_i$ for the canonical embedding of~$\b_i$ in~$\A$. Then~$(\b_1,\vphi_1)$ and~$(\b_2,\vphi_2)$ are concordant, by Proposition~\ref{prop:endoconcordinter}, and likewise~$(\b_2,\vphi_2)$ and~$(\b_3,\vphi_3)$ are concordant. Thus~$(\b_1,\vphi_1)$ and~$(\b_3,\vphi_3)$ are concordant, by transitivity of concordance, and~$\t_{1,-}$ and~$\t_{3,-}$ intertwine in~$\G$ by Proposition~\ref{prop:endoconcordinter} again. Case~\ref{corIntEquivalenceRelation.i} follows from case~\ref{corIntEquivalenceRelation.ii} by Proposition~\ref{prop:SimpleDegrees}\ref{prop:SimpleDegrees.i}.}
\end{proof}

%%%%%%%%%%%%%%%%%%%%%%%%%%%%%%%%%%%%

%%%%%%%%%%%%%%%%%%%%%%%%%%%%%%%%%%%%
\section{Self-dual semisimple characters: intertwining and concordance}
\label{secSemisimpleChars}
%%%%%%%%%%%%%%%%%%%%%%%%%%%%%%%%%%%%

In this section we recall, from~\cite{St05,MiSt}, the basic properties of semisimple strata and characters, and of their self-dual versions. We also recall from~\cite{SkSt} how the intertwining of semisimple characters induces a matching between their splittings, and use this to deduce both results on concordance and Skolem--Noether type results.

%%%%%%%%%%%%%%%%%%%%%%%%%%%%%%%%%%%%
\subsection{Semisimple strata}
%%%%%%%%%%%%%%%%%%%%%%%%%%%%%%%%%%%%
Suppose that~$\V=\bigoplus_{i\in \I} \V^i$ is a decomposition into~$\F$-subspaces. For~$\J$ any subset of~$\I$, we write~$\V^\J=\bigoplus_{i\in\J}\V^i$ and~$\mathbf{e}^\J:\V\to \V^\J$ for the projection with kernel~$\bigoplus_{j\in \I\setminus\J}\V^j$. We also set~$\A^\J=\End_\F(\V^\J)$ and~$\tG_\J=\Aut_\F(\V^\J)$. When~$\J=\{i\}$ is a singleton, then we will write~$\A^i$ rather than~$\A^{\{i\}}$, etc.

Now let~$[\La,n,r,\b]$ be a stratum in~$\A$. For~$\J$ a subset of~$\I$, we set~$\La^\J=\La\cap \V^\J$ and~$\b_\J=\mathbf{e}^\J\b\mathbf{e}^\J$, and~$n_\J=\max\{-\val_{\La^\J}(\b_\J),r\}$, so that~$[\La^\J,n_\J,r,\b_\J]$ is a stratum in~$\V^\J$. The decomposition~$\V=\bigoplus_{i\in \I} \V^i$ of~$\V$ is called a \emph{splitting} of~$[\La,n,r,\b]$ if~$\b=\sum_{i\in \I}\b_i$ and~$\La(k)=\bigoplus_{i\in \I}\La^i(k)$, for all~$k\in\mathbb{Z}$.  

\begin{definition}\label{defSemisimpleStratum}
A stratum~$[\La,n,r,\b]$ in~$\A$ is called \emph{semisimple} if it is a null stratum or if~$\val_{\La}(\b)=-n$ and there exists a splitting~$\bigoplus_{i\in \I} \V^i$ for~$[\La,n,r,\b]$ such that
\begin{enumerate}\setlength\itemsep{5pt}
\item for~$i\in \I$, the stratum~$[\La^i,n_i,r,\b_i]$ in~$\End_\F(\V^i)$
is simple; 
\item for~$i,j\in \I$ with~$i\neq j$, the stratum~$[\La^{\{i,j\}},n_{\{i,j\}},r,\b_{\{i,j\}}]$ is not equivalent to a simple stratum in~$\End_\F(\V^{\{i,j\}})$.
%%% Could be stated as: if~$\J$ is a subset of~$\I$ with~$|\J|\ge 2$ then the stratum~$[\La^{\J},n_\J,r,\b_\J]$ is not equivalent to a simple stratum in~$\End_\F(\V^\J)$.
\end{enumerate}
\end{definition}

Let~$[\La,n,r,\b]$ be a semisimple stratum in~$\A$. We write~$\E=\F[\b]$ and~$\E_i=\F[\b_i]$, so that~$\E=\bigoplus_{i\in \I}\E_i$ is a sum of fields, and set~$\B_\b=\C_\A(\b)$ and~$\tG_\b=\B_\b^\times$. By abuse of notation, we call an~$\o_\F$-lattice sequence which is a sum of~$\o_{\E_i}$-lattice sequences in~$\V^i$ an~\emph{$\o_\E$-lattice sequence}; thus~$\La$ is an~$\o_\E$-lattice sequence. We also call~$[\E:\F]=\dim_\F\E$ the~\emph{degree} of the semisimple stratum. 

\shauns{%
For~$[\La,n,0,\b]$ a non-null semisimple stratum in~$\A$, we let
\[
k_0(\b,\La)=-\min\{r\in\ZZ:r\geq 0,\ [\La,n,r,\b]\text{ is not semisimple}\}
\] 
denote the \emph{critical exponent} of~$[\La,n,0,\b]$ and set~$k_\F(\b)=\frac{1}{e(\La)}k_0(\b,\La)$; by~\cite[\S3.1]{St05}, this is independent of~$\La$. For null strata we put~$k_0(0,\La)=k_\F(0)=-\infty$.
}

\shauns{%
It is possible to generalize the critical exponent to all pairs~$(\b,\La)$ where~$\b$ generates a product of fields, and~$\La$ is an~$\o_{\E}$-lattice sequence as follows. We set~$n=-\val_\La(\b)$ and~$e=e(\La)$.
}

\begin{lemma}\label{lemmasufflargeinterstratasesi}
\shauns{%
With the notation above, if~$\b$ is non-zero then, for any sufficiently large integer~$l$, the stratum~$[\La,n+le,0,\w_\F^{-l}\b]$ is semisimple.
}
\end{lemma}

\shauns{%
We can then define
\[
k_0(\b,\La)=k_0(\w_\F^{-l}\b,\La)+le,
\]
for any integer~$l$ such that~$[\La,n+le,0,\w_\F^{-l}\b]$ is semisimple; this is independent of the choice of~$l$. We also set~$k_\F(\b)=\frac{1}{e}k_0(\b,\La)$, which is again independent of~$\La$.
}

\shauns{%
\begin{proof} Replacing~$\b$ by~$\w_\F^{-l}\b$ for sufficiently large~$l$, we can assume that~$k_\F(\b_i)<0$ for all~$i\in\I$, \shaun{in which case~$n$ is positive}. We need only show that there is an integer$~l$ such that~$[\La,n+le,0,\w_\F^{-l}\b]$ is semisimple so we suppose for contradiction that there is no such integer. From the definition of semisimple stratum, it is sufficient to consider the case that~$\I$ has cardinality two. For each~$l\ge 0$ there is then \daniel{by~\cite[Theorem 6.16]{SkSt}} a simple stratum~$[\La,n+le,0,\g^{(l)}]$ equivalent to~$[\La,n+le,0,\w_\F^{-l}\b]$ \shaun{with~$\g^{(l)}\in\bigoplus_{i\in\I}\A^i$. %\orange{We can further assume that~$[\La,n+el,0\g_l]$ is split be the splitting of~$\V$ associated to~$\b$. Then~$[\La,n,0,\w_\F^l\g_l]$ is a simple stratum, because~$k_\F(\g_l)=k_\F(\b_i)<0$ for any~$i$, and~$\w_\F^l\g_l$ converges to~$\b$ as~$l\to\infty$.} 
Restricting these to the~$i$th block (where both strata are simple), we see that%~$[\La,n,0,\w_\F^l\g^{(l)}]$ is a pure stratum equivalent to~$[\La,n,0,\b]$ and, restricting to a single block we have~
~$k_\F(\g^{(l)})= k_\F(\w_\F^{-l}\b_i)$; in particular,~$k_\F(\w_\F^l\g^{(l)})=k_\F(\b_i)<0$ so that~$[\La,n,0,\w_\F^l\g^{(l)}]$ is a simple stratum. But then~$\w_\F^l\g^{(l)}$ converges to~$\b$ as~$l\to\infty$} so~\cite[Proposition~1.9]{St00} implies that~$[\La,n,0,\b]$ is simple. In particular,~$\F[\b]$ is a field, which contradicts the fact that~$\I$ has cardinality two.
\end{proof}
}

Now we turn to the self-dual case. If~$[\La,n,r,\b]$ is self-dual and semisimple with associated splitting~$\V=\bigoplus_{i\in \I}\V^i$ then, for each~$i\in \I$, there exists a unique~$\s(i)=j\in \I$ such that~$\ov{\b_i}=-\b_j$. We set~$\I_0=\{i\in \I:\s(i)=i\}$ and choose a set of representatives~$\I^+$ for the orbits of~$\s$ in~$\I\setminus \I_0$.  Then we let~$\I_{-}=\s(\I_{+})$ so that we have a disjoint union~$\I=\I_+\cup \I_0\cup \I_{-}$. If~$\J$ is a~$\s$-stable subset of~$\I$, then we write~$h_\J$ for the restriction of the form~$h$ to~$\V^\J$, so that~$(\V^\J,h_\J)$ is an~$\e$-hermitian space over~$\F/\F_\so$; this applies in particular when~$\J$ is a singleton subset of~$\I_0$. 

\begin{definition}\label{skewsesistratum}
A semisimple stratum~$[\La,n,r,\b]$ in~$\A$ is called \emph{skew} if it is self-dual and the associated splitting~$\bigoplus_{i\in \I} \V^i$ is orthogonal with respect to the~$\e$-hermitian form~$h$, i.e.~$\I=\I_0$ in the notation above. 
\end{definition}

In particular, a self-dual simple stratum is automatically skew. \rob{At the start of the appendix, there is a brief discussion on the roles of skew and non-skew self-dual semisimple objects.}

\shaun{%
As in the simple case, many results concerning semisimple strata are proved ``by induction along~$r$'' using the following fundamental approximation result.
}

\begin{proposition}[{\cite[Proposition~3.4]{St05}, \cite[Lemma~3.1]{MiSt}}]\label{prop:semiapprox}
\shaun{%
Let~$[\La,n,0,\b]$ be a non-null semisimple stratum with associated splitting~$\bigoplus_{i\in \I} \V^i$ and let~$0<r\le n$. Then there is a semisimple stratum~$[\La,n,r,\g]$ equivalent to~$[\La,n,r,\b]$ with~$\g\in\bigoplus_{i\in\I}\A^i$. Moreover, if~$[\La,n,0,\b]$ is self-dual then~$[\La,n,r,\g]$ may be taken to be self-dual also.
}
\end{proposition}

%It will also be useful to have an analogue in the semisimple case of Lemma~\ref{lem:changetoconjlatt}, on the existence of lattice sequences with prescribed properties. %%%% But I haven't worked out the statement yet!

%\begin{lemma}\label{lem:changetoconjlattsemi}
%\shauns{Let~$[\La,n,0,\b]$ be a semisimple stratum in~$\A$ with associate splitting~$\bigoplus_{i\in \I} \V^i$. Let~$\E'=\F[\b']$ be an~$\F$-subalgebra of~$\A$ which is isomorphic to a sum containing~$\F$, such that~$e(\E/\F)=e(\E'/\F)$ and~$f(\E/\F)=f(\E'/\F)$, and let~$\La$ be an~$\o_\E$-lattice sequence in~$\V$. Then there exist an~$\o_{\E'}$-lattice sequence~$\La'$ in~$\V$ and~$g\in\tG$ such that~$g\La'=\La$.}
%\end{lemma} 

%\begin{proof}
%\shauns{There is an~$\F$-linear isomorphism from~$\E$ to~$\E'$ which maps~$\p_\E^n$ to~$\p_{\E'}^n$, for each~$n\in\ZZ$. Now we choose an~$\E$-basis of~$\V$ which splits~$\La$ and map it to an~$\E'$-basis of~$\V$, using this~$\F$-linear isomorphism, and the image of~$\La$ has the required property.}
%\end{proof}

%%%%%%%%%%%%%%%%%%%%%%%%%%%%%%%%%%%%
\subsection{Semisimple characters}
%%%%%%%%%%%%%%%%%%%%%%%%%%%%%%%%%%%%
Let~$[\La,n,r,\b]$ be a semisimple stratum in~$\A$. Associated to it \rob{are} an~$\o_{\F}$-order~$\HH(\b,\La)$ in~$\A$ defined inductively (see~\cite[Section 3.2]{St05}) and, for\gre{~$m\geqslant 1$,} the compact open subgroups~$\H^{m}(\b,\La)=\HH(\b,\La)\cap \P^{m}(\La)$ of~$\tG$. We also have a set~$\Cc(\La,r,\b)$ of characters of~$\H^{r+1}(\b,\La)$ called \emph{semisimple characters} (which depend on our fixed choice of additive character~$\psi$). For each subset~$\J$ of~$\I$, there is a natural embedding~$\H^{r+1}(\b_\J,\La^\J)\hookrightarrow \H^{r+1}(\b,\La)$ and hence a map~$\Cc(\La,r,\b)\to\Cc(\La^\J,r,\b_\J)$ which we write~$\t\mapsto\t_\J$. In the case when~$\J=\{i\}$ is a singleton, we call~$\t_i$ a \emph{simple block restriction} of~$\t$.

As in the simple case we have a notion of transfer for semisimple characters. We define~$e(\La_\E)$ to be the greatest common divisor of the integers~$e(\La^i|\o_{\E_i})$, for~$i\in \I$. Given two semisimple strata~$[\La,n,r,\b]$ and~$[\La',n',r',\b]$ in~$\A$, which satisfy~$\left\lfloor\frac{r}{e(\La_\E)}\right\rfloor=\left\lfloor\frac{r'}{e(\La'_{\E'})}\right\rfloor$, there is a canonical bijection~$\tau_{\La',\La,\b}:\ \Cc(\La,r,\b)\rightarrow\Cc(\La',r',\b)$ called~\emph{transfer} (see~\cite[Proposition 3.26]{St05} and~\cite[Remark 3.3]{St08}): if~$\t\in\Cc(\La,r,\b)$ then~$\tau_{\La',\La,\b}(\t)$ is the unique semisimple character~$\t'\in\Cc(\La',r',\b)$ such that~$1\in\tG$ intertwines~$\t$ with~$\t'$. Again, despite the dependence of the bijection on~$(r,r')$ we omit it from our notation. 

Now suppose that~$[\La,n,r,\b]$ is also self-dual. Then the subgroup~$\H^{r+1}(\b,\La)$ and the set~$\Cc(\La,r,\b)$ of semisimple characters are stable under~$\s$ (see~\cite[\S3.6]{St05} and~\cite[\S3.6]{MiSt}), and we set
\begin{align*}
\H_-^{r+1}(\b,\La)&=\H^{r+1}(\b,\La)^\Sigma= \H^{r+1}(\b,\La)\cap\G;\\
\Cc^\Sigma(\La,r,\b)&=\{\t\in\Cc(\La,r,\b):\t^\s=\t\}.
\end{align*} 
As in the simple setting,~$\H_-^{r+1}(\b,\La)$ is a compact open subgroup of~$\G$, and we define the set of~\emph{self-dual semisimple characters} of~$\H_-^{r+1}(\b,\La)$ by restriction:
\[
\Cc_-(\La,r,\b)=\{\t\mid_{\H_-^{r+1}(\b,\La)}:\t\in\Cc^\Sigma(\La,r,\b)\}.\]
This restriction coincides with the Glauberman correspondence. By the Glauberman correspondence, if~$\t_-\in \Cc_-(\La,r,\b)$ then there is a unique~$\t\in \Cc^{\Sigma}(\La,r,\b)$ whose restriction to~$\H^{r+1}_{-}(\b,\La)$ is~$\t_-$; we call~$\t$ the \emph{lift of~$\t_-$ \shauny{with respect to~$(\La,r,\b)$}}.  \shauny{(As before, we will simply write \emph{lift of~$\theta_-$}, since the stratum will be given implicitly.)}  

\daniel{A self-dual semisimple character~$\t_-$ is called~\emph{skew semisimple} if there is a skew semisimple stratum~$[\La,n,r,\b]$ such that~$\t_-\in\Cc_-(\La,r,\b)$.}

If we have two self-dual semisimple strata~$[\La,n,r,\b]$ and~$[\La',n',r',\b]$ in~$\A$, which satisfy~$\left\lfloor\frac{r}{e(\La_\E)}\right\rfloor=\left\lfloor\frac{r'}{e(\La'_{\E'})}\right\rfloor$, then the transfer map~$\tau_{\La',\La,\b}$ commutes with the involution~$\s$ (see~\cite[Proposition 3.32]{St05} and~\cite[\S3.7]{MiSt}). In particular it restricts to a bijection~$\tau_{\La',\La,\b}:\Cc_-(\La,r,\b)\rightarrow\rob{\Cc_-(\La',r',\b)}$.

%%%%%%%%%%%%%%%%%%%%%%%%%%%%%%%%%%%%
\subsection{Matching splittings, intertwining and conjugacy}
%%%%%%%%%%%%%%%%%%%%%%%%%%%%%%%%%%%%
%In this section, we l
Let~$[\La,n,r,\b]$ and $[\La',n',r',\b']$ be semisimple strata in~$\A$ %with~$e(\La)=e(\La')$, and 
with associated splittings~$\V=\bigoplus_{i\in \I}\V^i$ and~$\V=\bigoplus_{i\in \I'}\V'^i$, respectively. The starting point for this section is the \emph{Matching Theorem} of the second and third authors.

\begin{definition}\label{def:matching}
\shauns{Let~$\t\in \Cc(\La,r,\b)$ and~$\t'\in \Cc(\La',r',\b')$ be semisimple characters and
suppose there are a bijection~$\z:\I\to\I'$ and~$g\in\Aut_\F(\V)$ such that, for each~$i\in\I$, we have:
\begin{enumerate}\setlength\itemsep{5pt}
\item $g \V^i=\V'^{\z(i)}$;
\item $\presuper{g}\t_i$ and~$\t_{\z(i)}'$ intertwine in $\Aut_\F(\V'^{\z(i)})$. 
\end{enumerate}
Then we say that~$\z$ is a \emph{matching} from~$(\t,\b)$ to~$(\t',\b')$, and that~$\t$ intertwines~$\t'$ in~$\Aut_\F(\V)$ with matching~$\z$.
}
\end{definition}

\shauns{The use of the terminology ``intertwines with matching~$\z$'' is justified by the following result.}

\begin{proposition}\label{prop:intdecomp}
\shauns{Let~$\t\in \Cc(\La,r,\b)$ and~$\t'\in \Cc(\La',r',\b')$ be semisimple characters in~$\tG$, and suppose~$\z:\I\rightarrow \I'$ is a bijection between their index sets. For~$i\in\I$, write~$\I(\t_i,\t'_{\z(i)})$ for the set of isomorphisms~$g\in\Hom_\F(\V^i,\V'^{\z(i)})$ such that~$\presuper{g}\t_i$ is intertwined with~$\t'_{\z(i)}$ by the identity. Then
\begin{equation}\label{eqn:intdecomp}
\left(\I_{\tG}(\t,\t')\cap \prod_{i\in\I}\Hom_\F(\V^i,\V'^{\z(i)})\right) = \prod_{i\in\I}\I(\t_i,\t'_{\z(i)}).
\end{equation}
\ignore{
Suppose further that~$\t_-\in \Cc_-(\La,r,\b)$ and~$\t'_-\in \Cc(\La',r,\b')$ are self-dual semisimple characters in~$\G$ whose lifts are~$\t,\t'$ respectively, and that~$\z$ commutes with the involution~$\s$. For~$i\in\I_0$, we write~$\I_\G(\t_i,\t'_{\z(i)})$ for the set of isometries~$g\in\Hom_\F(\V^i,\V'^{\z(i)})$ such that~$\presuper{g}\t_i$ is intertwined with~$\t'_{\z(i)}$ by the identity. Then
\[
\left(\I_{\G}(\t_-,\t'_-)\cap \prod_{i\in\I}\Hom_\F(\V^i,\V'^{\z(i)})\right) = \prod_{i\in\I_+}\I(\t_i,\t'_{\z(i)})\times \prod_{i\in\I_0}\I_{\G}(\t_i,\t'_{\z(i)}),
\]
where, on the right hand side, for~$i\in\I_+$, we have identified~$\I(\t_i,\t'_{\z(i)})$ with the set of~$(g,\s(g))$ in~$\I(\t_i,\t'_{\z(i)})\times\I(\t_{\s(i)},\t'_{\s\z(i)})$.
\todo{Does this make sense?}
}
}
\end{proposition}

\begin{proof}
\shauns{It is clear that the left hand side of~\eqref{eqn:intdecomp} is contained in the right hand side. Conversely, if~$g_i\in \I(\t_i,\t'_{\z(i)})$, for~$i\in\I$, then~$g=\sum_{i\in\ I}g_i$ intertwines~$\t$ with~$\t'$, by the Iwahori decomposition of semisimple characters.} %The second assertion follows immediately by Glauberman.}
\end{proof}

\shauns{For the matching theorem we restrict to the case~$r=r'$ and~$e(\La)=e(\La')$, though we will see later that these hypotheses could be relaxed somewhat.}

\begin{theorem}[{\cite[Theorem~10.1]{SkSt}}]\label{thm:MatchingForChar}
\shauns{Suppose that~$e(\La)=e(\La')$ and} let~$\t\in \Cc(\La,r,\b)$ and~$\t'\in \Cc(\La',r,\b')$ be semisimple characters %in~$\A$ 
which intertwine in~$\tG$. \shauns{Then there is a unique matching~$\z:\I\to\I'$ from~$(\t,\b)$ to~$(\t',\b')$. Moreover, if~$g\in\tG$ satisfies~$g \V^i=\V'^{\z(i)}$ for~$i\in\I$, then~$\presuper{g}\t_i$ and~$\t_{\z(i)}'$ intertwine in $\Aut_\F(\V'^{\z(i)})$.}
%bijection~$\z:\I\rightarrow \I'$ such there is an element~$g\in\tG$ with,  for all~$i\in\I$,
%\begin{enumerate}\setlength\itemsep{5pt}
%\item\label{match1} we have~$g \V^i=\V'^{\z(i)}$;
%\item\label{match2} the characters~$\presuper{g}\t_i$ and~$\t_{\z(i)}'$ intertwine in $\tG_{\z(i)}$. 
%\end{enumerate}
%Moreover, any element of~$\tG$ which satisfies the first property also satisfies the second property. 
\end{theorem}
In particular, under the notation of the theorem we have~$e(\E_i/\F)=e(\E'_{\z(i)}/\F)$,~$f(\E_i/\F)=f(\E'_{\z(i)}/\F)$, and~$k_0(\b_i,\La^i)=k_0(\b'_{{\z(i)}},\La'^{{\z(i)}})$ by Proposition~\ref{prop:SimpleDegrees}. 

%\begin{definition}\label{def:matching}
%A map~$\z:\I\rightarrow\I'$ between the indexing sets of splittings which satisfies properties~\ref{match1},~\ref{match2} of Theorem \ref{thm:MatchingForChar} is called a \emph{matching} from~$(\t,\b)$ to~$(\t',\b')$. We also say that~$\t$ and~$\t'$ \emph{intertwine by an element of~$\tG$ with matching~$\z$}.
%\end{definition}

\begin{remark}\label{rem:matching}
\shauns{Suppose~$[\La,n,r,\b]$ and $[\La',n,r,\b']$ are self-dual semisimple strata and~$\t_-\in \Cc_-(\La,r,\b)$ and~$\t'_-\in \Cc_-(\La',r,\b')$ are self-dual semisimple characters, with lifts~$\t,\t'$ respectively. If~$\t_-,\t'_-$ intertwine in~$\G$ then the lifts intertwine so we have a matching~$\z:\I\to\I'$ from~$(\t,\b)$ to~$(\t',\b')$. Moreover, by uniqueness of matchings,~$\z$ is~$\s$-equivariant. We will also say that~$\z$ is a matching from~$(\t_-,\b)$ to~$(\t'_-,\b)$.}
\end{remark}

\begin{remark}\label{rem:skewsemi}
\shauny{It follows from the~$\s$-equivariance of the matching %and a  proof similar to the one of~\ref{prop:SimpleDegrees}\ref{prop:SimpleDegrees.ii} it follows
that if~$\t_-$ is a skew semisimple character and~$[\La,n,r,\b]$ is any self-dual semisimple stratum such that~$\t_-\in\Cc(\La,r,\b)$ then the stratum~$[\La,n,r,\b]$ is in fact skew semisimple.}
\end{remark}

\begin{corollary}\label{corCharPolBlockwisev4}
Under the assumptions of Theorem~\ref{thm:MatchingForChar}, suppose that there exists~$g\in\tG$ such that~$g\b g^{-1}=\b'$ and~$\t'=\tau_{\La',g\La,\b'}(\presuper{g}{\t})$. Then~$\b_i$ and~$\b'_{\z(i)}$ have the same characteristic polynomial, for all~$i\in \I$. 
\end{corollary}

\begin{proof}
\shauns{By conjugating by~$g$ we reduce to the case that~$\b=\b'$. As~$\t'=\tau_{\La',\La,\b}(\t)$ we have~$1\in \I_\tG(\t,\t')$. The identity map~$\I\rightarrow\I$ is then a matching from~$(\t,\b)$ to~$(\t',\b')$ %then satisfies conditions~\ref{match1} and~\ref{match2} of Theorem~\ref{thm:MatchingForChar}, and the uniqueness statement implies that
so the uniqueness in Theorem~\ref{thm:MatchingForChar} implies that~$\z$ is the trivial permutation of the index set, which finishes the proof.} 
\end{proof}

\begin{remark}\label{remark:CyclicPermutationSemisimpleChar}
If the semisimple characters are not related by transfer, then the conclusion of Corollary~\ref{corCharPolBlockwisev4} need not hold, as the following example shows. Suppose the characteristic of~$\F$ is~$p$ and set~$\I=\{0,\ldots,p-1\}$. 
\orange{Take an element~$\b_0\in\F$ of negative even valuation, and  a \bob{regular lattice sequence}~$\La^0$. Set~$n=-\val_\La(\b_0)$ and~$r=\frac{n}{2}$, both of which are non-zero multiples of~$e(\La^0)$.}
%Take an element~$\b_0$ which is minimal over~$\F$ and a principal lattice chain~$\La^0$ normalized by~$\b_0$. Set~$n=-\val_\La(\b_0)$ and~$r=\left\lfloor \frac{n}{2}\right\rfloor$ and suppose that~$n>0$ and that~$r$ is a non-zero multiple of~$e(\La^0)$. 
Take an element~$\l\in (\F\cap\aa_{-r}(\La^0))\setminus\aa_{-r+1}(\La^0)$ and set~$\b_i=\b_0+i\l$, for~$i\in\I$. Then, putting~$\b=\sum_{i\in\I}\b_i$ and~$\La=\bigoplus_{i\in\I} \La^0$, the stratum~$[\La,n,0,\b]$ is semisimple.

By the results of~\cite[\S3.5]{BK93}, the sets~$\Cc(\La^0,r-1,\b_i)$ coincide, so multiplication by~$\psi_\l$ induces a permutation of $\Cc(\La^0,r-1,\b_0)$. Choosing any~$\t_0\in\Cc(\La^0,r-1,\b_0)$, there are unique semisimple characters~$\t,\t'\in\Cc(\La,r-1,\sum_{i\in\I}\b_i)$ whose~$i$th simple block restrictions are~$\t_0\psi_{i\l}$ and~$\t_0\psi_{(i+1)\l}$ respectively. Then the matching from~$(\t,\b)$ to~$(\t',\b)$ is a cyclic permutation, not the identity, but~$\b_i,\b_{i+1}$ do not have the same characteristic polynomial.  
%%%%%% The existence of \t,\t' is not totally obvious but I don't think I want to include an argument for it, since this is an unused remark.
% and extend~$\bigotimes_{l=0}^{p-1} \t_1\psi_{l\l}$ and~$\bigotimes_{l=1}^{p} \t_1\psi_{l\l}$  to elements~$\t$ and~$\t'$ of~$\Cc(\bigoplus_i\La,r-1,\bigoplus_l(\b_1+l\l))$ by extending them trivially on the upper and lower unipotent matrices \red{one needs to use the Iwahori decomposition with respect to the parabolic?... Also one should explain why this is well-defined and defines a semisimple character - see mine and shauns paper section 3}. Then the matching from~$\t$ to~$\t'$ is a cyclic permutation, but not the identity.  
\end{remark}

We need a description of the intertwining of transfers, which generalizes~\cite[Theorem~3.22]{St05} and which we prove in the appendix (as Proposition~\ref{prop:IntertwiningOfTransfers}). The statement involves a particular subgroup~$\mathrm{S}_r(\b,\La)$ of~$\P^1(\La)$ associated to the semisimple stratum~$[\La,n,r,\b]$, which normalizes every character in~$\Cc(\La,r,\b)$ and is defined in~\cite[Section 3.2]{St05}, where it is denoted~$\Gamma_r(\b,\La)$ (see also~\cite[Proposition~9.8]{SkSt}, and~\cite[(3.5.1)]{BK93} for the simple case).

\begin{proposition}\label{propIntertwiningOfTransfers} Suppose~$e(\La)=e(\La')$ and~$\b'=\b$.
\begin{enumerate}\setlength\itemsep{5pt}
\item Let~$\t\in\Cc(\La,r,\b)$ and%~$\t'\in\Cc(\La',r,\b)$ be such that
~$\t'=\tau_{\La',\La,\b}(\t)$. Then %we have
\[
\I_{\tG}(\t,\t')=\mathrm{S}_{r}(\b,\La')\tG_{\b} \mathrm{S}_r(\b,\La).
\]
\label{propIntertwiningOfTransfersPart1}
\item Suppose~$[\La,n,r,\b]$ and~$[\La',n,r,\b]$ are self-dual and let~$\t\in\Cc_-(\La,r,\b)$ and%~$\t'\in\Cc_-(\La',r,\b)$ be such that
~$\t'_-=\tau_{\La',\La,\b}(\t_-)$. Then %we have
\[
\I_\G(\t_-,\t'_-)=(\SS_{r}(\b,\La')\cap \G) \G_{\b} (\SS_r(\b,\La)\cap \G).
\]
\end{enumerate}
\end{proposition}

We also have the following intertwining implies conjugacy theorem. In the case of semisimple characters and skew semisimple characters, this is~\cite[Theorems~10.2,10.3]{SkSt} respectively; we prove the case of self-dual semisimple characters in the appendix (as Theorem~\ref{thm:IntImplConjSelfDual}). The condition on a matching~$\z$ which allows one to deduce conjugacy is
\begin{equation}\label{eqn:matchdims}
\Lambda^i(j)/\Lambda^i(j+1)\cong \Lambda'^{\zeta(i)}(j)/\Lambda'^{\zeta(i)}(j+1), \quad\text{for all~$i\in \I$ and all integers~$j$.}
\end{equation}

\begin{theorem}\label{thmIntImplConjSelfDual}
\shauns{Suppose that~$e(\La)=e(\La')$.   
\begin{enumerate}\setlength\itemsep{5pt}
\item\label{thmIntImplConjSelfDual.i} Let~$\theta\in\Cc(\Lambda,r,\beta)$ and~$\theta'\in\Cc(\Lambda,r,\beta')$ be semisimple characters which intertwine, such that the matching~$\zeta$ from~$(\t,\b)$ to~$(\t',\b')$ satisfies~\eqref{eqn:matchdims}. Then there is an element of $\P(\Lambda)\cap\prod_{i\in\I}\Hom_\F(\V^i,\V'^{\z(i)})$ which conjugates~$\theta$ to~$\theta'$.
\item\label{thmIntImplConjSelfDual.ii} Suppose~$[\La,n,r,\b]$ and~$[\La',n,r,\b]$ are self-dual and let~$\theta_-\in\Cc_-(\Lambda,r,\beta)$ and~$\theta'_-\in\Cc_-(\Lambda,r,\beta')$ be self-dual semisimple characters which intertwine in~$\G$, such that the matching~$\zeta$ from~$(\t_-,\b)$ to~$(\t'_-,\b')$ satisfies~\eqref{eqn:matchdims}. Then there is an element of $\P_-(\Lambda)\cap\prod_{i\in\I}\Hom_\F(\V^i,\V'^{\z(i)})$ which conjugates~$\theta_-$ to~$\theta'_-$.
\end{enumerate}}
\end{theorem}

From these results we get the following important corollaries.

\begin{corollary}\label{cor:Intertwining}
%\begin{enumerate}\setlength\itemsep{5pt}
%\item 
Suppose that~$e(\La)=e(\La')$ and let~$\t\in \Cc(\La,r,\b)$ and~$\t'\in \Cc(\La',r,\b')$ be semisimple characters which intertwine in~$\tG$, and let~$\z:\I\rightarrow \I'$ be the matching from~$(\t,\b)$ to~$(\t',\b')$. Then
\[
\I_\tG(\t,\t')=\mathrm{S}_{r}(\b',\La')\left(\I_{\tG}(\t,\t')\cap \prod_{i\in\I}\Hom_\F(\V^i,\V'^{\z(i)})\right)
\mathrm{S}_r(\b,\La).
\]
%\item 
Suppose further that~$[\La,n,r,\b]$ and~$[\La',n,r,\b']$ are self-dual and that we have a partition~$\mathcal{P}$ of~$\I$ into~$\s$-stable subsets such that for each~$\J\in\mathcal{P}$ the hermitian spaces~$(\V^\J,h_\J)$ and~$(\V'^{\z(\J)},h_{\z(\J)})$ are isometric. Let~$\t_-\in \Cc_-(\La,r,\b)$ and~$\t'_-\in \Cc(\La',r,\b')$ be self-dual semisimple characters which intertwine in~$\G$%, and denote their lifts by~$\t$ and~$\t'$ respectively
. Then
\[
\I_\G(\t_-,\t'_-)=(\mathrm{S}_{r}(\b',\La')\cap \G)\left(\I_{\G}(\t_-,\t'_-)\cap\prod_{\J\in\mathcal{P}}\Hom_\F(\V^{\J},\V'^{\z(\J)})\right)(\mathrm{S}_r(\b,\La)\cap \G).
\] 
%\end{enumerate}
\end{corollary}

\begin{proof}%[Proof of Corollary~\ref{cor:Intertwining}]
%\ref{cor:Intertwining.i} 
Let~$g\in\tG$ be an element inducing the matching, that is, satisfying the conditions in Theorem~\ref{thm:MatchingForChar}. For each index~$i$ the field extensions~$\E_i/\F$ and~$\E'_{\z(i)}/\F$ have equal ramification indices and inertia degrees, by Proposition~\ref{prop:SimpleDegrees}; thus, by Lemma~\ref{lem:changetoconjlatt}, there is an~$\o_{\E_i}$-lattice sequence~$\La''^i$ in~$\V^i$ which is conjugate in~$\tG_i$ to~$g^{-1}\La'^{\z(i)}$. In particular,~$\La''=\bigoplus_{i\in\I}\La''^i$ is then an~$\o_\E$-lattice sequence in~$\V$ which is~$\tG$-conjugate to~$\La'$ by an element which maps~$\La''^i$ to~$\La'^{\z(i)}$.

Let~$\t''=\tau_{\La'',\La,\b}(\t)$ be the transfer of~$\t$ to~$\Cc(\La'',r,\b)$. Applying Theorem~\ref{thm:transintertwiningoverGL} to the simple block restrictions of~$\t'',\t,\t'$ and Proposition~\ref{propIntertwiningOfTransfers}, we see that~$\t''$ intertwines with~$\t'$. Then Theorem~\ref{thmIntImplConjSelfDual}\ref{thmIntImplConjSelfDual.i}
%~\cite[Theorem 10.2]{SkSt} 
implies that~$\t''$ is conjugate to~$\t'$ by an element~$g\in\tG$ which maps~$\La''^i$ to~$\La'^{\z(i)}$. 

Conjugating by this element, we can assume we are in the case~$\t''=\t'$,~$\La''=\La'$ and~$\V^i=\V'^{\z(i)}$ for all~$i\in\I$. We can then identify the index sets so that~$\z$ is the identity. We have~$\t\in\Cc(\La,r,\b)$ and $\t'\in\Cc(\La',r,\b)\cap\Cc(\La',r,\b')$ so that
\[
\SS_r(\b,\La')(\KK(\La')\cap\tG_\b)=\I_{\tG}(\t')\cap\KK(\La')=\SS_r(\b',\La')(\KK(\La')\cap\tG_{\b'}).
\]
% 
% matching~$\z$ is the identity. Note that we are only interested in the splittings, and hence, as they and the sets of characters agree after conjugation, we can assume~$\b=\b'$. 
Then Proposition~\ref{propIntertwiningOfTransfers} implies
\begin{align*}
\I_\tG(\t,\t')&=\SS_r(\b,\La')\tG_{\b} \SS_r(\b,\La)=\SS_r(\b,\La')(\KK(\La')\cap\tG_\b)\tG_{\b} \SS_r(\b,\La)\\
&=\SS_{r}(\b',\La')(\KK(\La')\cap\tG_{\b'})\tG_{\b} \SS_r(\b,\La)\subseteq \SS_{r}(\b',\La')(\I_\tG(\t,\t')\cap\prod_i \tG_{i}) \SS_r(\b,\La),
\end{align*}
since~$\tG_\b$ and~$\tG_{\b'}$ are both contained in~$\prod_i \tG_{i}$. %The completes the proof of~\ref{cor:Intertwining.i}, and~\ref{cor:Intertwining.ii} 
The final assertion now follows from a standard cohomology argument as in~\cite[4.14]{St05} (cf. also~\cite[2.4]{RKSS}). 
\end{proof}

\ignore{
Finally in this section, we state an intertwining implies conjugacy theorem for self-dual semisimple characters, which generalizes a result of the second and third author~\cite[10.2,10.3]{SkSt} for skew semisimple characters and which we prove in the appendix (as Theorem~\ref{thm:IntImplConjSelfDual}). 
\begin{theorem}\label{thmIntImplConjSelfDual}
\shauns{Let~$\theta\in\Cc_-(\Lambda,r,\beta)$ and~$\theta'\in\Cc_-(\Lambda,r,\beta')$ be self-dual semisimple characters which intertwine in~$\G$ such that the matching~$\zeta$ from~$(\t_-,\b)$ to~$(\t'_-,\b')$ satisfies 
\[
\Lambda^i(j)/\Lambda^i(j+1)\cong \Lambda'^{\zeta(i)}(j)/\Lambda'^{\zeta(i)}(j+1), 
\]
for all~$i\in \I$ and all integers~$j$. Then there is an element of $\P_-(\Lambda)\cap\Hom_\F(\V^i,\V'^{\z(i)})$ which conjugates~$\theta$ to~$\theta'$.}
\end{theorem}
}

%%%%%%%%%%%%%%%%%%%%%%%%%%%%%%%%%%%%
\subsection{Concordance and Skolem--Noether}
%%%%%%%%%%%%%%%%%%%%%%%%%%%%%%%%%%%%
We now prove a conjecture of the second and third authors,~\cite[Conjecture~10.4]{SkSt}. We let~$[\La,n,r,\b]$ and~$[\La',n,r,\b']$ be self-dual semisimple strata in~$\A$, with~$e(\La)=e(\La')$ and with associated splittings~$\V=\bigoplus_{i\in \I}\V^i$ and~$\V=\bigoplus_{i\in \I'}\V'^i$, respectively. %In the following, when~$[\La,n,r,\beta]$ is a semisimple stratum with associated splitting~$\V=\bigoplus_{i\in\I}\V^i$, w
We will write~$\vphi_i$ for the canonical embedding of~$\E_i=\F[\b_i]$ in~$\A^i$, and similarly~$\vphi'_i$. 
\orange{For~$\t_-\in \Cc_-(\La,r,\b)$, we write~$\t$ for its lift and~$\t_i$ for the simple block restrictions of~$\t$; if~$i\in\I_0$ then~$\t_i$ is self-dual and we write~$\t_{-,i}$ for its restriction in~$\Cc_-(\La^i,r,\b_i)$. We use similar notation for~$\t'_-\in \Cc_-(\La',r,\b')$. When we write about a \emph{matching from~$(\t_-,\b)$ to~$(\t'_-,\b')$}, we mean a matching from~$(\t,\b)$ to~$(\t',\b')$.}

\begin{theorem}\label{thm:MatchingForCharForG}
Let~$\t_-\in \Cc_-(\La,r,\b)$ and~$\t'_-\in \Cc_-(\La',r,\b')$ be self-dual semisimple characters which intertwine in~$\G$ and let~$\z:\I\rightarrow\I'$ be the matching from~$(\t_-,\b)$ to~$(\t'_-,\b')$. Then, for~$i\in\I_0$, the spaces~$(\V^i,h_i)$ and~$(\V'^{\z(i)},h'_{\z(i)})$ are isometric and the characters~$\t_{-,i}$ and~$\t'_{-,\z(i)}$ intertwine by an isometry from~$(\V^i,h_i)$ to~$(\V'^{\z(i)},h'_{\z(i)})$. Moreover, the pairs~$(\b_i,\vphi_i)$ and~$(\b'_{\z(i)},\vphi'_{\z(i)})$ are~$(h_i,h'_{\z(i)})$-concordant.
%for~$i\in\I_0$, let~$g_i$ be an isometry from~$h_i$ to~$h_{\z(i)}$. Then the canonical embeddings of~$g_i\b_i g_i^{-1}$ and~$\b'_{\z(i)}$ are Witt concordant.
%\red{My preference is to scrap the Witt-concordant notation, and just use concordant for everything}
\end{theorem}

%In the sequel we are going to say the canonical embeddings of~$\b_i$ and~$\b'_{\z(i)}$ are Witt-concordant instead of the last sentence of the Theorem. Note that this means 
%we consider~$h_i$ for~$\b_i$ and~$h_{\z(i)}$ for~$\b'_{\z(i)}$.  
In the proof of the theorem (and the subsequent corollaries), we abbreviate~$\V_0$ for~$\V^{\I_0}$, so that~$\V_0^\perp=\V^{\I_+\cup\I_-}$, and similarly~$\V'_0,\V_0'^\perp$. We also write~$\t,\t'$ for the lifts of~$\t_-,\t'_-$ respectively.
%We also abbreviate~$\V_\so$ for~$\V^{\I_0}$ and~$\V_{+-}$ for~$\V^{\I_+\cup\I_-}$. \red{I think it should be~$\V_0$ rather than~$\V_\so$ same with~$\I_0$.... for me~$\so$ isn't a~$0$ it is just a symbol...}

\begin{proof}
\shaun{%
Since the spaces~$\V_0^\perp$ and~$\V_0'^\perp$ have the same~$\F$-dimension and are hyperbolic, they are isometric~$\e$-hermitian spaces over~$\F/\F_\so$; thus~$\V_0$ and~$\V'_0$ are also isometric and Corollary~\ref{cor:Intertwining} then reduces us to the case~$\I=\I_0$. Moreover, the final assertion follows from the first and Proposition~\ref{prop:TiGandGIntertwiningSameNonSympl} so, by Corollary~\ref{cor:Intertwining} again, it is enough to show that~$\V^i$ is isometric to~$\V'^{\z(i)}$ for all~$i\in\I$.
}

\shaun{%
We proceed by induction along~$r$. When~$r=n$, both characters are trivial so there is nothing to show, while the case~$r=n-1$ is given by~\cite[Proposition~7.10]{SkSt}. Suppose now that~$r<n-1$ and let~$[\La,n,r+1,\g]$ and~$[\La',n,r+1,\g']$ be self-dual semisimple strata equivalent \rob{to}~$[\La,n,r+1,\b]$ and~$[\La',n,r+1,\b']$, respectively, such that~$\g\in \prod_{i\in\I}\A^{i}$ and~$\g'\in \prod_{j\in\I'}\A'^{j}$. By the induction hypothesis and Corollary~\ref{cor:Intertwining} it is sufficient to assume that~$\g$ and~$\g'$ generate field extensions of~$\F$. Then Lemma~\ref{lemma:MatchWittforIntertwCharacters}\ref{lemma:MatchWittforIntertwCharacters.ii} and Lemma~\ref{lemma:diagonal} imply that there exist simple self-dual strata~$[\La,n,r+1,\tilde{\g}]$ and~$[\La',n,r+1,\tilde{\g}']$ such that~$\tilde{\g}$ and~$\tilde{\g}'$ have the same minimal polynomial, their canonical embeddings are concordant, and
\[
\Cc(\La,r+1,\g)=\Cc(\La,r+1,\tilde{\g}),\qquad \Cc(\La',r+1,\g')=\Cc(\La',r+1,\tilde{\g}').
\]
In particular, there is an element~$g$ in~$\G$ such that~$g\tilde{\g}g^{-1}=\tilde{\g}'$. The characters~$\t|_{\H^{r+2}(\tilde{\g},\La)}$ and~$\tau_{g\La,\La',\tilde{\g}'}(\t'|_{\H^{r+2}(\tilde{\g}',\La')})$ intertwine by an element of~$\G$, by Corollary~\ref{corIntEquivalenceRelation}, and therefore are conjugate by an element~$g'\in\G$ which maps~$\La$ to~$g\La$, by Proposition~\ref{prop:InTildeGConGSimpleSymp}. Thus
\[\
\Cc(\La,r+1,\g)=\Cc(\La,r+1,\tilde{\g}'^{g'})
\]
and we can replace~$\tilde{\g}$ by~$\tilde{\g}'^{g'}$; that is, we can assume without loss of  generality that~$\tilde{\g}=\tilde{\g}'^{g'}$. We only need to prove the result for~$\presuper{g'}\t$ and~$\t'$ so we can assume further that~$\tilde{\g}=\tilde{\g}'$ and~$g'=1$, that is,~$\t|_{\H^{r+2}(\La,\tilde{\g})}$ is the transfer of~$\t'|_{\H^{r+2}(\La',\tilde{\g}')}$ from~$\La'$ to~$\La$.
}

\shauns{
By the translation principle~\cite[Theorem~9.26]{SkSt} there are a skew semisimple stratum~$[\La,n,r,\tilde{\b}]$\daniel{, such that~$[\La,n,r+1,\tilde{\b}]$ is equivalent to~$[\La,n,r+1,\tilde{\g}]$,} and~$u\in\P^1_-(\La)$\daniel{,} which normalizes~$[\La,n,r+1,\tilde{\g}]$ up to equivalence, such that
\[
\Cc(\La,r,\b)=\Cc(\La,r,\tilde{\b}), \qquad\text{and}\qquad u\tilde{\g}u^{-1}\in\prod_{i\in\tilde{\I}}\A^i,
\]
where~$\V=\bigoplus_{i\in\tilde{\I}}\V^i$ is the splitting of~$[\La,n,r,\tilde{\b}]$. If we denote by~$\tau$ the matching between~$(\t,\b)$ and~$(\t,\tilde\b)$ then Proposition~\ref{prop:ConjIdempToEachOther}\ref{prop:ConjIdempToEachOther-ii} implies that~$\V^i$ is isometric to~$\V^{\tau(i)}$ for all~$i\in\I$ and there is an element of the normalizer of~$\t$ in~$\G$ which realizes this matching. Analogously, we have a skew semisimple stratum~$[\La',n,r,\tilde{\b}']$\daniel{, such that~$[\La',n,r+1,\tilde{\b}']$ is equivalent to $[\La',n,r+1,\tilde{\g}]$,} and~$u'\in\P^1_-(\La')$, and a matching~$\tau'$ between~$(\t',\b')$ and~$(\t',\tilde{\b}')$. Thus we need only show that~$\V^{\tau(i)}$ and~$\V^{\tau'\z(i)}$ are isometric. Since matchings are unique,~$\tau'\z\tau^{-1}$ is the matching between~$(\t,\tilde\b)$ and~$(\t',\tilde\b')$, and we are thus reduced to the case~$\b=\tilde\b$ and~$\b'=\tilde{\b}'$.
}

\shauns{Now~$[\La,n,r+1,u^{-1}\b u]$ has splitting~$\bigoplus_{i\in\I} u^{-1}\V^i$ and is equivalent to~$[\La,n,r+1,\tilde{\g}]$, with~$\tilde{\g}\in\prod_{i\in\I} u^{-1}\A^i u$, and~$\t^u|_{\H^{r+2}(\tilde{\g},\La)}=\t|_{\H^{r+2}(\tilde{\g},\La)}$. Since~$u^{-1}\V^i$ is isometric to~$\V^i$, if we replace~$(\t,\b)$ by~$(\t^u,u^{-1}\b u)$ we reduce further to the case~$\tilde\g=\g$. Similarly, replacing~$(\t',\b')$ by~$(\t'^{u'},u'^{-1}\b' u')$ we see that we may assume~$\g=\tilde{\g}=\tilde{\g}'=\g'$.
}

\ignore{
By the translation principle~\cite[Theorem~9.26]{SkSt} there are skew-semisimple strata~$[\La,n,r,\tilde{\b}]$ 
and~$[\La,n,r,\tilde{\b}']$ such that
\begin{itemize}
 \item~$\Cc(\La,r,\b)=\Cc(\La,r,\tilde{\b})$
 \item~$\Cc(\La',r,\b')=\Cc(\La',r,\tilde{\b}')$
 \item~$[\La,n,r+1,\tilde{\b}]\approx [\La,n,r+1,\tilde{\g}]$
 \item~$[\La',n,r+1,\tilde{\b}']\approx [\La',n,r+1,\tilde{\g}]$
 \item~$u\tilde{\g}u^{-1}\in\prod_{i\in\tilde{\I}}\A^i$ for some~$u\in (1+\mathfrak{m}_{r+1}(\tilde{\gamma},\La))$ 
 \item~$v\tilde{\g}v^{-1}\in\prod_{i\in\tilde{\I}'}\A^i$ for some~$v\in (1+\mathfrak{m}_{r+1}(\tilde{\gamma},\La'))$.
\end{itemize}
By Proposition~\ref{prop:ConjIdempToEachOther}\ref{prop:ConjIdempToEachOther-ii} we only need to prove the result for~$\tilde{\b}$ and~$\tilde{\b}'$, and by conjugation with~$u$ and~$v$ we only need to prove the assertion for 
the pair~$ ^u\tilde{\b}$ and~$ ^v\tilde{\b}'$. Thus we can assume~$\g=\g'$.} 
% \red{%
% I can't follow this: By the translation principle~\cite[Theorem~9.26]{SkSt} and Proposition~\ref{prop:ConjIdempToEachOther}\ref{prop:ConjIdempToEachOther-ii} we can assume
% \[
% \g=\tilde{\g}=\tilde{\g}'=\g'.
% \] 
% }

\shaun{%
Now we take~$\t'_0\in\Cc(\La',r,\g)$ such that~$\t'=\t'_0\psi_{\b'-\g}$. If we set~$\t_0=\tau_{\La,\La',\g}(\t'_0)$ then the restrictions of~$\t$ and~$\t_0$ coincide on~$\H^{r+2}(\g,\La)$) so there exists~$c\in (\prod_i \A^{i})_-\cap\aa_{-r-1}$ such that~$\t=\t_0\psi_{\b-\g+c}$. Moreover, since then~$\t_0$ and~$\t_0\psi_c$ are both simple characters in~$\Cc(\La,r,\g)$, \orange{the character~$\psi_c$ is intertwined by all of~$\tG_\g$ and, writing~$s_\g$ for an equivariant tame corestriction with respect to~$\g$, it follows from~\cite[Lemma~3.10]{secherreStevensVI:10} that~$s_\g(c)$ is congruent to an element of~$\F[\g]$ modulo~$\aa_{-r}$.}
}

\ignore{%
Now we take~$\t_0\in\Cc(\La,r,\g)$ and~$\t'_0\in\Cc(\La',r,\g)$ such that~$\t_0=\tau_{\La,\La',\g}(\t'_0)$ and we can take~$\t'_0$ such that the restrictions of~$\t'$ and~$\t'_0$ on~$\H^{r+2}(\g,\La')$ coincide (from which it follows that the restrictions of~$\t$ and~$\t_0$ coincide on~$\H^{r+2}(\g,\La)$) and such that~$\t'=\t'_0\psi_{\b'-\g}$. Then there exists~$c\in (\prod_i \A^{i})_-\cap\aa_{-r-1}$ such that~$\t=\t_0\psi_{\b-\g+c}$; moreover, since then~$\t_0$ and~$\t_0\psi_c$ are both simple characters in~$\Cc(\La,r,\g)$, it follows from \red{Reference}, writing~$s_\g$ for an equivariant tame corestriction with respect to~$\g$, that~$s_\g(c)$ is congruent to an element of~$\F[\g]$ modulo~$\aa_{-r}$.
}

\shaun{%
Now Lemma~\ref{lem:DerivedCharactersForG} implies that the self-dual strata~$[\La,r+1,r,s_\g(\b-\g+c)]$ and~$[\La',r+1,r,s_\g(\b'-\g)]$ intertwine in~$\G$; moreover, these strata are equivalent to self-dual semisimple strata with splittings~$\V=\oplus_i\V^i$ and~$\V=\oplus_{i\in\I'}\V'^{i}$, respectively, by~\cite[Theorem~6.15]{SkSt}. }%
\shauns{The matching between these latter strata is still~$\z$. Indeed, by~\cite[Proposition 7.1]{SkSt} there are a bijection~$\xi:\I\rightarrow\I'$ and an element~$\tilde{g}\in\tG_\g$ which intertwines~$[\La,r+1,r,s_\g(\b-\g+c)]$ with~$[\La',r+1,r,s_\g(\b'-\g)]$ and satisfies~$\tilde{g}\V^i=\V'^{\xi(i)}$. Then~$\id_{\V^i}$ intertwines~$[\La^i,r+1,r,s_\g(\b-\g+c)|_{\V^i}]$ with~$[\tilde{g}^{-1}\La'^{\xi(i)},r+1,r,s_\g(\tilde{g}^{-1}\b'_{\xi(i)}\tilde{g}-\g)]$, for each~$i\in\I$. Thus~$\t_i$ and~$\t_{\xi(i)}'^{\tilde{g}}$ intertwine, by Lemma~\ref{lem:DerivedCharacters}\ref{lem:DerivedCharacters.ii}, and the uniqueness of the matching implies that~$\xi=\z$. Finally, the base case implies that there is an element~$g\in\G_\g$ such that~$g\V^i=\V'^{\z(i)}$, for all~$i\in\I$, which gives the required isometry.}
\end{proof}

Finally, we deduce from Theorem~\ref{thm:MatchingForCharForG} a semisimple \emph{Skolem--Noether} result:

\begin{corollary}\label{cor:SkolemNoetherSemisimplev4}
Let~$\t_-\in \Cc_-(\La,r,\b)$ and~$\t'_-\in \Cc_-(\La',r,\b')$ be self-dual semisimple characters which intertwine in~$\G$, let~$\z:\I\rightarrow\I'$ be the matching from~$(\t_-,\b)$ to~$(\t'_-,\b')$, and suppose that~$\b_i$ and~$\b'_{\z(i)}$ have the same characteristic polynomial for all indices~$i$. Then~$\b$ and~$\b'$ are conjugate in~$\G$.
%, there is an element of~$G^+$ which conjugates~$\b$ to~$\b'$.
\end{corollary}

\begin{proof}
By Theorem~\ref{thm:MatchingForCharForG}, we can assume that the matching~$\z$ is the identity of~$\I$. Now, the characters~$\t_i$ and~$\t'_i$ intertwine in~$\G_i$ for all~$i\in\I_0$. Hence, by \cite[Theorem 5.2]{SkSt},~$\b_i$ and~$\b'_i$ are conjugate by an element of~$\U(\V^i,h_i)$,~$i\in\I_0$. Thus~$\b$ and~$\b'$ are conjugate in~$\G$.
\end{proof}

For Theorem~\ref{thmIIC} below, the following %consequence of Theorem~\ref{thm:MatchingForCharForG} and 
generalization of Lemma~\ref{lemma:diagonal} is crucial: it allows us, when we have self-dual semisimple characters which intertwine, to find strata giving rise to these characters whose defining elements are \emph{conjugate}.

\begin{corollary}\label{cor:diagonalSemisimple}
Let~$\t_-\in\Cc_-(\La,r,\b)$ and~$\t'_-\in\Cc_-(\La',r,\b')$ be self-dual semisimple characters which intertwine by an element of~$\G$, and let~$\z:\I\rightarrow\I'$ be \rob{the} matching from~$(\t_-,\b)$ to~$(\t'_-,\b')$. Then there exist self-dual semisimple strata~$[\La,n,r,\tilde{\b}]$ and~$[\La',n,r,\tilde{\b}']$ with splittings~$\V=\oplus_{i\in \I}\V^i$ and~$\V=\oplus_{i\in \I'}\V'^i$ respectively, such that~$\Cc(\La,r,\b)=\Cc(\La',r,\tilde{\b})$ and $\Cc(\La,r,\b')=\Cc(\La',r,\tilde{\b}')$, and such that, for all~$i\in\I$, the characteristic polynomials of~$\tilde{\b}_i$ and~$\tilde{\b}'_{\z(i)}$ coincide. Moreover,~$\tilde{\b}$ and~$\tilde{\b}'$ are conjugate by an element of~$\G$.
\end{corollary}

\shaun{%
Note that the final claim of the theorem is immediate from Corollary~\ref{cor:SkolemNoetherSemisimplev4}.
}

\begin{proof}
\shaun{%
The proof is by induction along~$r$. In the case of trivial characters we can just take~$\tilde{\b}$ and~$\tilde{\b}'$ to be zero. The case~$r=n-1$ is the case of self-dual semisimple strata. By~\cite[Proposition 7.1]{SkSt} we have, for all~$i\in\I_0\cup\I_+$, that~$[\La^i\oplus\La'^{\z(i)},n,r,\b_i\oplus\b'_{\z(i)}]$ is equivalent to a simple stratum~$[\La^i\oplus\La'^{\z(i)},n,r,\tilde{\b}_i\oplus\tilde{\b}'_{\z(i)}]$ split by~$\V^i\oplus\V'^{\z(i)}$; moreover, for~$i\in\I_0$, this can be chosen self-dual by~\cite[Theorem 6.16]{SkSt}. We set~$\tilde{\b}_i:=-\ov{\tilde{\b}_{\s(i)}}$ for~$i\in\I_-$ and~$\tilde{\b}:=\sum_i\tilde{\b}_i$ and we define~$\tilde{\b}'$ analogously. They satisfy the requirements. 
}

\shaun{%
We now come to the induction step. We consider self-dual semisimple strata~$[\La,n,r+1,\g]$ equivalent to~$[\La,n,r+1,\b]$, and~$[\La,n,r+1,\g']$ equivalent to~$[\La,n,r+1,\b']$, such that~$\g\in\prod_{i\in\I}\A^{i}$ and~$\g'\in\prod_{i\in\I'}\A^{i}$. We first show that we can assume that~$\g=\g'$.} 
%\orange{We split the remaining part of the proof into two steps:}

%\orange{Step 1: We reduce to~$\g=\g'$:}
\shaun{
We denote the splittings of~$\g$ and~$\g'$ by~$\V=\oplus_{j\in \J}\V^j$ and~$\V=\oplus_{j\in \J'}\V'^j$, respectively. Now the semisimple characters~$\t|_{\H^{r+2}(\g,\La)}$ and~$\t'|_{\H^{r+2}(\g',\La)}$ intertwine. Then, denoting by~$\xi: \J\rightarrow\J'$ their matching, the induction hypothesis implies that there are self-dual semisimple strata~$[\La,n,r+1,\tilde{\g}]$ and~$[\La',n,r+1,\tilde{\g}']$, with splittings~$\V=\oplus_{j\in\J}\V^j$ and~$\V=\oplus_{j\in\J'}\V'^j$ respectively, such that~$\Cc(\La,r+1,\g)=\Cc(\La,r+1,\tilde{\g})$ and~$\Cc(\La',r+1,\g')=\Cc(\La',r+1,\tilde{\g}')$, and such that~$\g_j$ and~$\g'_{\xi(j)}$ have the same characteristic polynomial. By Corollary~\ref{cor:SkolemNoetherSemisimplev4} there is then an element~$g\in \G$ such that~$ ^g\tilde{\g}=\tilde{\g}'$. 
}
% \red{%
% This is now too quick for me to understand: 
% }
\shaun{We write~$\t_\g,\t'_{\g'}$ and~$\t''_{\tilde{\g}'}$ for the characters $\t|_{\H^{r+2}(\g,\La)},\t'|_{\H^{r+2}(\g',\La')}$ and the transfer $\tau_{g\La,\La',\tilde{\g}'}(\t'|_{\H^{r+2}(\g',\La')})$, respectively. By Theorem~\ref{thm:MatchingForChar}, for~$j\in\J_+$ there exists an~$\F$-\daniel{linear} isomorphism~$\V^j\rightarrow\V'^{\xi(j)}$ which intertwines~$\t_{\g,j}$ and~$\t'_{\g',\xi(j)}$; since~$\t'_{\g'}$ and~$\t''_{\tilde{\g}'}$ are intertwined by the identity (so also~$\t'_{\g',\xi(j)}$ and~$\t''_{\tilde{\g}',\xi(j)}$ are intertwined by the identity), Theorem~\ref{thm:transintertwiningoverGL} implies that there is an isomorphism~$g_j:\V^j\rightarrow\V'^{\xi(j)}$ which intertwines~$\t_{\g,j}$ and~$\t''_{\tilde{\g}',\xi(j)}$. By the same argument, using Theorem~\ref{thm:MatchingForCharForG} and Corollary~\ref{corIntEquivalenceRelation}, for each~$j\in\J_\so$ there is an isometry~$g_j:\V^j\rightarrow\V'^{\xi(j)}$ which intertwines~$\t_{\g,j}$ and~$\t''_{\tilde{\g}',\xi(j)}$. Finally, we put~$g_j=\ov{g_{-j}}^{\,-1}$, for~$j\in\J_-$, and~$g=\sum_{j\in\J} g_j$. Then Proposition~\ref{prop:intdecomp} implies that~$g$ is an element of~$\G$ which intertwines~$\t_\g$ and~$\t''_{\tilde{\g}'}$ with matching~$\xi$.
}

\ignore{
Moreover,~$g_j$ can be chosen to be an isometry if~$j\in\J_0$ by Theorem~\ref{thm:MatchingForCharForG}. As well:~$\t'_{\g'}$ and~$\t''_{\tilde{\g}'}$  intertwine by~$1$. So by Theorem~\ref{thm:transintertwiningoverGL} and Corollary~\ref{corIntEquivalenceRelation}~$\t_{\g,j}$ and~$\t''_{\tilde{\g}',\xi(j)}$ intertwine (by an isometry if~$j\in\J_\so$). Thus there is an element of~$\G$
which intertwines~$\t_{\g}$ and~$\t''_{\tilde{\g}'}$ with matching~$\xi$ because both characters respect the Iwahori decompositions with respect to their splittings. This proves the claim.
}

%\blue{%Step 1: We show that we can restrict to the case~$\g=\g'$ such that~$\t|_{\H^{r+2}(\g,\La)}$ and~$\t'|_{\H^{r+2}(\g,\La')}$ are transfers. 
%Let~$\xi;\ \J\rightarrow\J'$ be the matching for the intertwining characters~$\t|_{\H^{r+2}(\g,\La)}$ and~$\t'|_{\H^{r+2}(\g',\La)}$.
%By the induction hypothesis there are self-dual semisimple strata~$[\La,n,r+1,\tilde{\g}]$ and~$[\La',n,r+1,\tilde{\g}']$
%with splittings~$\V=\oplus_{j\in J}\V^j$ and~$\V=\oplus_{j\in J'}\V'^j$, respectively, such that~$\Cc(\La,r+1,\g)=\Cc(\La,r+1,\tilde{\g})$ 
%and~$\Cc(\La',r+1,\g')=\Cc(\La',r+1,\tilde{\g}')$ and such that~$\g_j$ and~$\g'_{\xi(j)}$ have the same characteristic polynomial. 
%There is an element~$g\in \G$ such that~$ ^g\tilde{\g}=\tilde{\g}'$. 
% By Theorem~\ref{thm:MatchingForCharForG}, Theorem~\ref{thm:MatchingForChar}, Theorem~\ref{thm:transintertwiningoverGL} and Corollary~\ref{corIntEquivalenceRelation} we see that the characters~$\t|_{\H^{r+2}(\g,\La)}$
% and~$\tau_{g\La,\La',\tilde{\g}'}(\t'|_{\H^{r+2}(\g',\La')})$ intertwine with matching~$\xi$ and 
\shauns{
Now Theorem~\ref{thmIntImplConjSelfDual} implies that there is an element of~$\G$ which conjugates~$\t|_{\H^{r+2}(\g,\La)}$ to~$\tau_{g\La,\La',\tilde{\g}'}(\t'|_{\H^{r+2}(\g',\La')})$, and conjugates their splittings. Conjugating by this element, we are reduced to the situation that~$\t|_{\H^{r+2}(\g,\La)}$ and~$\t'|_{\H^{r+2}(\g',\La')}$ intertwine by the identity, there exists an  element~$\tilde{\g}'$ such that the strata~$[\La,n,r+1,\tilde{\g}']$ and~$[\La',n,r+1,\tilde{\g}']$ are self-dual semisimple with~$\Cc(\La,r,\g)=\Cc(\La,r,\tilde{\g}')$ and~$\Cc(\La',r,\g')=\Cc(\La',r,\tilde{\g}')$, and the splittings for~$\g$,~$\g'$ and~$\tilde{\g}'$ coincide. Finally, applying the translation principle Theorem~\ref{thmTranslationPrincipleG} and Proposition~\ref{prop:ConjIdempToEachOther}\ref{prop:ConjIdempToEachOther-ii}, as in the proof of Theorem~\ref{thm:MatchingForCharForG}, we reduce to the case that~$\g=\g'=\tilde{\g}'$ and~$\xi$ is the identity map.
% to obtain self-dual semisimple strata~$[\La,n,r,\b_1]$ and~$[\la',n,r,\b'_1]$
% such that
% \begin{itemize}
%  \item $\Cc(\La,r,\b_1)=\Cc(\La,r,\b)$ and~$\Cc(\La',r,\b'_1)=\Cc(\La',r,\b')$,
%  \item $[\La,n,r+1,\tilde{\g}]\sim [\La,n,r+1,\b_1]$ and~$[\La',n,r+1,\tilde{\g}]\sim [\La',n,r+1,\b_1]$,
%  \item there exists an element~$u\in\P_-^1(\La)$ which normalises the equivalence class of~$[\La,n,r+1,\tilde{\g}]$ such that 
%  $u\tilde{\g}u^{-1}\in\prod_{i\in\I_1}\A^{ii}$,
%  \item there exists an element~$u'\in\P_-^1(\La')$ which normalises the equivalence class of~$[\La',n,r+1,\tilde{\g}']$ such that 
%  $u'\tilde{\g}'u'^{-1}\in\prod_{i\in\I'_1}\A^{ii}$.
% \end{itemize}
% By Proposition~\ref{prop:ConjIdempToEachOtherG} we can replace~$\b$ by~$u^{-1}\b_1u$. Thus we can assume without loss of 
% generality,~$\g=\tilde{\g}$ and~$\tilde{\g}'=\g'$. The element~$\tilde{\g}$ is conjugate to~$\tilde{\g}'$ by an element of~$\G$, and we therefore can assume without loss of generality~$\g=\tilde{\g}=\tilde{\g}'=\g'$,~$\J=\J'$ and~$\xi=\id_{\J}$.
}

\shauns{
Now we take~$\t'_0\in\Cc(\La',r,\g)$ such that~$\t'=\t'_0\psi_{\b'-\g}$. If we set~$\t_0=\tau_{\La,\La',\g}(\t'_0)$ then the restrictions of~$\t$ and~$\t_0$ coincide on~$\H^{r+2}(\g,\La)$) so there exists~$c\in (\prod_i \A^{i})_-\cap\aa_{-r-1}$ such that~$\t=\t_0\psi_{\b-\g+c}$. 
}%
\shauns{By Corollary~\ref{cor:Intertwining} the characters~$\t_j$ and~$\t'_j$ intertwine (and by an isometry if~$j\in\J_0$). Writing~$s_{\g_j}$ for an equivariant tame corestriction with respect to~$\g_j$, \rob{Lemmas}~\ref{lem:DerivedCharacters} and~\ref{lem:DerivedCharactersForG} imply that the strata~$[\La^j,r+1,r,s_{\g_j}(\b_j-\g_j+c_j)]$ and~$[\La'^j,r+1,r,s_{\g_j}(\b'_j-\g_j)]$ intertwine (by an element of~$\G_{\g_j}$ if~$j\in\J_0$). Thus~\cite[Proposition~7.6]{SkSt} implies that~$[\La^j,n,r,\b_j+c_j]$ and~$[\La'^j,n,r,\b'_j]$ intertwine (by an element of%~$\G_{\g_j}$
~\bob{$\G\cap \Aut_\F(\V^j)$} if~$j\in\J_0$) and moreover that~$[\La,n,r,\b+c]$ and~$[\La',n,r,\b']$ intertwine by an element of~$\G$. Furthermore~$[\La,n,r,\b+c]$ is equivalent to a semisimple stratum whose splitting is a coarsening of the splitting of~$\b$ by~\cite[Theorem~6.15]{SkSt}. In fact the splitting cannot be a proper coarsening by~\cite[Proposition 7.1]{SkSt}, because~$[\La,n,r,\b+c]$ intertwines with~$[\La,n,r,\b']$ which has the same number of blocks as~$\b$. Now we proceed as in the base case to obtain skew elements~$\tilde{\b}$ and~$\tilde{\b}'$ such that~$[\La,n,r,\b+c]$ is equivalent to~$[\La,n,r,\tilde{\b}]$ and~$[\La',n,r,\tilde{\b}']$ is equivalent to~$[\La',n,r,\b']$. We further have
\[
\t\in\Cc(\La,r,\b) \quad\text{and}\quad \t\in\Cc(\La,r,\g)\psi_{\b-\g+c}=\Cc(\La,r,\b)\psi_c=\Cc(\La,r,\tilde{\b})
\]
so that~$\Cc(\La,r,\tilde{\b})=\Cc(\La,r,\b)$, and also~$\Cc(\La',r,\tilde{\b}')=\Cc(\La',r,\b')$, as required.
}
\ignore{Step 2:  We take~$\t_0\in\Cc(\La,r,\g)$ and~$\t'_0\in\Cc(\La',r,\g)$ such that~$\t_0=\tau_{\La,\La',\g}(\t'_0)$ and we can take~$\t'_0$ such that the restrictions of~$\t'$ and~$\t'_0$ on~$\H^{r+2}(\g,\La')$ coincide (From this follows that the restrictions of~$\t$ and~$\t_0$ coincide on~$\H^{r+2}(\g,\La)$) and such that~$\t'=\t'_0\psi_{\b'-\g}$. 
Then there exists a~$c\in (\prod_i \A^{i})_-\cap\aa_{-r-1}$ such that~$\t=\t_0\psi_{\b-\g+c}$. 
}
\ignore{By Corollary~\ref{cor:Intertwining} the characters~$\t_j$ and~$\t'_j$ intertwine (and by an isometry if~$j\in\J_0$). Then \rob{Lemmas}~\ref{lem:DerivedCharacters} and~\ref{lem:DerivedCharactersForG} imply that the 
strata~$[\La^j,r+1,r,s_{\g_j}(\b_j-\g_j+c_j)]$ and~$[\La'^j,r+1,r,s_{\g_j}(\b'_j-\g_j)]$ intertwine (and by an element of~$\G_{\g_j}$ if~$j\in\J_0$). Here~$s_{\g_j}$ is a chosen (equivariant) tame corestriction, see~\cite[6.12]{SkSt} and~\cite[1.3.4]{BK93}. Thus~\cite[Proposition 7.6]{SkSt} implies that~$[\La^j,n,r,\b_j+c_j]$ and~$[\La'^j,n,r,\b'_j]$
intertwine and moreover that~$[\La,n,r,\b+c]$ and~$[\La',n,r,\b']$ intertwine by an element of~$\G$. 
Furthermore~$[\La,n,r,\b+c]$ is equivalent to a semisimple stratum whose splitting is a coarsening of the splitting of~$\b$
by~\cite[Theorem~6.15]{SkSt}. 
In fact the splitting cannot be a proper coarsening by~\cite[Proposition 7.1]{SkSt} because~$[\La,n,r,\b+c]$ intertwines with~$[\La,n,r,\b']$ which has the same number of blocks as~$\b$. 
Now we proceed as in the base case to obtain~$\tilde{\b}$ and~$\tilde{\b}'$, in particular such that~$[\La,n,r,\b+c]$ is equivalent to~$[\La,n,r,\tilde{\b}]$ and~$[\La',n,r,\tilde{\b}']$ is equivalent to~$[\La',n,r,\b']$. 
We further have
\[\t\in\Cc(\La,r,\g)\psi_{\b-\g+c}=\Cc(\La,r,\b)\psi_c=\Cc(\La,r,\tilde{\b})\]
and
\[\Cc(\La',r,\tilde{\b}')=\Cc(\La',r,\b').\]
}
\end{proof}

%%%%%%%%%%%%%%%%%%%%%%%%%%%%%%%%%%%%

%%%%%%%%%%%%%%%%%%%%%%%%%%%%%%%%%%%%
\section{Self-dual semisimple endo-classes}\label{secPSS}
%%%%%%%%%%%%%%%%%%%%%%%%%%%%%%%%%%%%
In this section, we introduce one of the central concepts of the article, \emph{self-dual semisimple endo-equivalence}. We generalize the previous notions of (self-dual) simple endo-equivalence to the semisimple setting and, via the Matching Theorem, reduce the fundamental properties of (self-dual) semisimple endo-equivalence to the (self-dual) simple setting we treated in Section~\ref{secPS}.

%%%%%%%%%%%%%%%%%%%%%%%%%%%%%%%%%%%%
\subsection{Self-dual semisimple pairs}
%%%%%%%%%%%%%%%%%%%%%%%%%%%%%%%%%%%%
% Let~$[\La,n,0,\b]$ be a semisimple stratum with positive~$n$.  We let
% \[
% k_0(\b,\La)=-\min\{r\in\ZZ:[\La,n,r,\b]\text{ is not semisimple}\}
% \] 
% denote the \emph{critical exponent} of~$[\La,n,0,\b]$ 
% and~$k_F(\b):=\frac{1}{e(\La)}k_0(\b,\La)$; by~\cite[\S3.1]{St05}, this
% is independent of~$\La$. For the zero-stratum we keep~$k(0,\La)=k_F(0)=-\infty$.

\begin{definition}\label{def:sesipair}
A \emph{semisimple pair} is a pair~$(k,\b)$ 
\shaun{consisting of an element~$\b$ of a finite-dimensional semisimple commutative~$\F$-algebra and an integer~$k$ such that, writing~$\E=\F[\b]=\bigoplus_{i\in \I} \E_i$ as a sum of fields, we have} 
%where~$\E=\F[\b]$ is a %semisimple~$\F$-algebra, i.e.~
%sum of %pairwise non-isomorphic 
%fields~$\E=\bigoplus_{i\in \I} \E_i$, and~$k$ is an integer satisfying  %{\color{red} want non-isomorphic fields?}
\[
0\leqslant k/e_\E <-k_\F(\b),
\]
where~$e_\E=\lcm_{i\in \I}e_i$ is the lowest common multiple of the ramification indices~$e_i=e(\E_i/\F)$. 
\shaun{(See after \rob{Lemma~\ref{lemmasufflargeinterstratasesi}} for the definition of~$k_\F(\b)$ in this generality.)} We say that~$\I$ is the \emph{index set} of~$(k,\b)$. Writing~$\b=\sum_{i\in\I}\b_i$ for the decomposition of~$\b$ in~$\E=\bigoplus_{i\in \I} \E_i$ and setting~$k_i=\left\lfloor\frac{ke_i}{\shauny{e_\E}}\right\rfloor$, each~$(k_i,\b_i)$ is a simple pair\daniel{, because~$k_\F(\b_i)\leq k_\F(\b)$,} and we call these the \emph{component simple pairs of~$(k,\b)$}.  \shauny{More generally, if~$\J$ is a non-empty subset of~$\I$ and we set~$\b_\J=\sum_{j\in\J}\b_j$ and~$k_\J=\left\lfloor \frac{ke_\J}{e_\E}\right\rfloor$, where $e_\J=\lcm_{j\in \J} e_j$, then~$(k_\J,\b_\J)$ is a semisimple pair.}

A semisimple pair~$(k,\b)$ is called \emph{self-dual} if there exists an extension of \shaun{the \rob{Galois} involution~$x\mapsto\ov x$ on~$\F$ to an involution on~$\E=\F[\b]$ such that~$\ov\b=-\b$}. %\gre{We have to be cautious, because~$\s$ has another meaning here.}\red{good point, although rather than~$\bar{(\ )}^\E$, maybe we can abuse notation like in the Witt section and still call the extension~$\bar{\ }$ without the~$\E$?  Looks much neater notationally.}
\end{definition}

Let~$(k,\b)$ be a self-dual semisimple pair, and write the minimal polynomial of~$\b$ as~$\Psi(X)=\prod_{i\in \I}\Psi_i(X)$ with~$\Psi_i(X)$ irreducible, so that~$\E_i\simeq \F[X]/(\Psi_i(X))$. The action of~$x\mapsto\ov x$ on the primitive idempotents of~$\E$ defines an action of~$\s$ on~$\I$. We let~$\I_0=\{i\in \I:\s(i)=i\}$, and choose a set of representatives~$\I^+$ for the orbits of~$\s$ in~$\I\backslash \I_0$.  Then we let~$\I_{-}=\s(\I_+)$ so that we have a disjoint union~$\I=\I_+\cup \I_0\cup \I_-$.  

\begin{definition}
A self-dual semisimple pair~$(k,\b)$ is called \emph{skew} if~$\I=\I_0$ in the notation above.
\end{definition}

Let~$(k,\b)$ be a semisimple pair, and let~$\Qq(k,\b)$ denote the class of quadruples $(\V,\vphi,\La,r)$ consisting of: 
\begin{enumerate}\setlength\itemsep{5pt}
\item a finite dimensional~$\F$-vector space~$\V$; 
\item an embedding~$\vphi:\E\hookrightarrow \A$, where~$\A=\End_\F(\V)$; 
\item an~$\o_{\vphi(\E)}$-lattice sequence~$\La$ in~$\V$; 
\item and an integer~$r$ such that~$\left\lfloor r/e(\La_\E)\right\rfloor=k$, where we recall that~$e(\La_\E)=e(\La|\o_\F)/e_\E$ is the greatest common divisor of the~$e(\La^i|\o_{\vphi(\E_i)})$. 
\end{enumerate}
Given~$(\V,\vphi,\La,r)\in\Qq(k,\b)$, setting~$n=\max\{r,-\val_{\La}(\vphi(\b))\}$ we obtain a semisimple stratum~$[\La,n,r,\vphi(\b)]$ with splitting~$\V=\bigoplus_{i\in \I} \V^i$, where~$\V^i=\ker(\Psi_i(\vphi(\b)))$, which we call a \emph{realization} of the semisimple pair~$(k,\b)$. \orange{Note also that, since~$\vphi$ is an embedding, the spaces~$\V^i$ are all non-zero.}

We let~$\fCc(k,\b)$ denote the class of all semisimple characters defined by a realization of the semisimple pair~$(k,\b)$:
\[
\fCc(k,\b)=\bigcup_{(\V,\vphi,\La,r)\in\Qq(k,\b)}\Cc(\La,r,\vphi(\b)).
\]

Now let~$(k,\b)$ be a self-dual semisimple pair. Let~$(\V,h)$ be a finite-dimensional~$\e$-hermitian space over~$\F/\F_\so$ and~$\A=\End_\F(\V)$. We say that an embedding~$\vphi:\E\hookrightarrow \A$ is \emph{self-dual} if~$\varphi(\ov{x})=\ov{\varphi(x)}$, for all~$x\in\E$. Let~$\Qq_-(k,\b)$ denote the class of quadruples~$((\V,h),\vphi,\La,r)$ where
\begin{enumerate}\setlength\itemsep{5pt}
\item $(\V,h)$ is a finite-dimensional~$\e$-hermitian space over~$\F/\F_\so$; 
\item $(\V,\vphi,\La,r)\in\Qq(k,\b)$;
\item and~$\vphi$ and~$\La$ are self-dual.
\end{enumerate}
Given such a quadruple~$((\V,h),\vphi,\La,r)\in\Qq_-(k,\b)$, the realization~$[\La,n,r,\vphi(\b)]$ is a self-dual semisimple stratum, which we call a \emph{self-dual realization} of~$(k,\b)$.
%setting~$n=\max\{r,-\val_{\La}(\vphi(\b))\}$, we obtain a self-dual semisimple stratum~$[\La,n,r,\vphi(\b)]$, with splitting~$\V=\bigoplus_{i\in \I} \V^i$ where~$\V^i=\ker(\Psi_i(\b))$, which we call a \emph{self-dual realization} of~$(k,\b)$. 
%%%
%\red{why is the definition of~$n$ here given differently to the non-self-dual case?... setting~$n=$....}
%\gre{It is not different.}
%%%

We let~$\fCc_-(k,\b)$ denote the class of all self-dual semisimple characters defined by a realization of the self-dual semisimple pair~$(k,\b)$:
\[
\fCc_-(k,\b)=\bigcup_{((\V,h),\vphi,\La,r)\in\Qq_-(k,\b)}\Cc_-(\La,r,\vphi(\b)).
\]

%%%%%%%%%%%%%%%%%%%%%%%%%%%%%%%%%%%%
\subsection{Transfer of semisimple characters}\label{sect:Transferofsesichars}
%%%%%%%%%%%%%%%%%%%%%%%%%%%%%%%%%%%%
%%%
%\red{Shaun suggested a BK compositio semisimple types reference I didn't have.}
%%%
Let~$(k,\b)$ be a semisimple pair with index set~$\I$, and let~$(\V,\vphi,\La,r),(\V',\vphi',\La',r')\in\Qq(k,\b)$. % with~$e(\La_\E)=e(\La'_\E)$.
%~$[\La,n,r,\vphi(\b)]$ and~$[\La',n',r',\vphi'(\b)]$ be realizations of~$(k,\b)$ with~$e(\La|\o_\E)=e(\La'|\o_\E)$.  
Let~$\t\in\Cc(\La,r,\vphi(\b))$ and let~$\tM$ denote the Levi subgroup of~$\tG=\Aut_\F(\V)$ associated to the decomposition~$\V=\bigoplus_{i\in\I}\V^i$. Then
\[
\t\mid_{\H^{r+1}(\vphi(\b),\La)\cap\tM}=\bigotimes_{i\in\I}\t_i,
\]
with~$\t_i\in\Cc(\La_i,r,\vphi(\b_i))$ simple characters. Put~$\t'_i=\tau_{\La'^i,\La^i,\b}(\t_i)$ and also write~$\tM'$ for the Levi subgroup of~$\tG'=\Aut_\F(\V')$ which is the stabilizer of~$\V'=\bigoplus_{i\in\I}\V'^i$.

\begin{lemma}\label{lemTransfer}
There is a unique semisimple character~$\t'\in\Cc(\La',r',\vphi'(\b))$ satisfying
\[
\t'\mid_{\H^{r'+1}(\vphi'(\b),\La')\cap\tM'}=\bigotimes_{i\in\I}\t'_i.
\]
\end{lemma}

Writing~$\tau_{\La',\La,\b}(\t)$ for the character given by the lemma, this then defines a bijection
\[
\tau_{\La',\La,\b}:\Cc(\La,r,\vphi(\b))\to \Cc(\La',r',\vphi'(\b))
\] 
which we call \emph{transfer}.

\begin{proof}
\shaun{%
If~$\tP'$ is any parabolic subgroup of~$\tG'$ with Levi factor~$\tM'$ and unipotent radical~$\tU'$ then~$\H^{r'+1}(\vphi'(\b),\La')$ has an Iwahori decomposition with respect to~$(\tM',\tP')$ and the restriction to~$\H^{r'+1}(\vphi'(\b),\La')\cap\tU'$ of any semisimple character in~$\Cc(\La',r',\vphi'(\b))$ is trivial by~\cite[Lemma 3.15]{St05}. Uniqueness is then immediate so it only remains to prove existence.
}

%%%
%\red{Should follow similar to~\cite[Lemma~3.15]{St05}.}
%\red{Needs a proper proof, with detail, not just a vague sketch.}
%%%

\shaun{
Passing to an affine translation of~$[\La,n,r,\varphi(\b)]$, we can assume \rob{that}~$e(\La|\o_\F)>\dim_\F\V'$. Then, by the~$\dag$-construction, we obtain~$\t^\dag\in\Cc(\La^\dag,r,\varphi(\b)^\dag)$ (see~\cite[Lemma 3.3]{RKSS}), where~$\V^\dag=\bigoplus_{i\in\I}(\V^i)^\dag$ and~$\t^\dag\mid_{\H^{r+1}(\b_i^\dag,(\La^i)^\dag)}=\t_i^\dag$. Moreover,
\[
\dim_\F(\V^i)^\dag\geq e(\La|\o_\F)\geq\dim_\F\V'\geq\dim_\F\V'^i.
\]
Hence, as~$\t_i^\dag$ is the transfer of~$\t_i$ to~$(\La^i)^\dag$ and the transfer for simple characters is transitive, by replacing~$\t$ with~$\t^\dag$ we can assume~$\dim_\F\V^i\geq\dim_\F\V'^i$ for each~$i\in\I$. Then there are an~$\E_i$-linear monomorphism~$g_i:\V'^i\rightarrow\V^i$ and an~$\E_i$-subspace~$\W^i$ of~$\V^i$ such that~$\W^i\oplus g_i\V'^i=\V^i$ splits~$\Lambda^i$. Write~$g$ for the direct sum of the~$g_i$ so that we have
\[
\varphi(x)\mid_{g\V'}=g\varphi'(x)g^{-1},\qquad\text{ for }x\in\E.
\]
Let~$\tilde{\t}$ be the transfer of~$\t\mid_{\H^{r+1}(\varphi(\b)|_{g\V'},\La\cap g\V')}$ from~$\La\cap g\V'$ to~$g\La'$,  see~\cite[Proposition 3.26]{St05}. 
Then~$ ^{g^{-1}}\tilde{\t}$ satisfies the desired properties. 
}
% \blue{By a~$\dag$ construction as in~\cite[Lemma 3.3]{RKSS} we can assume without loss of generality that there are~$\E_i$-vector spaces~$\W^i$,\ $i\in\I$, such that~$\W^i\oplus\V'^i=\V^i$ as an~$\E_i$-splitting for~$\La^i$, in particular the~$\E_i$-structure of~$\V'^i$ is the restriction of the~$\E_i$-structure on~$\V^i$, i.e.~$\vphi(\b)|_{\End_\F(\V')}=\vphi'(\b)$. Then we define~$\t'$ as the transfer of~$\t|_{\H^{r+1}(\vphi(\b),\La)\cap\Aut_\F(\V')}$ from~$\La\cap \V'$ to~$\La'$ with respect to~$\vphi'(\b)$, see~\cite[Proposition 3.26]{St05}.
% }
%We now show uniqueness. Choose any parabolic subgroup~$\P$ of~$\G$ with Levi factor~$\M$, unipotent radical~$\U$ and opposite unipotent~$\U^{-}$, then~$\H^{r'+1}(\vphi'(\b),\La')$ has an Iwahori decomposition
%\[
%\H^{r'+1}(\vphi'(\b),\La')=(\H^{r'+1}(\vphi'(\b),\La')\cap\U)(\H^{r'+1}(\vphi'(\b),\La')\cap\M)(\H^{r'+1}(\vphi'(\b),\La')\cap\U^{-})
%\]
%with respect to~$\P$ by~\cite[Proposition 5.2]{St08}, and when restricted to the unipotent factors of this decomposition~$\t'$ is trivial by~\cite[Lemma 3.15]{St05}.
\end{proof}

Now suppose~$(k,\b)$ is a self-dual semisimple pair and~$[\La,n,r,\vphi(\b)]$ and~$[\La',n',r',\vphi'(\b)]$ are self-dual realizations. Then, as in \cite[Proposition 3.32]{St05}, the bijection~$\tau_{\La',\La,\b}$ commutes with the restrictions of the actions of the adjoint anti-involutions of~$h$ and~$h'$ on~$\Cc(\La,r,\vphi(\b))$ and~$\Cc(\La',r',\vphi'(\b))$, respectively. Thus it restricts to give a bijection 
\[
\tau_{\La',\La,\b}:\Cc_{-}(\La,r,\vphi(\b))\to \Cc_{-}(\La',r',\vphi'(\b)).
\]
Thanks to this result and with the definition of semisimple pairs, we can now define (self-dual) potential semisimple characters. 

%%%%%%%%%%%%%%%%%%%%%%%%%%%%%%%%%%%%
\subsection{Self-dual pss-characters}
%%%%%%%%%%%%%%%%%%%%%%%%%%%%%%%%%%%%
Let~$(k,\b)$ be a semisimple pair.  
\begin{definition}\label{defPseudosemisimpleCharacter}
A \emph{potential semisimple character}, or \emph{pss-character}, \emph{supported on~$(k,\b)$} is a function~$\Th:\Qq(k,\b)\to \fCc(k,\b)$ such that
\begin{enumerate}\setlength\itemsep{5pt}
\item for all~$(\V,\vphi,\La,r)\in\Qq(k,\b)$, we have $\Th(\V,\vphi,\La,r)\in\Cc(\La,r,\vphi(\b))$; 
\item and, for all pairs~$(\V,\vphi,\La,r),(\V',\vphi',\La',r')\in\Qq(k,\b)$, we have
\[
\Th(\V',\vphi',\La',r')=\tau_{\La',\La,\b}(\Th(\V,\vphi,\La,r)).
\] 
\end{enumerate}
\end{definition}
We call the values of a pss-character its \emph{realizations}; by definition a pss-character is determined by any one of its realizations. 
\shaun{
We also define the \emph{degree} of a pss-character~$\Th$ supported on~$(k,\b)$ to be~$\deg(\Th)=[\F[\b]:\F]$.
}
%%% I wonder if this is really strong enough. Should instead $\deg(\Th)$ be the multiset~$\{\[\F[\b_i]:\F]\}\}$?

\begin{definition}
Let~$(k,\b)$ be a self-dual semisimple pair. 
\begin{enumerate}\setlength\itemsep{5pt}
\item \orange{A pss-character~$\Th$ supported on~$(k,\b)$ is called \emph{$\s$-invariant} if, for any (or equivalently, some)~$((\V,h),\vphi,\La,r)\in\Qq_{-}(k,\b)$ the realization~$\Th(\V,\vphi,\La,r)$ is~$\s$-invariant.}
%for any (or equivalently one) self-dual realization of~$(k,\b)$ the value is~$\s$-invariant.
\item A \emph{self-dual pss-character supported on~$(k,\b)$} is a function~$\Th_-:\Qq_-(k,\b)\to\bob{\fCc_-(k,\b)}$ such that, for all~$((\V,h),\vphi,\La,r),((\V',h'),\vphi',\La',r')\in\Qq_{-}(k,\b)$,
\begin{align*}
&\Th_-((\V,h),\vphi,\La,r)\in \Cc_{-}(\La,r,\vphi(\b));\\
&\Th_-((\V',h'),\vphi',\La',r')=\tau_{\La',\La,\b}(\Th_-((\V,h),\vphi,\La,r)).
\end{align*}
%\begin{enumerate}\setlength\itemsep{5pt}
%\item for all~$((\V,h),\vphi,\La,r)\in\Qq_-(k,\b)$, we have $\Th_-((\V,h),\vphi,\La,r)\in\Cc_-(\La,r,\vphi(\b))$;
%\item and, for all pairs~$((\V,h),\vphi,\La,r)$,~$((\V',h'),\vphi',\La',r')\in\Qq_-(k,\b)$, we have
%\[
%\Th_-((\V',h'),\vphi',\La',r')=\tau_{\La,\La',\b}(\Th_-((\V,h),\vphi,\La,r)).
%\]
%\end{enumerate}
\end{enumerate}
\end{definition}

We call the values of a self-dual pss-character its \emph{self-dual realizations}; by definition a self-dual pss-character is determined by any one of its realizations. As in the simple setting, by the Glauberman correspondence, every self-dual pss-character comes uniquely from the restriction of a~$\s$-invariant pss-character. 
\shaun{%
More precisely, for a self-dual pss-character~$\Th_-$ supported on~$(k,\b)$, there is a unique~$\sigma$-invariant pss-character~$\Th$, supported on~$(k,\b)$, such that the }\bob{following diagram commutes:}
%\[
%\xymatrix{\Qq_-(k,\b)\ar[r]^{\Th_-}\ar[d]&\fCc_-(k,\b)\ar[d] \\
%\Qq(k,\b)\ar[r]^\Th&\fCc(k,\b)
%}
%\]
%commutes. 
%
\[
\begin{tikzcd}[column sep=3pc]
 \bob{ \Qq_-(k,\b)}\arrow[d] \arrow[dr,"\mathrm{Gl}\ \circ\ \Th_-"]& {} \\
  %B \arrow{rru}{} \arrow{r}{} &
  \bob{\Qq(k,\b)} \arrow[r,"\Th"] &\bob{\fCc(k,\b)}
\end{tikzcd}
\]
where the vertical arrow is the forgetful map~$((\V,h),\vphi,\La,r)\mapsto(\V,\vphi,\La,r)$ and\bob{~$\mathrm{Gl}\ \circ\ \Th_-((\V,h),\vphi,\La,r)$ is the Glauberman lift in~$\Cc(\La,r,\varphi(\b))$ of~$\Th_-((\V,h),\vphi,\La,r)\in\Cc_-(\La,r,\varphi(\b))$.}  
%where the left vertical arrow is the forgetful map~$((\V,h),\vphi,\La,r)\mapsto(\V,\vphi,\La,r)$ and the right vertical arrow is the Glauberman lifting of self-dual simple characters.
We call~$\Th$ the \emph{lift} of~$\Th_-$. We also define the \emph{degree} of~$\Th_-$ to be the degree of its lift, so~$\deg(\Th_-)=[\F[\b]:\F]$.

%%%
%\gre{I have to change the next paragraph, because it was to redundant. Maybe the first part could be in the semisimple section} 
%\red{Yes}
%%%
We now see how a pss-character determines a set of ps-characters. Suppose we are given a semisimple character~$\t\in\Cc_-(\La,r,\b)$, whose index set splits as a disjoint union~$\I=\I_+\cup \I_0\cup \I_-$ as above; then we have 
\[
\V=\bigoplus_{i\in \I_+}(\V^i\oplus \V^{\s(i)})\oplus\bigoplus_{i\in \I_0} \V^i.
\]
Moreover, writing~$\M$ for the subgroup of~$\G$ stabilizing this decomposition, 
\shauny{which is the intersection with~$\G$ of the Levi subgroup~$\tM$ of~$\tG$ stabilizing the decomposition}, we have
%as in \cite[\S3.11]{MiSt} we have 
\[
\H_-^{r+1}(\b,\La)\cap \M\simeq \prod_{i\in \I_+}\H^{r+1}(\b_i,\La^i)\times\prod_{i\in \I_0} \H_-^{r+1}(\b_i,\La^i),
\]
\shauny{by applying~\cite[Proposition~5.4]{St08} to~$\H^{r+1}(\b,\La)\cap \tM$ and intersecting with~$\G$.} 
Then, after identifying $\H^{r+1}_-(\b,\La)\cap \M$ with the decomposition above we have
\[
\t|_{\H^{r+1}_-(\b,\La)\cap M}=\bigotimes_{i\in\I_+}\t_i^2\otimes\bigotimes_{i\in\I_0}\t_{i,-}
\]
with~$\t_i\in\Cc(\La^i,r,\b_i),\ i\in\I_+$ and~$\t_{i,-}\in\Cc_-(\La^i,r,\b_i),\ i\in\I_0$\shauny{, by applying~\cite[Proposition~5.5]{St08}}. Moreover, as in~\cite[4.3~Lemma~1]{Blondel}, for~$i\in \I_+$ we have
\[
\H^{r+1}(\b_i,\La^i)=\H^{r+1}(2\b_i,\La^i)\quad\text{and}\quad \t_i^2\in \Cc(\La^i,r,2\b_i).
\]
The above decomposition generalizes to pss-characters as follows:

\begin{lemma}%
%%%%
%[{\red{Reference or proof}}]%%%%
%%%%
\label{lemma:pssdecompositions}
Let~$(k,\b)$ be a semisimple pair with index set~$\I$ and component simple pairs~$(k_i,\b_i)$. %~$\b=\bigoplus_{i\in \I}\b_i$.
\begin{enumerate}\setlength\itemsep{5pt}
\item\label{lemma:pssdecompositions-i} Let~$\Th$ be a pss-character supported on~$(k,\b)$ and let~$(\V,\vphi,\La,r)\in\Qq(k,\b)$. % and write~$k_i=\lfloor\frac{r}{e(\La^i|\o_{\E_i})}\rfloor,\ i\in\I$. Then there is for every
%For each~$i\in\I$ there is a unique ps-character~$\Th_i$ supported on~$(k_i,\b_i)$ such that 
%\begin{equation}\label{eqThetai}
%\Th(\V,\vphi,\La,r)\mid_{\H^{r+1}(\vphi(\b_i),\La^i)}=\Th_i(\V^i,\vphi|_{\E_i},\La^i,r).
%\end{equation}
%Moreover, the ps-character~$\Th_i$ does not depend on the choice of~$(\V,\vphi,\La,r)\in\Qq(k,\b)$.
\orange{For each~$\J\subseteq\I$ there is a unique pss-character~$\Th_\J$ supported on~$(k_\J,\b_\J)$ such that 
\begin{equation}\label{eqThetai}
\Th(\V,\vphi,\La,r)\mid_{\H^{r+1}(\vphi(\b_\J),\La^\J)}=\Th_\J(\V^\J,\vphi|_{\E_\J},\La^\J,r).
\end{equation}
Moreover, the pss-character~$\Th_\J$ does not depend on the choice of~$(\V,\vphi,\La,r)\in\Qq(k,\b)$.}
\item\label{lemma:pssdecompositions-iii} \shaun{If~$\Th$ is a pss-character supported on~$(k,\b)$ and, for~$i\in\I$, we write~$\Th_i$ for the ps-character given by~\ref{lemma:pssdecompositions-i}, then the~$\Th_i$ are pairwise endo-inequivalent.}
\item\label{lemma:pssdecompositions-ii} 
\shaun{Suppose that~$(k,\b)$ is self-dual. Let~$\Th_-$ be a self-dual pss-character supported on~$(k,\b)$ with lift~$\Th$. 
\orange{For~$\J\subseteq\I$, write~$\Th_\J$ for the pss-character given by~\ref{lemma:pssdecompositions-i}.} 
%For~$i\in\I$, write~$\Th_i$ for the ps-character given by~\ref{lemma:pssdecompositions-i}. 
Let~$((\V,h),\vphi,\La,r)\in\Qq_-(k,\b)$. % and write~$k_i=\lfloor\frac{r}{e(\La^i|\o_{\E_i})}\rfloor,\ i\in\I$. 
\begin{enumerate}\setlength\itemsep{5pt}
\item\label{lemma:pssdecompositions-iia}  
%For all~$i\in\I_0$ there is a unique self-dual ps-character~$\Th_{i,-}$ supported on~$(k_i,\b_i)$ such that
%\[
%\Th_-((\V,h),\vphi,\La,r)\mid_{\H^{r+1}_-(\vphi(\b_i),\La^i)}=\Th_{i,-}((\V^i,h|_{\V^i}),\vphi|_{\E_i},\La^i,r).
%\]
%Moreover,~$\Th_i$ is the lift of~$\Th_{i,-}$.
\orange{For all~$\s$-stable~$\J\subseteq\I$, there is a unique self-dual pss-character~$\Th_{\J,-}$ supported on~$(k_\J,\b_\J)$ such that
\[
\Th_-((\V,h),\vphi,\La,r)\mid_{\H^{r+1}_-(\vphi(\b_\J),\La^\J)}=\Th_{\J,-}((\V^\J,h|_{\V^\J}),\vphi|_{\E_\J},\La^\J,r).
\]
Moreover,~$\Th_\J$ is the lift of~$\Th_{\J,-}$.}
%Further~$\Th_{i,-}$, for~$i\in\I_0$, do not depend on the choice of $((\V,h),\vphi,\La,r)\in\Qq_-(k,\b)$. 
\item\label{lemma:pssdecompositions-iib}  For all~$i\in\I_+$, the map~$\Th_i^2$ is the unique ps-character supported on~$(k_i,2\b_i)$ such that %, writing~$i:\H^{r+1}(\vphi(\b_i),\La^i)\hookrightarrow\H^{r+1}_-(\vphi(\b),\La)$ for the embedding, we have
\[
\Th_-((\V,h),\vphi,\La,r)\mid_{\H^{r+1}(\vphi(\b_i),\La^i)}=\Th^2_i(V^i,\vphi|_{\E_i},\La^i,r).
%\Th_-((\V,h),\vphi,\La,r)\circ i=\Th_i(V^i,\vphi|_{\E_i},\La^i,r)^2.
\] 
%Furthermore,~$\Th_i^2$ is a ps-character supported on~$(k_i,2\b_i)$.
\end{enumerate}
}
\end{enumerate}
\end{lemma}

%%%%%%
%\shaun{Note that, for~$\Th$ a ps-character supported on~$(k,\b)$, by~$\Th^2$ we mean the unique ps-character supported on~$(k,2\b)$ such that~$\Th^2(V,\vphi,\La,r)(y)=\Th(V,\vphi,\La,r)(y)^2$ for~$y\in\H^1(\vphi(\b),\La)$, for all~$(\V,\vphi,\La,r)\in\Qq(k,\b)$.}
%%%%%%

\begin{proof}
\shaun{Note first that all uniqueness statements follow from the compatibility with transfer. So we concentrate on the remaining parts of the assertions. \orange{For~\ref{lemma:pssdecompositions-i} we define~$\Th_\J$ by~\eqref{eqThetai} and transfer; it does not depend on the choice of~$(\V,\vphi,\La,r)$ by Lemma~\ref{lemTransfer}. The same strategy works for the construction of~$\Th_{\J.-}$ in~\ref{lemma:pssdecompositions-iia}; but then the values of~$\Th_{\J.-}$ are the restrictions of the corresponding values of~$\Th_\J$, which proves both that the definition is independent of the choice of~$((\V,h),\vphi,\La,r)$ and that~$\Th_\J$ is the lift of~$\Th_{\J,-}$.} Finally, for~$i\in\I_+$, once one has checked that~$\Th_i^2$ is a ps-character supported on~$(k_i,2\b_i)$, the same argument proves~\ref{lemma:pssdecompositions-iib}.
}

\shaun{
It remains to prove~\ref{lemma:pssdecompositions-iii}. Suppose for contradiction that there are distinct~$i,j\in\I$ such that~$\Th_i\approx\Th_j$ and let~$\z:\I\to\I$ be the transposition which exchanges~$i$ and~$j$. Let~$(\V,\vphi,\La,r)\in\Qq(k,\b)$ be such that~$\dim_\F\V^i=\dim_\F\V^j$ and consider~$\t=\Th(\V,\vphi,\La,r)$, which has simple block restrictions~$\t_i,\t_j$. If~$g:\V^i\to\V^j$ is any isomorphism then, since~$\Th_i,\Th_j$ are endo-equivalent, the simple characters~$\presuper{g}\t_i$ and~$\t_j$ intertwine in~$\Aut_\F(\V^j)$; replacing~$g$ if necessary, we can assume that~$\presuper{g}\t_i$ and~$\t_j$ are intertwined by the identity. In particular, we see that~$\I(\t_i,\t_{\z(i)})\ne\emptyset$, in the notation of Proposition~\ref{prop:intdecomp}. By symmetry we also have~$\I(\t_j,\t_{\z(j)})\ne\emptyset$. Since the identity lies in~$\I(\t_l,\t_{\z(l)})$, for any~$l\ne i,j$, Proposition~\ref{prop:intdecomp} implies that~$\t$ is intertwined with itself by an element of~$\Aut_\F(\V)$ with matching~$\z$. However, this contradicts the uniqueness of matchings in Theorem~\ref{thm:MatchingForChar}, since the identity certainly intertwines~$\t$ with itself, with matching the identity.
}
\end{proof}

This lemma shows that we can identify the index set of a semisimple pair~$(k,\b)$ with the index set of any realization of any pss-character supported on~$(k,\b)$. Given a pss-character~$\Th$ supported on~$(k,\b)$, we call the ps-characters~$\Th_i$ supported on~$(k_i,\b_i)$ given by the lemma the \emph{component ps-characters of~$\Th$}.

%%%%%%%%%%%%%%%%%%%%%%%%%%%%%%%%%%%%
\subsection{Semisimple endo-classes}
%%%%%%%%%%%%%%%%%%%%%%%%%%%%%%%%%%%%
Let~$\Th$ be a pss-character supported on the semisimple pair~$(k,\b)$, and let~$\Th'$ be a pss-character supported on the semisimple pair~$(k',\b')$.

\begin{definition}
We say that~$\Th$ and~$\Th'$ are \emph{endo-equivalent}, denoted~$\Th \approx\Th'$, if
\begin{enumerate}\setlength\itemsep{5pt}
\item $\deg(\Th)=\deg(\Th')$; 
\item $k=k'$;
\item there exist realizations on a common~$\F$-vector space which intertwine, i.e.\ there exist a finite-dimensional~$\F$-vector space~$\V$ and quadruples~$(\V,\vphi,\La,r)\in\Qq(k,\b)$ and~$(\V,\vphi',\La',r')\in\Qq(k',\b')$ such that~$\Th(\V,\vphi,\La,r)$ and~$\Th'(\V,\vphi',\La',r')$ intertwine in~$\tG=\Aut_{\F}(\V)$.
\end{enumerate}
\end{definition}

\begin{theorem}\label{thm:Endopss}
Let~$\Th$ and~$\Th'$ be pss-characters supported on semisimple pairs~$(k,\b)$ and~$(k,\b')$ respectively. 
\begin{enumerate}\setlength\itemsep{5pt}
\item\label{thm:Endopss-i} We have~$\Th\approx\Th'$ if and only if there is a bijection~$\z:\I\to\I'$ such that, for all~$i\in\I$, the component ps-characters~$\Th_i$ and~$\Th_{\z(i)}$ are endo-equivalent. Moreover, if~$\Th\approx\Th'$ then the map~$\z$ is uniquely determined.
\item \label{thm:Endopss-ii-new} Suppose that~$\Th\approx\Th'$ and let~$\z:\I\to\I'$ be the bijection of~\ref{thm:Endopss-i}. Let~$(\V,\vphi,\La,r)\in\Qq(k,\b)$ and~$(\V',\vphi',\La',r')\in\Qq(k,\b')$.
\begin{enumerate}\setlength\itemsep{5pt}
\item\label{thm:Endopss-iia1} For all~$i\in\I$, we have 
\begin{equation}\label{eqppsFieldDegrees}
e(\E_i|\F)=e(\E'_{\z(i)}|\F),\ f(\E_i|\F)=f(\E'_{\z(i)}|\F),\ k_\F(\b_i)=k_\F(\b'_{\z(i)}).
\end{equation}
\item\label{thm:Endopss-iia2} If%~$(\V,\vphi,\La,r)\in\Qq(k,\b)$ and~$(\V',\vphi',\La',r')\in\Qq(k,\b')$ are such that
~$e(\La)=e(\La')$ then~$(\V,\vphi,\La,r')\in\Qq(k,\b)$.
\item\label{thm:Endopss-iib} If~$\V=\V'$ and~$\Th(\V,\vphi,\La,r)$ and~$\Th'(\V,\vphi',\La',r')$ intertwine in~$\tG$ with matching~$\xi$, then~$\xi=\z$.
\item\label{thm:Endopss-iic} If~$\V=\V'$ and~$\dim_\F(\V^i)=\dim_\F(\V'^{\z(i)})$, for all~$i\in\I$, then~$\Th(\V,\vphi,\La,r)$ and~$\Th'(\V,\vphi',\La',r')$ intertwine in~$\tG$ with matching~$\z$.
\end{enumerate}
%\item\label{thm:Endopss-iia1} If~$\Th\approx\Th'$ and~$\z:\I\to\I'$ is the bijection given by~\ref{thm:Endopss-i} then, for all~$i\in\I$, we have 
%\begin{equation}\label{eqppsFieldDegrees}
%e(\E_i|\F)=e(\E'_{\z(i)}|\F),\ f(\E_i|\F)=f(\E'_{\z(i)}|\F),\ k_\F(\b_i)=k_\F(\b'_{\z(i)}).
%\end{equation}
\item\label{thm:Endopss-iii-new} Endo-equivalence of pss-characters is an equivalence relation.
\end{enumerate}
\end{theorem}

\begin{proof}
\shauns{
Note first that~\ref{thm:Endopss-iii-new} follows from~\ref{thm:Endopss-i} and \gre{Remark \ref{rem:defEndoEquivalent}\ref{rem:defEndoEquivalentiii}}. Moreover, the uniqueness statement in~\ref{thm:Endopss-i} follows immediately from Lemma~\ref{lemma:pssdecompositions}\ref{lemma:pssdecompositions-iii} and the transitivity of endo-equivalence for ps-characters.
}

\shauns{
Suppose that~$\xi:\I\to\I'$ is a bijection such that~$\Th_i\approx\Th'_{\xi(i)}$, for all~$i\in\I$. From Proposition~\ref{prop:firstPropEndoSimple}\ref{prop:firstPropEndoSimple.i} it follows that~$\Th$ and~$\Th'$ have the same degree. Let~$(\V,\vphi,\La,r)\in\Qq(k,\b)$ and~$(\V,\vphi',\La',r')\in\Qq(k,\b')$ be such that~$\V^i$ and~$\V'^{\xi(i)}$ have the same dimension for all~$i\in\I$, and set~$\t=\Th(\V,\vphi,\La,r)$ and~$\t'=\Th'(\V,\vphi',\La',r')$. Then the simple block restrictions~$\t_i$ and~$\t'_{\xi(i)}$ intertwine by an~$\F$-linear isomorphism from~$\V^i$ to~$\V'^{\xi(i)}$, for all~$i\in\I$, by Theorem~\ref{thm:EndoEquivMeanspairwiseIntforAllrealizations}. Then~$\t$ and~$\t'$ intertwine in~$\Aut_\F(\V)$ with matching~$\xi$, by Proposition~\ref{prop:intdecomp}, so~$\Th\approx\Th'$. This proves one direction of~\ref{thm:Endopss-i}, and also that~\ref{thm:Endopss-iic} follows from~\ref{thm:Endopss-i}.
}

\shauns{
Conversely, suppose that we have~$(\V,\vphi,\La,r)\in\Qq(k,\b)$ and~$(\V,\vphi',\La',r')\in\Qq(k,\b')$ such that~$\t=\Th(\V,\vphi,\La,r)$ and~$\t'=\Th'(\V,\vphi',\La',r')$ intertwine in~$\tG=\Aut_\F(\V)$. % with matching~$\xi:\I\to\I'$. 
Replacing~$\La,\La'$ in their affine classes, we can assume that~$\La$ and~$\La'$ have the same~$\o_\F$-period and, exchanging them if necessary, that~$r\leq r'$. To ease notation, we will identify~$\b,\b'$ with their images under the embeddings~$\vphi,\vphi'$ respectively.
%~$\Th\approx\Th'$. Thus there are realizations~$\t\in\Cc(\La,r,\b)$ and~$\t'\in\Cc(\La',r',\b')$ of~$\Th$ and~$\Th'$ respectively on a common vector space~$\V$ which intertwine in~$\tG=\Aut_\F(\V)$, where we have identified~$\b$ and~$\b'$ with their images under the embeddings. Without loss of generality we can assume that~$\La$ and~$\La'$ have the same~$\o_\F$-period and that~$r\leq r'$.
}

\shauns{
%We are going to show that~$[\La,n,r',\b]$ is semisimple.  %It needs some preparation. 
Suppose for contradiction that~$[\La,n,r',\b]$ is not semisimple and let~$[\La,n,r',\g]$ be a semisimple stratum equivalent to it such that~$\g\in\prod_i\A^{i}$. %This choice is possible by coarsening and~\cite[6.16]{SkSt}, cf.~\cite[3.4]{St05}. 
By Theorem~\ref{thm:MatchingForChar} and Proposition~\ref{prop:SimpleDegrees} the algebras~$\F[\g]$ and~$\E'=\F[\b']$ have the same degree and~$e_{\E'}=e_{\F[\g]}$. As the pss-characters are endo-equivalent~$\F[\g]$ also has the same~$\F$-degree as~$\E=\F[\b]$. If~$\J$ is the index set for~$\g$, then~$\#\J\leq\#\I$ and, for~$i\in\I,j\in\J$ with~$\V^i\subseteq\V^j$, we have~$e(\F[\g_j]/\F) \mid e(\F[\b_i]/\F)$ and~$f(\F[\g_j]/\F) \mid f(\F[\b_i]/\F)$. Thus the equality~$[\F[\g]:\F]=[\F[\b]:\F]$ implies that~$\#\J=\#\I$, so we can identify~$\J$ with~$\I$, and that~$e(\F[\g_i]/\F) = e(\F[\b_i]/\F)$ and~$f(\F[\g_i]/\F) = f(\F[\b_i]/\F)$. In particular, we deduce that~$e_{\E}=e_{\F[\g]}=e_{\E'}$.
}

\shauns{
Since~$[\La,n,r',\b]$ is not semisimple, there must then be a (unique) index~$i_0$ with~$\g_{i_0}=0$ and~$\b_{i_0}\in\F^\times$, by~\cite[6.4,~6.1]{SkSt}. This implies that~$k_0(\b,\La)=k_0(\b_{i_0},\La^{i_0})$ is a multiple of~$e(\La^{i_0}|\o_{\E_{i_0}})=e(\La|\o_F)$, so that
\[
\frac{r'}{e(\La|\o_\F)}\ge  \frac{-k_0(\b,\La)}{e(\La|\o_\F)} > \frac{r}{e(\La|\o_\F)},
\]
with the middle term an integer. In particular, we deduce that
\[
\left\lfloor\frac{r'}{e(\La|\o_\F)}\right\rfloor>\left\lfloor\frac{r}{e(\La|\o_\F)}\right\rfloor.
\]
But
\[
\left\lfloor\frac{r}{e(\La|\o_\F)}\right\rfloor=\left\lfloor\frac{r}{e(\La_\E)e_\E}\right\rfloor = \left\lfloor\frac{k}{e_\E}\right\rfloor = \left\lfloor\frac{k}{e_{\E'}}\right\rfloor=\left\lfloor\frac{r'}{e(\La'_{\E'})e_{\E'}}\right\rfloor = \left\lfloor\frac{r'}{e(\La'|\o_\F)}\right\rfloor,
\]
which contradicts the previous inequality since~$e(\La|\o_\F)=e(\La'|\o_\F)$.
}

\ignore{
% Applying Theorem~\ref{thm:MatchingForChar} to~$\t|_{\H^{r'+1}(\g,\La)}$ and~$\t'$,  Proposition~\ref{prop:SimpleDegrees} and above the equality~$e(\La_\E)=e(\La'_{\E'})$, 
%in particular (by
Since~$e(\La|\o_\F)=e(\La'|\o_\F)$,
\[
\left\lfloor\frac{r}{e(\La|\o_\F)}\right\rfloor=\left\lfloor\frac{k e(\La|\o_{\E})}{e(\La)}\right\rfloor=\left\lfloor\frac{r'}{e(\La')}\right\rfloor.
\]
Now we can conclude the desired semi-simplicity. 
If~$[\La,-,r',\b']$ is not semisimple then there must be an index~$i_\so\in\I$ such that~$\g_{i_\so}=0$ and~$\b_{i_\so}\in\F^\times$ by~\cite[6.4, 6.1]{SkSt}. Thus
\[
\left\lfloor\frac{r'}{e(\La)}\right\rfloor>\left\lfloor\frac{r}{e(\La)}\right\rfloor=\left\lfloor\frac{r'}{e(\La')}\right\rfloor.
\]
A contradiction. 
}

\shauns{
Thus~$[\La,n,r',\b]$ is semisimple. 
\orange{Now~$\t|_{\H^{r'+1}(\b,\La)}$ and~$\t'$ intertwine in~$\tG$ as~$\t$ and~$\t'$ do, Theorem~\ref{thm:MatchingForChar} provides a matching~$\z:\I\to\I'$, and~\eqref{eqppsFieldDegrees} follows from Proposition~\ref{prop:SimpleDegrees}. Note also that if we had started with a matching~$\xi:\I\to\I'$ from~$(\b,\t)$ to~$(\b',\t')$ then we would obtain~$\xi=\z$ by restriction.
}
%Then~$\t|_{\H^{r'+1}(\b,\La)}$ and~$\t'$ intertwine in~$\tG$ with matching~$\xi$, so~$\xi=\z$ is the matching given by Theorem~\ref{thm:MatchingForChar}, and~\eqref{eqppsFieldDegrees} follows from Proposition~\ref{prop:SimpleDegrees}. 
We see that:%, we see that
%\[
%e(\E_i|\F)=e(\E'_{\z(i)}|\F),\ f(\E_i|\F)=f(\E'_{\z(i)}|\F),\ k_\F(\b_i)=k_\F(\b'_{\z(i)}),
%\] 
%for all~$i\in\I$. Then:
%We need to show~$\Th_i\approx\Th'_{\z(i)}$ for all~$i\in\I$. 
%We collect the information about~$\Th_i$ and~$\Th'_{\z(i)}$:
\begin{itemize}
\item the field extensions~$\E_i/\F$ and~$\E_{\z(i)}/\F$ have the same degree;
\item we have~$e(\La^i|\o_{\E_i})=e(\La'^{\z(i)}|\o_{\E'_{\z(i)}})$  and~$e(\La_\E)=e(\La'_{\E'})$ so, setting~$q=\frac{e(\La^i|\o_{\E_i})}{e(\La_\E)}$, we obtain
\[
\left\lfloor\frac{r}{e(\La^i|\o_{\E_i})}\right\rfloor=\left\lfloor\frac{k}{q}\right\rfloor=\left\lfloor\frac{r'}{e(\La'^{\z(i)}|\o_{\E'_{\z(i)}})}\right\rfloor,
\]
which integer we denote by~$k_i$; 
\item we have~$(\V^i,\varphi_i,\Lambda^i,r')\in\Qq(k_i,\b_i)$, while~$\t_i|_{\H^{r'+1}(\b,\La)}$ and~$\t'_{\z(i)}$ intertwine. 
\end{itemize}
Thus the ps-characters~$\Th_i$ and~$\Th'_{\z(i)}$ are endo-equivalent. This completes the proof of the converse direction of~\ref{thm:Endopss-i}, and also that of~\ref{thm:Endopss-iia1} and~\ref{thm:Endopss-iib}.
}
%\orange{
%For the uniqueness statement suppose~$\Th\approx\Th'$ and the existence of a bijection~$\xi:\I\to\I'$ such that~$\Th'_{\z(i)}$ and~$\Th'_{\xi(i)}$ are endo-equivalent for all~$i\in\I$. Take a realization~$\t''$ of~$\Th'$ such that~$\V'^{\xi(i)}$ and~$\V'^{\z(i)}$ have the same dimension over~$\F$. Then~$\t''_{\z(i)}$ and~$\t''_{\xi(i)}$ intertwine by Theorem~\ref{thm:EndoEquivMeanspairwiseIntforAllrealizations}. The uniqueness of the matching from~$\t''$ to itself implies~$\xi(i)=\z(i)$ for all~$i\in\I$.
%}

\shauns{
Finally, assertion~\ref{thm:Endopss-iia2} follows as in the proof of Proposition~\ref{prop:firstPropEndoSimple}\ref{prop:firstPropEndoSimple.ii}. 
}
\end{proof}

We call the equivalence classes of pss-characters under endo-equivalence~\emph{semisimple endo-classes}. 

\begin{definition}
Given endo-equivalent pss-characters~$\Th$ and~$\Th'$ we call the bijection~$\z$ of Theorem~\ref{thm:Endopss-i} the~\emph{matching}~$\z_{\Th',\Th}$ from~$\Th$ to~$\Th'$.
\end{definition}

The uniqueness statement in Theorem~\ref{thm:Endopss}\ref{thm:Endopss-i} immediately gives us:

\begin{corollary}\label{cor:zetaEqualityPss}
Let~$\Th$ and~$\Th'$ and~$\Th''$ be pss-characters such that~$\Th\approx\Th'\approx\Th''$.  Then~$\z_{\Th'',\Th}=\z_{\Th'',\Th'}\circ\z_{\Th',\Th}$. 
\end{corollary}

%Theorem~\ref{thm:Endopss}\ref{thm:Endopss-i} also allows us to extend the definition of matching between semisimple characters.

%\begin{definition} 
%Let~$\Th$,~$\Th'$ be endo-equivalent pss-characters supported on semisimple pairs~$(k,\b)$,~$(k,\b')$ respectively. Let~$\t,\t'$ be realizations of~$\Th,\Th'$ respectively, on a common space~$\V$ and suppose there are a bijection~$\xi:\I\to\I'$ and~$g\in\Aut_\F(\V)$ such that, for each~$i\in\I$, we have:
%\begin{enumerate}\setlength\itemsep{5pt}
%\item $g \V^i=\V'^{\xi(i)}$;
%\item $\presuper{g}\t_i$ and~$\t_{\xi(i)}'$ intertwine in $\Aut_\F(\V'^{\xi(i)})$. 
%\end{enumerate}
%Then we say that~$\xi$ is  a \emph{matching} from~$(\t,\b)$ to~$(\t',\b')$ and that~$\t$ and~$\t'$ \emph{intertwine by an element of~$\tG$ with matching~$\xi$.}
%\end{definition}

%In fact, any such matching~$\xi$ is the matching~$\z_{\Th',\Th}$ from~$\Th$ to~$\Th'$, as the following theorem shows.

\ignore{
\begin{theorem}\label{thm:Endopss-ii}
Let~$\Th$,~$\Th'$ be endo-equivalent pss-characters supported on semisimple pairs~$(k,\b)$,~$(k,\b')$ respectively and let~$\z=\z_{\Th',\Th}$ be the matching between them. Let~$(\V,\vphi,\La,r)\in\Qq(k,\b)$ and~$(\V',\vphi',\La',r')\in\Qq(k,\b')$.
\begin{enumerate}\setlength\itemsep{5pt}
\item\label{thm:Endopss-iia2} If~$e(\La|\o_\F)=e(\La'|\o_\F)$ then~$(\V,\vphi,\La,r')\in\Qq(k,\b)$.
\item\label{thm:Endopss-iib} If~$\V=\V'$ and~$\Th(\V,\vphi,\La,r)$ and~$\Th'(\V,\vphi',\La',r')$ intertwine in~$\tG$ with matching~$\xi$, then~$\xi=\z$.
\item\label{thm:Endopss-iic} If~$\V=\V'$ and~$\V^i,\V'^{\z(i)}$ have the same~$\F$-dimension for all~$i\in\I$, then~$\Th(\V,\vphi,\La,r)$ and~$\Th'(\V,\vphi',\La',r')$ intertwine by an element of~$\Aut_\F(\V)$ with matching~$\z$.
\end{enumerate}
\end{theorem}
}

\ignore{
\begin{theorem}\label{thm:Endopss}
Let~$\Th$,~$\Th'$ and~$\Th''$ be pss-characters supported on semisimple pairs~$(k,\b)$,~$(k,\b')$ and~$(k,\b'')$, respectively. Let~$(\V,\vphi,\La,r)\in\Qq(k,\b)$ and~$(\V',\vphi',\La',r')\in\Qq(k,\b')$.
\begin{enumerate}\setlength\itemsep{5pt}
%\item \label{thm:Endopss-i} We have~$\Th\approx\Th'$ if and only if there is a bijection~$\z:\I\to\I'$ such that, for all~$i\in\I$, the component ps-characters~$\Th_i$ and~$\Th_{\z(i)}$ are endo-equivalent. Moreover, if~$\Th\approx\Th'$ then the map~$\z$ is uniquely determined.
\item \label{thm:Endopss-ii-new} Suppose that~$\Th\approx\Th'$ and let~$\z:\I\to\I'$ be the bijection of~\ref{thm:Endopss-i}. 
\begin{enumerate}\setlength\itemsep{5pt}
\item\label{thm:Endopss-iia1} For all~$i\in\I$, we have 
\begin{equation}\label{eqppsFieldDegrees}
e(\E_i|\F)=e(\E'_{\z(i)}|\F),\ f(\E_i|\F)=f(\E'_{\z(i)}|\F),\ k_\F(\b_i)=k_\F(\b'_{\z(i)}).
\end{equation}
\item\label{thm:Endopss-iia2} If%~$(\V,\vphi,\La,r)\in\Qq(k,\b)$ and~$(\V',\vphi',\La',r')\in\Qq(k,\b')$ are such that
~$e(\La)=e(\La')$ then~$(\V,\vphi,\La,r')\in\Qq(k,\b)$.
\item\label{thm:Endopss-iib} If~$\V=\V'$ and~$\Th(\V,\vphi,\La,r)$ and~$\Th'(\V,\vphi',\La',r')$ intertwine in~$\tG$ with a matching~$\xi$, then~$\xi=\z$.(In the case~$r\neq r'$ or~$e(\La)\neq e(\La')$, we define a matching to be a bijection~$\xi:\I\to\I'$ which satisfies Theorem~\ref{thm:MatchingForChar} \ref{match1} and \ref{match2}.)
\item\label{thm:Endopss-iic} Let~$\t$ and~$\t'$ be realizations of~$\Th$ and~$\Th'$, respectively, in the same vector space~$\V$, such that~$\V^i$ and~$\V'^{\z(i)}$ have the same~$\F$-dimension for all~$i\in\I$. Then they intertwine by an element of~$\Aut_\F(\V)$ with matching~$\z$.%\label{thm:Endopss-iv} 
\end{enumerate}
\item Endo-equivalence is an equivalence relation, in particular if~$\Th\approx \Th'$ and~$\Th'\approx \Th''$ then~$\Th\approx\Th''$.\label{thm:Endopss-iii-new} %\label{thm:Endopss-v}
\end{enumerate}
\end{theorem}
}

\ignore{
\begin{proof}
\shauns{
Assertion~\ref{thm:Endopss-iia2} follows as in Proposition \ref{prop:firstPropEndoSimple}\ref{prop:firstPropEndoSimple.ii}. 
}
\orange{
%We now prove~\ref{thm:Endopss-iib} Suppose that
Suppose now that~$\t$ and~$\t'$ are realizations of~$\Th$ and~$\Th'$ respectively which intertwine in~$\tG$ with a matching~$\xi$. Then the proof of Theorem~\ref{thm:Endopss}\ref{thm:Endopss-i}, applied with~$\xi$ in place of~$\z$, implies that~$\Th_i\approx\Th'_{\xi(i)}$. But then~$\xi=\z$ by the uniqueness in Theorem~\ref{thm:Endopss}. Finally,~\ref{thm:Endopss-iic} also follows as in the proof of Theorem~\ref{thm:Endopss}\ref{thm:Endopss-i}.
}
\end{proof}
}

\ignore{We call the equivalence classes of pss-characters under endo-equivalence~\emph{semisimple endo-classes}. }

\ignore{
\begin{definition}
Given endo-equivalent pss-characters~$\Th$ and~$\Th'$ we call the bijection~$\z$ of Theorem~\ref{thm:Endopss}\ref{thm:Endopss-i} the~\emph{matching}~$\z_{\Th',\Th}$ from~$\Th$ to~$\Th'$.
% Given endo-equivalent pss-characters~$\Th$ and~$\Th'$, the bijection~$\z$ between the index sets of the underlying semisimple pairs induced by intertwining realizations is uniquely defined by Theorem~\ref{thm:Endopss} \ref{thm:Endopss-iii}, and we call it the~\emph{matching}~$\z_{\Th',\Th}$ from~$\Th$ to~$\Th'$.  
% \red{We can just define this by \ref{thm:Endopss} \ref{thm:Endopss-i} now?}
\end{definition}
}

\ignore{
\begin{corollary}\label{cor:zetaEqualityPss}
Let~$\Th$ and~$\Th'$ and~$\Th''$ be pss-characters such that~$\Th\approx\Th'\approx\Th''$.  Then~$\z_{\Th'',\Th}=\z_{\Th'',\Th'}\circ\z_{\Th',\Th}$. 
\end{corollary}
}

We obtain, as another consequence, that intertwining is an equivalence relation for semisimple characters with the same degree and the same parameter~$k$ (cf. Theorem~\ref{thm:transintertwiningoverGL}).

\begin{corollary}\label{cor:equivOfIntertwiningSschar}
Suppose~$\t^{(l)}\in\Cc(\La^{(l)},r^{(l)},\b^{(l)})$, for~$l=1,2,3$, are semisimple characters of the same degree such that~$\t^{(1)}$ intertwines with~$\t^{(2)}$, and~$\t^{(2)}$ intertwines with~$\t^{(3)}$. Suppose that $\left\lfloor\frac{r^{(l)}}{e(\La^{(l)}_{{\E^{(l)}}})}\right\rfloor$ is independent of~$l$. Then~$\t^{(1)}$ and~$\t^{(3)}$ intertwine. 
\end{corollary}

%%%%%%%%%%%%%%%%%%%%%%%%%%%%%%%%%%%%
\subsection{Self-dual semisimple endo-classes}
%%%%%%%%%%%%%%%%%%%%%%%%%%%%%%%%%%%%
Let~$\Th_-$ be a self-dual pss-character supported on the self-dual semisimple pair~$(k,\b)$, and let~$\Th'_-$ be a self-dual pss-character supported on the self-dual semisimple pair~$(k',\b')$.

\begin{definition}
We say that~$\Th_-$ and~$\Th'_-$ are \emph{endo-equivalent}, denoted~$\Th_- \approx\Th'_-$, if
\begin{enumerate}\setlength\itemsep{5pt}
\item $\deg(\Th_-)=\deg(\Th'_-)$; 
\item $k=k'$;
%\red{is this defined earlier? or should it be~$\deg(\Th_i)=\deg(\Th'_i)$ for all~$i$}
\item there exist self-dual realizations on a common~$\e$-hermitian space which intertwine, i.e.\ there exist~$((\V,h),\vphi,\La,r)\in\Qq_-(k,\b)$ and~$((\V,h),\vphi',\La',r')\in\Qq(k',\b')$ such that~$\Th_-((\V,h),\vphi,\La,r)$ and~$\Th'_-((\V,h),\vphi',\La',r')$ intertwine in~$\G=\U(\V,h)$.
\end{enumerate}
\end{definition}

Given two endo-equivalent self-dual pss-characters~$\Th_-$ \rob{and}~$\Th'_-$ with lifts~$\Th$ \rob{and}~$\Th'$ respectively, then~$\Th\approx\Th'$ by the Glauberman correspondence, and the matching from~$\Th$ to~$\Th'$ will also be written as the matching~$\z_{\Th'_-,\Th_-}$ from~$\Th_-$ to~$\Th'_-$. This matching is~$\s$-equivariant because it is also the matching between any intertwining realizations of~$\Th_-$ and~$\Th'_-$, by %Glauberman and 
Theorem~\ref{thm:Endopss}\ref{thm:Endopss-iib}.
% This matching is self-dual because it is also the matching between intertwining realizations of~$\Th_-$ to~$\Th'_-$, by Glauberman and Theorem~\ref{thm:Endopss} \ref{thm:Endopss-iii}. 

We need to generalize the notion of concordance to embeddings of semisimple algebras over~$\F$. 

\begin{definition}\label{def:WittConcordanceSemisimple}
\shaun{Let~$(k,\b)$ and~$(k,\b')$ be self-dual semisimple pairs with index sets~$\I$ and~$\I'$ respectively. Suppose that~$(\V,h)$ and~$(\V',h')$ are isometric~$\e$-hermitian spaces and~$\vphi:\E\hookrightarrow \End_\F(\V)$ and~$\vphi':\E'\hookrightarrow \End_{\F}(\V')$ are self-dual~$\F$-algebra embeddings. 
%%
%\red{did we define self-dual in this generality}\orange{After Def. 9.2}
%%
Let~$\z:\I\to\I'$ be a bijection. We say that~$(\vphi,\b)$ and~$(\vphi',\b')$ are \emph{$\z$-concordant} if, for all~$i\in\I_0$, the spaces~$(\V^i,h_i)=(\V'^{\z(i)},h'_{\z(i)})$ are isometric and~$(\vphi|_{\E_i},\b_i)$ and~$(\vphi'|_{\E'_{\z(i)}},\b'_{\z(i)})$ are~$(h_i,h'_{\z(i)})$-concordant.} 
%if there is an isometry~$g$ from~$h$ to~$h'$ such that~$g(\V^i)=\V'^{\z(i)}$ and such that, for all~$i\in\I_0$, the embeddings~$g\vphi|_{\E_i}(*)g^{-1}$ and~$\vphi'|_{\E'_{\z(i)}}$ are concordant with respect to~$\b_i$ and~$\b_{\z(i)}'$.
\end{definition}
%%
%\red{It feels like the above could go in the Witt section, a new subsection at the end...}\gre{No, here is fine, because we do not use it before.}
%%

Now we can gather all the results of the previous sections to get the following:

\begin{theorem}\label{thmEndoSemisimplev4}
Let~$\Th_-$ and~$\Th'_-$ be self-dual pss-characters {supported on~$(k,\b)$ and~$(k,\b')$, respectively,} 
and~$\Th$ and~$\Th'$ their respective lifts.
Then, the following assertions are equivalent:
\begin{enumerate}\setlength\itemsep{5pt}
 \item \label{part1endosemi}The self-dual pss-characters~$\Th_-$ and~$\Th'_-$ are endo-equivalent;
 \item \label{part2endosemi}The lifts~$\Th$ and~$\Th'$ are endo-equivalent.
 \item \label{part3endosemi} \bob{$\deg(\Th_-)=\deg(\Th'_-)$ and t}\shaun{here is a  bijection~$\z:\I\to\I'$ which commutes with~$\s$ \shauny{ with the following property: if~$((\V,h),\vphi,\La,r)\in\Qq_-(k,\b)$ and~$((\V,h),\vphi',\La',r')\in\Qq(k,\b')$ are such that~$(\vphi,\b)$ and~$(\vphi',\b')$ are~$\z$-concordant and~$\dim_\F\V^i=\dim_\F\V'^{\zeta(i)}$, for~$i\in\I$,} then the realizations~$\Th_-((\V,h),\vphi,\La,r)$ and~$\Th'_-((\V,h),\vphi',\La',r')$ intertwine in~$\G=\U(\V,h)$ with matching~$\z$.}
 %for all pairs~$\t,\t'$ of realizations of~$\Th,\Th'$ respectively on a common hermitian space~$(\V,h)$ with $\z$-concordant embeddings, the characters~$\t,\t'$ intertwine in~$\G=\U(\V,h)$ with matching~$\z$}.
\end{enumerate}
\end{theorem}

\begin{proof}
If~$\Th_-$ and~$\Th'_-$ are endo-equivalent then so are~$\Th$ and~$\Th'$ by the Glauberman correspondence, i.e.~\ref{part1endosemi}$\Rightarrow$\ref{part2endosemi}. 

\shaun{
We have~\ref{part3endosemi} $\Rightarrow$\ref{part1endosemi}~because we can find realizations of the semisimple pairs such that, for all indices~$i\in \I_0$, the forms~$h_{i,\vphi(\b_i)}$ and~$h_{\z(i),\vphi'(\b'_{\z(i)})}$ are hyperbolic so that, in particular,~$(\vphi|_{\E_i},\b_i)$ and~$(\vphi'|_{\E'_{\z(i)}},\b'_{\z(i)})$ are~$(h_i,h'_{\z(i)})$-concordant.
}

\shauns{
It remains to show that~\ref{part2endosemi}$\Rightarrow$\ref{part3endosemi}, so suppose that~$\Th$ and~$\Th'$ are endo-equivalent. Let~$\z$ be the matching from~$\Th$ to~$\Th'$. Take realizations~$\t_-=\Th_-((V,h),\vphi,\La,r)$ and~$\t'_-=\Th'_-((V,h),\vphi',\La',r')$, such that for all~$i\in\I$ the~$\F$-vector spaces~$\V^i$ and~$\V'^{\z(i)}$ have the same dimension. Let~$\t$ and~$\t'$ be the lifts of~$\t_-$ and~$\t'_-$. Then~$\t$ and~$\t'$ intertwine with matching~$\z$ by Theorem~\ref{thm:Endopss}\ref{thm:Endopss-iic}. %Then~$\s\z\s$ is a matching from~$(\t,\vphi(\b))$ to~$(\t',\vphi'(\b'))$ too, and therefore~$\z$ commutes with~$\s$ by Theorem~\ref{thm:Endopss}\ref{thm:Endopss-iib}. 
Now suppose that~$\vphi$ and~$\vphi'$ are~$\z$-concordant. Since~$\Th_i$ and~$\Th'_{\z(i)}$ are endo-equivalent,~$\t_i$ and~$\t'_{\z(i)}$ intertwine by an~$\F$-linear isomorphism~$g_i\in\Hom_\F(\V^i,\V'^{\z(i)})$ by Theorem~\ref{thm:EndoEquivMeanspairwiseIntforAllrealizations}. For~$i\in\I_-$ we can replace~$g_i$ by~$\ov{g_{-i}}^{-1}$; moreover, for~$i\in\I_0$ we may assume that~$g_i$ is an isometry from~$(V^i,h_i)$ to~$(V'^{\z(i)},h_{\z(i)})$ which intertwines~$\t_{i,-}$ with~$\t'_{\z(i),-}$, by Proposition~\ref{prop:TiGandGIntertwiningSameNonSympl} (since the embeddings are~$\z$-concordant). In particular, the element~$g=\sum_{i\in\I} g_i$ is then in~$\G$ and intertwines~$\t$ with~$\t'$ with matching~$\z$, by Proposition~\ref{prop:intdecomp}.
}
\end{proof}

One consequence of Theorem~\ref{thmEndoSemisimplev4}, Theorem~\ref{thm:Endopss}\ref{thm:Endopss-iii-new} and Corollary~\ref{cor:zetaEqualityPss} is:

\begin{corollary}\label{cor:EndoequivTransSdPss}
Endo-equivalence of self-dual pss-characters is an equivalence relation and, for self-dual pss-characters~$\Th_-\approx\Th'_-\approx\Th''_-$, we have~$\z_{\Th''_-,\Th'_-}\circ\z_{\Th'_-,\Th_-}=\z_{\Th''_-,\Th_-}$.
\end{corollary}

\begin{definition}
We call the equivalence classes of self-dual pss-characters under endo-equivalence \emph{self-dual semisimple endo-classes}.
\end{definition}

\shaun{
As another corollary of Theorem~\ref{thmEndoSemisimplev4}  and Theorem~\ref{thm:MatchingForCharForG}, we see the remarkable result that, for self-dual semisimple characters of same degree and with the same~$k$, intertwining is an equivalence relation.
}
% the property intertwining for self-dual semisimple characters of same degree and same~$k$ is an equivalence relation.
%}% the remarkable result that intertwining of self-dual semisimple characters with same group level is transitive, and hence equivalence relation:

\begin{corollary}\label{cor:IntertwiningEquivalenceRelG}
Suppose~$\t^{(l)}_-\in\Cc_-(\La^{(l)},r^{(l)},\b^{(l)})$, for~$l=1,2,3$, are self-dual semisimple characters of the same degree such that~$\t^{(1)}_-$ intertwines with~$\t^{(2)}_-$ in~$\G$, and~$\t^{(2)}_-$ intertwines with~$\t^{(3)}_-$ in~$\G$. Suppose that $\left\lfloor\frac{r^{(l)}}{e(\La^{(l)}_{{\E^{(l)}}})}\right\rfloor$ is independent of~$l$. Then~$\t^{(1)}_-$ and~$\t^{(3)}_-$ intertwine in~$\G$. 
\end{corollary}

\begin{proof}
Let~$\Th^{(l)}_-$ be the self-dual pss-character supported on~$(k,\b^{(l)})$ with realization~$\t^{(l)}_-$. Now~$\Th^{(1)}_-\approx\Th^{(2)}_-$ and~$\Th^{(2)}_-\approx\Th^{(3)}_-$ and thus~$\Th^{(1)}_-\approx\Th^{(3)}_-$, by Corollary~\ref{cor:EndoequivTransSdPss}. Let~$\vphi^{(l)}$ be the canonical embedding of~$\E^{(l)}$ into~$\A$. We need to show that~$(\vphi^{(1)},\b^{(1)})$ and~$(\vphi^{(3)},\b^{(3)})$ are~$\z_{\Th^{(3)}_-,\Th^{(1)}_-}$-concordant. Without loss of generality we can assume that~$e(\La^{(l)})$ is independent of~$l$, and by Theorem~\ref{thm:Endopss}\ref{thm:Endopss-iia2} we can assume without loss of generality that~$r^{(l)}$ is independent of~$l$. \bob{By the Glauberman correspondence and Corollary~\ref{cor:equivOfIntertwiningSschar}, the lifts of~$\t^{(1)}_-$ and~$\t^{(3)}_-$ intertwine in~$\tG$ and, by Theorem~\ref{thm:Endopss}\ref{thm:Endopss-iib}, \shauny{they do so} with matching~$\zeta:=\zeta_{\Th^{(1)}_-,\Th^{(3)}_-}$; in particular,~$\dim_\F\V^i=\dim_\F\V^{\zeta(i)}$, for~$i\in\I$.} By Theorem~\ref{thm:MatchingForCharForG} we have that~$(\vphi^{(1)},\b^{(1)})$ and~$(\vphi^{(2)},\b^{(2)})$ are~$\z_{\Th^{(2)}_-,\Th^{(1)}_-}$-concordant, and that~$(\vphi^{(2)},\b^{(2)})$ and~$(\vphi^{(3)},\b^{(3)})$ are~$\z_{\Th^{(3)}_-,\Th^{(2)}_-}$-concordant. Now the transitivity of concordance and Corollary~\ref{cor:EndoequivTransSdPss} finish the proof.
\end{proof}

%%%%%%%%%%%%%%%%%%%%%%%%%%%%%%%%%%%%
\section{Intertwining and conjugacy for special orthogonal groups}
%%%%%%%%%%%%%%%%%%%%%%%%%%%%%%%%%%%%

We now investigate intertwining and conjugacy of semisimple characters of special orthogonal groups, so for this section we suppose that~$(\V,h)$ is a~$1$-hermitian space with~$\F=\F_\so$, so that~$\G=\U(\V,h)$ is an orthogonal group and~$\G^\so$ is its special orthogonal subgroup.   %This, can not happen if~$\b$ has no zero-component. 

%%%%%%%%%%%%%%%%%%%%%%%%%%%%%%%%%%%%
\subsection{Intertwining self-dual semisimple characters}
%%%%%%%%%%%%%%%%%%%%%%%%%%%%%%%%%%%%

%%%
%\red{I think this should move later.}\orange{Where?}
%%%
%Here we consider endo-equivalent pss-characters~$\Th_-$ and~$\Th'_-$ for self-dual semisimple pairs~$[k,\b]$ and~$[k,\b']$ in the orthogonal setting~$\s=1$,~$\e=1$. 
%Let~$\t_-\in \Cc_-(\La,r,\vphi(\b))$ and~$\t'_-\in \Cc_-(\La',r,\vphi'(\b'))$ be realizations of~$\Th_-$ and~$\Th'_-$ respectively on an orthogonal space~$(\V,h)$. In this section we analyse the intertwining classes of an orthogonal endo-class.  Let us assume that~$\La$ and~$\La'$ have the same period, i.e.~$e(\La)=e(\La')$.

%For this section, we suppose that we are in the orthogonal setting~$\s=1$, $\e=1$, so~$\G$ will denote an orthogonal group, and~$\G^\so$ its special orthogonal subgroup.  We l
Let~$[\La,n,r,\b]$ and~$[\La',n,r,\b']$, be self-dual semisimple strata in~$\A$, \rob{with} associated splittings~$\V=\bigoplus_{i\in \I}\V^i$ and~\rob{$\V=\bigoplus_{i\in \I'}\V'^i$}.

%{\color{red}Should we say that it follows that~$e(\La)=e(\La')$, because if~$n>r$ and~$[\La,n,n-1,\b]$ and~$[\La',n,n-1,\b']$ intertwine then both share the depth Reference.}

\begin{lemma}\label{propSOintertwiningInvertibleBeta}
Suppose that~$\b$ and~$\b'$ are non-zero. Let~$\t_-\in \Cc_-(\La,r,\b)$ and~$\t'_-\in \Cc_-(\La',r,\b')$ be self-dual semisimple characters which intertwine in~$\G$.
%\begin{enumerate}\setlength\itemsep{5pt}
% \item Suppose that there is~$i_0\in\I$ such that~$\b_{i_0}=0$. Then,~$\t_-$ and~$\t'_-$ intertwine by an element of~$\G^\so$.
% \item 
\shauns{Suppose also that~$\b$ normalizes~$\La$.}  %Suppose also that~$\val_\La(\b_i)=-n$ for all~$i\in\I$. 
Then~$\t_-$ and~$\t'_-$ intertwine in~$\G^\so$ (respectively in~$\G\setminus\G^\so$) if and only if the symplectic spaces~$(\V,\b^*(h))$ and~$(\V,\b'^*(h))$ are isometric by an automorphism of~$\V$ of determinant congruent to~$1$ modulo~$\p_\F$ (respectively, to~$-1$ modulo~$\p_\F$).
 %Assume that all~$\vphi(\b_i)$ have the same valuation for the lattice sequence~$\La$, i.e.~$\val_\La(\vphi(\b_i))=-q$ for all indexes~$i$.    
 %The realizations~$\t_-$ and~$\t'_-$ intertwine over~$\G^\so$ if and only if the symplectic forms~$h^{\vphi(\b)}$ and~$h^{ \vphi'(\b')}$ are isometric by 
 %an automorphism of~$\V$ of determinant congruent~$1$ modulo~$\mathfrak{p}_\F$.
 %\end{enumerate}
\end{lemma}

\begin{proof}
\shauns{By hypothesis, there is an element~$g\in\G$ which intertwines~$\t_-$ with~$\t'_-$. Then the fundamental strata~$[\La,n,n-1,\b]$ and~$[\La',n,n-1,\b']$ are intertwined by~$g$ so, by~\cite[Proposition~6.9]{SkSt}, have the same level. In particular, we deduce that~$e(\La)=e(\La')$. Moreover, writing~$\z:\I\to\I'$ for the matching from~$(\t_-,\b)$ to~$(\t'_-,\b')$, Theorem \ref{thm:MatchingForChar} and~\cite[Proposition~6.9]{SkSt} together imply that~$\nu_{\La'}(\b'_{\z(i)})=-n$ for all~$i\in\I$ also. In particular, this implies that~$\b'$ normalizes~$\La'$ also.}

\shauns{By the intertwining of the fundamental strata, there are skew elements~$c\in\b+\aa_{1-n}^-$ and~$c'\in\b'+\aa_{1-n}'^-$ such that~$gcg^{-1}=c'$, and~$g$ then gives an isometry from~$c^*(h)$ to~$c'^*(h)$. Then~\cite[Lemma~5.3]{SkSt} implies that there is an~$\F$-linear isometry~$u\in\P^1(\La)$ from~$\b^*(h)$ to~$c^*(h)$; similarly, there is an~$\F$-linear isometry~$u'\in\P^1(\La')$ from~$c'^*(h)$ to~$\b'^*(h)$. Thus\gre{~$u'gu$} is an isometry from~$\b^*(h)$ \rob{to}~$\b'^*(h)$, and\gre{~$\det(u'gu)\equiv \det(g) \pmod{\p_\F}$}. Since any isometry of a symplectic space has determinant~$1$, there cannot be isometries from~$\b^*(h)$ \rob{to}~$\b'^*(h)$ with determinant congruent to both~$\pm 1$ modulo~$\p_\F$ and the result follows.}
%The assumption on the~$\b_i$ is also true for the~$\b'_i$ by Theorem \ref{thm:MatchingForChar} and~\cite[6.9]{SkSt}.  By hypotheses there is an element~$g\in\G$ which intertwines~$\t_-$ with~$\t'_-$. Then, the fundamental strata~$[\La,n,n-1,\b]$ and~$[\La',n,n-1,\b']$ intertwine under~$g$ and the twists of~$h$,~$\b^*(h)$ and~$\b'^*(h)$,  are isometric by an element of the form~$u'gu$ with elements~$u\in 1+\aa_1(\La)$ and~$u'\in 1+\aa_1(\La')$, by~\cite[Proposition 3.1 and Proof of Theorem 5.2]{SkSt}. The element~$u'gu$ has determinant congruent to the determinant of~$g$ modulo~$\mathfrak{p}_\F$. The fact that an isometry of a symplectic space has determinant~$1$ finishes the proof. 
\end{proof}

\begin{theorem}\label{thmSOintertwining}
Let~$\t_-\in \Cc_-(\La,r,\b)$ and~$\t'_-\in \Cc_-(\La',r,\b')$ be self-dual semisimple characters which intertwine in~$\G$.
\begin{enumerate}\setlength\itemsep{5pt}
\item\label{thmSOintertwining.i} Suppose that there is~$i_0\in\I$ such that~$\b_{i_0}=0$. Then,~$\t_-$ and~$\t'_-$ intertwine by an element of~$\G^\so$ and by an element of~$\G\setminus\G^\so$.
\item\label{thmSOintertwining.ii} If~$\b$ has no zero component, then~$\t_-$ and~$\t'_-$ intertwine under an element of~$\G^\so$ if and only if~$(\V,\b^*(h))$ and~$(\V,\b'^*(h))$ are isometric by an automorphism of~$\V$ of determinant congruent to~$1$ modulo~$\mathfrak{p}_\F$. In this case every element of~$\G$ intertwining~$\t_-$ and~$\t'_-$ is in~$\G^\so$.
\end{enumerate}
\end{theorem}
\shaun{
Note that the statement in~\ref{thmSOintertwining.ii} is equivalent to saying that~$\t_-$ and~$\t'_-$ intertwine under an element of~$\G\setminus\G^\so$ if and only if~$(\V,\b^*(h))$ and~$(\V,\b'^*(h))$ are isometric by an automorphism of~$\V$ of determinant congruent to~$-1$ modulo~$\mathfrak{p}_\F$, in which case every element of~$\G$ intertwining~$\t_-$ and~$\t'_-$ is in~$\G\setminus\G^\so$.
}

\begin{proof}
\shaun{Write~$\z:\I\to\I'$ for the matching from~$(\t_-,\b)$ to~$(\t'_-,\b')$. By Theorem~\ref{thm:MatchingForCharForG}, for~$i\in\I_+$ we can choose an isomorphism~$g_i:\V^i\to\V'^{\z(i)}$, and by Theorem~\ref{thm:MatchingForCharForG}, for~$i\in\I_0$ we can find an isometry~$g_i$ from~$(\V^i,h_i)$ to~$(\V'^{\z(i)},h'_{\z(i)})$. For~$i\in\I_-$ we set~$g_i=\ov{g_{-i}}^{-1}$, so that~$g=\sum_{i\in\I}g_i$ is an element of~$\G$ which conjugates the splittings. Conjugating by this element~$g$ (which may have determinant~$-1$), we reduce to the case that the characters have the same splitting and the matching is the identity map.}
%and by  are isometric we can assume that both characters have the same associated splitting, the index sets are the same and the matching is the identity.}
\begin{enumerate}\setlength\itemsep{5pt}
\item \shaun{The characters~$\t_{i_0}$ and~$\t'_{i_0}$ are trivial, and therefore intertwine under any element of the group~$\U(\V^{i_0},h_{i_0})$, in particular by an element of determinant~$1$ and by an element of determinant~$-1$. The result follows immediately by applying Proposition~\ref{prop:intdecomp} to the lifts of~$\t_-,\t'_-$.}
%\orange{By Corollary~\ref{cor:Intertwining}, there is an element~$g$ of~$\I(\t,\t')$ such that~$g\V^i=\V^{i}$ for all~$i\in\I$, we write~$g=(g_i)_i$. Now one replaces~$g_{i_0}$ by any element of~$\U(\V^{i_0},h_{i_0})$. The new element still intertwines~$\t$ with~$\t'$, because both characters have an Iwahori decomposition with respect to their associated splitting. Therefore, there are an element of~$\G^\so$ and an element of~$\G\setminus\G^\so$ which intertwine the semisimple characters.}
%%%
% \red{Why does this mean one can adjust the original intertwiner so that it has determinant 1}
% \gre{See 8..} 
% \red{8..what does that mean?}
%%%
% Thus, by Corollary~\ref{cor:Intertwining}, there \orange{are} an element of~$\G^\so$ \orange{and an element of~$\G\setminus\G^\so$} which intertwine the semisimple characters. Here we have used that the characters~$\t$ and~$\t'$ have Iwahori decomposition with respect to the associated spittings.
\item %\shaun{For~$s\in\ZZ$, we write~$\I_s=\{i\in\I\mid\nu_\La(\b_i)=-s\}$, which gives a (finite) partition~$\I=\bigcup_{s\in\ZZ}\I_s$ into~$\s$-stable subsets. By Corollary~\ref{cor:Intertwining}, there are~$g_{\I_s}\in \U(\V^{\I_s},h_{\I_s})$ such that~$g=\sum_{s\in\ZZ} g_{\I_s}$ is an element of~$\G$ which intertwines~$\t$ with~$\t'$; then~$g_{\I_s}$ intertwines~$\t_{\I_s}$ with~$\t'_{\I_s}$. Applying Lemma~\ref{propSOintertwiningInvertibleBeta} to~$\t_{\I_s,-}$ and~$\t'_{\I_s,-}$, we see that~$\b_{\I_s}^*(h)$ is isometric to~$\b_{\I_s}'^*(h)$ by an element of determinant congruent to~$\det(g_{\I_s})$ modulo~$\p_\F$. Summing the blocks,~$\b^*(h)$ is isometric to~$\b'^*(h)$ by an element of determinant congruent to~$\det(g)$ modulo~$\p_\F$. The result now follows since~$\b^*(h)$ and~$\b'^*(h)$ are symplectic forms and any isometry of a symplectic space has determinant~$1$.}
\shaun{We write~$\I=\bigcup_{\J\in\mathcal{P}} \J$ in the coarsest way such that, for each~$\J$, all \rob{elements}~$\b_j$ with~$j\in\J$ have the same valuation with respect to~$\La$. By Corollary~\ref{cor:Intertwining}, there are~$g_{\J}\in \U(\V^{\J},h_{\J})$ such that~$g=\sum_{\J\in\mathcal{P}} g_{\J}$ is an element of~$\G$ which intertwines~$\t$ with~$\t'$; then~$g_{\J}$ intertwines~$\t_{\J}$ with~$\t'_{\J}$. Applying Lemma~\ref{propSOintertwiningInvertibleBeta} to~$\t_{\J,-}$ and~$\t'_{\J,-}$, we see that~$\b_{\J}^*(h)$ is isometric to~$\b_{\J}'^*(h)$ by an element of determinant congruent to~$\det(g_\J)$ modulo~$\p_\F$. Summing the blocks,~$\b^*(h)$ is isometric to~$\b'^*(h)$ by an element of determinant congruent to~$\det(g)$ modulo~$\p_\F$. The result now follows since~$\b^*(h)$ and~$\b'^*(h)$ are symplectic forms and any isometry of a symplectic space has determinant~$1$.}
%\orange{We write the index set~$\I$ in the coarsest way as the union of subsets~$S$ such that the elements~$\b_s$ all have the same valuation with respect to~$\La$, for all~$s\in S$.  We now apply Lemma~\ref{propSOintertwiningInvertibleBeta} to all blocks~$S$ and we get the result.}
\end{enumerate}
\end{proof}

We deduce an analogue of Corollary~\ref{cor:IntertwiningEquivalenceRelG} (transitivity of intertwining) for special orthogonal groups.

\begin{corollary}\label{cor:IntertwiningEquivalenceRelGo}
Suppose~$\t^{(l)}_-\in\Cc_-(\La^{(l)},r^{(l)},\b^{(l)})$, for~$l=1,2,3$, are self-dual semisimple characters of the same degree such that~$\t^{(1)}_-$ intertwines with~$\t^{(2)}_-$ in~$\G^\so$, and~$\t^{(2)}_-$ intertwines with~$\t^{(3)}_-$ in~$\G^\so$. Suppose that $\left\lfloor\frac{r^{(l)}}{e(\La^{(l)}_{{\E^{(l)}}})}\right\rfloor$ is independent of~$l$. Then~$\t^{(1)}_-$ and~$\t^{(3)}_-$ intertwine in~$\G^\so$. 
\end{corollary}

\begin{proof}
\shaun{%
Changing the lattice sequences in their affine class, we can assume that they all have the same~$\o_\F$-period; then Theorem~\ref{thm:Endopss}\ref{thm:Endopss-iia1} implies that~$e(\La^{(l)}_{{\E^{(l)}}})$ is also independent of~$l$. Set~$r_{\min}=\min\{r_i\mid i=1,2,3\}$ and~$r_{\max}=\max\{r_i\mid i=1,2,3\}$, so that we can restrict the characters to~$\Cc_-(\La^{(l)},r_{\max},\b^{(l)})$ and extend them uniquely to~$\Cc_-(\La^{(l)},r_{\min},\b^{(l)})$ without changing endo-class (i.e.\ we pass to their transfers). Moreover, by Corollary~\ref{cor:IntertwiningEquivalenceRelG}, any two of these intertwine by an element of~$\G$. Then, if any~$\b^{(l)}$ has a zero component, then~$\t_-^{(1)}$ and~$\t_-^{(3)}$ intertwine by an element of~$\G^\so$ by Theorem~\ref{thmSOintertwining}\ref{thmSOintertwining.i}.
}

\shaun{%
Suppose now that none of the~$\b^{(l)}$ has a zero component. Applying Theorem~\ref{thmSOintertwining}\ref{thmSOintertwining.ii} to the restrictions to~$\Cc_-(\La^{(l)},r_{\max},\b^{(l)})$, we have that
\[
\b^{(1)*}(h)\cong  \b^{(2)*}(h) \cong \b^{(3)*}(h)
\]
by isometries of determinant congruent to~$1$ modulo~$\p_F$. Then, by Theorem~\ref{thmSOintertwining}\ref{thmSOintertwining.ii} again, the extensions to~$\Cc_-(\La^{(l)},r_{\min},\b^{(l)})$ all intertwine by an element of~$\G^\so$. In particular, their restrictions~$\t_-^{(1)}$ and~$\t_-^{(3)}$ intertwine by an element of~$\G^\so$.
}
\ignore{
At first we prove the case~$r^{(1)}=r^{(2)}=r^{(3)}$. If~$\b^{(1)}$ or~$\b^{(3)}$ has a zero-component then~$\t_-^{(1)}$ and~$\t_-^{(3)}$ intertwine by an element of~$\G^\so$ by Corollary~\ref{cor:IntertwiningEquivalenceRelG} and Theorem~\ref{thmSOintertwining}. If all three elements~$\b^{(l)}$ have no zero-component, then by~\emph{loc.cit.} we have that 
\[
h^{\b^{(1)}}\simeq  h^{\b^{(2)}} \simeq h^{\b^{(3)}}.
\]
by isometries of determinant congruent to~$1$ modulo~$\p_F$. Thus by~\emph{loc.cit.} and Corollary~\ref{cor:IntertwiningEquivalenceRelG} we obtain the result.
} 
\ignore{%
The general case: \orange{We restrict the characters to} $\Cc_-(\La^{(l)},\max(r^{(i)}|i=1,2,3),\b^{(l)})$ and then~$ h^{\b^{(1)}}\simeq h^{\b^{(3)}}$ by an \orange{element of determinant congruent to~$1$ mod~$\mathfrak{p}_\F$} if~$\b^{(1)}$ has no zero-component, by~\emph{loc.cit.}. Further the canonical embeddings of~$\b^{(1)}$ and~$\b^{(3)}$ are~$\z_{\Th^{(1)}_-,\Th^{(3)}_-}$-concordant as shown in Corollary~\ref{cor:IntertwiningEquivalenceRelG}. Now the extensions of~$\t^{(1)}$ and~$\t^{(3)}$ to~$\Cc_-(\La^{(l)},\min(r^{(i)}|i=1,3),\b^{(l)})$ intertwine by an element of~$\G$ by Theorem~\ref{thmEndoSemisimplev4}\ref{part3endosemi} and thus intertwine in~$\G^\so$ by Theorem~\ref{thmSOintertwining}.
}
\end{proof}

%%%%%%%%%%%%%%%%%%%%%%%%%%%%%%%%%%%%
\subsection{Conjugacy of self-dual semisimple characters}
%%%%%%%%%%%%%%%%%%%%%%%%%%%%%%%%%%%%

There exist self-dual semisimple characters for~$\G^\so$ which intertwine in~$\G^\so$, are conjugate in~$\G$, but are not conjugate in~$\G^\so$. For example, let~$[\La,n,r,\b]$ be a self-dual semisimple stratum in~$\A$ with associated splitting~$\V=\bigoplus_{i\in\I}\V^i$ and let~$\t\in\Cc^{\Sigma}(\La,r,\b)$ be a semisimple character.  Suppose there exists~$i_0\in\I$ with~$\b_{i_0}=0$, such that~$\bob{\P_-(\La^{i_0})}$ has no element of determinant~$-1$ in its normalizer. Take an element~$g_{i_0}$ of determinant~$-1$ in~$\U(\V^{i_0},h_{i_0})$ and~$g_i=\id$ for~$i\ne i_0$, and put~$g=\sum_{i\in\I} g_i$. Then~$\t$ and~$\t'= \presuper{g}\t$ intertwine by an element of~$\G^\so$ by Theorem~\ref{thmSOintertwining}, but they are not conjugate by an element of~$\G^\so$ by an exercise using Theorem~\ref{thmSOintertwining} and Proposition~\ref{prop:ConjIdempToEachOther}\ref{prop:ConjIdempToEachOther-ii}.  %We fix this problem in the following way: 

Nonetheless, we do have the following intertwining implies conjugacy theorem.

\begin{theorem}\label{thmSOIntertwiningConjugacy}
Suppose that~$e(\La)=e(\La')$, let~$[\La,n,r,\b]$ and\rob{~$[\La',n,r,\b']$} be self-dual semisimple strata in~$\A$ with splittings~$\V=\bigoplus_{i\in\I}\V^i$ and~$\V=\bigoplus_{i\in\I'}\V'^i$, and let~$\t\in\Cc_-(\La,r,\b)$ and~$\t'\in\Cc_-(\La',r,\b')$ be self-dual semisimple characters which intertwine in~$\G^\so$ with matching~$\z:\I\to\I'$. Suppose there exists~$g\in\G$ such that~$g\La^i=\La'^{\z(i)}$, for all~$i\in\I$, and 
%Suppose~$\t$ and~$\t'$ intertwine under an element~$g$ of~$\G^\so$ 
%Let~$[\La,n,r,\b]$ and~$[\La',n,r,\b']$ be self-dual semisimple strata in~$\A$ with respective splittings~$\V=\bigoplus_{i\in\I}\V^i$ and~$\V=\bigoplus_{i\in\I'}\V'^i$, and let~$\t\in\Cc_-(\La,r,\b)$ and~$\t'\in\Cc_-(\La',r,\b')$ be self-dual semisimple characters which intertwine in~$\G^\so$ with matching~$\z:\I\to\I'$. Suppose there exists~$g\in\G$ such that~$g\La^i=\La'^{\z(i)}$, for all~$i\in\I$, and 
%Suppose that~$g\in\I_{\G^\so}(\t,\t')$ and there exists~$g'\in\G$ such that~$g'\La^i=\La'^{\z(i)}$. We write \[g=u\diag(g_i|i\in \I_+\cup \I_0)v,\ v\in\SS_r(\b,\La)\cap\G,\ u\in\SS_r(\b',\La')\cap\G\] using Corollary~\ref{cor:Intertwining}. 
one of the following two assertions:
\begin{enumerate}\setlength\itemsep{5pt}
%\item The element~$\b$ has a zero component~$\b_{i_0}$,~$\La^{i_0}$ and~$\t_{i_0}$ have the same normalizer in~$\U(h_{i_0})$,~$g'\in \G^{\so}$ and~$ g_{i_0}^{-1}g'_{i_0}$ is an element of~$\SU(h_{i_0})$. 
%\item The element~$\b$ has a zero component, say~$\b_{i_0}$, and the normalizer of~$\La^{i_0}$ contains an element of~$\U(\V^{i_0},h_{i_0})$ with determinant~$-1$. 
%\item The element~$\b$ has no zero component.
\item\label{thmSOIntertwiningConjugacy.i} there is an~$i_0$ such that~$\b_{i_0}=0$, and~$\P_-(\La^{i_0})$ contains an element of determinant~$-1$; or
\item\label{thmSOIntertwiningConjugacy.ii} $\b_i\ne 0$, for all~$i\in\I$.
\end{enumerate}
Then~$\t$ is conjugate to~$\t'$ by an element of~$\G^\so\cap \P_-(\La)$.
%Then~$\t$ is conjugate to~$\t'$ by an element~$g\in\G^\so$ such that~$g\La=\La'$.
\end{theorem}

%\begin{remark}
%Unfortunately the theorem is a little unsatisfying, because in the first part we have the condition that~$\La^{i_0}$ and~$\t_{i_0}$ have the same normalizer in~$\G$. In general, this is a proper condition; there are many examples where this fails. For example, \blue{consider~$\mathrm{O}(1,1)(\F)=\U(h)$ and a lattice sequence~$\La$ with~$e(\La)=3$, such that~$\La^\#(j)=\La(-j)$ for all~$j\in\mathbb{Z}$ and such that~$\mathfrak{a}_\La$ is an Iwahori order of~$\GL_2(\F)$.} Then,~$\mathfrak{a}_2$ is the radical of a maximal order and thus the trivial character of~$\P_-^2(\La)$ has a normalizer in~$\mathrm{O}(1,1)(F)$ bigger than the normalizer of~$\La$.  But, in spite of the example, the normalizer condition holds if~$r=0$, we call this later~\emph{full}, because~$\mathfrak{a}_1$ is the radical of~$\La$ and both have the same normalizer. 
%\end{remark}

\begin{remark}\label{remIICforCharsSOvertex}
If~$\La$ is a self-dual lattice sequence which corresponds to a vertex in the Bruhat--Tits building of~$\G$, then there is an element of~$\G\setminus \G^\so$ in the normalizer of~$\La$. In particular, in the situation of Theorem~\ref{thmSOIntertwiningConjugacy}, if~$\P^\so_-(\La^{i_0})$ is a maximal parahoric subgroup of~$\U(\V^{i_0},h_{i_0})$, then condition~\ref{thmSOIntertwiningConjugacy.i} is satisfied so~$\t$ is conjugate to~$\t'$ by an element~$g\in\G^\so$ such that~$g\La=\La'$.
\end{remark}

\begin{proof}[Proof of Theorem~\ref{thmSOIntertwiningConjugacy}]
By Theorem~\ref{thmIntImplConjSelfDual}\ref{thmIntImplConjSelfDual.ii}, there is a~$y\in\G$ which conjugates~$\t$ to~$\t'$.
\begin{enumerate}\setlength\itemsep{5pt}
%\item We can assume~$g'$ is the identity. Therefore $\z$ is the identity and~$\La^i$ and~$\La'^i$ equal.  In particular, on the block for the zero component of~$\b$ the characters equal, i.e.~$\t_{i_0}=\t'_{i_0}$. There is nothing to prove if~$\La^{i_0}$ has an element of determinant~$-1$ in its normalizer in $\U(h_{i_0})$. Thus let us assume the opposite, which by assumption implies that the~$\U(h_{i_0})$-normalizer of~$\t_{i_0}$ is also contained in~$\SU(h_{i_0})$. By  Theorem~\ref{thm:IntImplConjSelfDual},~$\t$ and~$\t'$ are thus conjugate by an element of~$\P^-(\La)$ \orange{preserving every~$\V^i$}. 
%Corollary~\ref{cor:Intertwining} and 
%Theorem~\ref{thmSOintertwining} now implies that this conjugating element must have determinant~$1$.
\item If~$g_{i_0}\in\P_-(\La^{i_0})$ has determinant~$-1$, put~$g_i=\id$, for~$i\ne i_0$, and~$g=\sum_{i\in\I} g_i$. Then~$g$ is an element of determinant~$-1$ which normalizes~$\t$. Then~$y$ and~$yg$ both conjugate~$\t$ to~$\t'$ and one of them lies in~$\G^\so$.
%
%This follows directly from Theorem~\ref{thm:IntImplConjSelfDual}. 
\item Since~$y$ intertwines~$\t$ with~$\t'$, it lies in~$\G^\so$ by Theorem~\ref{thmSOintertwining}\ref{thmSOintertwining.ii}.
%The determinant of every conjugating element (\blue{which exists by Theorem~\ref{thm:IntImplConjSelfDual}}) \blue{has to be in~$\G^\so$ by Theorem~\ref{thmSOintertwining}.}
\end{enumerate}
\end{proof}

\ignore{
\begin{proposition}\label{propDet-1Vertex}
%Let~$\G^\so$ be an~$\F_\so$-form of a special orthogonal group on an~$\F_\so$-vector space~$\V$. 
Suppose that~$\La$ is a self-dual lattice sequence which corresponds to a vertex in the Bruhat Tits building of~$\G$. Then there is an element of~$\G\setminus \G^\so$ in the normalizer of~$\La$. 
\end{proposition}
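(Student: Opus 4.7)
The plan is to produce an isometry of $(V,h)$ of determinant $-1$ stabilising $L := \Lambda_l$. By Remark~\ref{remVertexCond} the image of $\Lambda$ consists of $F^\times$-multiples of $L$ and $L^\sharp$; any isometry of $(V,h)$ stabilising $L$ automatically stabilises $L^\sharp$, hence each lattice in the image of $\Lambda$, hence lies in $\mathfrak{n}(\Lambda)$. The task therefore reduces to finding a single reflection in $G^+ = \mathrm{O}(V,h)$ that preserves $L$.

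First I would construct a non-zero non-degenerate quadratic $k_F$-space out of the residual data. When $L \subsetneq L^\sharp$, the inclusions $\varpi_F L^\sharp \subseteq L \subseteq L^\sharp$ give $h(L^\sharp,L^\sharp) \subseteq \mathfrak{o}_F$ and $h(L,L^\sharp) \subseteq \mathfrak{p}_F$, so $h$ descends to a symmetric $k_F$-bilinear form on $\overline{V} := L^\sharp/L$, non-degenerate because $(L^\sharp)^\sharp = L$. When $L = L^\sharp$ we have $h(L,L) \subseteq \mathfrak{p}_F$, so the rescaled form $h' := \varpi_F^{-1} h$ makes $(L, h')$ unimodular, and its reduction on $\overline{V} := L/\varpi_F L$ is the desired space. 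In either case, since $k_F$ is finite of odd characteristic and $\overline{V} \neq 0$, there is an anisotropic $\bar v \in \overline{V}$, which lifts to $v \in L^\sharp$ with $h(v,v) \in \mathfrak{o}_F^\times$ in the first case, respectively $v \in L$ with $h(v,v) \in \varpi_F \mathfrak{o}_F^\times$ in the second. The reflection $r_v(w) = w - 2 \frac{h(w,v)}{h(v,v)} v$ is then an isometry of $(V,h)$ with $\det r_v = -1$, and a short check shows it preserves $L$: in the first case $h(w,v) \in \mathfrak{p}_F$ for $w \in L$, so $\frac{h(w,v)}{h(v,v)} v \in \mathfrak{p}_F \cdot L^\sharp = \varpi_F L^\sharp \subseteq L$; in the second the numerator and denominator have the same $\varpi_F$-valuation, so $h(w,v)/h(v,v) \in \mathfrak{o}_F$ and the correction lies in $\mathfrak{o}_F \cdot L = L$. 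Thus $r_v \in G^+ \setminus G$ normalises $\Lambda$.

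The only real obstacle is to stay consistent with the shifted duality convention $L^\sharp = \{v : h(v,L) \subseteq \mathfrak{p}_F\}$ used in the paper (rather than the more standard $\mathfrak{o}_F$-valued dual), which controls exactly which $\varpi_F$-rescaling is required to exhibit the residual quadratic space in each of the two cases. Once this is pinned down, the existence of the anisotropic vector is the standard fact about non-degenerate quadratic forms over finite fields of odd characteristic, and the stability of the reflection under these lattices is a formal consequence of the duality.
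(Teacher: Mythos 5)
Your proof is correct, and it takes a genuinely different route from the paper's. The paper splits into cases according to whether the anisotropic dimension of $(V,h)$ is positive or zero. In the first case it picks a Witt basis whose apartment contains the point of $\Lambda$ and takes the diagonal isometry with a single $-1$ on an anisotropic basis vector; in the second (split) case it explicitly swaps a hyperbolic pair $v_1\leftrightarrow v_{-1}$ with a $\varpi_F$-twist, using the vertex condition to verify the lattice $\Lambda_l$ is preserved. You instead split according to whether $L=\Lambda_l$ is self-dual or properly contained in $L^\sharp$, produce a non-degenerate residual quadratic $k_F$-space ($L^\sharp/L$ or $L/\varpi_F L$ after rescaling), choose an anisotropic vector over $k_F$, and lift it to a reflection on $(V,h)$. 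This is a cleaner, more uniform argument: the two cases collapse to "which $\varpi_F$-normalisation exhibits the residual form," and the existence of the determinant-$-1$ element falls out of the standard fact about odd-characteristic finite fields rather than an apartment choice plus a bespoke swap. Both arguments use the vertex hypothesis (via Remark~\ref{remVertexCond}) in an essential way to reduce to controlling the single lattice $L$; the paper's Case~1 happens not to need it, but yours does and that is fine given the statement. One small point worth making explicit in a write-up: the residual space is non-degenerate because $(L^\sharp)^\sharp = L$ (so the radical of the reduced form on $L^\sharp$ is exactly $L$, respectively the radical on $L$ is $\varpi_F L$), and it is nonzero by the vertex inclusions, so the anisotropic vector does exist.
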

\begin{proof}
First assume that~$(\V,h)$ has positive anisotropic dimension. Then every lattice sequence~$\La'$ is normalized by an orthogonal element of determinant~$-1$, because we just have to take a Witt basis whose apartment contains the point corresponding to~$\La'$, i.e.\ a Witt basis which splits the lattice  sequence. Then the diagonal matrix~$\diag(1,\ldots,1,-1)$ is an element of~$\P_-(\La')$.
\blue{
Now assume that the anisotropic dimension is zero. We take a Witt basis which splits~$\La$, i.e.\ a basis~$(v_j)_{j\in \J_+\cup \J_-}$ with~$-\J_+=\J_-$ and $h(v_j,v_{j'})=\delta_{j,-j'}$, the Kronecker symbol, for all~$j\in \J_+$. We denote~$\M:=\La_l$, with~$\La_l\subseteq\La_l^\#\subseteq\w^{-1}_\F\La_l$. 
\orange{Then, 
\[
\M=\bigoplus_{j\in \J_+\cup \J_-} \mathfrak{p}_\F^{\nu_j}v_j,
\]
and we can rescale the basis the way that~$\nu_j=0$ for all~$j\in \J_+$. Then the condition on~$\M$ rephrases as~$\nu_j$ is~$1$ or~$2$, for~$j\in \J_-$.
%, such that at least one exponent has to be~$1$, say~$\nu_{-1}=1$.
We define an element of~$\G$ by 
\[
g(v_j):=v_j,\ j\neq 1,-1,\ g(v_1):=\varpi_\F^{\nu_{-1}} v_{-1},\ g(v_{-1}):=\varpi_\F^{-\nu_{-1}}v_{1},
\]
and~$g$ normalizes~$\La$ and has determinant~$-1$.}
}
\end{proof}
}

%%%%%%%%%%%%%%%%%%%%%%%%%%%%%%%%%%%%

\section{Intertwining implies conjugacy for cuspidal types}\label{sectionIIC}

We recall the construction of cuspidal types for~$\G^\so$ of \cite{St08}, or more precisely its extension to representations over~$\CC$ in \cite{RKSS}, and then prove that two cuspidal types for~$\G^\so$ intertwine in~$\G^\so$ if and only if they are conjugate in~$\G^\so$. This completes the classification by types of the irreducible cuspidal representations of~$\G^\so$. 
\shaun{In the whole section we assume that~$\G^\so$ has compact centre, i.e. that~$\G$ is not~$\F$-isomorphic to~$\O(1,1)(\F)$.}

\shaun{Let~$[\La,n,0,\b]$ be a \emph{skew} semisimple stratum with index set~$\I=\I_0$, and~$\E=\F[\b]=\bigoplus_{i\in\I}\E_i$. We write~$\bb_n=\bb_n(\La)$ for the intersection of the~$\o_\F$-lattice~$\aa_n=\aa_n(\La)$ with the centralizer~$\B=\B_\b$ of~$\b$, so that~$\bb_n=\bigoplus_{i\in\I}\bb^i_n$. The quotient~$\P_-(\La_\E)/\P^1_-(\La_\E)$ is the set of rational points of the reductive group (defined over~$\mathrm{k}_\so$)
\begin{equation}\label{eqReductiveGroup}
\prod_{i\in\I}\Res_{\mathrm{k}_{\E_{i,\so}}|\mathrm{k}_\so} (\mathbb{U}(\ov{\phantom{a}}^i)),
\end{equation}
where~$\mathbb{U}(\ov{\phantom{a}}^i)$ is the reductive group defined by the anti-involution which is the restriction of~$\ov{\phantom{a}}$ to~$\bb_0^i/\bb_1^i$ and~$\Res_{\mathrm{k}_{\E_{i,\so}}|\mathrm{k}_\so}$ is the Weil restriction. Recall that the \emph{parahoric group}~$\P^{\so}_-(\La_\E)$ is the pre-image of the set of~$\mathrm{k}_\so$-rational points of the neutral component of~\eqref{eqReductiveGroup}.}

%\rob{We note that~$(\G^\so)_\beta=(\G_\beta)^\so$, because the only~$\sigma$-orbit~$\J$ in~$\I$ with~$(\G_\J)_{\beta_\J}$ non-connected  singletons with~$\beta_\J=0$ ....  (this shows that~$(\G_\beta)^\circ=\prod_\J \G_J\times \prod(\G_\J^\circ)_{\beta_\J=0}$.... but how does one see that this equals $(\G^\so)_\beta$?????); we write~$\G^\so_\beta=(\G^\so)_\beta$.}
\shauny{We note also that~$(\G^\so)_\beta=(\G_\beta)^\so$, since there is at most a single~$i\in\I$ such that~$\b_i=0$; we may therefore unambiguously denote this group by~$\G^\so_\beta$.}
%\red{Define parahoric subgroups of~$\B_\b^\times$.}
%\subsection{Cuspidal types and cuspidal representations}
\begin{definition}
A skew semisimple stratum~$[\La,n,0,\b]$ is called \emph{cuspidal} if~\rob{$\G^{\so}_\b$} has compact centre and~$\P^{\so}_-(\La_\E)$ is a maximal parahoric subgroup in~\rob{$\G_\b^\so$}. 
\end{definition}

\shaun{
The property of being cuspidal depends only on the equivalence class of the stratum and we have the following stronger result. 
\begin{proposition}\label{propCuspidalStratum}
Suppose~$[\La,n,0,\b]$ and~$[\La,n,0,\b']$ are skew semisimple strata such that~$\Cc(\La,0,\b)=\Cc(\La,0,\b')$. Then~$[\La,n,0,\b]$ is cuspidal if and only if~$[\La,n,0,\b']$ is cuspidal.
% who define the same set of semisimple characters. Then the first stratum is cuspidal if and only if the second is. 
\end{proposition}
}

\shaun{%Before beginning the proof, we recall some results on the definition of~$\P^{\so}_-(\La_\E)$, so let~$[\La,n,0,\b]$ be a skew semisimple stratum with index set~$\I=\I_0$, and~$\E=\F[\b]=\bigoplus_{i\in\I}\E_i$. We write~$\bb_n=\bb_n(\La)$ for the intersection of the~$\o_\F$-lattice~$\aa_n=\aa_n(\La)$ with the centralizer~$\B=\B_\b$ of~$\b$, so that~$\bb_n=\bigoplus_{i\in\I}\bb^i_n$. The quotient~$\P_-(\La_\E)/\P^1_-(\La_\E)$ is the set of rational points of the reductive group (defined over~$\mathrm{k}_\so$)
%\begin{equation}\label{eqReductiveGroup}
%\prod_{i\in\I}\Res_{\mathrm{k}_{\E_{i,\so}}|\mathrm{k}_\so} (\mathbb{U}(\ov{\phantom{a}}^i)),
%\end{equation}
%where~$\mathbb{U}(\ov{\phantom{a}}^i)$ is the reductive group defined by the anti-involution which is the restriction of~$\ov{\phantom{a}}$ to~$\bb_0^i/\bb_1^i$ and~$\Res_{\mathrm{k}_{\E_{i,\so}}|\mathrm{k}_\so}$ is the Weil restriction. The group~$\P^{\so}_-(\La_\E)$ is the pre-image of the set of~$\mathrm{k}_\so$-rational points of the neutral component of~\eqref{eqReductiveGroup}. 
We need the following straightforward result on algebraic groups.
\begin{lemma}\label{lemFiniteGpofLieType}
Let~$\mathrm{k}$ be a finite field of odd characteristic~$p$, with involution~$\ov{\phantom{a}}$ on~$\mathrm{k}$ with fixed point set~$\mathrm{k}_\so$. Let~$\mathbb{H}$ be an algebraic group defined over~$\mathrm{k}_\so$ and~$\mathrm{k}_\so$-isomorphic to a symplectic, a unitary or an orthogonal group over~$\mathrm{k}_\so$. Suppose~$\mathbb{H}(\mathrm{k}_\so)$ is isomorphic to~$\O(1,1)(\mathrm{k})$ as an abstract group. Then~$\mathrm{k}=\mathrm{k}_\so$ and~$\mathbb{H}$ is~$\mathrm{k}$-isomorphic to~$\O(1,1)$. 
\end{lemma}
\begin{proof}
Put~$q=|\mathrm{k}|$ and~$q_\so=|\mathrm{k}_\so|$% \bob{and assume that the conclusion of the lemma does not hold}
. If~$\mathbb{H}$ is isotropic \bob{and not~$\mathrm{k}_\so$-isomorphic to~$\O(1,1)$} then the cardinality of~$\mathbb{H}(\mathrm{k}_\so)$ is divisible by~$p$, because the latter contains a unipotent group of cardinality~$p$, so we are left with the cases~$\U(1)$,~$\O(2)$\bob{,~$\O(1)$ and~$\O(1,1)$}. \shauny{But~$\U(1)(\mathrm{k}/\mathrm{k}_\so)$ is cyclic while~$\O(1,1)(\mathrm{k})$ is not; and the groups~$\O(2)(\bob{\mathrm{k}_\so})$ and~$\O(1)(\bob{\mathrm{k}_\so})$ have cardinalities~$2(\bob{q_\so}+1)$ and~$2$ which both differ from the cardinality~$2(q-1)$ of~$\O(1,1)(\mathrm{k})$. Thus~$\mathbb{H}$ is~$\mathrm{k}_\so$-isomorphic to~$\O(1,1)$ and, comparing cardinalities,~$\mathrm{k}=\mathrm{k}_\so$.}
\end{proof}
}

\shaun{
For the proof of Proposition~\ref{propCuspidalStratum} we need to recall some more of the data attached to a semisimple stratum~$[\La,n,0,\b]$, in particular the~$\o_\F$-order~$\JJ(\b,\La)$ (see~\cite[Section 9.1]{SkSt} and~\cite[3.1.8]{BK93}). It is the additive group generated by the intersection of~$\aa_0$ with the~$\tG$-intertwining of any semisimple character~$\t\in\Cc(\La,0,\b)$; it contains~$\bb_1$ and, writing~$\JJ^1(\b,\La)$ for its intersection with~$\aa_1$, we have a canonical isomorphism~$\JJ(\b,\La)/\JJ^1(\b,\La)\simeq\bb_0(\La)/\bb_1(\La)$. We have a chain of compact open subgroups of~$\G$
 \[
 \J^+_-(\b,\La)\supseteq\J_-(\b,\La)\supseteq\J^\so_-(\b,\La)\supseteq\J^1_-(\b,\La)\supseteq\H^1_-(\b,\La),
 \]
 with the first two defined as the intersection of~$\JJ(\b,\La)$ with~$\G$ and~$\G^\so$ respectively,~$\J^1_-(\b,\La)$ its intersection with~$\P^1_-(\La)$, and~$\J^\so_-(\b,\La)=\P^{\so}_-(\La_\E)\J^1_-(\b,\La)$; this is the inverse image in~$\J^+_-(\b,\La)$ of the connected component of
 \[
 \J^+_-(\b,\La)/\J^1_-(\b,\La) \simeq \P_-(\La_\E)/\P^1_-(\La_\E).
 \]
 }
%
%For the proof we need to recall that to the pair~$(\b,\La)$ in a skew-semisimple stratum~$[\La,n,0,\b]$ is attached an~$o_\F$-order~$\JJ(\b,\La)$, see~\cite[Section 9.1]{SkSt} and~\cite[3.1.8]{BK93}. The needed properties of~$\JJ(\b,\La)$ are: 
%\begin{remark}\label{remJ}
\ignore{
\begin{enumerate}\setlength\itemsep{5pt}
\item\label{remJi}~$\JJ(\b,\La)$ is the additive group generated by the intersection of~$\aa(\La)$ with~$\I(\t)$ for any element~$\t\in\Cc(\La,0,\b)$. 
 \item\label{remJii}~$\JJ(\b,\La)/\JJ^1(\b,\La)=\bb_0(\La)/\bb_1(\La)$, where~$\JJ^1(\b,\La)$ is defined as~$\JJ(\b,\La)\cap\aa_1(\La)$. 
 The anti-involution~$(\bar{\ })$ preserves~$\bb_0(\La)$ and~$\bb_1(\La)$ and we denote the quotient of~$(\bar{\ })$ on~$\bb_0/\bb_1$ by~$(\bar{\ })_{\bb_1}$.
 \item\label{remJiii}~$\JJ(\b,\La)$ contains a list of compact open subgroups of~$\G$:
 \[\J^+_-(\b,\La)\supseteq\J_-(\b,\La)\supseteq\J^1_-(\b,\La)\supseteq\H^1_-(\b,\La),\]
 the first three defined as the intersection of~$\JJ(\b,\La)$ with~$\G$,~$\G^\so$ and~$\P^1_-(\La)$, respectively.
 \item\label{remJiv} The group~$\P_-(\La_\E)$ is contained in~$\J^+_-(\b,\La)$ and the inclusion mod~$\J^1_-(\b,\La)$ is an isomorphisms
 \[
 \P_-(\La_\E)/\P^1_-(\La_\E)\simeq\J^+_-(\b,\La)/\J^1_-(\b,\La).
 \]
\item\label{remJv} The quotient~$\P_-(\La_\E)/\P^1_-(\La_\E)$ is the set of rational point of the following reductive group defined over~$\mathrm{k}_\so$:
\begin{equation}\label{eqReductiveGroup}
\prod_{i\in\I}\Res_{\mathrm{k}_{\E_{i,\so}}|\mathrm{k}_\so}%(\mathbb{U}((\bar{\ })_{\bb^i_1})),
(\mathbb{U}(\ov{\phantom{a}}^i)),
\end{equation}
where~$\mathbb{U}(\ov{\phantom{a}}^i)$ is the reductive group defined by the anti-involution which is the %~$(\bar{\ })_{\bb^i_1}$. (the restriction of~$(\bar{\ })_{\bb_1}$ to~$\bb^i/\bb_1^i$.) and~$\Res_{(\mathrm{k}_{\E_i})_\so|\mathrm{k}_\so}$ is the Weil restriction.
restriction of~$\ov{\phantom{a}}$ to~$\bb_0^i/\bb_1^i$ and~$\Res_{\mathrm{k}_{\E_{i,\so}}|\mathrm{k}_\so}$ is the Weil restriction. The group~$\P^{\so}_-(\La_\E)$ is the pre-image of the set of~$\mathrm{k}_\so$-rational points of the neutral component of~\eqref{eqReductiveGroup}.
 \end{enumerate}
%\end{remark}
}
\ignore{
Proposition~\ref{propCuspidalStratum} is the consequence of the following Lemmas: 
}

\ignore{
\begin{lemma}\label{lemFiniteGpofLieType}
Let~$k$ be a finite field of odd characteristic~$p$, with involution~$(\bar{\ })$ on~$k$ with fixed point set~$\mathrm{k}_\so$.   
Let~$\mathbb{H}$ be an algebraic group defined over~$\mathrm{k}_\so$  and as an algebraic group~$\mathrm{k}_\so$-isomorphic to a symplectic, a unitary or 
an orthogonal group. Suppose~$\mathbb{H}(\mathrm{k}_\so)$ is isomorphic to~$\O(1,1)(k)$ as an abstract group. Then~$k=\mathrm{k}_\so$ and~$\mathbb{H}$ is~$k$-isomorphic to~$\O(1,1)$. 
\end{lemma}
}
\ignore{
\begin{proof}
 If~$\mathbb{H}$ is isotropic then the cardinality of~$\mathbb{H}(\mathrm{k}_\so)$ is divisible by~$p$, because the latter contains a 
 unipotent group of cardinality~$p$, and 
 so we are left with the cases~$\U(1)$,~$\O(2)$ and~$\O(1)$. 
 But the groups~$\U(1)(k/\mathrm{k}_\so)$,~$\O(2)(k)$ and~$\O(1)(k)$ have cardinalities 
 $|\mathrm{k}_\so|+1$,~$2(|k|+1)$ and~$2$ which all differ from~$(|k|-1)2$ which is the cardinality of~$\O(1,1)(k)$.
\end{proof}
}

\ignore{
Note, given a self-dual field extension~$\F[\b]$ with~$\I=\I_0$, we consider the building~$\BBred(\G_\b)$ with the weak simplicial structure, i.e.  the facets are the intersection of the facets of~$\BBred(\tG_\b)$ with~$\BBred(\G_\b)$. 
}
\shaun{
Proposition~\ref{propCuspidalStratum} is an immediate consequence of the following lemma, in which we consider the reduced Bruhat--Tits building~$\BBred(\G_\b)$ (the product of the buildings~$\BBred(\G^i_{\b_i})$) with its weak simplicial structure, i.e. the facets of~$\BBred(\G^i_{\b_i})$ are the intersection of the facets of~$\BBred(\tG^i_{\b_i})$ with~$\BBred(\G^i_{\b_i})$. 
}

\shaun{
\begin{lemma}\label{lemCuspStratum}
Let~$[\La,n,0,\b]$ and~$[\La,n,0,\b']$ be skew semisimple strata such that~$\Cc(\La,0,\b)=\Cc(\La,0,\b')$. %Then we have
\begin{enumerate}\setlength\itemsep{5pt}
\item\label{lemCuspStratumi} $\JJ(\b,\La)=\JJ(\b',\La)$ and we have a canonical isomorphism~$\P_-(\La_\E)/\P^1_-(\La_\E)\simeq\P_-(\La_{\E'})/\P^1_-(\La_{\E'})$. 
\item\label{lemCuspStratumii} $\G_\b^\so$ has compact centre if and only if~$\G_{\b'}^\so$ has compact centre.  
\item\label{lemCuspStratumiii} Suppose~$\G_\b^\so$ has compact centre; then~$\La_\E$ corresponds to a vertex of~$\BBred(\G_\b)$ if and only if~$\La_{\E'}$ corresponds to a vertex of~$\BBred(\G_{\b'})$.
\item\label{lemCuspStratumiv} Suppose~$\G_\b^\so$ has compact centre and~$\La_\E$ corresponds to a vertex of~$\BBred(\G_\b)$; then~$\P^\so_-(\La_\E)$ is a maximal parahoric subgroup  of~$\G_\b$ if and only if~$\P^\so_-(\La_{\E'})$ is a maximal parahoric subgroup of~$\G_{\b'}$.
  \item\label{lemCuspStratumv} Let~$\z:\I\rightarrow\I'$ be the matching from~$(\t,\b)$ to~$(\t,\b')$ for some semisimple character~$\t\in\Cc^\Sigma(\La,0,\b)$. Then~$\mathrm{k}_{\E_i}$ and~$\mathrm{k}_{\E'_{\z(i)}}$ coincide in~$\aa_0/\aa_1$ and the canonical map 
%\[
%\prod_{i\in\I}\P_-(\La^i_{\E_i})/\P^1_-(\La^i_{\E_i})\simeq\prod_{j\in\I'}\P_-(\La^{j}_{\E'_{j}})/\P^1_-(\La^{j}_{\E'_{j}})
%\]
from~\ref{lemCuspStratumi} is a product of algebraic isomorphisms
\[
%f_i:
\P_-(\La^i_{\E_i})/\P^1_-(\La^i_{\E_i})\rightarrow\P_-(\La^{\z(i)}_{\E'_{\z(i)}})/\P^1_-(\La^{\z(i)}_{\E'_{\z(i)}})
\] 
defined over~$\mathrm{k}_{\E_{i,\so}}$.
\end{enumerate}
\end{lemma}
}

\begin{proof}
\shaun{
Assertion~\ref{lemCuspStratumi} follows immediately from the description of~$\JJ(\b,\La)$ as the additive group generated by the intersection of~$\aa_0$ with the~$\tG$-intertwining of any semisimple character in~$\Cc(\La,0,\b)$.
% directly from Remark~\ref{remJ}\ref{remJi}. 
The centre of~$\G_\b^\so$ is non-compact if and only if there is \rob{an} index~$i\in\I$ such that~$\b_i=0$,~$\dim_\F\V^i=2$ and the restriction~$h|_{\V^i}$ is isotropic and orthogonal. By Theorem~\ref{thm:MatchingForCharForG}, this is equivalent to~$\G_{\b'}^\so$ having non-compact centre, proving~\ref{lemCuspStratumii}. 
}

\shaun{
We now assume that the centre of~$\G_\b^\so$ is compact. By Proposition~\ref{prop:ConjIdempToEachOther} we can conjugate by some~$g\in\G$ which normalizes every character in~$\Cc(\La,0,\b)$ to reduce to the case where the splittings of~$\b$ and~$\b'$ coincide; thus we are in fact reduced to the case that~$\b$ and~$\b'$ are simple. We have the canonical maps
\[
\bb_0(\La)/\bb_1(\La)\hookrightarrow\aa_0(\La)/\aa_1(\La)\hookleftarrow\bb'_0(\La)/\bb'_1(\La)
\]
induced by the inclusions, which have the same image so we get an isomorphism
\[
\Psi:\bb_0(\La)/\bb_1(\La)\to\bb'_0(\La)/\bb'_1(\La). 
\]
The anti-involution of~$h$ restricts to the anti-involution of the form~$h_\b$ defining~$\G_\b$. The lattice sequence~$\La_\E$ corresponds to a vertex if and only if~$\bb_0(\La)/\bb_1(\La)$ has at most two central idempotents and they are fixed (not permuted) by the adjoint anti-involution on~$\aa_0/\aa_1$. Then~\ref{lemCuspStratumiii} follows because~$\Psi$ is an equivariant ring isomorphism. Now to prove assertion~\ref{lemCuspStratumiv}, suppose~$\La_\E$ and~$\La_{\E'}$ correspond to vertices. Then~$\P^\so_-(\La_\E)$ is not a maximal parahoric subgroup of~$\G_\b$ if and only if, for one of the central idempotents~$\fe$ of~$\bb_0(\La)/\bb_1(\La)$ the corresponding factor of~$\P_-(\La_\E)/\P^1_-(\La_\E)$ is given by the algebraic group~$\O(1,1)$ defined over~$\mathrm{k}_\E$; in that case the algebraic group defining the factor of~$\P_-(\La_{\E'})/\P^1_-(\La_{\E'})$ corresponding to~$\Psi(\fe)$ must be~$\O(1,1)$ by Lemma~\ref{lemFiniteGpofLieType}, since~$\mathrm{k}_\E=\mathrm{k}_{\E'}$ by Proposition~\ref{prop:SimpleDegrees}. % In particular~$\P^\so_-(\La_{\E'})$ would not be a maximal parahoric of~$\G_{\b'}$.
}

\shaun{
For the final assertion~\ref{lemCuspStratumv}, first the canonical embeddings of~$\mathrm{k}_\E$ and~$\mathrm{k}_{\E'}$ into~$\aa_0/\aa_1$ have the same image by~\cite[5.2]{BK94}. Further~$\P_-(\La_\E)/\P^1_-(\La_\E)$ is the set of rational points of the reductive group defined over~$\mathrm{k}_{\E_\so}$ defined by the anti-involution~$\ov{\phantom{a}}$ on~$\bb_0(\La)/\bb_1(\La)$. The map from~\ref{lemCuspStratumi} is~$\mathrm{k}_\E$-linear and preserves~$\ov{\phantom{a}}$, so it is an algebraic isomorphism defined over~$\mathrm{k}_{\E_\so}$.
}
\end{proof}

% Let~$[\La,n,0,\b]$ be a skew semisimple stratum.  In \cite[\S 3.2]{St05}, associated to~$[\La,n,0,\b]$ are compact open subgroups~$\J_-(\b,\La)\supseteq \J^1_-(\b,\La)$ of~$\G^\so$, \orange{the intersections of~$\JJ(\b,\La)$ and~$\JJ^1(\b,\La)$ with~$\G^\so$}.  Moreover,~$\J^1_-(\b,\La)\supseteq \H^1_-(\b,\La)$.  The quotient~$\J_-(\b,\La)/\J^1_-(\b,\La)\simeq \P_-(\La_\E)/\P_-^1(\La_\E)$ is a finite reductive group over~$\mathrm{k}_\so$, and we put~$\J^{\so}_-(\b,\La)$ the preimage of the~$\mathrm{k}_\so$-points of its connected component in~$\J_-(\b,\La)$.  

\shaun{
Now let~$[\La,n,0,\b]$ be a cuspidal skew semisimple stratum and let~$\t_-\in\Cc_{-}(\La,0,\b)$ be a skew semisimple character.   By \daniel{\cite[Corollary 3.29]{St05}}, there exists a unique irreducible representation~$\eta$ of~$\J^1_-(\b,\La)$ containing~$\theta_-$. The representation~$\eta$ extends to~$\J_-(\b,\La)$ and we call such an extension~$\k$ a \emph{$\b$-extension} if it extends further to~$\J^+_-(\b,\La)$ and its restriction to a pro-$p$-Sylow subgroup of~$\J_-(\b,\La)$ is intertwined by all of~$\I_\G(\t_-)$ (see\daniel{~\cite[after Theorem 4.1]{St08} and}~\cite[\S6]{RKSS}, where it is the extensions to~$\J^+_-(\b,\La)$ which are called~$\b$-extensions).
}
%and certain extensions satisfy a compatibility property when restricted to a pro-$p$-Sylow subgroup of~$\J_-(\b,\La)$, called~\emph{$\b$-extensions} \cite[\S 6]{RKSS}.  \orange{Note: A~$\b$-extension in~\emph{loc.cit.} and~\cite[Definition 4.5]{St08} is a certain extension of~$\eta$ to~$\J^+_-(\b,\La)$. Here we consider precisely the restriction of those to~$\J_-(\b,\La)$ and call these~$\b$-extensions of~$\eta$.}

\begin{definition}
A \emph{cuspidal type} for~$\G^\so$ is a pair~$(\J,\l)$ such that there exist a cuspidal skew semisimple stratum~$[\La,n,0,\b]$ and~$\theta_-\in\Cc_-(\La,0,\b)$, such that~$\J=\J_-(\b,\La)$ and~$\l=\kappa\otimes\tau$, with $\kappa$ a~$\b$-extension of the unique irreducible representation of~$\J^1_-(\b,\La)$ containing~$\theta_-$, and~$\tau$ an irreducible representation of $\J_-(\b,\La)/\J^1_-(\b,\La)$ with cuspidal restriction to~$\J^\so_-(\b,\La)/\J^1_-(\b,\La)$.
%\orange{We also say that the cuspidal type as above is~\emph{parametrized} by the cuspidal stratum~$[\La,n,0,\b]$.}
\end{definition}

%\orange{Analogously to cuspidal strata a cuspidal type can still be re-parametrized  as follows:

\begin{proposition}\label{propReparametrizationCuspType}
\shaun{
Let~$(\J,\l)$ be a cuspidal type for~$\G^\so$ defined via a cuspidal stratum~$[\La,n,0,\b]$, with~$\l=\k\otimes\tau$. Suppose~$[\La,n,0,\b']$ is a skew semisimple stratum such that~$\Cc(\La,0,\b)=\Cc(\La,0,\b')$. Then~$[\La,n,0,\b']$ is cuspidal,~$\J_-(\b',\La)=\J$,~$\kappa$ is a~$\b'$-extension and the restriction of~$\tau$ to~$\P^\so_-(\La_{\E'})$ is cuspidal. 
}
%Let~$(\J,\l=\kappa\otimes\tau)$ be a cuspidal type for~$\G^\so$ parametrized by a cuspidal stratum~$[\La,n,0,\b]$. Suppose~$[\La,n,0,\b']$ is a skew-semisimple stratum such that~$\Cc(\La,0,\b)=\Cc(\La,0,\b')$. Then~$[\La,n,0,\b']$ is cuspidal,~$\J_-(\b',\La)=\J$,~$\kappa$ is a~$\b'$-extension and the restriction of~$\tau$ to~$\P^\so_-(\La_{\E'})$ is cuspidal. 
\end{proposition}

\begin{proof}
\shaun{The stratum~$[\La,n,0,\b']$ is cuspidal by Proposition~\ref{propCuspidalStratum}, while~$\J_-(\b',\La)=\J$ by Lemma~\ref{lemCuspStratum}\ref{lemCuspStratumi}. Let~$\t_-\in\Cc_-(\La,0,\b)=\Cc_-(\La,0,\b')$ be a skew semisimple character contained in~$\l$; then the characterization of~$\b$-extensions in terms of the intertwining of~$\t_-$ implies that~$\k$ is also a~$\b'$-extension.
%Now~$\kappa$ is also a~$\b'$-extension because~$\La_{\E'}$ corresponds to a vertex in~$\BB(\G_{\b'})$, by Lemma~\ref{lemCuspStratum}\ref{lemCuspStratumiii}, and for vertexes the requirement on the~$\b'$-extension is just that~$\kappa$ is extendable to~$\J_-^+(\b',\La)$ and that the restriction to a pro-$p$-Sylow subgroup of~$\J$ is intertwined by~$\I(\t_-)$. Both properties only depend on~$\kappa$ and~$\La$. 
%The representation~$\l$ is contained in a cuspidal irreducible represention on~$G^\so$ by Theorem~\ref{thmRKSScusp} and therefore the representation~$\tau$ is cuspidal with respect to~$\La_{\E'}$ by Proposition~\cite[4.4]{MiSt}.
Moreover, the restriction of~$\tau$ to~$\P^\so_-(\La_{\E'})$ is cuspidal by Lemma~\ref{lemCuspStratum}\ref{lemCuspStratumv}.
}
\end{proof}

%
%Let~$\tau$ be an irreducible representation of~$\J_-(\b,\La)/\J^1_-(\b,\La)$ with cuspidal restriction to~$\J^{\so}_-(\b,\La)/\J^1_-(\b,\La)$.  Put~$\J=\J_-(\b,\La)$ and~$\l=\kappa\otimes\tau$.  A pair~$(\J,\l)$, constructed in this way is called a \emph{cuspidal type} for~$\G^\so$.  

The main result of \cite{RKSS}, generalizing the main result of \cite{St08} in the case~$\CC=\mathbb{C}$ (see also \cite[Appendix A]{MiSt} for a correction to the definition of cuspidal type given in \cite{St08}), can be stated as follows.

\begin{theorem}[{\cite[Theorems 12.1,  12.2]{RKSS}}]\label{thmRKSSexhaust}
Let~$(\J,\l)$ be a cuspidal type for~$\G^\so$.  Then the representation~$\ind_\J^{\G^\so}(\l)$ is irreducible and cuspidal.
Moreover, every irreducible cuspidal representation of~$\G^\so$ appears this way.
\end{theorem}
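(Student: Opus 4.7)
The strategy is to follow the Bushnell--Kutzko paradigm for constructing cuspidal types, adapted to classical groups, and split the theorem into two independent assertions: that $\ind_J^G(\lambda)$ is irreducible and cuspidal for every such $(J,\lambda)$, and that every irreducible cuspidal $\pi$ of~$G$ arises this way. The main ingredients one expects to need are the intertwining formula for the self dual semisimple character $\theta$ inside $\lambda$, the uniqueness of the Heisenberg representation $\eta$ above $\theta$ and the extension properties of the chosen $\beta$-extension $\kappa$, and Harish-Chandra's theory relating cuspidality of a finite reductive representation to the vanishing of all proper Jacquet modules.

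For the first assertion, the plan is to compute $I_G(\lambda)$ and show it equals $J$, so that Mackey's irreducibility criterion applies. Writing $\lambda=\kappa\otimes\tau$ and using the tensor-product factorisation of Hom-spaces along $J^1$, this reduces to computing $I_G(\eta)$ and then controlling the intertwining of $\tau$ inside the finite reductive quotient $J/J^1$. The $\GL$-side formula $I_{\tG}(\eta)=J^1 B_\beta^\times J^1$ descends via Glauberman and the matching/Skolem--Noether results for self dual semisimple characters (Theorem \ref{thmMatchingForCharForG} and Corollary \ref{corSkolemNoetherSemisimple}) to $I_G(\eta)=J^1(B_\beta^\times\cap G)J^1$. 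The hypothesis that $P^\circ(\Lambda_E)$ is a maximal parahoric in $G_E$ collapses $B_\beta^\times\cap G$ modulo $J^1$ to the normaliser of the connected finite reductive group $J^\circ/J^1$, at which point the cuspidal restriction hypothesis on $\tau|_{J^\circ/J^1}$ forces $I_G(\lambda)=J$. Cuspidality of $\ind_J^G\lambda$ then follows from Frobenius reciprocity: any nonzero Jacquet module along a proper parabolic of $G$ would, after intersecting with $J$, produce a nonzero Jacquet module of $\tau$ in a proper parabolic of $J^\circ/J^1$, contradicting cuspidality.

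For the exhaustion statement, the plan is to begin with a nonzero irreducible cuspidal $\pi$ and produce a cuspidal type inside it by successive refinement. One first shows $\pi$ contains some fundamental stratum by depth theory, then progressively refines this to a self dual semisimple stratum~$[\Lambda,n,0,\beta]$ such that some $\theta\in\C_-(\Lambda,0,\beta)$ is contained in $\pi$, using Theorem~\ref{thmTranslationPrincipleForself dualcharactersOfG} and the strata-refinement machinery of \cite{SkSt} to preserve self duality at every stage. Cuspidality of $\pi$ then forces two things: the centre of $G_E$ must be compact (otherwise some proper Levi would support $\pi$), and $P^\circ(\Lambda_E)$ must be a maximal parahoric in $G_E$ (otherwise inflating a parabolic subgroup of the finite reductive quotient would yield a nontrivial Jacquet module of $\pi$). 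Given such a cuspidal skew semisimple stratum, the Heisenberg representation $\eta$ extending $\theta$ exists and is unique, any $\beta$-extension $\kappa$ can be chosen, and the remaining $\tau$ is extracted from $\Hom_{J^1}(\eta,\pi)$ as a $J/J^1$-representation whose restriction to $J^\circ/J^1$ must be cuspidal by the cuspidality of $\pi$.

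The main obstacle is the construction and control of $\beta$-extensions in the self dual setting together with the precise intertwining computation needed to run the Mackey argument: one must pin down exactly which elements of $B_\beta^\times\cap G$ normalise both $J^1$ and the class of $\theta$, and this is precisely where the matching bijection and the semisimple Skolem--Noether result are indispensable. The exhaustion step has its own delicate point, namely that the refinement from an arbitrary fundamental stratum to a cuspidal semisimple one must be carried out while preserving self duality at each step; the forced maximality of the parahoric $P^\circ(\Lambda_E)$ relies on a careful Hecke-algebra argument showing that any non-maximal parahoric would yield, through a cover structure, a nonzero Jacquet module for $\pi$ in $G$.
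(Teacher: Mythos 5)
This statement carries the citation tag \texttt{[Corollary 6.19 \& Proposition 7.13]\{St08\}} in the paper: it is an \emph{imported} result, quoted as a prerequisite, not one the present paper proves. So there is no ``paper's own proof'' here to compare against, and your job in reading the paper is to take it as given (together with the correction in \cite[Appendix A]{MiSt} that the surrounding text explicitly flags). You appear to have missed this.

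That said, a few remarks on the sketch itself, since it captures some of the right spirit but has real problems as a reconstruction. First, you invoke Theorem~\ref{thmMatchingForCharForG}, Corollary~\ref{corSkolemNoetherSemisimple}, and Theorem~\ref{thmTranslationPrincipleForself dualcharactersOfG} — all of which are proved \emph{in this paper} or in \cite{SkSt}, both of which postdate \cite{St08}. You cannot use later results to reconstruct the proof of an earlier theorem; at best you would be giving a new argument, but then you owe an independent check that no circularity arises (in fact the present paper proves its results \emph{assuming} the cited theorem's construction of types is in place, so circularity is a genuine risk). Second, the level of detail is far below what the actual argument requires: the computation of $I_G(\lambda)$, the existence and uniqueness of $\beta$-extensions, the Hecke-algebra cover structure used in the exhaustion step, and the structure-theoretic argument that non-maximality of $P^\circ(\Lambda_E)$ forces a nonzero Jacquet module, each occupy substantial sections of \cite{St08}. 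Finally, the paper explicitly records that \cite{St08} contained a gap corrected in \cite[Appendix A]{MiSt}; a sketch at this level of granularity cannot even see where that gap was, let alone address it. The useful takeaway: when a theorem is stated with a bracketed bibliographic citation in its header, treat it as a black box input rather than a statement to be reproved.
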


Thus, it remains to determine when cuspidal types~$(\J,\l)$ and~$(\J',\l')$ induce isomorphic cuspidal representations.  Notice that conjugate cuspidal types induce equivalent representations and if~$\ind_\J^{\G^\so}(\l)\simeq\ind_{\J'}^{\G^\so}(\l')$ then~$(\J,\l)$ and~$(\J',\l')$ intertwine in~$\G^\so$.  Hence the following result completes the classification in terms of conjugacy classes of cuspidal types.

%\subsection{Intertwining implies conjugacy}
%
\begin{theorem}\label{thmIIC}
Let~$(\J,\l)$ and~$(\J,\l')$ be cuspidal types for~$\G^\so$ which intertwine in~$\G^\so$. Then they are conjugate in~$\G^\so$.
%Cuspidal types intertwine in~$\G^\so$ if and only if they are conjugate in~$\G^\so$.
\end{theorem}

% \orange{For the proof of Theorem~\ref{IIC} we need to be able to re-parametrize a cuspidal type. For this purpose we 
% prove the following lemma. 
% }
% 
% \orange{
% \begin{lemma}\label{lemCuspTypeReparametrization}
%  Suppose~$[\La,n,0,\b]$ and~$[\La_,n,0,\b']$ are skew-semisimple strata. 
% \end{lemma}
% }

\begin{proof}
%Let~$\t_-\in\Cc_-(\La,0,\b)$ and~$\t'_-\in\Cc_-(\La',0,\b')$ be the underlying skew semisimple characters. 
As~$\l$ and~$\l'$ intertwine by an element of~$\G^\so$, the underlying skew semisimple characters~$\t_-\in\Cc_-(\La,0,\b)$ and~$\t'_-\in\Cc_-(\La',0,\b')$ intertwine by the same element, and we denote by~$\zeta:\I\rightarrow\I'$ the matching from~$(\t_-,\b)$ to~$(\t'_-,\b')$.
%for the intertwining Glauberman lifts of~$\t_-$ and~$\t'_-$ from Theorem~\ref{thm:MatchingForChar}. 
%and we denote~$\zeta:\I\rightarrow\I'$ the matching 
%from~$(\t_-,\b)$ to~$(\t'_-,\b')$\red{... we never say before matching between semisimple characters}. 
By Corollary~\ref{cor:diagonalSemisimple} we can replace the underlying strata so that there is an element~$g\in\G$ such that~$\b_i^{g^{-1}}=\b'_{\zeta(i)}$, for all~$i\in\I$, and by Proposition~\ref{propReparametrizationCuspType} these new strata are also cuspidal.
Now~$\t_-$ intertwines with~$\tau_{g\La,\La',\b'}(\t'_-)$ by an element of~$\G^\so$ by Corollaries~\ref{cor:IntertwiningEquivalenceRelG} and~\ref{cor:IntertwiningEquivalenceRelGo}. Thus, by~\cite[Theorem 10.3]{SkSt}, Theorem~\ref{thmSOIntertwiningConjugacy} and Remark~\ref{remIICforCharsSOvertex}, there is an element~$g_1\in\G^\so$ such that
\[
\tau_{g\La,\La',\b'}(\t'_-)=\t_-^{g_1^{-1}}.
\]
Then, using Proposition~\ref{propReparametrizationCuspType} again, we can assume that~$\t'_-$ is the transfer of~$\t_-$ to~$\La'$ with respect to~$\b'$, and the result now follows from~\cite[Theorem 12.3]{RKSS}.
\end{proof}

\begin{remark}
\shaun{
The proof of Proposition~\ref{propReparametrizationCuspType} could have been given without using Proposition~\ref{propCuspidalStratum}, using~\cite[Proposition~4.4]{MiSt} instead (and its analogue when~$\CC$ has positive characteristic~$\ell\ne p$); however, this uses the full strength of the exhaustion proof in~\cite[Section~7]{St08},~\cite[Appendix~A]{MiSt} and~\cite[Theorem~12.2]{RKSS}, while our approach here is more direct, and independent of the proof of exhaustion.   
}
\end{remark}

%%%%%%%%%%%%%%%%%%%%%%%%%%%%%%%%%%%%

%%%%%%%%%%%%%%%%%%%%%%%%%%%%%%%%%%%%
\section{Endo-parameters}\label{secParametrizationGL}
In this section we are only interested in semisimple characters which are defined on~$\H^1(\b,\La)$ -- we call such semisimple characters \emph{\full}. We parametrize the set of intertwining-classes of (self-dual) semisimple characters for~$\tG$ (and for~$\G$) by arithmetic parameters which we call \emph{endo-parameters}. The restriction to \full\ semisimple characters is natural because every smooth representation of~$\G^\so$ contains a \full\ self-dual semisimple character \cite[Proposition 8.5]{Finitude}, though more refined arithmetic information could be sought by looking more generally (cf.~\cite{BH17}). %%I mean MR3664814

%%%%%%%%%%%%%%%%%%%%%%%%%%%%%%%%%%%%
\subsection{Endo-parameters for~$\tG$}
Here we parametrize~$\tG$-intertwining classes for \full\ semisimple characters for~$\tG$. First we state the definition of \full\ precisely. 

\begin{definition}
\begin{enumerate}\setlength\itemsep{5pt}
\item A semisimple character~$\t$ is called~\emph{\full}~\bob{if there exists a semisimple stratum~$[\La,n,0,\b]$ such that~$\t\in\Cc(\La,0,\b)$.}
\item A pss-character is called~\emph{\full}\ if it is supported on a semisimple pair of the form~$(0,\b)$.
\item An endo-class of pss-characters is called~\emph{\full}\ if it consists of~\full\ pss-characters; note that this includes the \full\ zero endo-class~$\mathbf{0}$.
\end{enumerate}
\end{definition}

%\shaun{Note also that any \gre{\full} semisimple character, although it is the realization of different pss-characters, determines a \gre{\full} endo-class. We will therefore say that two \gre{\full} semisimple characters are \emph{endo-equivalent} if they determine the same \gre{\full} endo-class.}
\shauny{Note that any~\full\ semisimple character~$\t$ is a realization of different pss-characters: for example, if~$\t$ is a realization of a pss-character supported on a semisimple pair~$(0,\b)$, then it is also a realization (on the same lattice sequence) of a pss-character supported on~$(0,\b+\varpi_\F)$. Moreover,~$\t$ can be a realization of different pss-characters supported on the same semisimple pair: for example, if~$\t,\t'\in\Cc(\La,0,\b)$ are as in Remark~\ref{remark:CyclicPermutationSemisimpleChar}, then $\t$ is conjugate to~$\t'$ by an element of~$\P(\La)$, by Theorem~\ref{thmIntImplConjSelfDual}\ref{thmIntImplConjSelfDual.i}; then the pss-characters~$\Th$ and~$\Th'$, supported on~$(0,\b)$, with realizations~$\t$ and~$\t'$ at~$(\V,\id_{\F[\b]},\La,0)$, respectively, both attain~$\t$.}

%\daniel{
%A full semisimple character~$\t$ can be parametrized by different semisimple strata and in general even by non-equivalent ones. In this way~$\t$ can be realized by pss-characters supported on different semisimple pairs. Even more: If~$\Th$ and~$\Th'$ are pss-characters supported on~$(0,\b)$ and attain~$\t$, then it doesn't imply that~$\Th$ and~$\Th'$ coincide. Take for 
%example~$\t,\t'\in\Cc(\La,0,\b)$ in Remark~\ref{remark:CyclicPermutationSemisimpleChar} (for~$n=2$). $\t$ is conjugate to~$\t'$ by an element of~$\P(\La)$, see Theorem~\ref{thmIntImplConjSelfDual}\ref{thmIntImplConjSelfDual.i}. Now~$\Th$ and~$\Th'$ realizing~$\t$ and~$\t'$ at~$(\V,\id_{\F[\b]},\La,0)$, respectively, attain~$\t$.}

\daniel{Nonetheless, any~\full\ semisimple character determines a~\full\ endo-class.} 
\shaun{We will therefore say that two \gre{\full} semisimple characters are \emph{endo-equivalent} if they determine the same \gre{\full} endo-class.}

Since every ps-character is a pss-character, we can also make the following definition.
\begin{definition}
We let~$\Ee$ denote the set of all~\full\ endo-classes of ps-characters, and let~$\Ee^{\fin}$ denote the set of finite subsets of~$\Ee$. 
\end{definition}

\shaun{We know that we can decompose pss-characters into ps-characters via Lemma~\ref{lemma:pssdecompositions}. This extends to give us a bijection between \full\ semisimple endo-classes and~$\Ee^{\fin}$.}

\begin{proposition}\label{propSemisimpleEndoclassesAsTupleOfSimpleEndo}
\shaun{
The map~$\Ff$ from the set of all \full\ semisimple endo-classes to~$\Ee^{\fin}$, defined by
\[
\Ff([\Th])=\{[\Th_i]\mid i\in\I\}
\]
is well-defined and bijective \daniel{where the~$\Th_i$ are defined in Lemma~\ref{lemma:pssdecompositions}.}
}
\end{proposition}

For the proof (of surjectivity) we need the following \rob{lemmas}.

\begin{lemma}\label{lemSquareExtensionOfASemisimpleChar}
\shaun{
Let~$[\La,n,r,\b]$ \daniel{and~$[\La,n,r+1,\tilde{\g}]$ be semisimple strata split by~$\V={\V'}\oplus {\V''}$ such that~$[\La,n,r+1,\b]$ is equivalent to~$[\La,n,r+1,\tilde{\g}]$.} Put~${\La'}=\La\cap{\V'}$ and~${\b'}=\b|_{\V'}$, and similarly for~$\La'',\b''$. Given~$\tilde{\t}\in\daniel{\Cc(\La,r+1,\tilde{\g})}$ and an extension~${\t'}\in\Cc({\La'},r,{\b'})$  of~$\tilde{\t}|_{\H^{r+2}({\b'},{\La'})}$ there is a semisimple character~$\t\in\Cc(\La,r,\b)$ such that the restriction to~$\H^{r+1}({\b'},{\La'})$ is~${\t'}$ and the restriction to~$\H^{r+2}(\b,\La)$ is~$\tilde{\t}$.
}
\end{lemma}

\begin{proof}
\shaun{
By induction on the number of simple blocks of~$[{\La''},{n''},r,{\b''}]$, we are reduced to the case where~$[{\La''},{n''},r,{\b''}]$ is simple. The proof proceeds by induction along the critical exponent~$k_0=k_0(\b,\La)$. If~$\b=0$ then we take~$\t$ the trivial character of~$\P^{r+1}(\La)$. Thus we can assume~$\b\neq0$; in particular~$-k_0\leq n$. Write~$\V'=\bigoplus_{i\in\I'}\V'^i$ for the splitting of~$[{\La'},{n'},r,{\b'}]$; then the splitting of~$[\La,n,r,\b]$ is either~$\V=\V''\oplus\bigoplus_{i\in\I'}\V'^i$ (if~$\b''$ is a simple block of~$\b$) or~$\V=\bigoplus_{i\in\I'}\V^i$, where
\begin{equation}\label{eqn:secondcase}
\V^i = \begin{cases} \V'^i\oplus\V'' &\text{ if }i=i_0, \\ 
 \V'^i &\text{ otherwise,} \end{cases}
\end{equation}
for some (unique) value~$i_0\in\I'$.
}

\shaun{
First we consider the case where~$r<\lfloor\frac{-k_0}{2}\rfloor$. If~$\b''$ is a simple block of~$\b$ then~\cite[Lemma~3.15]{St05} says that there exists~$\t\in\Cc(\La,r,\b)$ whose simple block restrictions are an extension of~$\tilde{\t}|_{\H^{r+2}(\b'',\La'')}$ to~$\H^{r+1}(\b'',\La'')$, and the simple block restrictions of~$\t'$; since a semisimple character is determined by its simple block restrictions, we are done.
}

\shaun{
Otherwise we are in the second case~\eqref{eqn:secondcase} above, and~$\b'_{i_0}$ and~$\b''$ have the same minimal polynomial over$~\F$. If we denote by~$\t''\in\Cc(\La'',r,\b'')$ the transfer of the simple block restriction~$\t'_{i_0}$, then the transfer of~$\t'_{i_0}$ to~$\Cc(\La'^{i_0}\oplus\La'',r,\b'_i+\b'')$ restricts to~$\t'_{i_0}\otimes\t''$ on~$\H^{r+1}(\b'_{i_0},\La'^{i_0})\times\H^{r+1}(\b'',\La'')$. Applying~\cite[Lemma~3.15]{St05} as above, we find a semisimple character as required.
}

\shaun{
Now suppose~$r\geq\lfloor\frac{-k_0}{2}\rfloor$. Then we take a semisimple stratum~$[\La,n,-k_0,\g]$ equivalent to~$[\La,n,-k_0,\b]$ which is split under the decomposition~$\V''\oplus\bigoplus_{i\in\I'}\V'^i$. We write~$\g'$ for~$\g|_{\V'}$. Since~$k_0(\g,\La)<k_0(\b,\La)$, we can apply the inductive hypothesis to find a common extension~$\t_\g\in\Cc(\La,r,\g)$ of~$\t'\psi_{\g'-\b'}$ and~$\tilde{\t}\psi_{\g-\b}$ to~$\H^{r+1}(\b,\La)=\H^{r+1}(\g,\La)$. Then the character~$\t=\t_\g\psi_{\b-\g}$ is as required.}
\end{proof}

\begin{lemma}\label{lem:changesecondonly}
\shaun{
Let~$[\La',n',r,\b']$ be a semisimple stratum in~$\V'$ and let~$[\La'',n'',r,\b'']$ be a simple stratum in~$\V''$ such that~$e(\La')=e(\La'')$. Set~$\V=\V'\oplus\V''$,~$\La=\La'\oplus\La''$ and~$n=\max\{n',n''\}$. 
%If~$[\La,n,r,\b'\oplus\b'']$ is equivalent to a semisimple stratum in~$\V$ 
Then there exists~$\tilde\b''\in\End_\F(\V'')$ such that~$[\La'',n'',r,\tilde\b'']$ is simple and equivalent to~$[\La'',n'',r,\b'']$, and~$[\La,n,r,\b'\oplus\tilde\b'']$ is semisimple.
}
\end{lemma}

\begin{proof} 
%%%%%%%%%%%%%%%%%%%%%%%%%%%%%%%%%%%%
%%%%%%%%%%%%%%%%%%%%%%%%%%%%%%%%%%%%
\shaun{Suppose~$[\La,n,r,\b]$ is not semisimple. Then, using notation as in the previous proof, there is an index~$i_0\in\I'$ %($\I'$ the index set for~$\b'$) 
such that the stratum~$[\La'^{i_0}\oplus\La'',\max\{n_{i_0},n''\},r,\b'_{i_0}\oplus\b'']$ is equivalent to a simple stratum. We only need to find~$\tilde{\b}''$ such that~$[\La'^{i_0}\oplus\La'',\max\{n_{i_0},n''\},r,\b'_{i_0}\oplus\tilde{\b}'']$ is simple and we may therefore assume~$|\I'|=1$, that is~$\b'=\b_i$. Then~$\F[\b']/\F$ has the same ramification index and inertia degree as~$\F[\b'']/\F$. (This follows from~\cite[Theorem~6.16]{SkSt} and~\cite[Theorem~2.4.1]{BK93} when the lattice sequences are strict, and in general via a~$\dag$-construction.) Thus there is an embedding~$\vphi:\F[\b']\to\End_\F\V''$ such that~$\F[\vphi(\b')]^\times$ normalizes~$\La''$.}

\shaun{We set~$m'=\dim_\F\V'$ and~$m''=\dim_\F\V''$ and identify~$\V'^{\oplus m''}$ with~$\V''^{\oplus m'}$. Then the strata
\[
[\La''\oplus\La''^{\oplus m'},n,r,\b''\oplus \vphi(\b')^{\oplus m'}]
\quad
\text{ and }
\quad
[\La''\oplus\La'^{\oplus m''},n,r,\b''\oplus \b'^{\oplus m''}]
\]
intertwine, while the latter is equivalent to a simple stratum by~\cite[Theorem~6.16]{SkSt}. Since the first is certainly equivalent to a semisimple stratum, the matching of~\cite[Proposition~7.1]{SkSt} implies that it must be equivalent to a simple stratum. Therefore
\[
[\La''\oplus\La'',n,r,\b''\oplus\vphi(\b')]
\]
is equivalent to a simple stratum (\cite[Theorem~6.16]{SkSt} again) so that~$[\La'',n,r,\b'']$ and~$[\La'',n,r,\vphi(\b')]$ intertwine. Then~\cite[Theorem~8.1]{SkSt} implies that there exists~$g\in\P(\La'')$ such that~$[\La'',n,r,g\vphi(\b')g^{-1}]$ is equivalent to~$[\La'',n,r,\b'']$ and the element~$\tilde{\b}'':=g\vphi(\b')g^{-1}$ satisfies the assertion. 
}
%%%%%%%%%%%%%%%%%%%%%%%%%%%%%%%%%%%%
%%%%%%%%%%%%%%%%%%%%%%%%%%%%%%%%%%%%
\end{proof}

\begin{lemma}\label{lemJoiningSemisimpleCharacters}
\shaun{
Let~$\t'\in\Cc(\La',r,\b')$ and~$\t''\in\Cc(\La'',r,\b'')$ be semisimple characters in~$\Aut_\F(\V')$ and~$\Aut_\F(\V'')$ respectively, and suppose that~$e(\La')=e(\La'')$. Set~$\V=\V'\oplus\V''$,~$\La=\La'\oplus\La''$ and~$n=\max\{n',n''\}$. Then there exist a semisimple stratum~$[\La'',n'',r,\tilde\b'']$, such that~$\Cc(\La'',r,\tilde\b'')=\Cc(\La'',r,\b'')$ and~$[\La,n,r,\b'\oplus\tilde\b'']$ is semisimple, and~$\t\in\Cc(\La,r,\b'\oplus\tilde\b'')$ such that~$\t|_{\H^{r+1}(\b',\La')}=\t'$ and~$\t|_{\H^{r+1}(\b'',\La'')}=\t''$.
}
\end{lemma}

\begin{proof}
\shaun{
We begin by reducing to the case that~$\t''$ is a \emph{simple} character. Indeed, the general case proceeds from this case by induction on the number of simple blocks of~$[\La'',n'',r,\b'']$ as follows: Suppose this stratum has splitting~$\V''=\bigoplus_{i\in\I}\V''^i$ and~$\I=\J\cup\{i_0\}$ (disjoint union). Applying the inductive hypothesis to~$\t'$ and~$\t''_\J$, we have a semisimple stratum~$[\La''^\J,n''_\J,r,\tilde\b''_\J]$, such that~$\Cc(\La''^\J,r,\tilde\b''_\J)=\Cc(\La''^\J,r,\b''_\J)$ and~$[\La'\oplus\La''^\J,n_\J,r,\b'\oplus\tilde\b''_\J]$ is semisimple, and~$\t_\J\in\Cc(\La'\oplus\La''^\J,r,\b'\oplus\tilde\b''_\J)$ with~$\t_\J|_{\H^{r+1}(\b',\La')}=\t'$ and~$\t_\J|_{\H^{r+1}(\b''_\J,\La''^\J)}=\t''_\J$. Then the simple case applied to~$\t_\J$ and the simple block restriction~$\t''_{i_0}$ give a (semi)\-simple stratum~$[\La''^{i_0},n''_{i_0},r,\tilde\b''_{i_0}]$ and semisimple character~$\t\in\Cc(\La,r,\b'\oplus\tilde\b''_\J\oplus\tilde\b''_{i_0})$ such that~$\t|_{\H^{r+1}(\b'\oplus\tilde\b''_\J,\La'\oplus\La''^\J)}=\t_\J$ and~$\t|_{\H^{r+1}(\b''_{i_0},\La''^{i_0})}=\t''_{i_0}$. Putting~$\tilde\b''=\tilde\b''_\J\oplus\tilde\b''_{i_0}$, we see that~$\t|_{\H^{r+1}(\b'',\La'')}$ is a semisimple character in~$\Cc(\La'',r,\tilde\b'')$ whose restriction to any~$\H^{r+1}(\b''_i,\La''^i)$ is~$\t''_i$, for~$i\in\I$. It follows from Corollary~\ref{corCDeltaPsia} that~$\t|_{\H^{r+1}(\b'',\La'')}=\t''$ and~$\Cc(\La'',r,\tilde\b'')=\Cc(\La'',r,\b'')$, as required. 
}

\shaun{So now we assume that~$\t''$ is simple, and prove the statement by induction on~$r$. If~$r=n$ then we take~$\t$ to be the trivial character. If~$r<n$ then let~$[\La',n',r+1,\g']$ be a semisimple stratum which is equivalent to~$[\La',n',r+1,\b']$ and split with respect to the splitting of~$\b'$, and let~$[\La'',n'',r+1,\g'']$ be a simple stratum equivalent to~$[\La'',n'',r+1,\b'']$. By the inductive hypothesis, there are a simple stratum~$[\La'',n'',r+1,\tilde\g'']$ and a character~$\t_\g\in\Cc(\La,r+1,\g'\oplus \tilde\g'')$ with restrictions~$\t'|_{\H^{r+2}(\b',\La')}$ and~$\t''|_{\H^{r+2}(\b'',\La'')}$. By the translation principle~\cite[Theorem~9.16]{SkSt}, there is a simple stratum~$[\La'',n'',r,\hat\b'']$ such that~$\Cc(\La'',r,\b'')=\Cc(\La'',r,\hat\b'')$ and~$[\La'',n'',r+1,\hat\b'']$ is equivalent to~$[\La'',n'',r+1,\tilde\g'']$. Moreover, Lemma~\ref{lem:changesecondonly} implies that we can replace~$[\La'',n'',r,\hat\b'']$ by an equivalent simple stratum so that~$[\La,n,r,\b'\oplus\hat\b'']$ is itself semisimple.
}

\shaun{
Now Lemma~\ref{lemSquareExtensionOfASemisimpleChar} provides a semisimple character~$\tilde\t\in\Cc(\La,r,\b'\oplus\hat\b'')$ with restrictions~$\t'$ on~$\H^{r+1}(\b',\La')$ and~$\t_\g$ on~$\H^{r+2}(\b'\oplus\hat\b'',\La)$. Thus there is an element~$a\in\mathfrak{a}''_{-1-r}$ such that~$\t''=\tilde\t\psi_a|_{\H^{r+1}(\b'',\La'')}$; \orange{moreover,~$[\La'',n'',r,\hat\b''+a]$ is equivalent to a simple stratum} \gre{by Lemma \ref{lemCDeltaPsia}\ref{lemCDeltaPsiai}}. Applying Lemma~\ref{lem:changesecondonly} again, there is a simple stratum~$[\La'',n'',r,\tilde\b'']$ equivalent to~$[\La'',n'',r,\hat\b''+a]$ such that~$[\La,n,r,\b'\oplus\tilde\b'']$ is semisimple. Finally, setting~$\t=\tilde\t\psi_a\in\Cc(\La,r,\b'\oplus\tilde\b'')$, we are done.
}
\end{proof}

\begin{proof}[Proof of Proposition~\ref{propSemisimpleEndoclassesAsTupleOfSimpleEndo}]
\shaun{
Let~$\Th$ be a \full\ pss-character supported on~$(0,\b)$, with index set~$\I$, and write~$[\Th]$ for the endo-class of~$\Th$. Then Theorem~\ref{thm:Endopss}\ref{thm:Endopss-i} shows that~$\Ff([\Th])$ is well-defined and~%
%$\#\I=\#\Ff([\Th])$
\orange{$|\I|=|\Ff([\Th])|$}, and also that the map~$\Ff$ is injective. For surjectivity, suppose we are given a finite set~$\{[\Th_i]:i\in\I\}$ of \full\ simple ps-characters and, for each~$i\in\I$, choose a realization~$\t_i$ of~$\Th_i$. Then Lemma~\ref{lemJoiningSemisimpleCharacters} and induction on~%
%$\#\I$
\orange{$|\I|$} give a \full\ semisimple character~$\t$ which has simple character restrictions~$\t_i$. The corresponding pss-character~$\Th$ has simple block restrictions which must be~$\Th_i$, by Lemma~\ref{lemma:pssdecompositions}, and~$\Ff([\Th])=\{[\Th_i]:i\in\I\}$, as required.
}
\end{proof}

Recall that the \emph{degree} of a \bob{full} simple character~$\t\in\Cc(\La,0,\b)$ is defined to be~$[\F[\b]:\F]$, and is independent of intertwining \bob{between full simple characters} and transfer, and that the degree of a simple endo-class~$c\in\Ee$ is defined to be the common degree of the values of the ps-characters in~$c$, which we denote by~$\deg(c)$.

\begin{definition}
An \emph{endo-parameter} is a function~$\f$ from the set $\Ee$ to the set~$\NN_0$ of non-negative integers, with finite support. We define the degree of an endo-parameter~$\f$ by
\[
\deg(\f):=\sum_{c\in\Ee} \deg(c) \f(c).
\]
\end{definition}

\rob{
Given a~\full\ semisimple character~$\t\in\Cc(\La,0,\b)$, let~$\Th$ be the pss-character supported on~$(0,\b)$ with~$\Th(\V,\varphi_{\b},\La,0)=\t$, where~$\varphi_\b$ denotes the canonical embedding of~$\F[\b]$ as usual, and~$c_i$ be the endo-classes of its simple block restrictions; we define the endo-parameter~$\f_\t$ to be the map with support~$\Ff([\Th])$, and such that~$\f_\t(c_i):=\frac{\dim_\F\V^i}{\deg(c_i)}$, for~$i\in\I$.
}

\begin{theorem}\label{thmClassifyConjClassGL}
\rob{There is a canonical bijection from the set of intertwining classes of \full\ semisimple characters for~$\tG=\GL_\F(\V)$ to the set of endo-parameters~$\f$ of degree~$\dim_\F(\V)$, defined by mapping the intertwining class of a \full\ semisimple character~$\t$ to the endo-parameter~$\f_\t$.}
\end{theorem}

\begin{proof}
\shaun{
Suppose~$\t\in\Cc(\La,0,\b)$ and~$\t'\in\Cc(\La',0,\b')$ are semisimple characters with index sets~$\I,\I'$ respectively. By changing lattice sequences in their affine class, we may and do assume~$e(\La)=e(\La')$. Let~$\Th$ be the pss-character supported on~$(0,\b)$ with value~$\t$ \daniel{at~$(\V,\varphi_\b,\La,0)$}, and similarly~$\Th'$ the pss-character supported on~$(0,\b')$ with value~$\t'$ \daniel{at~$(\V,\varphi_{\b'},\La',0)$}.
}

\shaun{
First we prove that the map described is well-defined (on intertwining classes). Suppose~$\t$ and~$\t'$ intertwine in~$\tG$. Then Theorem~\ref{thm:MatchingForChar} gives a matching~$\z:\I\to\I'$ between index sets, and~$\z_{\Th',\Th}=\z$ by Theorem~\ref{thm:Endopss}\ref{thm:Endopss-iib}. Since~$\dim_\F\V^i=\dim_\F\V'^{\z(i)}$ (from Theorem~\ref{thm:MatchingForChar} again), it follows that~$\f_\t=\f_{\t'}$.
}

\shaun{
Conversely, suppose that~$\f_\t=\f_{\t'}$. Then, by comparing the support, we have~$\Ff([\Th])=\Ff([\Th'])$, so~$\Th\approx\Th'$ by Proposition~\ref{propSemisimpleEndoclassesAsTupleOfSimpleEndo} and we have a matching~$\z_{\Th',\Th}:\I\to\I'$, by Theorem~\ref{thm:Endopss}\ref{thm:Endopss-i}. In particular, the simple components~$[\Th_i]=[\Th'_{\zeta(i)}]$ have the same degree so it follows from~$\f_\t=\f_{\t'}$ that~$\dim_\F\V^i=\dim_\F\V^{\zeta(i)}$, for all~$i\in\I$. Finally Theorem~\ref{thm:Endopss}\ref{thm:Endopss-iic} implies~that~$\t$ and~$\t'$ intertwine. Thus the map is injective.
}

\shaun{
Finally we prove surjectivity, so let~$\f$ be an endo-parameter of degree~$\dim_\F(\V)$. By Proposition~\ref{propSemisimpleEndoclassesAsTupleOfSimpleEndo}, there is a \full\ pss-character~$\Th$, supported on some~$(0,\b)$, such that~$\supp(\f)=\Ff([\Th])=\{[\Th_i]\mid i\in\I\}$; further, each~$\Th_i$ is supported on the simple pair~$(0,\b_i)$. For~$i\in\I$, we choose~$(\V^i,\vphi_i,\La^i,0)\in\Qq(0,\b_i)$ with~$\dim_\F \V^i=\deg([\Th_i]) \f([\Th_i])$. Replacing the~$\La^i$ in their affine classes, we may and do assume that~$e(\La^i)$ is independent of~$i$. Since~$\f$ has degree~$\dim_\F(\V)$, there is an isomorphism between~$\bigoplus_{i\in\I}\V^i$ and~$\V$; replacing the~$\V^i$ by their images, we may assume this isomorphism is an equality. Then~$(\V,\bigoplus_{i\in\I}\vphi_i,\bigoplus_{i\in\I}\La^i,0)\in\Qq(0,\b)$ and~$\f=\f_{\t}$ for~$\t=\Th(\V,\bigoplus_{i\in\I}\vphi_i,\bigoplus_{i\in\I}\La^i,0)$.
}
\end{proof}

%%% Include/improve or delete? Or just say that~$\f$ is a section of~$\bigsqcup\limits_{c\in\Ee} \mathbb{Z}_{\ge 0} \to \Ee$ with finite support?
\ignore{\begin{remark}
One could also describe endo-parameters as follows. For~$\Th$ a \full\ ps-character supported on~$(0,\b)$, define~$\operatorname{vec}_\Th$ to be the category of isomorphism classes of finite-dimensional~$\F[\b]$-vector spaces. For~$c$ an endo-class, set~$\operatorname{vec}_c=\lim\limits_{\xrightarrow[{\!\!\Th\in c\!\!}]{}}\operatorname{vec}_\Th$. \red{(Not sure this is quite right.)} Then we have a natural map~$\bigsqcup\limits_{c\in\Ee} \operatorname{vec}_c \to \Ee$ and an endo-parameter is a section of this map with finite support. 
\end{remark}}
%%% Include or delete?

%%%%%%%%%%%%%%%%%%%%%%%%%%%%%%%%%%%%
\subsection{The classical involution on~$\Ee$}\label{subsecTheClassicalInvolutionOnE}
\shaun{
We return to the classical setting so that we have an extension~$\F/\F_{\so}$ of degree at most two, whose \rob{Galois} group is generated by~$x\mapsto\ov x$. We fix an algebraic closure~$\Falg$ of~$\F$ and denote by~$\PsF$ be the set of \full\ ps-characters supported on simple pairs~$(0,\b)$ such that~$\F[\b]$ is contained in~$\Falg$. 
}

\shaun{
We choose an automorphism~$f$ of~$\Falg$ extending the map~$x\mapsto\ov x$ on~$\F$ and a sign~$\e$. We are going to define a map~$\Th\mapsto\Th^f$ on~$\PsF$ which, \emph{a priori}, depends on many choices we will make. In the end we will find that it is in fact independent of these choices and, moreover, it induces an involution on the set~$\Ee$ of endo-classes over~$\F$ \emph{which is independent of the choices of~$f$,~$\e$ and~$\Falg$.}
}

\begin{definition}\label{deffOperationonPsCharacters}
\shaun{Given~$\Th\in\PsF$ supported on \rob{a} simple pair~$(0,\b)$, we set~$\E=\F[\b]$ and \emph{choose} the following:
\begin{itemize}\setlength\itemsep{5pt}
\item a finite-dimensional~$\F$-vector space~$\V$ such that~$[\E:\F]$ divides~$\dim_\F\V$;
\item an~$\F$-linear field embedding~$\vphi:\E\to\A=\End_\F(\V)$;
\daniel{\item a hyperbolic~$\e$-hermitian space~$(\V_h,h)$ over~$\F/\F_\so$ with a complete polarization~$\V_h=\V\oplus\V^{\#}$;}
%see the before Remark~\ref{remark:hyperbolicplane};}
% \item a hyperbolic~$\e$-hermitian space~$(\V_h,h)$ over~$\F/\F_\so$ such that~$\V$ is a maximal \daniel{totally} isotropic subspace of~$\V_h$, and a \daniel{maximal totally isotropic} complement~$\V^\#$ for~$\V$ in~$\V_h$; 
\item an~$\o_\E$-lattice sequence~$\La$ in~$\V$.
\end{itemize}
\daniel{In particular, the adjoint anti-involution of~$h$ defines a map from~$\End_\F(\V)$ to~$\End_\F(\V^\#)$.}
Then~$(0,-f(\b))$ is a simple pair and we set~$\E^\#=f(\E)$, define the lattice sequence~$\La^\#$ in~$\V^\#$ by
\[
\La^\#(r)=\{v\in\V^\# \mid h(v,\La(1-r))\subseteq \p_\F \},
\]
and denote by~$\vphi^\#:\E^\#\into\End_\F(\V^\#)$ the embedding~$\vphi^\#(x)=\ov{\vphi(f^{-1}(x))}$, for~$x\in\E^\#$, where~$\ov{\phantom{a}}$ here denotes the adjoint anti-involution on~$\V_h$. Then~$(\V^\#,\vphi^\#,\La^\#,0)\in\Qq(0,-f(\b))$ and we define~$\Th^f$ to be the unique \full\ ps-character supported on~$(0,-f(\b))$ such that
\begin{equation}\label{eqn:eqDefClassicalInv}
  \Th^f(\V^\#,\vphi^\#,\La^\#,0)(y)= \(\Th(\V,\vphi,\La,0)(\ov{y})\)^{-1},\qquad\text{for }y\in\H^1(\vphi^\#(-f(\b)),\La^\#).
\end{equation}
}
\end{definition}

\shaun{
Note that we are defining~$\Th^f$ by its single value at~$(\V^\#,\vphi^\#,\La^\#,0)\in\Qq(0,-f(\b))$, and its value at other elements of~$\Qq(0,-f(\b))$ is then given by transfer; it is for this reason that the definition appears to depend on the choices of~$\V,\vphi,(\V_h,h),\V^\#,\La$. Note also that the value~$\Th^f(\V^\#,\vphi^\#,\La^\#,0)$ is in fact independent of the automorphism~$f$: the group~$\H^1(\vphi^\#(-f(\b)),\La^\#)=\H^1(-\ov{\vphi(\b)},\La^\#)$ does not depend on~$f$ and the formula~\eqref{eqn:eqDefClassicalInv} is clearly independent of~$f$. However, the ps-character~$\Th^f$ does depend on~$f$ since the simple pair~$(0,-f(\b))$ on which it is supported does.
}

\begin{proposition}\label{propWellDefClassInv}
 Let~$\Th,\Th'$ be elements of~$\PsF$. 
 \begin{enumerate}\setlength\itemsep{5pt}
  \item\label{propWellDefClassInvi} $\Th^f$ is well-defined, i.e. is independent of the choices made. 
  \item\label{propWellDefClassInvii} $(\Th^f)^f$ is endo-equivalent to~$\Th$.
  \item\label{propWellDefClassInviii} If~$\Th$ is endo-equivalent to~$\Th'$ then~$\Th^f$ is endo-equivalent to~$\Th'^f$. 
  \item\label{propWellDefClassInviv} The endo-class of~$\Th^f$ does not depend on~$f$ or~$\e$.
 \end{enumerate}
\end{proposition}

\begin{proof}
\shaun{
We start with assertion~\ref{propWellDefClassInvi}. Suppose we have chosen~$(\V_i,\vphi_i,(\V_{h_i},h_i),\V_i^\#,\La_i)$, for~$i=1,2$, as in Definition~\ref{deffOperationonPsCharacters}. In order to show that these give rise to the same ps-character~$\Th^f$, we need to show that the characters~$\t^\#_i$ given by~$y\mapsto \(\Th(\V_i,\vphi_i,\La_i,0)(\ov{y})\)^{-1}$ are \daniel{related by transfer} (from~$(\La_1^\#,\vphi_1^\#)$ to~$(\La_2^\#,\vphi_2^\#)$). Since the self-dual~$\dag$-construction commutes with transfer, we can perform such a construction to reduce to the case where~$(\V_{h_1},h_1)$ and~$(\V_{h_2},h_2)$ are isometric; conjugating then by a suitable isometry, we can assume
 \[
 h_1=h_2=:h,\qquad \V_{h_1}=\V_{h_2}=:\V_h, \qquad \V^\#_1=\V^\#_2=:\V^\#.
\]
Now, $\Th(\V,\vphi_1,\La_1,0)$ and~$\Th(\V,\vphi_2,\La_2,0)$ are intertwined by an element~$g\in\Aut_\F(\V)$ which conjugates~$\vphi_1$ to~$\vphi_2$, because these characters are \daniel{related by transfer} (from~$(\La_1,\vphi_1)$ to~$(\La_2,\vphi_2)$). Thus~$\bar{g}^{-1}$ intertwines~$\t^\#_1$ with~$\t^\#_2$ and conjugates~$\vphi^\#_1$ to~$\vphi^\#_2$, i.e. they are transfers as required.
}

\shaun{
Now assertion~\ref{propWellDefClassInvii} follows immediately because, if the tuple~$(\V,\vphi,(\V_h,h),\V^\#,\La)$ is used to define~$\Th^f$ then~$(\V^\#,\vphi^\#,(\V_h,h),\V,\La^\#)$ can be used to define~$(\Th^f)^f$ so we see that~$\Th(\V,\vphi,\La,0)=(\Th^f)^f(\V,\vphi^{\#\#},\La,0)$; thus the intersection of the images of~$\Th$ and~$(\Th^f)^f$ is non-empty and they are endo-equivalent. Similarly, if~$\Th$ and~$\Th'$ are endo-equivalent then they have a common \daniel{value} $\Th(\V,\vphi,\La,0)=\Th'(\V,\vphi',\La,0)$ \daniel{by Lemma~\ref{lem:changetoconjlatt}, Theorem~\ref{thm:EndoEquivMeanspairwiseIntforAllrealizations} and Theorem~\ref{thmIntImplConjSelfDual}\ref{thmIntImplConjSelfDual.i}}; using this (together with a choice of~$\V_h,\V^\#$), it follows immediately from~\eqref{eqn:eqDefClassicalInv} that~$\Th^f,\Th'^f$ have a common realization so are endo-equivalent, and~\ref{propWellDefClassInviii} follows.
}

\shaun{
We are left with assertion~\ref{propWellDefClassInviv}. That~$[\Th^f]$ is independent of~$f$ is clear from the formula~\eqref{eqn:eqDefClassicalInv} (which is independent of~$f$). To see that~$[\Th^f]$ is independent of~$\e$, we can replace~$h$ \gre{by the twist~${(\vphi(\b)-\ov{\vphi(\b)})}^*(h)$} (which is a~$-\e$-hermitian form over~$\F/\F_\so$) in the construction; this replaces~$\La^\#$ by a translate, which affects nothing, so that the formula~\eqref{eqn:eqDefClassicalInv} remains the same. Thus the image of the resulting ps-character has a non-trivial intersection with the image of $\Th^f$, and they are endo-equivalent.
}
\end{proof}

The last proposition allows us to define an involution on~$\Ee$. 

\begin{definition}\label{defClassicalInv}
%We define an involution~$\ov{\phantom{a}}$ on~$\Ee$ by~$\ov{[\Th]}:=[\Th^f]$, for~$\Th\in\PsF$. 
\orange{We define an action of~$\Sigma$ on~$\Ee$ by~$\s([\Th]):=[\Th^f]$, for~$\Th\in\PsF$.}
\end{definition}

\shaun{
Note that it also follows from Proposition~\ref{propWellDefClassInv} that this involution (defined on the set~$\Ee$, which does not depend on the choice of~$\Falg$) does not in fact depend on the choice of algebraic closure.
}

%%%%%%%%%%%%%%%%%%%%%%%%%%%%%%%%%%%%
\subsection{Orbits and self-dual endo-classes}\label{subsecOrbitsAndSelfDualEndoClasses}
We fix~$\e$,~$\F/\F_\so$ and~$\Falg$ as in the previous subsection. We want to compare orbits of~$\Ee$ with self-dual endo-classes. We therefore introduce the notion of \full\ self-dual endo-classes. We call a self-dual pss-character or endo-class~\emph{\full} if the corresponding lift is \full~\bob{ and we call a self-dual semisimple 
character~\emph{full} if it is contained in~$\Cc_-(\La,0,\b)$ for some 
self-dual semisimple stratum~$[\La,n,0,\b]$. } Let~$(0,\b)$ be a self-dual semisimple pair. Recall that, attached to~$\b$ is an index set~$\I$ together with an action of~$\s$, and we choose a set~$\I_0\cup \I_+$ of representatives for the orbits of~$\s$, where~$\I_0$ is the set of~$\s$-fixed points,~$\I_+$ is a section through the orbits of length two, and we put~$\I_-:=\s(\I_+)$. Analogously to the previous subsection, we denote by~$\PsFcl$ the set of \full\ self-dual pss-characters~$\Th_-$ such that the attached semisimple pair~$(0,\b)$ satisfies 
\[
%\#
|(\I_0\cup\I_+)|=1,
\]
and such that~$\E$ is a subset of~$\oplus_{i\in\I}\Falg$; in the case~$\I_+\ne\emptyset$, we will usually write~$\I_+=\{+1\}$ and~$\I_-=\{-1\}$, and write~$\Thipm$ for the simple block restrictions of the lift~$\Th$ of~$\Th_-$. We denote by~$\Ee_-$ the set of endo-classes of elements of~$\PsFcl$. 

\begin{lemma}\label{lemElementarySelfdualECandOrbits}
Suppose~$\Th$ is a lift of a \full\ self-dual pss-character~$\Th_-\in\PsFcl$ supported on~$(0,\b)$. 
%In the case of~$\#\I=2$ let~$\Thip$ and~$\Thim$ be the block restrictions of~$\Th$. Then we have:
\begin{enumerate}\setlength\itemsep{5pt}
 \item\label{lemElementarySelfdualECandOrbitsii} If~$\Th$ is not simple then~$[\Thip]\neq [\Thim]=\s([\Thip])$.
 \item\label{lemElementarySelfdualECandOrbitsi} If~$\Th$ is simple then~$[\Th]=\s([\Th])$. 
\end{enumerate}
\end{lemma}

\begin{proof}
\shaun{
We first prove~\ref{lemElementarySelfdualECandOrbitsii}. Take~$((\V,h),\vphi,\La,0)\in\Qq_-(0,\b)$ 
%. We write~$\I=\{\pm 1\}$ for the index set. As usual
so that~$\V$,~$\b$ and~$\vphi$ decompose as 
\[
\V^{{1}}\oplus\V^{-1}, \qquad \b=\b_{{1}}+\b_{-1},\qquad \vphi=\vphi_{{1}}\oplus\vphi_{-1}.
\] 
Following Definition~\ref{deffOperationonPsCharacters} using the data~$(\V^{{1}},\vphi_{{1}},(\V,h),\V^{-1},\La^{{1}})$ and \orange{taking~$f:\Falg\to\Falg$ %to be an extension of the map~$\E_{{1}}\to\E_{-1}$ with
to be an extension of the map~$x\mapsto\ov x$ on~$\F$ such that~\gre{$f(\b_{{1}})=-\b_{-1}$},} we compute 
\begin{align*}
 \Thip^f(\V^{-1},\vphi_{-1},\La^{-1},0)(y)&=\Thip^f((\V^{{1}})^\#,\vphi_{{1}}^\#,(\La^{{1}})^\#,0)(y) \\
 &=(\Thip(\V^{{1}},\vphi_{{1}},\La^{{1}},0)(\bar{y}))^{-1} \\
 &=\Thim(\V^{-1},\vphi_{-1},\La^{-1},0)(y).
\end{align*}
Thus~$\Thip^f$ and~$\Thim$ have a common value. They are already \full\ and have the same degree so~$[\Thim]=\s([\Thip])$. The inequality of~\ref{lemElementarySelfdualECandOrbitsii} is a consequence of~\ref{lemma:pssdecompositions}\ref{lemma:pssdecompositions-iii}.
}

\shaun{
The proof of~\ref{lemElementarySelfdualECandOrbitsi} is similar. We choose~$((\V,h),\vphi,\La,0)\in\Qq_-(0,\b)$ and~$f$ an extension of the generator of~$\Gal(\E/\E_\so)$. 
%to extend \[\E\to\E,\ \b\mapsto -\b.\] 
We define an~$\e$-hermitian form~$\tilde{h}$ on~$\tilde\V=\V\oplus\V$ by
\[
\tilde{h}=\begin{pmatrix} 0 & h \\ h & 0\end{pmatrix},\quad\text{i.e. }
\tilde{h}(v_1+v_2,w_1+w_2):=h(v_1,w_2)+h(v_2,w_1),\qquad\text{ for~$(v_1,v_2),(w_1,w_2)\in\V\oplus\V$.}
\]
Then~$\La\oplus\La$ is self-dual with respect to~$\tilde{h}$. Applying the definition with~$(\V\oplus 0,\vphi\oplus 0,(\tilde\V,\tilde{h}),0\oplus\V,\La)$, we obtain: 
\begin{align*}
\Th^f(0\oplus\V,0\oplus\vphi,0\oplus\La ,0)(0\oplus y)&=\Th^f((\V\oplus 0)^\#,(\vphi\oplus 0)^\#,(\La\oplus 0)^\#,0)(0\oplus y)\\
&=(\Th(\V\oplus 0,\vphi\oplus 0,\La\oplus 0,0)(\bar{y}\oplus 0))^{-1}\\
&=(\Th(\V,\vphi,\La,0)(\bar{y}))^{-1}\\
&=\Th(\V,\vphi,\La,0)(y)\\ 
&=\Th(0\oplus\V,0\oplus\vphi,0\oplus\La ,0)(0\oplus y).
\end{align*}
The \bob{third} and the final equality follow from the fact that~$\Th$ respects transfers, \rob{while} the fourth follows from self-duality.
 }
\end{proof}

\ignore{
\begin{proof}
 We start with the proof of~\ref{lemElementarySelfdualECandOrbitsi}. We choose~$((\V,h),\vphi,\La,0)\in\Qq_-(0,\b)$. We define 
 the~$\e$-hermitian form~$\tilde{h}$ on~$\V\oplus\V$ via
 \[\tilde{h}=\left(\begin{array}{cc} 0 & h \\ h & 0\end{array}\right),\text{i.e. }\tilde{h}(v_1+v_2,w_1+w_2):=h(v_1,w_2)+h(v_2,w_1),\]
 for~$(v_1,v_2),(w_1,w_2)\in\V\oplus\V$. Then~$\La\oplus\La$ is self-dual with respect to~$\tilde{h}$. We consider two isomorphisms:
 \[t_u:\V\to\V\oplus 0,\ t_u(v):=(v,0)\ t_l:\V\to 0\oplus\V,\ t_l(v):=(0,v).\]
 Then
 \begin{eqnarray*}
  ^{t_l^{-1}}((^{t_u}\Th(\V,\vphi,\La,0))^{\sigma_{\tilde{h}}}) &=& \Th(\V,\vphi,\La,0)^{\sigma_h}\\
  &=& \Th(\V,\vphi,\La,0).
 \end{eqnarray*}
This finishes the proof of~\ref{lemElementarySelfdualECandOrbitsi}.
The proof of~\ref{lemElementarySelfdualECandOrbitsii} is similar to the proof of~\ref{lemElementarySelfdualECandOrbitsi}. 
Note that~$\Thip$ is not endo-equivalent to~$\Thim$ because otherwise~$\Th$ would be endo-equivalent to a ps-characters which is absurd. 
\end{proof}
}

Using the notation of Lemma~\ref{lemElementarySelfdualECandOrbits} we define a map~$\Phi$ from~$\Ee_-$ to~$\Ee/\Sigma$, the set of orbits of simple endo-classes, by
\begin{equation}\label{eqDefPhi}
\Phi([\Th_-]):=\left\{\begin{array}{ll}
                             \{[\Th]\}, & \text{if }\Th\text{ is simple}\\
                             \{[\Thip],[\Thim]\}, & \text{if }\Th\text{ is not simple}\\
                            \end{array}\right. .
\end{equation}
This map is well-defined and injective by Theorem~\ref{thm:Endopss} and Theorem~\ref{thmEndoSemisimplev4}. We now state the converse of Lemma~\ref{lemElementarySelfdualECandOrbits}.

\begin{theorem}\label{thmConverseSDECandOrbits}
The map~$\Phi$ is surjective.
\end{theorem}

The key  idea for the proof of the theorem is enclosed in the following lemma: 
\ignore{
\begin{lemma}\label{lemInvSimpleCharAreLifts}
Let~$(\V,h)$ be an~$\e$-hermitian space and let~$[\La,n,0,\b]$ be a simple stratum with a self-dual lattice sequence such that
~$\Cc(\La,0,\b)$ is invariant under the action of~$\sigma_h$. 
Then there is a self-dual simple stratum~$[\La,n,0,\b']$ with the same set of simple characters as~$[\La,n,0,\b]$. 
\end{lemma}
\begin{proof}
 We prove a more general assertion depending on~$r\in\mathbb{Z}$,~$0\leq r\leq n$:
\vspace{0.2cm}\\
\emph{Let~$[\La,n,r,\g]$ be a simple stratum equivalent to~$[\La,n,r,\b]$. Then, there is a self-dual simple stratum~$[\La,n,r,\g']$
  with the same set of simple characters as~$[\La,n,r,\g]$.}
\vspace{0.2cm}\\
The proof will be by induction on~$r$. The base case~$r=n$ (null-stratum) follows from the self-duality of~$\La$. 
Suppose~$r<n$. We assume that~$[\La,n,r,\b]$ is simple to avoid extra notation. We choose a simple stratum~$[\La,n,r+1,\g]$
equivalent to~$[\La,n,r+1,\b]$ and by induction hypothesis there is a self-dual simple stratum~$[\La,n,r+1,\g']$ such that
\[\Cc(\La,r+1,\g)=\Cc(\La,r+1,\g').\]
By Theorem~\cite[9.16]{SkSt} there is a simple stratum~$[\La,n,r,\b']$ such that~$[\La,n,r+1,\b']$ is equivalent to 
~$[\La,n,r+1,\g']$ such that
\[\Cc(\La,r,\b')=\Cc(\La,r,\b).\]
Thus, for the induction step, we can assume~$\g=\ov{\g}$ without loss of generality.
We choose~$\t_0\in\Cc(\La,r,\g)$ and~$\t\in\Cc(\La,r,\b)$ such that~$\t_0^{\sigma_h}=\t_0$ and~$\t^{\sigma_h}=\t$. 
Such characters exists because the sets of simple characters have odd cardinality. 
Even stronger we can choose~$\t$ such that~$\t|_{\H^{r+2}(\g,\La)}=\t_0|_{\H^{r+2}(\g,\La)}$, because there are~$p$-power many elements of
~$\Cc(\La,r,\b)$ which restrict to~$\t_0|_{\H^{r+2}(\g,\La)}$. 
Then there is an element~$a\in\A_-\cap \mathfrak{a}_{-r}$ such that~$\t=\t_0\psi_a$. Then by Lemma~\ref{lemCDeltaPsia} and 
\cite[Proposition 1.10]{St00} the stratum~$[\La,n,r,\g+a]$ is equivalent to a self-dual simple stratum. This finishes the proof. 
\end{proof}
}

\shaun{
\begin{lemma}\label{lemNotSkewInvSemiSimpleCharAreLifts}
Let~$(\V,h)$ be an~$\e$-hermitian space over~$\F/\F_\so$ with a \daniel{complete} polarization
\begin{equation}\label{eqSplitting}
%\V=\V^1\oplus\V^2
\V=\V^{1}\oplus\V^{-1}
\end{equation}
%be a splitting of~$\V$ such that the corresponding idempotents~$1_1$ and~$1_2$ are swapped by~$\bar{(\ )}$.  
Suppose~$[\La,n,r,\b]$ is a semisimple stratum split by~\eqref{eqSplitting} such that~$\La$ is self-dual and  
\[
\Cc(\La,r,\b)=\Cc(\La,r,-\bar{\b}),
\]
and such that~$[\La^{1},n,r,\b_{1}]$ is simple. Then there exists a self-dual semisimple stratum~$[\La,n,r,\b']$ split by~\eqref{eqSplitting} such that
\[
\Cc(\La,r,\b)=\Cc(\La,r,\b').
\]
\end{lemma}
}

\begin{proof}
\shaun{
We prove the statement by induction along~$r$. % (in fact along~$n-r$).
If~$n=r$ then we choose~$\b'=0$ so suppose now~$r<n$. We choose a semisimple approximation~$[\La,n,r+1,\g]$ of~$[\La,n,r+1,\b]$ split by~\eqref{eqSplitting}.
%(see Proposition~\ref{prop:semiapprox}) split by~\eqref{eqSplitting} (see~\cite[6.16]{SkSt}). 
Then by the induction hypothesis there is a self-dual semisimple stratum~$[\La,n,r+1,\g']$ split by~\eqref{eqSplitting} with the same set of semisimple characters as~$[\La,n,r+1,\g]$. By~\cite[Theorem 9.16]{SkSt} there is a simple stratum~$[\La^{1},n,r,\b'_{1}]$ such that~$[\La^{1},n,r+1,\b'_{1}]$ is equivalent to~$[\La^{1},n,r+1,\g'_{1}]$ and
\[
\Cc(\La^{1},r,\b_{1})=\Cc(\La^{1},r,\b'_{1}).
\]
We choose~$\t\in\Cc^\Sigma(\La,r,\b)$ and~$\t_0\in\Cc^\Sigma(\La,r,\g')$ which coincide on~$\H^{r+2}(\b,\La)=\H^{r+2}(\g',\La)$.
%such that~$\t|_{\H^{r+2}(\b,\La)}=\t_0|_{\H^{r+2}(\b,\La)}$. 
Then there exists an element~$a_{1}\in\mathfrak{a}^{1}_{-r-1}$ such that 
\[
\t_{1}=\psi_{a_1+\b'_1-\g'_1}\t_{0,1}.
\]
\shauny{Lemma~\ref{lemCDeltaPsia}\ref{lemCDeltaPsiai} applied to~$\psi_{\b'_1-\g'_1}\t_{0,1}$ (with~$|K|=1$) implies that~$[\La^{1},n,r,\b'_1+a_1]$ is equivalent to a semisimple stratum~$[\La^{1},n,r,\b''_{1}]$, and Lemma~\ref{lemCDeltaPsia}\ref{lemCDeltaPsiaii} then implies
\[
\Cc(\La^{1},r,\b_1)=\Cc(\La^{1},r,\b''_1).
\] 
In particular~$[\La^{1},n,r,\b''_{1}]$  is simple by Theorem~\ref{thm:MatchingForChar} and therefore~$[\La^{1},n,r,-\ov{\b''_{1}}]$ is simple by duality. We put~$\b'':=\b_{1}''-\ov{\b_{1}''}$. Then:
\begin{itemize} 
\item if~$[\La,n,r,\b'']$ is not equivalent to a simple stratum, then it is semisimple (and already self-dual);
\item if~$[\La,n,r,\b'']$ is equivalent to a simple stratum, then it is equivalent to  a self-dual simple stratum split by~\eqref{eqSplitting}, by~\cite[1.10]{St00}. 
\end{itemize} 
In any case~$[\La,n,r,\b'']$ is equivalent to a self-dual semisimple stratum~$[\La,n,r,\b''']$ split by~\eqref{eqSplitting}. Further we have:
\[
\t_{1}=\psi_{\b'''_{1}-\g'_{1}}\t_{0,1},\qquad \t_{-1}=\psi_{-\ov{\b'''_{1}}-\g'_{-1}}\t_{0,-1}.
\]
Applying Corollary~\ref{corCDeltaPsia} to the pair~$\t\in\Cc(\La,r,\b)$ and~$\psi_{\b'''-\g'}\t_{0}\in\Cc(\La,r,\b''')$, we obtain the desired equality 
\[
\Cc(\La,r,\b)=\Cc(\La,r,\b''').
\] 
} 
%  Lemma~\ref{lemCDeltaPsia} implies that~$[\La^{1},n,r,\b'_1+a_1]$ is equivalent to a simple stratum~$[\La^{1},n,r,\b''_{1}]$. Thus we have 
% \[
% \t_{1}=\psi_{\b''_{1}-\g'_{1}}\t_{0,1},\qquad \t_{-1}=\psi_{-\ov{\b''_{1}}-\g'_{-1}}\t_{0,-1}.
% \]
% We put~$\b'':=\b''_{1}-\ov{\b''_{1}}$ and then: %apply Lemma~\ref{lemCDeltaPsia} a second time to obtain:
% \begin{itemize}\setlength\itemsep{5pt}
% \item $[\La,n,r,\b'']$ is equivalent to a semisimple stratum split by~\eqref{eqSplitting} \gre{by~\cite[Theorem~6.16]{SkSt}}; and
% \item $\t=\psi_{\b''-\g'}\t_0$ by Corollary~\ref{corCDeltaPsia}.
% \end{itemize}
% In fact we have two cases: if~$[\La,n,r,\b'']$ is already semisimple then the assertion of the Lemma is proven; otherwise (in which case it must be equivalent to a simple stratum) we can replace~$[\La,n,r,\b'']$ by an equivalent self-dual simple stratum split by~\eqref{eqSplitting}, by~\cite[1.10]{St00}, and we have finished the proof.
}
\end{proof}

\begin{proof}[Proof of Theorem~\ref{thmConverseSDECandOrbits}]
\shaun{
Let~$[\Th]$ be a non-zero \full\ simple endo-class and choose~$(\V,\vphi,(\V_h,h),\V^\#,\La)$ as in Definition~\ref{deffOperationonPsCharacters}.
Let~$\t=\Th(\V,\vphi,\La,0)$, a simple character in~$\Cc(\La,0,\vphi(\b))$, and define~$\t^\#\in\Cc(\La^\#,0,-\ov{\vphi(\b)})$ by
\[
\t^\#(y):=(\t(\bar{y}))^{-1}.
\]
Setting~$n=-v_\La(\vphi(\b))$, the strata~$[\La,n,0,\vphi(\b)]$ and~\rob{$[\La^\#,n,0,-\ov{\vphi(\b)}]$} are simple, and therefore~$[\La\oplus\La^\#,n,0,\vphi(\b)\oplus (-\ov{\vphi(\b)})]$ is equivalent to a semisimple stratum split by~$\V\oplus\V^\#$. Lemma \ref{lemJoiningSemisimpleCharacters} provides us with a semisimple stratum~$[\La\oplus\La^\#,n,0,\tilde{\b}]$ split by~$\V\oplus\V^\#$ and a semisimple character~$\tilde{\t}\in\Cc(\La\oplus\La^\#,n,0,\tilde{\b})$ with restrictions~$\t$ and~$\t^\#$. Note that~$[\La,n,0,\tilde{\b}|_\V]$ and~$[\La^\#,n,0,\tilde{\b}|_{\V^\#}]$ are simple strata: they are certainly semisimple and, since~$\Cc(\La,0,\tilde{\b}|_\V)=\Cc(\La,0,\vphi(\b))$, the matching of Theorem~\ref{thm:MatchingForChar} implies that~$[\La,n,0,\tilde{\b}|_\V]$ is simple. We have~$\tilde{\t}=\tilde{\t}^\sigma$ by Corollary~\ref{corCDeltaPsia}, and therefore by Lemma~\ref{lemNotSkewInvSemiSimpleCharAreLifts} we can choose~$[\La\oplus\La^\#,n,0,\tilde{\b}]$ to be self-dual. Let~$\tilde{\Th}_-$ be the self-dual \bob{pss}-character supported on~$(0,\tilde{\b})$ whose lift~$\tilde{\Th}$ takes value~$\tilde{\t}$ at~$(\V_h,\daniel{\varphi_{\tilde{\b}}},\La\oplus\La^\#,0)$. 
Then
\[
\Phi([\tilde{\Th}_-])=\{\s([\Th]),[\Th]\}.
\]
}
\end{proof}

\shauny{Let us illustrate the two cases which occur at the end of (the proof of) Theorem~\ref{thmConverseSDECandOrbits}. If the lift~$\tilde{\Th}$ is simple then~$\tilde{\Th}$ and~$\Th$ are endo-equivalent and we have
\[
[\Th]=[\tilde{\Th}]=\sigma[\tilde{\Th}]=\sigma([\Th])
\]
so that~$\Phi([\tilde{\Th}_-])=\{[\Th]\}$. Otherwise, the lift~$\tilde{\Th}$ is not simple, the endo-classes~$[\Th]$ and~$\sigma([\Th])$ are the non-endo-equivalent simple block restrictions of~$[\tilde{\Th}]$, and~$\Phi([\tilde{\Th}_-])$ consists of two elements. 
}

%%%%%%%%%%%%%%%%%%%%%%%%%%%%%%%%%%%%
\subsection{Endo-parameters for~$(h,\G)$}\label{subsec:endoG}

We now fix~$\F/\F_\so$ and~$\e$, and our~$\e$-hermitian space $(\V,h)$ over~$\F$.  In this section, we parametrize the~$\G=\U(\V,h)$- and~$\G^\so$-intertwining classes of \full\ self-dual semisimple characters (i.e. supported on a self-dual semisimple pair of the form~$(0,\b)$).  %As in the non-self-dual setting, we call these self-dual semisimple characters \full.

One invariant of an intertwining class of skew semisimple characters comes from the theory of concordant pairs: if two skew semisimple characters~$\t_-\in\Cc_-(\La,0,\b)$ and~$\t'_-\in\Cc_-(\La,0,\b')$ intertwine by an element of~$\G$ then the underlying simple block strata are concordant (see Theorem~\ref{thm:MatchingForCharForG}). We now encode this into invariants in the following way. 

\begin{definition}\label{enDefWittType}
Consider the class of pairs~$(\b,\ft)$ where~$(0,\b)$ is a self-dual simple pair and~$\ft$ is an element of~$\Ww_\e(\E/\E_\so)$, where~$\E=\F[\b]$ as usual. Two pairs~$(\b,\ft)$ and~$(\b',\ft')$ are \emph{equivalent} if 
\begin{enumerate}\setlength\itemsep{5pt}
 \item\label{enDefWittTypei} $(\F[\b],\b)$ and~$(\F[\b'],\b')$ are similar self-dual extensions (see Definition~\ref{def:similar}); and
 \item\label{enDefWittTypeii} $\sw_{\e,\b',\b}(\ft)=\ft'$. 
 \end{enumerate}
This is clearly an equivalence relation. We call the equivalence classes \emph{Witt types} and we denote the Witt type associated to a pair~$(\b,\ft)$ by~$[\b,\ft]$. 
%The set of Witt types is denoted by~$\WT_{\e}(\F/\F_\so).$
\end{definition}

\begin{remarks}
\begin{enumerate}\setlength\itemsep{5pt}
\item Note that the diagrams~\eqref{eqDiagConcOdd} and~\eqref{eqDiagConcEven} commute by similarity of $(\F[\b],\b)$ and~$(\F[\b'],\b')$ \orange{and Corollary~\ref{cor:similardiagramscommute}}. Therefore, in the non-symplectic case, the condition~\ref{enDefWittTypeii} is equivalent to the anisotropic dimensions of~$\ft$ and~$\ft'$ having the same parity, together with~$\lambda^*_{\b}(\ft)=\lambda^*_{\b'}(\ft')$. 
\item Given a self-dual field extension~$(\E,\b)$, the set of all possible Witt types~$[\b,\ft]$ is in bijection with~$\Ww_\e(\E/\E_\so)$.
\end{enumerate}
\end{remarks}

\shaun{We are going to attach Witt types to elements of~$\Ee_-$. Throughout this section we identify~$\Ee_-$ with~$\Ee/\Sigma$, a consequence of Theorem~\ref{thmConverseSDECandOrbits} and~\eqref{eqDefPhi}; we will usually use~$\fo\in\Ee/\Sigma$ and write~$[\Th_-]=\Phi^{-1}(\fo)$ for the corresponding element of~$\Ee_-$. We will write~$\deg(\fo)=\deg(\Th_-)$ so that, if~$\Th_\sfo$ is any pss-character whose endo-class is in the orbit~$\fo$, we have~$\deg(\fo)=|\fo|\deg(\Th_\sfo)$.
}

Not all Witt types are suitable for a given~$\fo\in\Ee/\Sigma$. We therefore define %~$[\Th_-]\in\Ee_-$. We therefore define
\[
\WT(\fo):=\begin{cases}
\{[\b,\ft] \mid \text{$\exists\Th_-$ supported on~$(0,\b)$ with~$\Phi([\Th_-])=\fo$, $\ft\in\Ww_\e(\F[\b]/\F[\b]_\so)$}\}, 
        & \text{ if }|\fo|=1, \\
\{[0,{\bs 0}]\}, 
       & \text{ if }|\fo|=2.
\end{cases}
\]
In the non-simple case~$|\fo|=2$ there is only one Witt type because all realizations of a corresponding~$\Th_-$ use a hyperbolic space over~$\F$.
\begin{remark} 
If~$\fo\in\Ee/\Sigma$ is an orbit of cardinality one and we choose a self-dual ps-character~$\Th_-$ supported on the self-dual pair~$(0,\b)$ with~$\Phi([\Th_-])=\fo$, then the map
\[
\Ww_\e(\F[\b]/\F[\b]_\so)\to \WT(\fo),\qquad \ft\mapsto [\b,\ft]
\]
is a bijection. Indeed, if~$\Th'_-$ is another self-dual ps-character, supported on the self-dual pair~$(0,\b')$ and with~$\Phi([\Th'_-])=\fo$ then, since~$\Th_-$ and~$\Th'_-$ are endo-equivalent, Corollary~\ref{cor:endosimilar} implies that the self-dual extensions~$(\F[\b],\b)$ and~$(\F[\b'],\b')$ are similar.
\end{remark}
\shauny{If we have self-dual semisimple characters~$\t_-\in\Cc_-(\La,0,\b)$ and~$\t'_-\in\Cc_-(\La',0,\b')$ which intertwine in~$\G$ with matching~$\zeta:\I \rightarrow\I'$ then, for~$i\in\I$, the dimensions~$\dim_{\E_i}\V^i$ and~$\dim_{\E'_{\zeta(i)}}\V^{\zeta(i)}$ must coincide and, for~$i\in\I_0$, the forms~$h_{\b_i}$ and~$h_{\b_{\zeta(i)}}$ must have the same Witt index. We need invariants taking this into account.}
% Any two realizations of a self-dual pss-character which intertwine certainly need to live on vector spaces of the same Witt index so we need a numerical invariant taking this into account. 

We define the set of possible~\emph{endo-parameters for~\shauny{$\fo\in\Ee/\Sigma$}} to be
\[
\EP(\fo):=\WT(\fo)\times\NN_0.
\]
%We collect some invariants for the elements of this set.
%
\begin{definition}
%Let~$[\Th_-]\in\Ee_-$ and let~$\fz=([\a,\ft],n)$ be an endo-parameter for~$[\Th_-]\leftrightarrow\fo_{[\Th]}$. We define
%\begin{itemize}\setlength\itemsep{5pt}
%\item $\diman(\fz):=\diman(\ft)$, the~\emph{anisotropic dimension of~$\fz$}, 
%\item $\deg(\fz):=(2n+\diman(\fz))\deg(\fo_{[\Th]})$, the~\emph{degree of~$\fz$},
%\item $\herm_{\F/\F_\so}(\fz):=\lambda_{\a}^*(\ft)$. 
%\end{itemize}
Let~$\fo\in\Ee/\Sigma$ and let~$\fz=([\a,\ft],n)$ be an endo-parameter for~$\fo$. We define
\begin{itemize}\setlength\itemsep{5pt}
\item $\diman(\fz):=\diman(\ft)$, the~\emph{anisotropic dimension of~$\fz$}, 
\item $\deg(\fz):=(2n+\diman(\fz))\frac{\deg(\sfo)}{|\sfo|}$, the~\emph{degree of~$\fz$},
\item $\herm_{\F/\F_\so}(\fz):=\lambda_{\a}^*(\ft)$, an element of the Witt group~$\Ww_\e(\F/\F_\so)$.
\end{itemize}
\end{definition}

\shaun{
We are now able to define the parameters for the classification of intertwining classes of self-dual semisimple characters.
\begin{definition}\label{defSDEndoParameters}
A~\emph{self-dual endo-parameter} (with respect to~$(\F/\F_\so,\e)$) is a section~$\f_-$ of 
\[
\bigsqcup_{\fo\in\Ee/\Sigma}\EP(\fo)\to\Ee/\Sigma
\]
with finite support~$\supp(\f_-)=\{\fo\in\Ee/\Sigma \mid \deg(\f_-(\fo))\ne 0\}$. For~$\f_-$ a self-dual endo-parameter, we define its degree by
\[
\deg(\f_-) = \sum_{\fo\in\Ee/\Sigma}\deg(\f_-(\fo))\in\NN_0
\]
and also set
\[
\herm_{\F/\F_\so}(\f_-)=\sum_{\fo\in\Ee/\Sigma}\herm_{\F/\F_\so}(\f_-(\fo)) \in \Ww_\e(\F/\F_\so).
\]
% \begin{enumerate}\setlength\itemsep{5pt}
% \item We call the sum~$\sum_{o\in\Ee/\Sigma}\deg(f_-(o))$ the degree of~$f_-$ and the set 
% \[\{o\in\Ee/\Sigma\mid\ \deg(o)\neq 0\}\]
% the support of~$f_-$.
% \item $f_-$ is called a~\emph{self-dual endo-parameter} (with respect to~$(\F/\F_\so,\e)$) if its degree is finite.  
% \end{enumerate}
\end{definition}
Notice then that the endo-parameters for~$\fo$ are just the endo-parameters with support contained in the singleton~$\{\fo\}$.
\begin{remark}\label{remSDEPtoGLEP}
A self-dual endo-parameter~$\f_-$ defines a~$\GL$-endo-parameter~$\f:\Ee\to\NN_0$ %(Definition~\ref{defEndoParameter}) which satisfies
by setting
\[
\f([\Th])=\frac{\deg (\f_-(\fo))}{\deg(\fo)} ,\qquad [\Th]\in \fo \in\Ee/\Sigma.
\]
We have~$\deg (\f)=\deg (\f_-)$. 
\end{remark}
}

\shaun{
Recall that Proposition~\ref{propSemisimpleEndoclassesAsTupleOfSimpleEndo} gives us a canonical bijection~$\Ff$ from the set of all \full\ semisimple endo-classes to~$\Ee^{\fin}$. We write~$(\Ee/\Sigma)^{\fin}$ for the set of all finite subsets of~$\Ee/\Sigma$, and we have:
\begin{proposition}\label{propSemisimpleEndoclassesAsTupleOfSimpleEndoG}
There is a canonical bijection~$\Ff_-$ from the set of all \full\ self-dual endo-classes to~$(\Ee/\Sigma)^{\fin}$, defined by mapping a \full\ self-dual endo-class~$[\Th_-]$ with lift~$[\Th]$ to~$\Ff([\Th])/\Sigma$, the set of orbits of elements 
of~$\Ff([\Th])$.
\end{proposition}
}

The proof mimics that of Proposition~\ref{propSemisimpleEndoclassesAsTupleOfSimpleEndo}. 
%Using a similar proof, or by the Glauberman correspondence, we get the following self-dual version of Lemma~\ref{lemJoiningSemisimpleCharacters}.
First we have an analogue of Lemma~\ref{lemSquareExtensionOfASemisimpleChar}:

\begin{lemma}\label{lemSquareExtensionOfASemisimpleCharG}
\shaun{\daniel{Let~$[\La,n,r,\b]$ and~$[\La,n,r+1,\g]$ be self-dual semisimple strata which are split by~$\V=\V'\operp \V''$ and such that~$[\La,n,r+1,\b]$ is equivalent to~$[\La,n,r+1,\g]$.} Put~$\La'=\La\cap \V'$ and~$\b'=\b|_{\V'}$, and similarly for~$\La'',\b''$. Given~$\tilde{\t}\in\daniel{\Cc^\Sigma(\La,r+1,\g)}$ and an extension~$\t'\in\Cc^\Sigma(\La',r,\b')$  of~$\tilde{\t}|_{\H^{r+2}(\b',\La')}$, there is a semisimple character~$\t\in\Cc^\Sigma(\La,r,\b)$ such that the restriction to~$\H^{r+1}(\b',\La')$ is~$\t'$ and the restriction to~$\H^{r+2}(\b,\La)$ is~$\tilde{\t}$.}
%  %Suppose that a semisimple stratum~$[\La,n,r,\b]$ is split by~$\V=\V^1\oplus \V^2$. 
%Let~$[\La,n,r,\b]$ be a self-dual semisimple stratum which is split by~$\V=\V^1\operp \V^2$, put~$\La^i=\La\cap \V^i$ and~$\b_i:=\b|_{\V^i}$, for~$i=1,2$.
%%\red{and $\b$}.  
% Given~$\tilde{\t}\in\Cc^\Sigma(\La,r+1,\b)$ and an extension~$\t_1\in\Cc^\Sigma(\La^1,r,\b_1)$  of~$\tilde{\t}|_{\H^{r+2}(\b_1,\La^1)}$ there is a semisimple character~$\t\in\Cc^\Sigma(\La,r,\b)$ such that the restriction to~$\H^{r+1}(\b_1,\La^1)$ is~$\t_1$ and the restriction to~$\H^{r+2}(\b,\La)$ is~$\tilde{\t}$. 
\end{lemma}

\shaun{We could prove this in a similar way to Lemma~\ref{lemSquareExtensionOfASemisimpleChar}, but the group action of~$\Sigma$ provides a significant simplification.}

\begin{proof}
%  The proof is line by line similar to the proof of Lemma~\ref{lemSquareExtensionOfASemisimpleChar}, with one difference:
%  The stratum for~$\b_2$ in the induction step is elementary, i.e. the corresponding index set has one~$\sigma$-orbit, but we allow
%  the stratum to be non-simple.
\shaun{By the definition of semisimple character, in particular see~\cite[Definition~9.5]{SkSt}, we have that for any three characters~$\t_0,\t_1,\t_2\in\Cc(\La,r,\b)$ the character~$\t_0\t_1\t_2^{-1}$ is also an element of $\Cc(\La,r,\b)$. Thus for~$\t'_1\in\Cc(\La',r,\b')$ and~$\tilde{\t}_1\in\Cc(\La,r+1,\g)$ coinciding on~$\H^{r+2}(\b',\La')$, since there always is an extension to~$\Cc(\La,r,\b)$ by Lemma~\ref{lemSquareExtensionOfASemisimpleChar}, the number of such extensions does not depend on the choice of~$(\t'_1,\tilde{\t}_1)$ \daniel{and therefore it is a divisor of the cardinality of~$\Cc(\La,r,\b)$}. The cardinality of~$\Cc(\La,r,\b)$ is odd, and therefore the number of extensions of~$(\t',\tilde{\t})$ is odd; thus~$\Sigma$ has a fixed point which extends~$\t'$ and~$\tilde{\t}$.
}
\end{proof}

The analogue of Lemma~\ref{lemJoiningSemisimpleCharacters} needs more subtle modifications. 

\begin{lemma}\label{lemJoiningSemisimpleCharactersG}
%  Let~$\t_1\in\Cc^\Sigma(\La^1,r,\b_1)$ and~$\t_2\in\Cc^\Sigma(\La^2,r,\b_2)$ be semisimple characters, then there 
%  exists a self-dual semisimple 
%  stratum~$[\La^1\oplus\La^2,\max(n_1,n_2),r,\b'_1\oplus \b'_2]$ on~$\V^1\operp\V^2$
%  and~$\t\in\Cc^\Sigma(\La^1\oplus\La^2,r,\b'_1\oplus\b'_2)$ such 
%  that~$\t\mid_{\H^{r+1}(\b'_1,\La^1)}=\t_1$ and~$\t\mid_{\H^{r+1}(\b_2,\La^2)}=\t_2$.
\shaun{Let~$(\V',h')$ and~$(\V'',h'')$ be~$\e$-hermitian spaces and let~$\t'\in\Cc^{\Sigma'}(\La',r,\b')$ and~$ \t''\in\Cc^{\Sigma''}(\La'',r,\b'')$ be characters for self-dual semisimple strata in~$\V'$ and~$\V''$ respectively, such that~$e(\La')=e(\La'')$. Set~$\V=\V'\operp\V''$,~$\La=\La'\oplus\La''$ and~$n=\max\{n',n''\}$. Then there exist self-dual semisimple strata~$[\La',n',r,\tilde{\b}']$ and~$[\La'',n'',r,\tilde{\b}'']$ in~$\V'$ and~$\V''$ respectively, such that~$[\La,n,r,\tilde{\b}'\oplus\tilde{\b}'']$ is self-dual semisimple and such that there is a character~$\t\in\Cc^\Sigma(\La,r,\tilde{\b}'\oplus\tilde{\b}'')$ with restrictions~$\t|_{\H^{r+1}(\tilde{\b}',\La')}=\t'$ and~$\t|_{\H^{r+1}(\tilde{\b}'',\La'')}=\t''$. Moreover, we can choose~$\tilde{\b}'$ and~$\tilde{\b}''$ with the same associated splittings as
$\b'$ and~$\b''$, respectively.}
\end{lemma}

\shaun{
Note that, contrary to the case in Lemma~\ref{lemJoiningSemisimpleCharacters}, we do not claim that we can take~$\tilde{\b}'=\b'$. We cannot give a proof mutatis mutandis to that of~\emph{loc.cit.}, because we would need to join non-skew self-dual characters. Instead, the proof uses the full strength of the translation principle~\ref{thmTranslationPrincipleG}.
 }

\begin{proof}
\shaun{We proceed by induction along~$r$. If~$r=n$ then~$\b'$ and~$\b''$ vanish, and we put~$\tilde{\b}'$ and~$\tilde{\b}''$ as zero, so suppose~$r<n$. We choose approximations~$[\La',n',r+1,\g']$ and~$[\La'',n'',r+1,\g'']$ for the strata with~$\b'$ and~$\b''$, respectively. By the induction hypothesis we can find self-dual semisimple strata~$[\La',n',r+1,\tilde{\g}']$ and~$[\La'',n'',r+1,\tilde{\g}'']$, with the same associated splittings as~\rob{$\g'$} and~\rob{$\g''$} respectively, such that their direct sum is semisimple and such that there is a character~$\t_\g\in\Cc^\Sigma(\La,\rob{r+1},\tilde{\g})$ %($\tilde{\g}=\tilde{\g}'\oplus\tilde{\g}''$) 
with restrictions~$\t'|_{\H^{r+2}(\b',\La')}$ and~$\t''|_{\H^{r+2}(\b'',\La'')}$.
}

\shaun{
Using Theorem~\ref{thmTranslationPrincipleG} we find a self-dual semisimple stratum~$[\La',n',r,\d']$ and~$u'\in\P^1_-(\La')$ which normalizes every element of~$\Cc(\La',r+1,\tilde\g')$, such that~$\Cc(\La',r,\d')=\Cc(\La',r,\b')$ and~$[\La',n',r+1,\d']$ is equivalent to~$[\La',n',r+1,\tilde\g']$, and~\bob{$\tilde\g'$} respects the splitting of~\bob{$u\d'u^{-1}$}. Moreover, replacing~$\tilde\g'$ by~$u^{-1}\tilde\g'u$, we can assume that~$\bob{\tilde\g'}$ respects the splitting of~\bob{$\d'$}; that is,~$[\La',n',r,\d']$ has an approximation given by~$\tilde\g'$. Similarly, we have a self-dual semisimple stratum~$[\La'',n'',r,\d'']$  with approximation~$\tilde{\g}''$ such that~$\Cc(\La'',r,\d'')=\Cc(\La'',r,\b'')$.
}

\shaun{
The stratum~$[\La,n,r,\d'\oplus\d'']$ is equivalent to a semisimple stratum and therefore, by~\cite[Theorem~6.16]{SkSt}, to a self-dual semisimple stratum respecting the splittings of~$\d',\d''$ and~$\V'\operp\V''$; thus we may assume~$[\La,n,r,\d]$ is self-dual semisimple, where~$\d:=\d'\oplus\d''$. We apply Lemma~\ref{lemSquareExtensionOfASemisimpleCharG} to find a character~$\t_\d\in\Cc^\Sigma(\La,r,\d)$ with restrictions~$\t',\ \t_\g$. 
\ignore{
By Theorem~\ref{thmTranslationPrincipleG} we can assume without  loss of generality that there are self-dual semisimple strata~$[\La',n',r,\d']$ and~$[\La'',n'',r,\d'']$  with approximations given by~$\tilde{\g}'$ and~$\tilde{\g}''$, respectively, such that
\[
\Cc(\La',r,\d')=\Cc(\La',r,\b'),\ \Cc(\La'',r,\d'')=\Cc(\La'',r,\b'').
\]
}
%\orange{
%The stratum~$[\La,n,r,\d'\oplus\d'']$ is equivalent to a semisimple stratum and therefore by~\cite[6.16]{SkSt} to a self-dual semisimple stratum respecting the splittings of~$\d',\d''$ and~$\V'\oplus\V''$. We do not introduce new notation and assume without loss of generality~$[\La,n,r,\d]$ to be self-dual semisimple,~$\d:=\d'\oplus\d''$. We apply Lemma~\ref{lemSquareExtensionOfASemisimpleCharG} to find a character~$\t_\d\in\Cc^\Sigma(\La,r,\d)$ with restrictions~$\t',\ \t_\g$. 
Then there is a skew~$a''\in\mathfrak{a}_{-(r+1)}''$ split by the splitting of~$\d''$ such that
 \[
 \t''=\t_\d\psi_{a''}\ \text{on }\H^{r+1}(\b'',\La'').
 \]
The stratum~$[\La,n,r,\d+a'']$ is equivalent to a semisimple stratum by Lemma~\ref{lemCDeltaPsia} and therefore to a self-dual semisimple stratum~$[\La,n,r,\tilde{\b}]$ split by the splittings of~$\d',\d''$ and~$\V'\operp\V''$. %This is a consequence of~\cite[6.16]{SkSt}. 
Thus~$\tilde{\b}=\tilde{\b}'\oplus\tilde{\b}'' $ and~$\t=\t_\d\psi_{a''}$ satisfy the first part of the lemma. Finally Proposition~\ref{prop:ConjIdempToEachOther}\ref{prop:ConjIdempToEachOther-ii} states that we can conjugate~$\tilde{\b}'$ and~$\tilde{\b}''$ to the splittings of~$\b'$ and~$\b''$, which completes the proof.  
}
\end{proof}

\begin{proof}[Proof of Proposition~\ref{propSemisimpleEndoclassesAsTupleOfSimpleEndoG}]
%\orange{At first the injectivity of the map. 
\shaun{Let~$\Th_-,\Th'_-$ be self-dual pss-characters, with lifts~$\Th,\Th'$ and index sets~$\I=\I_0\cup\I_-\cup\I_+$ and~$\I'$ respectively. If~$[\Th_-]$ and~$[\Th'_-]$ are mapped to the same set, then~$\Ff([\Th])/\Sigma=\Ff_-([\Th_-]) = \Ff_-([\Th'_-]) =\Ff([\Th'])/\Sigma$. It follows that~$\Ff([\Th])$ and~$\Ff([\Th'])$ coincide because both sets are union of orbits; indeed,~$\s([\Th_i])=[\Th_i]$ for~$i\in\I_0$ and~$\s([\Th_i])=[\Th_{-i}]$ for~$i\in\I_+$, by Lemma~\ref{lemma:pssdecompositions}\ref{lemma:pssdecompositions-iia} and then Lemma~\ref{lemElementarySelfdualECandOrbits} applied to~$\Th_{\{\pm i\}}$. Thus~$\Th$ and~$\Th'$ are endo-equivalent by Proposition~\ref{propSemisimpleEndoclassesAsTupleOfSimpleEndo} and therefore~$\Th_-,\Th'_-$ are endo-equivalent by Theorem~\ref{thmEndoSemisimplev4}. The surjectivity follows inductively as in the case of Proposition~\ref{propSemisimpleEndoclassesAsTupleOfSimpleEndo}, using Lemma~\ref{lemJoiningSemisimpleCharactersG}.}
\end{proof}

We are finally ready to define endo-parameters for~$(h,\G)$; note that these depend not only on the group~$\G$ but also on the isometry class of the form~$h$.

\shaun{
\begin{definition}
We denote by~$\EP(h,\G)$ the set of those self-dual endo-parameters with~$\herm_{\F/\F_0}(\f_-)=[h]$ and~$\deg(\f_-)=\dim_\F\V$, and we call it \emph{the set of endo-parameters for~$(h,\G)$.} 
\end{definition} 
}

\rob{%The bijection \gre{depends on~$h$ (not just the isometry class of~$h$) and} is Given as follows: 
Let~$\t_-\in\Cc_-(\La,0,\b)$ be a self-dual semisimple character, let~$\Th_-$ be the self-dual pss-character supported on~$(0,\b)$ with value~$\t_-$ \daniel{at~$((\V,h),\varphi_\b,\La,0)$}, and let~$\Th$ be its lift, with index set~$\I=\I_0\cup\I_+\cup\I_-$. We have
% The map~$\Ff_-$ 
%from Proposition~\ref{propSemisimpleEndoclassesAsTupleOfSimpleEndoG} attaches to~$[\Th_-]$ a set of orbits 
\begin{equation*}
\Ff_-([\Th_-])=\{\fo_i\mid  i\in\I_0\cup\I_+\}\subseteq \Ee/\Sigma,
\end{equation*}
where~$\fo_i$ is the orbit associated to the block restriction~$[\Th_i]$. For each~$i\in\I_0$, we have the~$\e$-hermitian form~$h_{\id_{\E_i}}$ given by Lemma~\ref{lem:hphi}. We attach to~$\t_-$ the endo-parameter with support~$\Ff_-([\Th_-])$ given by 
\[
\f_{\t_-}(\fo_i):=\begin{cases}
(\left[\b_i,[h_{\id_{\E_i}}]\right],m_i), & i\in\I_0,\\
 ([0,{\bs 0}],m_i), & i\in\I_+,
\end{cases}
\]
where~$m_i$ denotes the Witt index of~$h_{\id_{\E_i}}$ when~$i\in\I_0$, and~$m_i=\dim_{\E_i}(\V^i)$ for~$i\in\I_+$. Note that the map~$\t_-\mapsto \f_{\t_-}$ depends on~$h$ (not just the isometry class of~$h$)\shauny{, which is why we have included~$h$ in the notation~$\EP(h,\G)$}.
%The endo-parameter~$\f_{\t_-}$ only depends on the~$\U(\V,h)$-intertwining class of~$\t_-$.  
}

\begin{theorem}\label{thmClassifyConjClassG}
\rob{There is a canonical bijection from the set of intertwining classes of~full~self-dual semisimple characters for~$\G=\U(\V,h)$ to the set of self-dual endo-parameters~$\EP(h,\G)$, defined by mapping the intertwining class of a full self-dual semisimple character~$\t_-$ to the self-dual endo-parameter~$\f_{\t_-}$.} 
% the set of self-dual endo-parameters~$\f_-$ which satisfy~$\deg(\f_-)=\dim_\F\V$ and~$\herm_{\F/\F_0}(\f_-) =[h].$
\end{theorem}

\shauny{Before starting the proof, we give an example illustrating the dependence on~$h$. Suppose that~$-1$ is not a square in~$\F$, that~$(\V,h)$ is a symplectic space, and that~$\vphi:\E\hookrightarrow \End_\F(\V)$ is a self-dual embedding of a self-dual extension~$(\E,\b)$ such that~$\dim_\E\V$ is odd. Consider a self-dual simple character~$\t_-\in\Cc_-(\La,0,\varphi(\b))$. 
%The~$\G$-intertwining class for~$\t_-$ does not depend on~$h$: that is, if~$h'$ is another~$\e$-hermitian form on~$\V$ such that~$\G=\U(\V,h)=\U(V,h')$ then . 
Note that we have~$\G=\U(\V,h)=\U(\V,-h)$ so that we also have~$\EP(h,\G)=\EP(-h,\G)$. However,~$[h_{\varphi}]\neq [-h_{\varphi}]=[(-h)_{\varphi}]$, because, $-1\not\in\N_{\E/\E_o}(\E)$. Therefore, the~$\G$-intertwining class of~$\t_-$ is attached to a  different self-dual endo-parameter in~$\EP(h,\G)$ than in~$\EP(-h,\G)$.}

\shauny{The reason for this phenomenon is that the notion of self-dual endo-parameter is equivariant with respect to isometries: in the above example, an isometry~$g$ from~$(\V,h)$ to~$(\V,-h)$ maps the intertwining class of~$\t_-$ to the intertwining class of~${}^g\t_-$; and~$\t_-$ is not intertwined with~${}^g\t_-$ by any element of~$\G$, only by an isometry from~$h$ to~$-h$.}

% \begin{theorem}\label{thmTranslationPrincipleForSelfdualCharacters}
% Let~$[\La,n,r+1,\g]$ and~$[\La,n,r+1,\g']$ be self-dual-semisimple strata with the same associated splitting such that 
% \[
% \Cc(\La,r+1,\g)=\Cc(\La,r+1,\g').
% \]
% Let~$[\La,n,r,\b]$ be a self-dual -semisimple stratum, with splitting~$\V=\bigoplus_{i\in I}\V^i$, such that~$[\La,n,r+1,\b]$ is equivalent to~$[\La,n,r+1,\g]$ and~$\g$ is an element of~$\prod_{i\in I} A^{i,i}$. Then, there exists a self-dual semisimple stratum~$[\La,n,r,\b']$, with splitting~$\V=\bigoplus_{i'\in I'}\V'^{i'}$, such that~$[\La,n,r+1,\b']$ is equivalent to~$[\La,n,r+1,\g']$, with~$\g'\in \prod_{i'\in I'}A^{i'i'}$ and
% \[
% \Cc(\La,r,\b)=\Cc(\La,r,\b').
% \]
% \end{theorem}

\begin{proof}
\shaun{
We need to show that the map described is well-defined, injective and surjective. Let~$\t_-\in\Cc_-(\La,0,\b)$ and~$\t'_-\in\Cc_-(\La',0,\b')$ be self-dual semisimple characters in~$\G$ with corresponding pss-characters~$\Th_-$,~$\Th'_-$ and lifts~$\t,\t',\Th,\Th'$. Denote by~$\I,\I'$ the corresponding index sets and decompose them~$\I=\I_0\cup\I_+\cup\I_-$ as usual, similarly for~$\I'$. For~$i\in\I$, we put~$\E_i=\F[\b_i]$ as usual,~\bob{$[\Th_i]$} for the simple endo-classes corresponding to~\bob{$\Th_i$}, and~$\fo_i$ for the orbit of~$[\Th_i]$ in~$\Ee/\Sigma$, with similar notation~$\E'_{i'}$,~$\Th'_{i'}$ and~$\fo'_{i'}$, for~$i'\in\I'$.
}

\shaun{
Suppose first that~$\t_-$ and~$\t'_-$ intertwine by an element of~$\G=~\U(\V,h)$, with matching~$\z:\I\to\I'$. In particular, the self-dual pss-characters~$\Th_-,\Th'_-$ are endo-equivalent so~$\Ff_-([\Th_-])=\Ff_-([\Th'_-])$ and the endo-parameters~$\f_{\t_-},\f_{\t'_-}$ have the same support. Then, for all~$i\in\I$, we have:
\begin{itemize}\setlength\itemsep{5pt}
\item $\z$ commutes with~$\s$, by Remark~\ref{rem:matching}, and $\Th_i$ is endo-equivalent to~$\Th'_{\z(i)}$ so~$\fo_i=\fo'_{\z(i)}$;
\item $\dim_\F\V^i=\dim_\F\V'^{\z(i)}$;
\item if~$i\in\I_0$ then the self-dual field extensions~$(\E_i,\b_i)$ and~$(\E'_{\z(i)},\b'_{\z(i)})$ are similar, by Corollary~\ref{cor:endosimilar};
\item the pairs~$(\b_i,\id_{\E_i})$ and~$(\b'_{\z(i)},\id_{\E'_{\z(i)}})$ are~$(h_i,h_{\z(i)})$-concordant, by Theorem~\ref{thm:MatchingForCharForG}.
\end{itemize}
The final two points imply that the pairs~$(\b_i,[h_{\id_{\E_i}}])$ and~$(\b'_{\z(i)},[h_{\id_{\E'_{\z(i)}}}])$ are equivalent, for~$i\in\I_0$; then~$(\V^i,h_{\id_{\E_i}})$ and~$(\V'^{\z(i)},h_{\id_{\E'_{\z(i)}}})$ have the same anisotropic dimension and the same dimension so also the same Witt index. This \rob{implies}~$\f_{\t_-}(\fo_i)=\f_{\t'_-}(\fo_i)$, for~$i\in\I_0$, while the same is true for~$i\in\I_+$ simply by the comparison of dimensions. Thus~$\f_{\t_-}=\f_{\t'_-}$, as required.
}

\shaun{Conversely, suppose~$\f_{\t_-}=\f_{\t'_-}$ and let~$\f:\Ee\to\NN_0$ be the corresponding~$\GL$-endo-parameter (see Remark~\ref{remSDEPtoGLEP}). Then~$\f_\t=\f=\f_{\t'}$ by Theorem~\ref{thmClassifyConjClassGL} and, applying the same theorem again, the characters~$\t,\t'$ intertwine by an element of~$\Aut_\F(\V)$ with a matching~$\z$; thus~$\Th_i$ and~$\Th'_{\z(i)}$ are endo-equivalent and~$\fo_i=\fo_{\z(i)}$ for all indices~$i\in\I$. The fact that~$\f_{\t_-}(\fo_i)=\f_{\t'_-}(\fo_i)$ says that~$\dim_\F\V^i=\dim_\F\V'^{\z(i)}$, for~$i\in\I$, and that the pairs~$(\b_i,\id_{\E_i})$ and~$(\b'_{\z(i)},\id_{\E'_{\z(i)}})$  are~$(h_i,h'_{\z(i)})$-concordant, for~$i\in\I_0$. Therefore~$\t_-$ and~$\t'_-$ intertwine in~$\G$, by Theorem~\ref{thmEndoSemisimplev4}.}

\ignore{We need to show 
\begin{enumerate}\setlength\itemsep{5pt}
 \item\label{welldef} well-definedness,
 \item\label{inj} injectivity and 
 \item\label{surj} surjectivity
\end{enumerate}
of the map described in the theorem statement. 
For~\ref{welldef} and~\ref{inj} we choose characters~$\t_-\in\Cc_-(\La,0,\b)$ and~$\t'_-\in\Cc_-(\La',0,\b')$ with corresponding pss-characters~$\Th_-$,~$\Th'_-$ and lifts~$\t,\t',\Th,\Th'$. 
}

\ignore{We prove at first~\ref{welldef}. Suppose for~\ref{welldef} that~$\t_-$ and~$\t'_-$ intertwine by an element of~$\G=~\U(\V,h)$ say with matching~$\z:\I\to\I'$, in particular~$\Th_-\approx\Th'_-$. 
Thus~$\Ff_-([\Th_-])=\Ff_-([\Th'_-])$
and both endo-parameters~$\f_{\t_-},\f_{\t'_-}$ have the same support. So we have:
\begin{enumerate}\setlength\itemsep{5pt}
 \item[(a)] $\z$ commutes with~$\sigma$ by~\ref{}, in particular we have 
 \[|o_i|=1 \Longleftrightarrow |o_{\z(i)}|=1.\]
 (We write~$o_i$ for the orbit of~$[\Th_i]$.) 
 \item[(b)] $\dim_\F\V^i=\dim_\F\V^{\z(i)}$ for all~$i\in\I$.
 \item[(c)] We use the usual partition~$\I=\I_0\cup\I_+\cup\I_-$. For~$i\in\I_0$  the self-dual field extensions~$(\F[\b_i],\b_i)$ and~$(\F[\b'_{\z(i)}],\b'_{\z(i)})$ are similar, see~\ref{}. 
 \item[(d)] As usual we will write~$\E_i$ and~$\E'_{\z(i)}$ for~$\F[\b_i]$ and~$\F[\b'_{\z(i)}]$. The pairs~$(\id_{\E_i},\b_i)$ and~$(\id_{\E'_{\z(i)}},\b'_{\z(i)})$ are~$(h_i,h_{\z(i)})$-concordant. 
\end{enumerate}
Property (c) and (d) imply~$(\b_i,[h_{\id_{\E_i}}])\sim (\b'_{\z(i)},[h_{\id_{\E'_{\z(i)}}}])$. Consequently with (b):~$\f_{\t_-}(o_i)$ is equal to~$\f_{\t_-}(o_i)$ for~$i\in\I_0$ (Note: $o_i=o_{\z(i)}$, as~$\Th_i\approx\Th'_{\z(i)}$!),
because~$h_{\id_{\E_i}}$ and~$h_{\id_{\E'_{\z(i)}}}$ have the same anisotropic dimension. The equality also holds for~$i\in\I_+$ by (a) and (b). Thus~$\f_{\t_-}=\f_{\t'_-}$.
}

\ignore{
Next we prove~\ref{inj}, so suppose~$\f_{\t_-}=\f_{\t'_-}$. Let~$\f$ be the attached~$\GL$-endo-parameter, see Remark~\ref{remSDEPtoGLEP}. Then~$\f_\t=\f=\f_{\t'}$ by Theorem~\ref{thmClassifyConjClassGL}, and 
therefore, applying the same theorem again, the characters~$\t,\ \t'$ intertwine by an element of~$\Aut_\F(\V)$ with a matching~$\z$ and we obtain~$\Th_i\approx\Th'_{\z(i)}$ and~$o_i=o_{\z(i)}$ for all indexes~$i$. 
Equal self-dual endo-parameters imply coinciding~$\F$-dimensions for~$\V^i$ and~$\V^{\z(i)}$ ($i\in\I$) and~$(h_i,h_{\z(i)})$-concordant pairs~$(\id_{\E_i},\b_i)$ and~$(\id_{\E'_{\z(i)}},\b'_{\z(i)})$ ($i\in\I_0$).
Therefore~$\t_-$ and~$\t'_-$ intertwine in~$\G$ by Theorem~\ref{thmEndoSemisimplev4} .
}

\shaun{It remains to prove surjectivity. Let~$\f_-$ be a self-dual endo-parameter for~$(h,\G)$ so that we
%such that~$\herm(\f_-)=[h]$. We 
need to construct a semisimple character~$\t_-$ for~$\G$ satisfying~$\f_{\t_-}=\f_-$. In fact it is enough to study endo-parameters supported on a single orbit in~$\Ee/\Sigma$, since Lemma~\ref{lemJoiningSemisimpleCharactersG} then allows us to construct self-dual semisimple characters in general. So we suppose~$\supp(\f_-)=\{\fo\}$ and consider the two cases for the cardinality of the orbit separately.
%So suppose that~$\f_-$ is only supported at one orbit~$o$. We treat the two cardinality cases for the orbit separately. 
}

\shaun{
Suppose first that~$|\fo|=1$ and~$\f_-(\fo)=([\b,\ft],m)$. Let~$\Th_-$ be a self-dual ps-character supported \rob{on}~$(0,\b)$ such that~$\Phi([\Th_-])=\fo$, and set~$\E=\F[\b]$. We choose~$(\tilde{\V},\tilde{h})$, an~$\e$-hermitian space over~$\E/\E_\so$, such that
$[\tilde{h}]=\ft$ and with Witt index~$m$, and a self-dual~$\o_{\E}$-lattice sequence~$\La$ in~$\tilde{\V}$. Then~$\l^*_\b(\tilde{h})$ is isometric to~$h$, as~$\l^*_\b(\ft)=[h]$ and~$\dim_\F\tilde{\V}=\deg(\f_-)=\dim_\F\V$. We identify~$\l^*_\b(\tilde{h})$ with~$h$ via an isometry and then the character
\[
\t_-=\Th_-((\V,h),\id_{\E},\La,0)
\]
satisfies~$\f_{\t_-}=\f_-$.
}

\ignore{
Case~$|o|=1$,~$\f(o)=([\b,\ft],m)$. Let~$\Th_-$ be a self-dual ps-character supported~$(0,\b)$ such that~$\Phi([\Th_-])=o$. Choose~$(\tilde{\V},\tilde{h})$, an~$\e$-hermitian space over~$\F[\b]/\F[\b]_0$, such that
$[\tilde{h}]=\ft$ and with Witt rank~$m$. Further we choose a self-dual~$o_{\F[\b]}$-lattice sequence~$\La$ in~$\tilde{\V}$. Then~$\lambda^*_\b(\tilde{h})$ is isometric to~$h$, as~$\lambda^*_\b(\ft)=[h]$ and
\[\dim_\F\tilde{\V}=\deg(\f_-)=\dim_\F\V.\]
We identify~$\lambda^*_\b(\tilde{h})$ with~$h$ via an isometry and the character
\[\t_-=\Th_-((\V,h),\id_{\E_i},\La,0)\]
satisfies~$\f_{\t_-}=\f_-$.
}

\shaun{Finally, suppose~$|\fo|=2$ and~$\f_-(\fo)=([0,\bs 0],m)$. Let~$\Th_-$ be a self-dual pss-character supported at some pair $(0,\b_{1}\oplus\b_{-1})$ such that~$\Phi([\Th_-])=\fo$, and write~$\E_{1}=\F[\b_{1}]$ and~$\E_{-1}=\F[\b_{-1}]$. We take an~$m$-dimensional~$\E_{1}$-vector space~$\W$, which we consider as an~$\F$-vector space and identify with a maximal \daniel{totally} isotropic space of~$\V$ as part of a \daniel{complete polarization}~$(\W,\W^\#)$. We choose an~$\o_{\E_{1}}$-lattice sequence~$\La$ in~$\W$ so that the stratum~$[\La,n,0,\b_{1}]$ is simple, for an appropriate integer~$n$. Since~$\b_{-1}$ and~$-\ov{\b_{1}}$ have the same minimal polynomial over~$\F$, there is an embedding~$\vphi_{-1}:\E_{-1}\to\End_\F(\W^\#)$ which maps~$\b_{-1}$ to~$-\ov{\b_{1}}$. Then the character
\[
\t_-=\Th_-((\V,h),\id_{\E_1}\oplus\vphi_{-1},\La\oplus\La^\#,0)
\]
satisfies~$\f_{\t_-}=\f_-$.
}
\ignore{
Case~$|o|=2$: Let~$\Th_-$ be a self-dual pss-character supported at some pair $(0,\b_1\oplus\b_2)$ such that~$\Phi([\Th_-])=o$.  We take an~$\F$-vector space~$\W$ of dimension~$\frac{\deg(\f_-)}{2}$ and 
equip it with an~$\F[\b_1]$-vector space structure, i.e. an embedding~$\vphi_1:\F[\b_1]\to\End_\F\W$. We identify~$\W$ with a maximal \daniel{totally} isotropic space of~$\V$ as part  of a Lagrangian~$(\W,\W^\#)$. The stratum~$[\La,n,0,\vphi_1(\b_1)]$ (for appropriate~$n$) is a simple stratum
for any~$o_{\E_1}$-lattice sequence~$\La$. The elements~$\b_2$ and~$-\bar{\b}_1$ have the same minimal polynomial over~$\F$, and we denote with~$\vphi_2:\F[\b_2]\to\End_\F(\W^\#)$ the embedding which maps~$\b_2$ 
to~$-\ov{\vphi(\b_1)}$. Then the character
\[\t_-:=\Th_-((\V,h),\vphi_1\times\vphi_2,\La\oplus\La^\#,0)\]
satisfies~$\f_{\t_-}=\f_-$. Which finishes the proof. 
}
% The proof is completely analogous to the proof of Theorem~\ref{thmClassifyConjClassGL}; we just need to keep track of Witt towers, using 
% Theorem~\ref{thm:MatchingForCharForG}.
%, while the multiplicities~$f_-^1(\{[\Th]\})$,~$\Th\in\Ee_o$, have the meaning of the~$\F[\b]$-Witt index if~$\Th$ is the lift of a self-dual ps-character~$\Th_-$ supported on~$(0,\b)$.
\end{proof}

\shauny{Let~$\t\in\Cc(\La,0,\b)$ be the lift of a self-dual semisimple character~$\t_-\in\Cc(\La,0,\b)$. %, and let~$\fCc$ be the~$\tG$-intertwining class of~$\t$. 
Then Theorem~\ref{thmClassifyConjClassG}, combined with Theorem~\ref{thmClassifyConjClassGL}, provides a way to count the number of~$\G$-intertwining classes of self-dual semisimple characters whose lift is in the~$\tG$-intertwining class of~$\t$, by counting the number of endo-parameters~$\f_-$ which give the same~$\GL$-endo-parameter (see Remark~\ref{remSDEPtoGLEP}). We denote this number by~$\Nn(\t_-,\tG,\G)$. Write
\[
n_0:=\begin{cases}
0, &\text{ if~$\G$ is symplectic and~$\b$ has no zero component;}\\
2, &\text{ if~$\G$ is orthogonal,~$\b$ has zero component~$\b_{i_0}$ with~$\dim_\F\V^{i_0}\leq 2$ and~$\diman\V^{i_0}\le 1$;}\\ %is}\\[0pt] &\text{ \qquad\ either~$2$-dimensional isotropic or~$1$-dimensional;}\\
1, &\text{ otherwise.}\\
\end{cases}
\]
Then we obtain
%If~$\G$ is symplectic or unitary then we obtain
\[
\Nn(\t_-,\tG,\G)=
%\left\{\begin{array}{ll}
                     2^{|\I_0|-n_0} .%, &\text{ if~$\G$ is symplectic;}\\
%                     2^{\#\I_0-1}, &\text{ if~$\G$ is unitary.}
%                    \end{array}\right..
\]
Note that the reason for the difference in the symplectic case lies in Proposition~\ref{prop:TiGandGIntertwiningSameNonSympl} and the remark following it: concordance is implied by intertwining in~$\tG$ in the non-symplectic case, but not in the symplectic case. The difference in the orthogonal case comes from the fact that, when~$\V^{i_0}$ is small, there is no orthogonal space~$(\V',h')$ such that~$\dim_\F\V^{i_0}=\dim_\F\V'$ and~$[h_{i_0}]-[h']$ is the maximal element of the Witt group~$\Ww_1(\F/\F_0)$.%For the orthogonal case we obtain 
%\[
%n(\t_-,\tG,\G)=\left\{\begin{array}{ll}
%                     2^{\#\I_0-2}, &\text{ if~$n_0=1$ and~$\dim_\F\V^{i_0}\leq 2$ and~$\diman h_{i_0}\leq 1$}\\
%                     2^{\#\I_0-1}, &\text{ else.}\\
%                    \end{array}\right.
%\]
}
%%%%%%%%%%%%%%%%%%%%%%%%%%%%%%%%%%%%
\subsection{Special orthogonal groups}\label{sect:sporthendoparameter}

We conclude with the parametrization of the~$\G^\so$-intertwining classes of \full\ self-dual semisimple characters in the orthogonal case ($\sigma=1$,~$\e=1$).  % For that let us fix a symplectic form~$h_{symp}$ on~$\V$ if the~$\F$-dimension is even. We take a system of representatives~$\mathcal{R}$ of~$\F^\times/(\pm 1+\mathfrak{p}_\F)$.
\shaun{The partition of the set \rob{of } \full\ self-dual semisimple characters \rob{for~$\G$} into~$\G^\so$-intertwining classes is in general finer than the partition into~$\G$-intertwining classes so it is necessary to augment the self-dual endo-parameters. This will of course only happen when~$\V$ is even-dimensional \shauny{ (since there is an element of determinant~$-1$ in the centre in the odd-dimensional case)}, and will indeed only occur when the zero endo-class is not in the support of the endo-parameter\daniel{, by Theorem 
~\ref{thmSOintertwining}\ref{thmSOintertwining.i}.}}

%\shaun{Let~$\f_-$ be an endo-parameter for~$(h,\G)$. The difficulty will only occur, when the zero-class is not in the support of~$\f_-$. We need at first a new equivalence relation on the set of symplectic forms on~$\V$, if~$\V$ is even dimensional.
\begin{definition}
\shaun{%Suppose~$\V$ is even-dimensional over~$\F$. 
Two symplectic forms over~$\F$ on an even-dimensional~$\F$-vector space~$\V$ are~\emph{$(1+\p_\F)$-equivalent} if they are isometric by an automorphism of determinant in~$1+\p_\F$. We write%~$[\![h']\!]$
~\rob{$[h']_{1+\p_\F}$} for the~$(1+\p_\F)$-equivalence class of a symplectic form~$h'$ on~$\V$.}
% We write~$[h']_\sim$ for the equivalence class of a symplectic form~$h'$ on~$\V$. 
\end{definition}

\shaun{
Now let~$\f_-$ be an endo-parameter for~$(h,\G)$, and let~$\Th_-$ be a self-dual pss-character supported on some pair~$(0,\b)$ such that~$\Ff_-([\Th_-])$ is the support of~$\f_-$. As usual, let~\rob{$\I=\I_+\cup\I_0\cup\I_-$} be the corresponding index set, put~$\E=\F[\b]$, and write~$\Ff_-([\Th_-])=\{\fo_i\mid  i\in\I_0\cup\I_+\}$. 
\begin{definition}
For~$\vphi:\E\to \End_\F\V$ a self-dual embedding with corresponding decomposition~$\V=\bigoplus_{i\in\I}\V^i$, denote by~$\f_{\vphi,-}$ the endo-parameter for~$(h,\G)$ with support~$\Ff_-([\Th_-])$ such that
\[
\f_{\vphi,-}(\fo_i):=\begin{cases}
(\left[\b_i,[h_{i,\vphi_i}]\right],m_i), & i\in\I_0,\\
 ([0,{\bs 0}],m_i), & i\in\I_+,
\end{cases}
\]
where~$m_i$ denotes the Witt index of~$h_{i,\vphi_i}$ when~$i\in\I_0$, and~$m_i=\dim_{\E_i}(\V^i)$ for~$i\in\I_+$. We say that~$\vphi$ is \emph{adapted to~$\f_-$} if~~$\f_-=\f_{\vphi,-}$.
%$\dim_\F(\V^i)=\frac{\deg(\f_-(\fo_i))}{|\fo_i|}$, for all~$i\in\I$ and~$[h_{\vphi_i}]$ is the Witt tower in~$\f_-(\fo_i)$ for all~$i\in\I_0$.  
%A self-dual embedding~$\vphi:\E=\F[\b]\to \End_\F\V$ is called \emph{adapted to~$\f_-$} if~$\dim_\F(\V^i)=\frac{\deg(\f_-(\fo_i))}{|\fo_i|}$, for all~$i\in\I$ and~$[h_{\vphi_i}]$ is the Witt tower in~$\f_-(\fo_i)$ for all~$i\in\I_0$.  
\end{definition}}

\shaun{
We are interested in the following classes of symplectic forms if the support of~$\f_-$ does not contain the zero endo-class: 
\[
\mcH(\f_-):=\left\{[\vphi(\b)^*h]_{1+\p_\F}\mid \vphi\text{ an embedding adapted to }\f_-\right\}.
\]
If the support of~$\f_-$ contains the zero endo-class, then we formally just put~$\mcH(\f_-):=\{0\}$.  We need to prove that~$\mcH(\f_-)$ is well-defined -- that is, independent of the choice of self-dual pss-character~$\Th_-$.
\begin{lemma}
% Suppose that~$\f_-$ is does not have the zero-class in its support. Then 
The definition of~$\mcH(\f_-)$ does not depend on the choice of~$\Th_-$.  
\end{lemma}
\begin{proof}
If the support of~$\f_-$ contains the zero endo-class then there is nothing to prove, so we suppose otherwise. Let~$\Th'_-$ be a self-dual pss-character endo-equivalent to~$\Th_-$ and supported on~$(0,\b')$ and take embeddings~$\vphi,\vphi'$, of~$\E,\E'$ respectively, which are adapted to~$\f_-$. Take any self-dual~$\o_{\vphi(\E)}$ (respectively~$\o_{\vphi'(\E')}$)-lattice sequence~$\La$ (respectively~$\La'$) in~$\V$. The characters
 \[
 \t_-:=\Th_-((\V,h),\vphi,\La,0)\quad\text{ and }\quad \t'_-:=\Th'_-((\V,h),\vphi',\La',0)
 \]
intertwine in~$\G$ by Theorem~\ref{thmClassifyConjClassG}, because~$\f_{\t_-}=\f_{\t'_-}=\f_-$. Thus~$\vphi(\b)^*h$  is isometric to~$\vphi'(\b')^*h$ by an element of determinant congruent to~$\pm 1\pmod{\p_\F}$ (and both are possible), by %Theorem~\ref{thmSOintertwining}\ref{thmSOintertwining.ii}.
\daniel{Lemma~\ref{propSOintertwiningInvertibleBeta}.}
\end{proof}
It follows moreover from the proof that~$\mcH(\f_-)$ has cardinality two whenever it is non-trivial. We are now able to define the endo-parameters for~$(h,\G^\so)$. 
\begin{definition}
The set
\[
\EP(h,\G^\so):=\{(\f_-,\fy)\mid \f_-\in\EP(h,\G),\ \fy\in\mcH(\f_-)\}
\]
is called the set of endo-parameters for~$(h,\G^\so)$. 
\end{definition}
}
%$\EP(h,\G^\so)$ parametrizes the~$\G^\so$-intertwining classes of self-dual semisimple characters: 
\rob{Let~$\t_-\in\Cc_-(\La,0,\b)$ be a self-dual semisimple character.  We attach to~$\t_-$ the pair~$(\f_{\t_-},\fy_{\t_-})\in \EP(h,\G^\so)$, where
\[
\fy_{\t_-}:=\begin{cases}
                    [\b^*h]_{1+\p_\F}, & \text{if }\b\text{ has no zero component},\\
                    0, & \text{otherwise}.
\end{cases}
 \]
%This attachment does only depend on the intertwining class of~$\t_-$.}
}

\begin{corollary}\label{CorollaryendoparamSO}
%Suppose~$\sigma=\id$ and~$\epsilon=1$. 
\rob{There is a canonical bijection from the set of~$\G^\so$-intertwining classes of \full\ self-dual semisimple characters to~$\EP(h,\G^\so)$, defined by mapping the~$\G^\so$-intertwining class of a \full\ self-dual semisimple character~$\t_-$ to the endo-parameter~$(\f_{\t_-},\fy_{\t_-})$.}
\end{corollary}

\shaun{
\begin{proof}
%The well-definedness and injectivity of the map 
It follows immediately from Theorems~\ref{thmClassifyConjClassG} and~\ref{thmSOintertwining} that the map is well-defined and injective. For surjectivity, we only need to consider a pair~$(\f_-,\fy)\in\EP(h,\G^\so)$ such that the zero endo-class is not contained in the support of~$\f_-$. % (since the other case follows directly from Theorem~\ref{thmClassifyConjClassG}). 
By Theorem~\ref{thmClassifyConjClassG} there is some~$\t_-\in\Cc_-(\La,0,\b)$ such that~$\f_{\t_-}=\f_-$. We are done, if~$\fy_{\t_-}=\fy$; if not, then we conjugate~$\t_-$ by any~$g\in\G\setminus\G^\so$ to find~$\presuper{g}\t_-\in\Cc_-(\La,0,\presuper{g}\b)$ while~$[(\presuper{g}\b)^*h]_{1+\p_\F} =\fy$, because~$\mcH(\f_-)$ consists of only two elements.  
\end{proof}
}

\ignore{
\begin{corollary}\label{CorollaryendoparamSO}
Suppose~$\sigma=\id$ and~$\epsilon=1$, then the set of~$\G^\so$-intertwining classes of \full\ self-dual semisimple characters is in bijection with the union of the two following sets:
\begin{enumerate}\setlength\itemsep{5pt}
 \item The set of orthogonal endo-parameters which contain the orbit of the \full\ zero endo-class in their support, and  
 \item The set of pairs~$(f_-,y)$, where the first coordinate is an orthogonal endo-parameter with a support which does not contain the orbit of the \full\ zero endo-class and where the second coordinate is either~$1$ or~$-1$. 
\end{enumerate}
The bijection is given in the following way: Let~$\Phi$ be the bijection from Theorem~\ref{thmClassifyConjClassG} and let~$\t_-\in\Cc_-(\La,0,\b)$
be a self-dual semisimple character in~$\V$. Let~$[\t_-]$ and~$[\t_-]^\so$ denote the~$\G$-intertwining class and~$\G^\so$-intertwining class of~$\t_-$, respectively. Then,
\begin{enumerate} \setlength\itemsep{5pt}
 \item if~$\b$ has a zero-component then $[\t_-]=[\t_-]^\so$ is send to~$\Phi([\t_-])$, otherwise
 \item $\b$ has no zero-component and~$[\t_-]^\so$ is mapped to the pair~$(\Phi([\t_-]),y)$ where~$y$ equals~$1$ if only if there is an isometry 
 from~$h_{symp}$ to~$\b^*h$ of determinant in~$(1+\mathfrak{p}_\F)\mathcal{R}$.
\end{enumerate}
\end{corollary}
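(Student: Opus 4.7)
The plan is to use the $G^+$-classification of Theorem~\ref{thmClassifyConjClassG+} and to understand, for each $G^+$-intertwining class, how it refines into $G$-intertwining classes. For $\theta_- \in \C_-(\Lambda,0,\beta)$, the zero-endo-class lies in the support of $\Phi([\theta_-]^+)$ if and only if some block restriction $\theta_{-,i}$ is trivial, equivalently $\beta$ has a zero component; by the matching theorem~\ref{thmMatchingForCharForG} this is a well-defined property of the $G^+$-intertwining class, which partitions the target into the two disjoint sets described in the statement. In the first case, when $\beta$ has a zero component, Theorem~\ref{thmSOintertwining}(1) guarantees that every pair of characters in $[\theta_-]^+$ already intertwines over $G$, so $[\theta_-] = [\theta_-]^+$, and composition with $\Phi$ gives the required bijection.

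In the remaining case, $\beta$ is invertible so that $h^\beta$ is a non-degenerate symplectic form on $V$, forcing $\dim_F V$ to be even and $h_{sym}$ to be defined. For each $\theta'_- \in \C_-(\Lambda',0,\beta')$ in $[\theta_-]^+$, I define $\delta(\theta'_-) := \det(g)$ for any isometry $g: (V,h_{sym}) \to (V,h^{\beta'})$; this is a well-defined element of $F^\times$ since two such isometries differ by right-multiplication by an element of $\Sp(h_{sym})$, which has determinant~$1$. Set $y(\theta'_-) := +1$ or $-1$ according as $\delta(\theta'_-)$ lies in $(1+\mf p_F)\mathcal{R}$ or $-(1+\mf p_F)\mathcal{R}$. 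Combining Theorem~\ref{thmSOintertwining}(2) with the computation in the proof of Proposition~\ref{propSOintertwiningInvertibleBeta} (any $g \in G^+$ intertwining $\theta_-$ with $\theta'_-$ produces an isometry $h^\beta \to h^{\beta'}$ of the form $u'gu$ with $u,u' \in 1+\mf a_1$, hence of determinant in $\det(g)(1+\mf p_F) \subseteq \pm(1+\mf p_F)$), one sees that if $\theta_-$ and $\theta'_-$ lie in the same $G^+$-class then $\delta(\theta'_-)/\delta(\theta_-) \in \pm(1+\mf p_F)$, with the sign being $+$ exactly when they intertwine over~$G$; hence $y$ is constant on $G$-classes and distinguishes the (at most two) $G$-classes inside a fixed $G^+$-class.

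The hardest step is verifying that both values of $y$ are actually attained in every such $G^+$-class, so that the splitting is always two-to-one. For this I pick any $s \in G^+ \setminus G$: since $h^{s\beta s^{-1}}(v,w) = h^\beta(s^{-1}v,s^{-1}w)$, the element $s$ itself is an isometry $(V,h^\beta) \to (V,h^{s\beta s^{-1}})$ of determinant $-1$, so the character $s\theta_- s^{-1} \in \C_-(s\Lambda,0,s\beta s^{-1})$ lies in $[\theta_-]^+$ and satisfies $\delta(s\theta_- s^{-1}) = -\delta(\theta_-)$, whence $y(s\theta_- s^{-1}) = -y(\theta_-)$. Combined with Theorem~\ref{thmClassifyConjClassG+} and the previous paragraph, this yields the bijection with pairs $(f,y)$ in the second case and completes the proof.
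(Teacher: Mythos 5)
Your proof is correct and follows exactly the strategy the paper intends: the paper's own proof consists solely of the citation "Theorem~\ref{thmClassifyConjClassG+} and Theorem~\ref{thmSOintertwining}", and you have filled in precisely the details that citation leaves implicit. In particular, you correctly identify that the zero-endo-class in the support corresponds to $\beta$ having a zero component, apply Theorem~\ref{thmSOintertwining}(1) in that case, and in the other case use the well-definedness of the isometry determinant $\delta$ (via $\Sp$ having trivial determinant), the $u'gu$-isometry produced in the proof of Proposition~\ref{propSOintertwiningInvertibleBeta} to pin down the determinant modulo $\pm(1+\mf p_F)$, and the explicit $s \in G^+\setminus G$ to show each $G^+$-class genuinely splits into two $G$-classes.
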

\begin{proof}
This follows from Theorems~\ref{thmClassifyConjClassG} and~\ref{thmSOintertwining}. 
\end{proof}
}

% \begin{remark}
% If~$\V$ is odd dimensional then the~$\G$-intertwining classes of self-dual semisimple characters coincide with the~$\G^\so$-intertwining classes, in the orthogonal case, because there is an element of determinant~$-1$
% in the centre. Another way of seeing this is that the support of the corresponding orthogonal endo-parameter always contains the orbit of the \full\ zero endo-class. 
% \end{remark}

%\subfile{Endo-appendices}

%%%%%%%%%%%%%%%%%%%%%%%%%%%%%%%%%%%%

%%%%%%%%%%%%%%%%%%%%%%%%%%%%%%%%%%%%
\appendix
\section{From skew to self-dual semisimple characters}\label{sec:appendix}
%%%%%%%%%%%%%%%%%%%%%%%%%%%%%%%%%%%%

\orange{In this appendix, we generalize several previous results which are needed in the main body of the paper, in particular to extend them from the case of \emph{skew} semisimple characters \daniel{\cite{SkSt}} (where the index set is pointwise fixed by the involution) to the case of \emph{self-dual} semisimple characters.}
%\red{We might want these in a different order???}

\shauny{Every cuspidal representation of~$\G^\so$ contains a skew semisimple character by~\cite[Theorem~5.1]{St05} so, for cuspidal representations, it suffices to only consider skew semisimple characters; thus many results were originally only proved in the skew case. However, to consider the category of all smooth representations of~$\G^\so$, it is necessary also to consider the broader class of self-dual semisimple characters: indeed, every smooth representation contains a full self-dual semisimple character by~\cite[Proposition 8.5]{Finitude}, and every self-dual semisimple character is contained in some irreducible representation; moreover, if an irreducible representation contains two \bob{full} self-dual semisimple characters then these characters are endo-equivalent. In particular, skew semisimple characters do not suffice to study the category of all smooth representations. For this reason, in the main text, we need to consider non-skew self-dual semisimple characters and pss-characters.}

\shaun{\shauny{In this highly technical appendix, we extend the results of~\cite{SkSt} we need to the self-dual case, relying heavily on (and assuming knowledge from)~\cite{SkSt}}. We will use the notation introduced in the main body of the paper freely, in particular that in Section~\ref{secSemisimpleChars}, though not, of course, any results from the main paper.}

%\section{Intertwining semisimple self-dual characters}
%%%%%%%%%%%%%%%%%%%%%%%%%%%%%%%%%%%%
\subsection{The translation principle for self-dual semisimple characters}
%%%%%%%%%%%%%%%%%%%%%%%%%%%%%%%%%%%%
%In this appendix
We first generalize the translation principle of the second and third authors~\cite[Theorem~9.26]{SkSt}. %, from skew semisimple to self-dual semisimple characters. We use the notation of Section \ref{secSemisimpleChars} freely.
Let~$[\La,n,r,\b]$ be a non-null semisimple stratum and put~$k_0=k_0(\b,\La)$.  We write~$\mm_r(\b,\La)$ for the~$\o_\F$-lattice
\[
\mm_r(\b,\La)=\nn_{-r}(\b,\La)\cap \aa_{-(r+k_0)}(\La),
\]
where~$\nn_{-r}(\b,\La)=\{a\in\aa\mid \b a-a\b\in\aa_{-r}\}$. The pro-$p$ subgroup~$1+\mm_r(\b,\La)$ of~$\bob{\P^{-k_0-r}(\La)}$ normalizes the equivalence class of the stratum, and every character in~$\Cc(\La,r,\b)$; indeed, the group
\[
\SS_r(\b,\La)=1+\mm_r(\b,\La)+\JJ^{\left\lfloor(-k_0+1)/2\right\rfloor}(\b,\La),
\]
also normalizes every character in~$\Cc(\La,r,\b)$, by~\cite[Lemma~3.16]{St05}.

%For a stratum~$\Delta=[\La,n,r,\b]$ we write~$\mm(\Delta)$
%or~$\mm_\b$ 
%for the set~$\nn_{-r}(\b,\La)\cap \aa_{-(r+k_0(\b,\La))}$,
%where the set~$\nn_{-r}$ is defined as in section~\ref{secPotentialsimplecharacters} with arbitrary~$\b$. The pro-p-subgroup~$(1+\mm(\Delta))$ of~$P_{-k_0-r}(\La)$ normalizes the stratum as already~$1+\nn_{-r}$ does and therefore in the case of a semisimple stratum it also normalizes every character 
%in~$\Cc(\La,m,\b)$. 

\begin{theorem}\label{thmTranslationPrincipleG}
Let~$[\La,n,r+1,\g]$ and~$[\La,n,r+1,\g']$ be self-dual semisimple strata with the same associated splitting~$\V=\bigoplus_{j\in\J}\V^j$ such that 
\[
\Cc(\La,r+1,\g)=\Cc(\La,r+1,\g').
\]
Let~$[\La,n,r,\b]$ be a self-dual semisimple stratum, with associated splitting~$\V=\bigoplus_{i\in \I}\V^i$, such that~$[\La,n,r+1,\b]$ is equivalent to~$[\La,n,r+1,\g]$ and~$\g$ is an element of~$\prod_{i\in\I} \A^{i}$. Then, there exist a self-dual semisimple stratum~$[\La,n,r,\b']$, with splitting~$\V=\bigoplus_{i'\in \I'}\V^{i'}$ and an element~$u$ of~$(1+\mm_{r+1}(\g,\La))\cap \prod_{j\in\J}\A^{j}\cap \G$, such that~$[\La,n,r+1,\b']$ is equivalent to~$[\La,n,r+1,\g']$, with~$u\g'u^{-1}\in \prod_{i'\in\I'}\A^{i'}$ and
\[
\Cc(\La,r,\b)=\Cc(\La,r,\b').
\]
\end{theorem}

\orange{The proof will take the next few subsections.}

\subsection{Idempotents and self-dual minimal strata}
Let~$(\kk_r)_{r\ge 0}$ be a decreasing sequence of~$\s$-invariant~$\o_\F$-lattices in~$\A$ such that~$\kk_r\kk_s\subseteq\kk_{r+s}$, for all~$r,s\ge 0$, and~$\bigcap_{r\ge 1}\kk_r = \{0\}$. 

\begin{lemma}[{\cite[Lemma~7.13]{SkSt}}]\label{lemFindIdempotents0}
Suppose that there is an element~$\a$ of~$\kk_0$ which satisfies~$\a^2-\a\in \kk_1$. Then there is an idempotent~$\tilde{\a}\in \a + \kk_1$. Moreover, if~$\ov{\a}=\a$ then we can choose~$\tilde{\a}$ such that~$\ov{\tilde\a}=\tilde\a$. 
%Let~$\La,\La'$ be~$o_\F$-lattice sequences in~$A$ and suppose there is an element~$\a$ of~$\aa_0\cap\aa'_0$ which satisfies~$\a^2-\a\in \aa_r\cap\aa'_r$. Then there is an idempotent~$\tilde{\a}\in \aa_0\cap\aa'_0$ such that~$\tilde{\a}-\a\in \aa_r\cap\aa'_r$. Moreover, if~$\La$ and~$\La'$ are self-dual and~$\s(\a)=\a$ then we can choose~$\tilde{\a}$ such that~$\s(\tilde\a)=\tilde\a$. 
\end{lemma}

For the self-dual setting we also consider idempotents~$e\in\A$ which satisfy~$\ov{e}e=0$. 

\begin{lemma}\label{lemFindIdempotents+}
Suppose that there is an element~$\a$ of~$\kk_0$ which satisfies~$\a^2- \a\in \kk_1$ and%~$\ov{\a}\a\equiv\a\ov{\a}\equiv 0\pmod{\kk_1}$. 
~$\ov{\a}\a,\a\ov{\a}\in{\kk_1}$. Then there is a idempotent~$\tilde{\a} \in \a +\kk_1$ such that~$\ov{\tilde{\a}}\tilde{\a}=\tilde{\a}\ov{\tilde{\a}}=0$.
\end{lemma}

\begin{proof}
Lemma~\ref{lemFindIdempotents0} provides a symmetric idempotent~$e$ congruent to~$\a+\ov{\a}$ modulo~$\kk_{1}$.
Then the element~$\a'=e\(\frac{1+\a-\ov{\a}}{2}\)e$ satisfies~$\a'+\ov{\a'}=e$.  We follow now the idea of the proof of Lemma~\cite[Lemma~7.13]{SkSt}. It is easy to check that~$\a'':=3\a'^2-2\a'^3$ also satisfies~$\a''+\ov{\a''}=e$ and the result follows in the same way as in~\emph{loc.~cit.}. 
\end{proof}

\begin{corollary}\label{corFindSumOfIdempotents}
 %Let~$(\kk_r)_{r\ge 0}$ be as in Lemma~\ref{lemFindIdempotents+}.  
 Suppose that~$\a_1,\ldots,\a_l$ are elements of~$\kk_0$ such that%
% ~$\a_i^2\equiv\a_i$ and~$\a_i\a_j\equiv 0$ modulo~$\kk_1$ for all~$i,j$ with~$i\neq j$. 
~$\a_i^2-\a_i,\a_i\a_j\in\kk_1$, for all~$i,j$ with~$i\neq j$. 
%Suppose further that~$\sum_i\a_i\equiv 1$ modulo~$\kk_1$ and that
Suppose further that~$\sum_i\a_i - 1\in\kk_1$ and that there is an action of~$\s$ on~$\I=\{1,\ldots,l\}$ such that%~$\ov{\a}_i$ is congruent to~$\a_{\s(i)}$ for all indexes~$i\in \I$.  
~$\ov{\a_i}-\a_{\s(i)}\in\kk_1$, for all indices~$i\in \I$. Then there are idempotents~$\tilde{\a}_i\in\a_i+\kk_1$ which are pairwise orthogonal and such that~$\sum_i\tilde{\a}_i=1$ and~$\ov{\tilde{\a}_i}=\tilde{\a}_{\s(i)}$ for all~$i\in\I$.
 \end{corollary}

\begin{proof}
This follows from \rob{Lemmas}~\ref{lemFindIdempotents0} and~\ref{lemFindIdempotents+}, \emph{cf.}~\cite[Corollary~7.14]{SkSt}.
\end{proof}

Recall that, whenever we have a splitting~$\V=\bigoplus_{i\in\I}\V^i$, we have the associated idempotents~$\mfe_i$ with image~$\V^i$ and kernel~$\bigoplus_{j\ne i}\V^j$.

\begin{corollary}\label{corConjSplittingEquivStrata}
Let~$[\La,n,r,\b]$ and~$[\La,n,r,\b']$ be equivalent self-dual semisimple strata. Suppose that~$[\La,n,r,\b]$ is split by~$\V=\oplus_{k\in\K}\V^k$ and suppose that the set of idempotents of this splitting is invariant under~$\ov{\phantom{a}}$. Then there is an element~$u\in(1+\mm_r(\b,\La))\cap\G$ such that~$u\b'u^{-1}\in\prod_{k\in\K}\A^{k}$.
\end{corollary}

\begin{proof}
 The action of~$\ov{\phantom{a}}$ on the idempotents gives a sum~$1=\sum_{k\in \K_0}\mfe^k+\sum_{k\in \K_+}(\mfe^k+\ov{\mfe}^k)$ and by~\cite[Lemma~9.25]{SkSt} there is an element~$g$ of~$(1+\mm_r(\b,\La))\cap\G$ such that~$g\b'g^{-1}$ is split by~$\bigoplus_{[i]\in \bob{\K}/\Sigma}V^{[i]}$, where~$\bob{\K}/\Sigma$ denotes the set of~$\Sigma$-orbits in~$\bob{\K}$. Thus we only need to consider the case where~$\K$ is one orbit with two elements. In this case~$\K=\{+,-\}$, and we take \orange{idempotents~$\mfe'_+ \in \mfe_+ + \mm_r(\b,\La)$ and~$\mfe'_-$ commuting with~$\b'$ such that~$\ov{\mfe'_+}=\mfe'_-$ and~$\mfe'_++\mfe'_-=1$, which exist by Corollary~\ref{corFindSumOfIdempotents} and comparison of the descriptions of the intertwining of~$[\La,n,r,\b]$ in terms of~$\b$ and~$\b'$.} This case follows now from~\cite[Lemma~9.15]{SkSt} which provides an element~$g=(g_+,g_-)$ of~$(\End_\F(\V^+,\V'^+)\times\End_\F(\V^-,\V'^-))\cap(1+\mm_r(\b,\La))$ such that~$g\b'g^{-1}\in\prod_{k\in\K}\A^{k}$, and we take the element~$u=(g_+,\ov{g}^{-1}_+)\in(1+\mm_r(\b,\La))\cap\G$.  
\end{proof}

%For the next proposition we need an action of~$\s$ on the set of polynomials~$\phi$ over the residue field of~$\F$. We define~$\s(\phi)$ in applying~$\ov{\phantom{a}}$ to all the coefficients.  \gre{I think we do not need the statement with the polynomials. So we can skip it. But we need the remaining part of the proposition.}

\begin{proposition}[{cf.~\cite[Lemma~7.19]{SkSt}}]\label{propSelfDualMinimalStrata}
Let~$[\La,n,n-1,\a]$ be a self-dual stratum which is equivalent to a semisimple stratum. Then it is equivalent to a self-dual semisimple stratum.
%Suppose that~$[\La,n,n-1,\a]$ is a self-dual stratum which is equivalent to a semisimple stratum and let~$\phi$ be the characteristic polynomial of the stratum with primary factors~$\phi_i$,~$i\in\I'$.
% We define~$\eta$ to be~$(-1)^{(2n+ee_0)/ge_0}$, where~$e(\F/\F_0)$,~$e=e(\La|\o_\F)$ and~$g=(e,n)$, the greatest common divisor of~$e$ and~$n$.  Then~$[\La,n,n-1,\a]$  is equivalent to a self-dual semisimple stratum~$[\La,n,n-1,\b']$, say with associated splitting~$\V=\oplus_{i\in\I'}V'^i$, and the action of~$\s$ on the index set~$\I'$ satisfies
% \begin{equation}\label{eqConMinStrEqSelfDual}
% \s(\phi_i)(-X)=\eta^{\deg(\phi_i)}\phi_{\s(i)}(\eta X),
%  \end{equation}
% for all indexes~$i \in \I'$.
\end{proposition}

\begin{proof}
The stratum~$[\La,n,n-1,\a]$ is equivalent to a semisimple stratum~$[\La,n,n-1,\b]$ with associated splitting~$\V=\bigoplus_{i\in\I}\V^i$ and idempotents~$\mfe_i$, for~$i\in \I$. The skew-symmetry of~$\a$ implies that the strata~$[\La,n,n-1,\b]$ and~$[\La,n,n-1,-\ov{\b}]$ are equivalent and thus by~\cite[Lemma~7.17]{SkSt} the idempotents are permuted by~$\ov{\phantom{a}}$ modulo~$\aa_1$; this defines an action of~$\s$ on~$\I$. %such that~\eqref{eqConMinStrEqSelfDual} is satisfied. 
Corollary~\ref{corFindSumOfIdempotents} provides pairwise orthogonal idempotents~$\mfe'_i$ congruent to~$\mfe_i$ modulo~$\aa_1$ which sum to~$1$ and satisfy~$\ov{\mfe}'_i=\mfe'_{\s(i)}$. The map~$g=\sum_i \mfe'_i\mfe_i\in \bob{\P^1(\La)} $ conjugates~$[\La,n,n-1,\b]$ to a semisimple stratum which is split by~$\V=\bigoplus_{i\in\I}\V'^i$, where~$\V'^i=\im(\mfe'_i)$. For the indices~$i$ fixed by~$\s$ we put~$\b'_i=g\b_ig^{-1}$ and then the stratum~$[\La,n,n-1,\b_i']$ is equivalent to a self-dual stratum and to a simple stratum, so to a self-dual simple stratum by~\cite[Proposition~1.10]{St00}; thus we may assume it is itself self-dual simple. For the remaining indices we take a section~$\I_+$ through the non-singleton orbits and define~$\b'_i=g\b_ig^{-1}$ and~$\b'_{\s(i)}=-\ov{\b'_i}$ for all~$i\in \I_+$. Then setting~$\b'=\sum_{i\in\I}\b'_i$, we have found a self-dual semisimple stratum~$[\La,n,n-1,\b']$ equivalent to~$[\La,n,n-1,\a]$.
\end{proof}

%%%%%%%%%%%%%%%%%%%%%%%%%%%%%%%%%%%%
\subsection{Equal sets of semisimple characters}
%%%%%%%%%%%%%%%%%%%%%%%%%%%%%%%%%%%%

\begin{lemma}[{cf. \cite[Lemma~9.13]{SkSt}}]\label{lemCDeltaPsia}
Suppose that~$\V=\bigoplus_{k\in\K}\V^k$ is a splitting which refines the associated splitting of a semisimple \daniel{stratum}~$[\La,n,r,\b]$; denote by~$\mfe_k$ the idempotents of the decomposition and~$\b_k=\mfe_k\b_k\mfe_k$. %We write~$A^k$ for~$\End_\F(V^k)$.
Let~$\t\in\Cc(\La,r,\b)$ with restrictions~$\t_k\in\Cc(\La^k,r,\b_k)$ and, for~$k\in\K$, let~$a_k\in\aa_{-r-1}\cap\A^{k}$ be such that~$\t_k\psi_{a_k}\in\Cc(\La^k,r,\b_k)$. Put~$a=\sum_{k\in\K}a_k$.
% Suppose further that~$\t\in\Cc(\La,r,\b)$ and~$a\in\aa_{-r-1}\cap\prod_{k\in\K}\A^{k}$ such that~$\t_k\psi_{a_k}$, where $a_k=\mfe_ka\mfe_k$,  is a semisimple  character in~$\Cc(\La^k,r,\b_k)$.
\begin{enumerate} 
\item\label{lemCDeltaPsiai}  $[\La,n,r,\b+a]$ is equivalent to a semisimple stratum~$[\La,n,r,\b']$ which is split by~$\V=\bigoplus_{k\in\K}\V^k$, and the sets~$\Cc(\La,r,\b')$ and~$\psi_a\Cc(\La,r,\b)$ coincide. 
\item\label{lemCDeltaPsiaii} Suppose~$[\La,n,r,\b'']$ is a semisimple stratum whose associated splitting is refined by~$\V=\bigoplus_{k\in\K}\V^k$, such that~$\H^{r+2}(\b,\La)=\H^{r+2}(\b'',\La)$ and such that there is a semisimple character~$\t''\in\Cc(\La,r,\b'')$ with~$\t''_k=\t_k\psi_{a_k}$ for all~$k\in\K$. Then~$\t\psi_a=\t''$ and~$\Cc(\La,r,\b'')=\Cc(\La,r,\b')$. 
\end{enumerate} 
\end{lemma}

\begin{proof}
Although the statement is slightly different, the proof is the same as that of~\cite[Lemma~9.13]{SkSt}.
% The proof is the same as in~\cite[9.11]{SkSt} (The statement is slightly different, but the proof is the same.)
% One just shows that $[\La^k,n,r,\b_k+a_k]$ is equivalent to a simple stratum for every~$k$, using derived strata. \red{May be asked to include the proof I don't know whether it is worth it before sending it to the referee.}\gre{Let's Shaun decide it.}
\end{proof}

\begin{corollary}[{\cite[Corollary~9.14]{SkSt}}]\label{corCDeltaPsia}
Suppose that~$\V=\bigoplus_k\V^k$ is a splitting which refines the associated splittings of two semisimple strata~$[\La,n,r,\b]$ and~$[\La,n,r,\b']$, and suppose that there are characters~$\t\in\Cc(\La,r,\b)$ and~$\t'\in\Cc(\La,r,\b')$ such that~$\t_k$ and~$\t'_k$ coincide, for all~$k$. Then~$\t=\t'$ and~$\Cc(\La,r,\b)=\Cc(\La,r,\b')$.
\end{corollary}

The following result shows that, if~$\Cc(\La,r,\b)\cap\Cc(\La,r,\b')$ is non-empty (or, equivalently, these sets are equal) then there is an element of~$\SS_r(\b,\La)$ which maps the splitting of~$\b$ to the splitting of~$\b'$.

\begin{proposition}[{\cite[Proposition~9.9(iv)]{SkSt}, cf.~\cite[9.23(iii)]{SkSt}}]\label{prop:ConjIdempToEachOther}
Let~$[\La,n,r,\b]$ and~$[\La,n,r,\b']$ be semisimple strata with associated splittings~$\V=\bigoplus_{i\in\I}\V^i$ and~$\V=\bigoplus_{i'\in\I'}\V'^{i'}$ respectively, with corresponding idempotents~$\mfe_i$ and~$\mfe'_{i'}$. Suppose that~$\Cc(\La,r,\b)=\Cc(\La,r,\b')$, and let~$\tau:\I\to\I'$ be the bijection of~\cite[Proposition~9.9]{SkSt}, such that~$\mfe_i\equiv \mfe'_{\tau(i)}\pmod{\aa_1(\La)}$, for all~$i\in\I$.
\begin{enumerate}
\item There is an element~$g\in \SS_r(\b,\La)$  such that~$g\mfe_i g^{-1}=\mfe'_{\tau(i)}$.% where~$\tau$ is the bijection from $\I$ to~$\I'$ such that~$\mfe_i\equiv \mfe'_{\tau(i)}$ modulo~$\aa_1(\La)$. 
\label{prop:ConjIdempToEachOther-i}
\item If~$[\La,n,r,\b]$ and~$[\La,n,r,\b']$ are also self-dual then there exists~$g\in \SS_r(\b,\La)\cap\G$  such that~$g\mfe_i g^{-1}=\mfe'_{\tau(i)}$. \label{prop:ConjIdempToEachOther-ii}
\end{enumerate}
\end{proposition}

Notice that the element~$g$ in Proposition~\ref{prop:ConjIdempToEachOther} normalizes every element of~$\Cc(\La,r,\b)$. 
%
%
% \blue{
% \begin{proposition}\label{prop:ConjIdempToEachOtherG}
% Suppose~$[\La,n,r,\b]$ and~$[\La,n,r,\b']$ are self-dual semisimple strata and that~$\Cc_-(\La,r,\b)=\Cc_-(\La,r,\b')$, say with associate splittings~$(\mfe_i)_{i\in \I}$ and~$(\mfe_{i'})_{i'\in 
% \I'}$. \gre{maybe we should just say splittings instead of associated splittings in the whole paper} Then there is an element of~$g\in \SS_r(\b,\La)\cap\G$  such that~$g\mfe_ig^{-1}=\mfe'_{\tau(i)}$ 
%  where~$\tau$ is the bijection from $\I$ to~$\I'$ such that~$\mfe_i\equiv \mfe'_{\tau(i)}$ modulo~$\aa_1(\La)$. 
%  \end{proposition}
%  }
% 
We denote the normalizer of a character~$\t\in \Cc(\La,r,\b)$ by~$\nn(\t)$. Note that all elements of~$\Cc(\La,r,\b)$ have the same normalizer, because they have the same set of intertwining elements.
 
\begin{proof}
Part~\ref{prop:ConjIdempToEachOther-i} is given by~\cite[Proposition~9.9(iv)]{SkSt} so we prove~\ref{prop:ConjIdempToEachOther-ii}. Take a decomposition~$\I=\I_0\cup \I_+\cup \I_-$ as usual, which gives a decomposition into idempotents~$1=\mfe_0+\mfe_++\mfe_-$, and the same for~$\I'=\I'_0\cup\I'_+\cup\I'_-$ with~$\I'_+$ chosen to coincide with~$\tau(\I_+)$. Then~$\mfe_0\equiv \mfe'_0$,~$\mfe_+\equiv \mfe'_+$ and~$\mfe_-\equiv \mfe'_-$ modulo~$\SS_r(\b,\La)-1$ by~\cite[Proposition~9.9(iv)]{SkSt}. By~\cite[Proposition~9.23(iii)]{SkSt} there is an element~$g\in \bob{\P^1_-(\La)}\cap\nn(\t)$ which sends $\V_0$ to~$\V'_0$. Thus by Corollary~\ref{corCDeltaPsia} we only have to prove the proposition for the cases where~$\I_0$ or~$\I_+$ is empty. The case where~$\I_+$ is empty is~\cite[Proposition~9.23(iii)]{SkSt} so let us assume that~$\I_0$ is empty. By~\cite[Proposition~9.9(iv)]{SkSt} there is an element~$g=(g_+,g_-)\in \SS_r(\b,\La)$ which maps~$\V^i$ to~$\V'^{\tau(i)}$ for all~$i\in\I$; then $u=(g_+,\ov{g}_+^{-1})\in \bob{\P^1_-(\La)}$ also maps~$\V^i$ to~$\V'^{\tau(i)}$.

Take a character~$\t\in\Cc^\Sigma(\La,r,\b)=\Cc^\Sigma(\La,r,\b')$, so that~$\t^{u^{-1}}_{i}=\t_{\tau(i)}$ for all~$i\in \I_+$; since~$u\in\G$ and~$\t$ is self-dual, this equality holds for all~$i\in\I$. Then Corollary~\ref{corCDeltaPsia} implies that the sets~$\Cc(\La,r,u\b u^{-1})$ and~$\Cc(\La,r,\b')$ are the same and indeed that~$\presuper{u}\t=\t$. Since~$\P^1_-(\La)\cap\nn(\t)=(\SS_r(\b,\La)\cap\G)\P^1_-(\La_\E)$ and~$\P^1_-(\La_\E)$ commutes with~$\b$, this finishes the proof.
\end{proof}

%%%%%%%%%%%%%%%%%%%%%%%%%%%%%%%%%%%%
\subsection{Proof of the translation principle}
%%%%%%%%%%%%%%%%%%%%%%%%%%%%%%%%%%%%
Here we prove Theorem~\ref{thmTranslationPrincipleG}, granted that we have already the Theorem for the skew case (\cite[Theorem~9.26]{SkSt}) and the~$\tG$-case (\cite[Theorem~9.16]{SkSt}). Let~$\J=\J_0\cup \J_+\cup \J_-$ be a partition with respect to the action of~$\s$ as usual, and write~$\J_{+-}=\J_+\cup \J_-$. 
\begin{enumerate}
\item\label{thmTranslationPrincipleG.i} First we assume that~$\J_0$ is empty. By~\cite[Theorem~9.16]{SkSt} (the~$\tG$-case) there is a semisimple stratum~$[\La^{\J_+},n,r,\b'_{\J_+}]$ such that~$\Cc(\La^{\J_+},r,\b_{\J_+})=\Cc(\La^{\J_+},r,\b'_{\J_+})$ and such that~$\g'_{\J_+}$ satisfies the desired conjugation property. Setting~$\b'=\b'_{\J_+}-\ov{\b'_{\J_+}}$, we deduce that~$[\La,n,r,\b']$ is a self-dual stratum whose set of semisimple characters coincides on the blocks of~$\V^{\J_+}$ and~$\V^{\J_-}$ with the corresponding restrictions of characters in~$\Cc(\La,r,\b)$, and such that~$\g'=\g'_{\J_+\cup \J_-}$ satisfies the desired conjugation property. Take~$\t\in \Cc^\Sigma(\La,r,\b)$ and an extension~$\t'\in\Cc^\Sigma(\La,r,\b')$ of~$\t|_{\H^{r+2}(\b,\La)}$. By Proposition~\ref{prop:ConjIdempToEachOther}\ref{prop:ConjIdempToEachOther-i}, \gre{conjugating~$\b$ with an element of~$\SS_r(\b,\La)\cap\G$},  we can assume that~$\b$ and~$\b'$ have the same associated splitting. Take a skew-symmetric~$a\in\aa_{-r-1}\cap\prod_{i\in\I} \A^{i}$ such that~$\t=\t'\psi_a$. Then by Lemma~\ref{lemCDeltaPsia} the stratum~$[\La,n,r,\b'+a]$ is equivalent to a self-dual semisimple stratum~$[\La,n,r,\b'']$ with the same associated splitting as~$\b'$ and such that~$\Cc(\La,r,\b'')=\Cc(\La,r,\b)$.

\item Now we reduce to the case where~$\J$ is a singleton, so suppose we have proven the theorem in that case. By~\ref{thmTranslationPrincipleG.i} and the singleton case we find~$[\La,n,r,\b']$ such that~$\Cc(\La,r,\b')$ coincides with~$\Cc(\La,r,\b)$ on every simple block for~$j\in \J_0$ and on the block corresponding to~$\J_{+-}$ (and the conjugation property is satisfied). Using Proposition~\ref{prop:ConjIdempToEachOther} we can assume that~$\b$ and~$\b'$ have the same associated splitting and we finish the proof using Lemma~\ref{lemCDeltaPsia} in the same manner as at the end of~\ref{thmTranslationPrincipleG.i}.

\item Finally we prove the case where~$\J$ is a singleton. We follow the step~(iv) of the proof of~\cite[Theorem~9.16]{SkSt}. Note that, by Corollary~\ref{corConjSplittingEquivStrata}, we are free to replace~$[\La,n,r+1,\g']$ by any equivalent stratum. Thus, by~\cite[Proposition~9.24]{SkSt}, we can assume that~$\Cc(\La,r,\g)=\Cc(\La,r,\g')$. Take tame corestrictions~$s$ and~$s'$, for~$\g$ and~$\g'$ respectively, which commute with the adjoint anti-involution and which satisfy the assertions of~\cite[Lemma~5.2]{BK94}; in particular~$s(x)\equiv s'(x) \pmod{\aa_l}$, for all~$x\in\aa_{l-1}$ and all integers~$l$. The stratum~$[\La,r,r+1,s(\b-\g)]$ is equivalent to a semisimple stratum, by~\cite[Corollary~6.15]{SkSt}, and as in step~(iv) of the proof of~\cite[Theorem~9.16]{SkSt} it follows that~$[\La,r,r+1,s'(\b-\g)]$ is equivalent to a semisimple stratum; further,~$s'(\b-\g)$ is skew-symmetric and, by Proposition~\ref{propSelfDualMinimalStrata}, this stratum is equivalent to a self-dual semisimple stratum, say with associated splitting~$\V=\bigoplus_{i''\in\I''}\V^{i''}$ and corresponding idempotents~$\mfe_{i''}$. Thus~$[\La,n,r,\g'+\sum_{i''\in\I''}\mfe_{i''}(\b-\g)\mfe_{i''}]$ is equivalent to a self-dual semisimple stratum~$[\La,n,r,\b'']$ with associated splitting~$\V=\bigoplus_{i''\in\I''}\V^{i''}$ by~\cite[Corollary~6.15]{SkSt} and~\cite[Proposition~1.10]{St00}. Finally, by~\cite[Proposition~7.6]{SkSt} there is an element~$u\in (1+\mm_{r+1}(\g',\La))\cap\G$ such that~$\b':=u\b''u^{-1}$ is congruent to~$\g'+\b-\g$ modulo~$\aa_{-r}$. This element~$\b'$ is as required. %completes the proof.
\end{enumerate}

\subsection{Asymmetric statements}
%%%%%%%%%%%%%%%%%%%%%%%%%%%%%%%%%%%%
%In this appendix, w
We now prove some asymmetric versions of results already in the literature.

\begin{proposition}\label{prop:IntertwiningOfTransfers}
Let~$[\La,n,r,\b]$ and~$[\La',n,r,\b]$ be semisimple strata with~$e(\La)=e(\La')$.%=e$.
\begin{enumerate}
\item  Let~$\t\in\Cc(\La,r,\b)$ and%~$\t'\in\Cc(\La',r,\b)$ be such that
~$\t'=\tau_{\La',\La,\b}(\t)$. Then
\[
\I_{\tG}(\t,\t')=\mathrm{S}_{r}(\b,\La')
\tG_\b
%\B_{\b}^\times 
\mathrm{S}_r(\b,\La).
\]
\label{prop:IntertwiningOfTransfersPart1}
\item Suppose~$[\La,n,r,\b]$ and~$[\La',n,r,\b]$ are self-dual and let~$\t_-\in\Cc_-(\La,r,\b)$ and%~$\t'\in\Cc_-(\La',r,\b)$ be such that
~$\t'_-=\tau_{\La',\La,\b}(\t_-)$. Then
\[
\I_\G(\t_-,\t'_-)=(\SS_{r}(\b,\La')\cap \G)
%(\B_{\b}^\times \cap \G)
\G_\b
(\SS_r(\b,\La)\cap \G).
\]
\end{enumerate}
\end{proposition}

\begin{proof}
%See~\cite[Theorem 3.22]{St05} for the case~$\La=\La'$. 
The proof is analogous to that of~\cite[Theorems~4.9, 4.10]{RKSS}. 
\begin{enumerate}
\item Let us at first assume that both lattice sequences are block-wise regular strict. % principal lattice chains of the same block size. 
There is an element~$g\in\tG_\b$ such that~$g\La$ is equal to $\La'$ and the conjugation with~$g$ realizes the transfer from~$\Cc(\La,r,\b)$  to~$\Cc(\La',r,\b)$. Thus we can reduce to the case where~$\t$ is equal to~$\t'$ which follows from~\cite[Theorem~3.22]{St05} (see also~\cite[Proposition~9.8]{SkSt} and the paragraph following it).

We now consider the general case. Applying the $\dag$-construction, 
%of \cite[\S 3.1]{RKSS}
we obtain semisimple characters~$\t^\dag\in\Cc(\La^\dag,r,\b^\dag)$ and~$\t'^\dag\in\Cc(\La'^\dag,r,\b^\dag)$, where~$\La^\dag$ and~$\La'^\dag$ are strict and regular of the same block size. %in a direct sum~$\V^\dag$ of~$e$-copies of $\V$ (where $e=e(\La)=e(\La')$). %\gre{We have to include a notion of~$\dag$-construction for~$\tG$, because in the self-dual~$\dag$-construction we have used the dual of a lattice.} We set~$\tG^\dag=\Aut_\F(\V^\dag)$.  
From the first case, we have the formula~\ref{prop:IntertwiningOfTransfersPart1} for~$\I_{\tG^\dag}(\t^\dag,\t'^\dag)$. Moreover, exactly as in the proof of~\cite[Lemma~4.6]{RKSS}, we have the simple intersection property
\[
\SS_{r}(\b^\dag,\La'^\dag)x \SS_{r}(\b^\dag,\La^\dag)\cap \tG_{\b^\dag}= (\SS_{r}(\b^\dag,\La'^\dag)\cap \tG_{\b^\dag}) x (\SS_{r}(\b^\dag,\La^\dag)\cap \tG_{\b^\dag}),
\]
for all~$x\in  \tG_{\b^\dag}$. As in~\cite[Corollary~4.14]{St01}, it follows from~\cite[Theorem~4.10]{RKSS} % ~$\Gamma=\{\pm 1\}^{e}$, 
that the intertwining formula behaves well under intersection with the Levi group~$\M^\dag$ attached to the~$\dag$-construction, i.e.
\[
\I_{\tG^\dag}(\t^\dag,\t'^\dag)\cap \M^\dag=(\SS_{r}(\b^\dag,\La'^\dag)\cap \M^\dag)(\tG_{\b^\dag} \cap \M^\dag)(\SS_{r}(\b^\dag,\La^\dag)\cap \M^\dag).
\]
%by~\cite[Theorem~2.7]{RKSS}, using the group~$\Gamma=\{\pm 1\}^{e}$, if we have the intersection property
%\[\SS_{r}(\b^\dag,\La'^\dag)x \SS_{r}(\b^\dag,\La^\dag)\cap \B_{\b^\dag}^\times= (\SS_{r}(\b^\dag,\La'^\dag)\cap \B_{\b^\dag}^\times) x (\SS_{r}(\b^\dag,\La^\dag)\cap \B_{\b^\dag}^\times),\] for all~$x\in  \B_{\b^\dag}^\times$. The proof of the intersection property follows mutatis mutandis 
%%
%\todo{Explain that this means going right back to BK 1.6.1, through all the intervening references.}
%%
%to the proof of~\cite[Lemma~4.6]{RKSS}. 
Finally, we restrict to the first block of~$\M^\dag$ to obtain the desired description of~$\I_{\tG}(\t,\t')$. 
\item This follows from~\ref{prop:IntertwiningOfTransfersPart1} and a standard cohomology argument~\cite[Theorem~2.12]{RKSS}, as in the proof of~\cite[Theorem~4.10]{RKSS}. 
\end{enumerate}
%\red{This proof is fine as it is similar to the reference, I don't know if we should include more details or not as it is quick.}
%\red{Are strict, regular, principal defined in our preliminaries?}
\end{proof}

The proofs of the following two \rob{lemmas} are \emph{mutatis mutandis} to the proofs of~\cite[Proposition~9.17]{SkSt} and~\cite[Proposition~9.27]{SkSt} respectively, except that one uses Proposition~\ref{prop:IntertwiningOfTransfers} instead of~\cite[Propositions~9.8,9.22]{SkSt}. %(which are~\cite[3.22,3.27]{St05}).  

\begin{lemma}[{cf.~\cite[Proposition~9.17]{SkSt}}]\label{lem:DerivedCharacters}
Suppose~$m<q-1$ and let~$[\La,q,m,\b]$ and~$[\La',q,m,\b']$ be semisimple strata with~$e(\La)=e(\La')$, and with splitting~$\V=\bigoplus_{i\in\I}\V^i$ and~$\V=\bigoplus_{i\in\I'}\V'^{i}$. Suppose that~$[\La,q,m+1,\g]$ and~$[\La',q,m+1,\g]$ are non-null simple strata equivalent to~$[\La,q,m+1,\b]$ and~$[\La',q,m+1,\b']$ respectively, and that~$\g$ lies in both~$\bigoplus_{i\in\I}\A^i$ and~$\bigoplus_{i'\in\I'}\A^{i'}$. % commutes with all idempotents of the decomposition of~$\b$ and that~$\g$ does so with the idempotents for~$\b'$. 
Let~$\t_0\in\Cc(\La,m,\g)$ and set~$\t'_0=\tau_{\La',\La,\g}(\t_0)\in\Cc(\La',m,\g)$. Let~$\t\in \Cc(\La,m,\b)$ and~$\t'\in \Cc(\La',m,\b')$ be semisimple characters which satisfy
\[\bob{\t}=\t_0\psi_{\b-\g+c}\text{ and } \t'=\t'_0\psi_{\b'-\g},
\]
for some~$c\in\aa_{-(m+1)}$. Let~$s_\g$ be a tame corestriction with respect to~$\g$. Then we have:
%agree on restriction to~$H^{m+2}(\g,\La)$, so that we can write~$\t'=\t_0\psi_{\b'-\g}$ and~$\t=\t_0\psi_{\b-\g+c}$, for some~$\t_0\in \C(\La,m,\g)$ and~$c\in \aa_{-(m+1)}$. Let~$s_\g$ be a tame corestriction with respect to~$\g$.
\begin{enumerate} 
\item For any~$g\in \I_{\tG}(\t,\t')$ there are elements~$x\in \SS_{m+1}(\g,\La')$ and~$y\in \SS_{m+1}(\g,\La)$ and~$g'\in\tG_\g$ such that~$g=xg'y$; moreover,~$g'$ intertwines~$[\La,m+1,m,s_\g(\b-\g+c)]$ with~$[\La',m+1,m,s_\g(\b'-\g)]$. \label{lem:DerivedCharacters.i}
\item For any~$g'\in \tG_\g$ which intertwines~$[\La,m+1,m,s_\g(\b-\g+c)]$ with~$[\La',m+1,m,s_\g(\b'-\g)]$, 
there are elements~$x\in 1+\mm_{m+1}(\g,\La')$ and~$y\in 1+\mm_{m+1}(\g,\La)$ such that~$xg'y$ intertwines~$\t$ with~$\t'$. \label{lem:DerivedCharacters.ii}
\end{enumerate}
\end{lemma}

% \begin{proof}
%  The proof is mutatis mutandis to the proof of \it{loc. cit.}, with the exeption to use Proposition~\ref{prop:IntertwiningOfTransfers}. 
% \end{proof}
% 

%Finally, we have the same result for~$\G$.

\begin{lemma}[{cf.~\cite[Proposition~9.27]{SkSt}}]\label{lem:DerivedCharactersForG}
In the situation of Lemma~\ref{lem:DerivedCharacters}, suppose additionally that all strata are self-dual, all semisimple characters are self-dual,~$c\in\aa_{-(m+1)}^-$, and~$s_\g$ commutes with the adjoint anti-involution. 
\begin{enumerate} 
\item For any~$g\in \I_\G(\t,\t')$ there are elements~$x\in \SS_{m+1}(\g,\La')\cap\G$ and~$y\in \SS_{m+1}(\g,\La)\cap\G$ 
and~$g'\in \G_\g$ such that~$g=xg'y$; moreover,~$g'$ intertwines~$[\La,m+1,m,s_\g(\b-\g+c)]$ with~$[\La',m+1,m,s_\g(\b'-\g)]$. \label{lem:DerivedCharactersforG.i}
\item For any~$g'\in \G_\g$ which intertwines~$[\La,m+1,m,s_\g(\b-\g+c)]$ with~$[\La',m+1,m,s_\g(\b'-\g)]$, 
there are elements~$x\in (1+\mm_{m+1}(\g,\La'))\cap\G$ and~$y\in(1+\mm_{m+1}(\g,\La))\cap\G$ such that~$xg'y$ intertwines~$\t$ with~$\t'$. 
\label{lem:DerivedCharactersforG.ii}
\end{enumerate}
\end{lemma}

\ignore{
\begin{lemma}\label{lem:DerivedCharactersForG}
Suppose~$m<q-1$ and let~$[\La,q,m,\b]$ and~$[\La',q,m,\b']$ are self-dual semisimple strata and suppose that~$[\La,q,m+1,\g]$
and~$[\La',q,m+1,\g]$ are non-null self-dual simple strata equivalent to~$[\La,q,m+1,\b]$ and~$[\La,q,m+1,\b']$, respectively. We assume further that~$\g$ commutes with all idempotents of the decomposition of~$\b$ and that~$\g$ does so with the idempotents for~$\b'$. 
Let~$\t_0\in\Cc^{\Sigma}(\La,m,\g)$ and~$\t'_0\in\Cc^{\Sigma}(\La',m,\g)$ be transfers (from~$\La$ to~$\La'$) and suppose~$\t\in \Cc^{\Sigma}(\La,m,\b)$ and~$\t'\in \Cc^{\Sigma}(\La',m,\b')$ are semisimple characters which satisfy
\[\t=\t_0\psi_{\b-\g+c}\text{ and } \t'=\t'_0\psi_{\b'-\g},\]
for some~$c\in\aa_{-(m+1)}^-$. Let~$s_\g$ be a~$\ov{\phantom{a}}$-equivariant tame corestriction with respect to~$\g$. 
Then we have:
%agree on restriction to~$H^{m+2}(\g,\La)$, so that we can write~$\t'=\t_0\psi_{\b'-\g}$ and~$\t=\t_0\psi_{\b-\g+c}$, for some~$\t_0\in \C(\La,m,\g)$ and~$c\in \aa_{-(m+1)}$. Let~$s_\g$ be a tame corestriction with respect to~$\g$.
\begin{enumerate} 
\item For any~$g\in \I_\G(\t,\t')$ there are elements~$x\in \SS_{m+1}(\g,\La')\cap\G$ and~$y\in \SS_{m+1}(\g,\La)\cap\G$ 
and~$g'\in\G_\g$ such that~$g=xg'y$; moreover,~$g'$ intertwines~$[\La,m+1.m,s_\g(\b-\g+c)]$ with~$[\La',m+1,m,s_\g(\b'-\g)]$. \label{lem:DerivedCharactersforG.i}
\item For any~$g'\in \G_\g$ which intertwines~$[\La,m+1.m,s_\g(\b-\g+c)]$ with~$[\La',m+1,m,s_\g(\b'-\g)]$, 
there are elements~$x$ of~$1+\mm_{-k_0(\g,\La')-m-1}\cap\G$ and~$y$ of~$1+\mm_{-k_0(\g,\La)-m-1}\cap\G$ such that~$xg'y$ intertwines~$\t$ with~$\t'$. \label{lem:DerivedCharactersforG.ii}
\end{enumerate}
}
%
% Suppose~$m<q-1$ and let~$[\La,q,m,\b]$,~$[\La,q,m,\b']$ be skew-semisimple strata which have defining sequences with a common first element~$[\La,q,m+1,\g]$. Let~$\t\in \C(\La,m,\b)^\s$ and~$\t'\in \C(\La,m,\b')^\s$ be semisimple characters which agree on~$H^{m+2}(\La,\g)$, so that we can write~$\t'=\t_0\psi_{\b'-\g}$ and~$\t=\t_0\psi_{\b-\g+c}$, for some~$\t_0\in \C(\La,m,\g)^\s$ and~$c\in \aa_{-(m+1),-}$. Let~$s_\g$ be a~$\s$-equivariant tame corestriction with respect to~$\g$.
% \begin{enumerate} 
% \item \label{lemDerivedCharactersForG.i}For any~$g\in \I_G(\t,\t')$ there are elements~$x,y\in \SS_{m+1}(\g)\cap G$ and~$g'\in B_\g\cap G$ such that~$g=xg'y$; moreover,~$g'$ intertwines~$\psi_{s_\g(\b-\g+c)}$ with~$\psi_{s_\g(\b'-\g)}$.
% \item \label{lemDerivedCharactersForG.ii}For any~$g'\in \I_{B_\g\cap G}(\psi_{s_\g(\b-\g+c)},\psi_{s_\g(\b'-\g)})$, there are~$x,y\in (1+\mm_{-k_0(\g,\La)-m-1})\cap G$ such that~$xg'y$ intertwines~$\t$ with~$\t'$. 
% \item \label{lemDerivedCharactersForG.iii}If~$\psi_{s_\g(\b-\g+c)}=\psi_{s_\g(\b'-\g)}$ then there is~$z\in (1+\mm_{-k_0(\g,\La)-m-1})\cap G$ such that~$\t^{z}=\t'$.
% \end{enumerate}

%%%%%%%%%%%%%%%%%%%%%%%%%%%%%%%%%%%%
\subsection{Intertwining and conjugacy for self-dual semisimple characters}
%%%%%%%%%%%%%%%%%%%%%%%%%%%%%%%%%%%%
In this final subsection, we prove an intertwining implies conjugacy theorem for self-dual semisimple characters, which generalizes a result of the second and third authors~\cite[10.2,10.3]{SkSt} for skew semisimple characters. Let~$[\La,n,r,\b]$ and~$[\La,n,r,\b']$ be self-dual semisimple strata in~$\A$.

\begin{theorem}\label{thm:IntImplConjSelfDual}
Let~$\t\in\Cc_-(\La,r,\b)$ and~$\t'\in\Cc_-(\La,r,\b')$ be self-dual semisimple characters which intertwine in~$\G$ and such that the matching~$\zeta:\I\rightarrow \I'$ of~\cite[Theorem~10.1]{SkSt} satisfies 
\[
\La_j^i/\La_{j+1}^i\cong \La^{\zeta(i)}_j/\La^{\zeta(i)}_{j+1}, 
\]
for all indices~$i\in \I$ and all integers~$j$. Then there is an element of $\bob{\P_-(\La)}\cap\prod_{i\in\I}\Hom_\F(\V^i,\V'^{\z(i)})$ which conjugates~$\t$ to~$\t'$.  
\end{theorem}

\begin{proof}
The involution~$\s$ acts on the index sets~$\I$ and~$\I'$ and this action commutes with the map~$\zeta$ by the matching theorem~\cite[Theorem~10.1]{SkSt}. We write~$\I=\I_0\cup\I_+\cup\I_-$ as usual, and similarly for~$\I'$. We deduce that~$\zeta$ sends~$\I_0$ to~$\I'_0$ and~$\I_{+}\cup\I_-$ to~$\I'_{+}\cup\I'_-$. We abbreviate~$\V_0=\V^{\I_0}$ so that~$\V_0^\perp=\V^{\I_{+}\cup\I_-}$, and similarly~$\V'_0$. 

The hyperbolic spaces~$\V_0^\perp$ and~$\V'^\perp_0$ are isometric since they have the same dimension, so~$\V_0$ and~$\V'_0$ are isometric. Take an isometry~$g$ of~$(\V,h)$ which sends~$\V_0$ to~$\V'_0$ and~$\V^\perp_0$ to~$\V'^\perp_0$. By~\cite[Proposition~5.2]{SkodField} we can modify~$g$ such that~$g$ is an element of~$\bob{\P_-(\La)}$. Conjugating~$\t$ by~$g$, we may assume without loss of generality that~$\V_0$ and~$\V'_0$ coincide. 

We show next that there is an element of~$\G\cap (\Aut_\F(\V_0)\times\Aut_\F(\V_0^\perp))$ which intertwines~$\t$ with~$\t'$. By Theorem~\cite[Theorem~10.2]{SkSt} there is an element~$\tilde{g}\in \P(\La) \cap \prod_{i\in\I} \Hom_\F(\V^i,\V'^{\z(i)})$ which conjugates~$\t$ to~$\t'$. Taking the intertwining formula of Proposition~\ref{prop:IntertwiningOfTransfers} and conjugating back with~$\tilde{g}$ we obtain %that~$\I_{\tG}(\t,\t')$ coincides with~$\SS_r(\b',\La)\B^\times_{\b'}\tilde{g}\SS_r(\b,\La)$, which is a subset of
\[
\I_{\tG}(\t,\t') = \SS_r(\b',\La)\tG_{\b'}\tilde{g}\SS_r(\b,\La) \subseteq
\SS_r(\b',\La)(\Aut_\F(\V_0)\times\Aut_\F(\V_0^\perp))\SS_r(\b,\La).
\] 
By a standard cohomology argument, as in~\cite[Corollary~4.14]{St01}, we see that %the intersection of~$\I_{\tG}(\t,\t')$ with~$\G$ is contained in
\[
\I_{\G}(\t,\t')\subseteq (\SS_r(\b',\La)\cap \G)((\Aut_\F(\V_0)\times\Aut_\F(\V_0^\perp))\cap \G)(\SS_r(\b,\La)\cap \G)
\]
and thus we obtain that the restrictions of~$\t$ and~$\t'$ on~$\V_0$ and on~$\V_0^\perp$ intertwine by an element of~$\U(\V_0)$
and~$\U(\V_0^\perp)$ respectively. Thus, by Corollary~\ref{corCDeltaPsia}  we can restrict to the cases where~$\I=\I_0$ or~$\I=\I_{+-}$. The first case is precisely~\cite[Theorem~10.3]{SkSt} and the second case is an easy exercise using Theorem~\cite[Theorem~10.2]{SkSt} (for~$\V_+=\V^{\I_+}$ and~$\V'_+=\V^{\I'_+}$) and Corollary~\ref{corCDeltaPsia}. 
% 
%  The proof is mutatis mutandis the proof of \cite[Theorem 10.3]{SkSt} following a strata induction, see \cite[after Remark 7.2]{SkSt}, if we know the result for 
%  minimal strata. Thus, we suppose that we have two strata~$[\La,n,n-1,\b]$ and~$[\La',n,n-1,\b']$ which intertwine over some element of~$\G$. We only need to consider the case that the first stratum is 
%  a self-dual non-skew stratum whose index set~$I$ has cardinality two. By the matching theorem Theorem~\ref{thmMatchingForCharForG} the index set~$I'$ of the second stratum has also two elements interchanged by the adjoint involution. By \cite[Theorem 10.2]{SkSt} there is an isomorphism~$g$ from~$\V^1$ to~$\V^{\zeta(1)}$ such 
%  that~$g\La^1$ is equal to~$\La'^{\zeta(1)}$ and~$[\La'^{\zeta(1)},n,n-1,g\b_1 g^{-1}]$ is equivalent to~$[\La'^{\zeta(1)},n,n-1,\b'_{\zeta(1)}]$. 
%  The self-duality of the strata imply the equivalence of~$[\La'^{\zeta(2)},n,n-1,\s(g)^{-1}\b_2 \s(g)]$ and~$[\La'^{\zeta(2)},n,n-1,\b'_{\zeta(2)}]$.
\end{proof}

\bibliographystyle{plain}
\bibliography{Endoclass.bib}
\end{document}